\theoremstyle{plain}
\newtheorem{lemma}{Lemma}[section] 
\newtheorem{theorem}[lemma]{Theorem}
\newtheorem{proposition}[lemma]{Proposition}
\newtheorem{corollary}[lemma]{Corollary}
\theoremstyle{definition}
\newtheorem{definition}[lemma]{Definition}
\newtheorem{example}[lemma]{Example}
\newtheorem{remark}[lemma]{Remark}
\newtheorem*{definition*}{Definition}
\theoremstyle{remark}
\newtheorem{ques}[lemma]{Question}
\newtheorem{convention}[lemma]{Convention}
\newcommand{\dd}{\mathrm{d}}
\title{The holonomy Lie $\infty$-groupoid of a singular foliation I}
\author{Camille Laurent-Gengoux\thanks{Institut \'Elie Cartan de Lorraine, UMR 7502, Universit\'e de Lorraine, Metz, France.}\\  Ruben Louis\thanks{Department of Mathematics, University of Illinois Urbana-Champaign, Urbana, IL, USA.}}
\begin{document}
\maketitle
\begin{abstract}We construct a finite-dimensional higher Lie groupoid integrating a singular foliation $\mathcal{F}$, under the mild assumption that the latter admits a geometric resolution. More precisely, a recursive use of bi-submersions, a tool coming from non-commutative geometry and invented by Androulidakis and Skandalis, allows us to integrate any universal Lie $ \infty$-algebroid of a singular foliation to a Kan simplicial manifold, where all components are made of non-connected manifolds which are all the same finite dimension that can be chosen to be equal to the ranks of a given geometric resolution. Its $1$-truncation is the Androulidakis-Skandalis holonomy groupoid.

\end{abstract}
\tableofcontents
\section*{Introduction}

As in \cite{Hermann,Cerveau,AS,AndroulidakisZambon, Debord,LLS} in smooth differential geometry or \cite{zbMATH03423310,Ali-Sinan} in holomorphic differential geometry, we define a singular foliation on a smooth, complex,  or real analytic manifold $M$,  with sheaf of functions $\mathcal O$, to be a subsheaf $\mathcal F \colon U\longrightarrow\mathcal F(U )$ of the sheaf of vector fields $\mathfrak X$, which is closed under the Lie bracket and locally finitely generated as an $\mathcal O$-module, see \cite{LLL1} for a modern introduction. 
By Hermann's theorem \cite{Hermann}, this is enough to induce a partition of the manifold $M$ into {immersed} submanifolds of possibly different dimensions, called \emph{leaves} of the singular foliation. Singular foliations generalize the notion of regular foliations by allowing leaves  with possibly different dimensions. They appear for instance as orbits of Lie group actions, or, more generally, as the orbits of a Lie groupoid. For instance, in Poisson geometry, we encounter a particularly intricate class of singular foliations known as “the symplectic leaves of a Poisson structure”, e.g., see \cite{LPV,CFM} which,  under some assumption \cite{Weinstein,Cattaneo-Felder,Crainic-Fernandes}, are the orbits of a symplectic groupoid. We refer to \cite[\S 4]{LLL1} for a detailed list of examples. In contrast to regular foliations, it is an open question whether a singular foliation can be derived from a Lie groupoid or the image of the anchor map of a Lie algebroid as in the examples above. This paper is part of a long series of efforts aimed at understanding the following question

\begin{ques}\label{question0}
    Given a singular foliation $\mathcal{F}$ on a manifold $M$, is there a Lie groupoid $\mathcal G\rightrightarrows M$ whose associated Lie algebroid $A\mathcal{G}\stackrel{\rho}{\rightarrow}TM$ induces $\mathcal{F}$, i.e., $\rho(\Gamma(A\mathcal{G}))=\mathcal{F}$?
\end{ques}
If $\mathcal{F}$ is a regular foliation or a Debord singular foliation (i.e., projective $\mathcal O$-module), then the response is positive, cf., \cite{Debord}. The response to that question is obviously “no” when the minimal number of local generators is not bounded globally (in \cite[Lemma 1.3]{AZ}, Androulidakis-Zambon gave an example of such a singular foliation). But Question \ref{question0} is open if this number is globally bounded.  Also, \cite[\S 4.5]{LLS} the first author, S. Lavau and T. Strobl defined an obstruction class for having a Lie algebroid of minimal rank defining $\mathcal{F}$ in a neighborhood of a point, and an example where it is not zero.
However, it does not answer the  question since it does not exclude the possibility that the dimension of the Lie groupoid is non-minimal, i.e., that the dimensions of the fiber of the anchor map are strictly greater than the upper bound of the number of local generators. 

One of the ideas that has emerged in recent years is to avoid the question by exploring higher-level structures, i.e., by looking for higher Lie groupoids or higher Lie algebroids. Initial work in this direction began with S. Lavau’s PhD thesis, followed by a collaborative paper with the first author and T. Strobl \cite{LLS}. This approach was later generalized for arbitrary Lie-Rinehart algebras in a purely algebraic framework by the authors in \cite{CLRL}, which brings us to a reformulation of Question \ref{question0}. We primarily aim to address the following question

\begin{ques}\label{ques1}
    Given a singular foliation $\mathcal{F}$ on a manifold $M$, is there a Lie $\infty$-groupoid, i.e., a finite-dimensional Kan simplicial manifold \begin{equation}
   K_\bullet\colon \xymatrix{&\cdots\; \ar@<5pt>[r]\ar@<-5pt>@{->}[r]\ar@<1pt>[r]\ar@<-2pt>@{.}[r]& \ar@/^{0.8pc}/[l]\ar@/^{1.1pc}/[l]\ar@{-}@/^{1.4pc}/[l] \ar@/^{1.7pc}/[l] K_3\ar@<5pt>[r]\ar@<-5pt>@{->}[r]\ar@<1pt>[r]\ar@<-2pt>[r]&\ar@/^{0.8pc}/[l]\ar@/^{1.1pc}/[l]\ar@/^{1.4pc}/[l] K_2\ar@<-2pt>[r]\ar@<6pt>[r]\ar@<2pt>[r]&\ar@/^{0.8pc}/[l]\ar@/^{1.2pc}/[l]K_1 \ar@<-2pt>[r]\ar@<2pt>[r] & \ar@/^{0.8pc}/[l]K_0},
   \end{equation}
   
   \vspace{0.3cm}
whose differentiation is a Lie $\infty$-algebroid that induces $\mathcal{F}$ on the base manifold $M$? If yes, is there a natural one and how unique is it? Last, for $i=0, 1,\ldots,$ can we make the dimension of the $K_i's$ minimal?
\end{ques}


To address Question \ref{ques1}, we consider singular foliations that admit  geometric resolutions, i.e., those for which there exists an anchored complex of vector bundles 

$$ (E,\dd,
\rho) \colon  \xymatrix{  \ar[r] & E_{-i-1} \ar[r]^{{\dd^{(i+1)}}} \ar[d] & 
     E_{ -i} \ar[r]^{{\dd^{(i)}}}
     \ar[d] & E_{-i+1} \ar[r] \ar[d] & \ar@{..}[r] & \ar[r]^{{\dd^{(2)}}}& E_{-1} \ar[r]^{\rho} \ar[d]& TM \ar[d] \\ 
      \ar@{=}[r] & \ar@{=}[r] M  &  \ar@{=}[r] M 
      &  \ar@{=}[r] M  &\ar@{..}[r] & \ar@{=}[r]   &  \ar@{=}[r] M  & M}
    $$
    such that the following complex of sheaves      \begin{equation}
           \label{resolution1}{\longrightarrow} \Gamma({E_{ -i-1}})
     \stackrel{\dd^{(i+1)}}{\longrightarrow} \Gamma({E_{-i}})
     \stackrel{\dd^{(i)}}{\longrightarrow}{\Gamma(E_{-i+1})}{\longrightarrow}\cdots  {\longrightarrow} \Gamma(E_{-1})  
     \stackrel{\rho}{\longrightarrow} \mathcal F.
      \end{equation}  is exact. It is quite natural to work with this class of singular foliation, as it contains the class of (locally) real analytic singular foliations. It is also a natural object in the holomorphic setting, since $\mathcal F$ is then a coherent sheaf and such geometric resolutions always exist locally.  It is proven in \cite{LLS,CLRL} that any geometric resolution $(E,\dd,\rho)$ of a singular foliation $\mathcal{F}$ can be equipped with a Lie $\infty$-algebroid 
      which is unique up to homotopy. The latter is referred to as a \underline{universal Lie $\infty$-algebroid} $\mathbb U_\mathcal{F}$ of $\mathcal F$. {The universal  Lie $\infty$-algebroid $\mathbb U_\mathcal{F}$ of a singular foliation $\mathcal F$ should be the infinitesimal counterpart of a Lie $\infty$-groupoid.} Therefore, finding a Kan simplicial manifold of Question \ref{ques1} amounts to integrating this universal Lie $\infty$-algebroid of $\mathcal{F}$.

      Our integration method gives a finite-dimensional object and is different from the work \cite{Siran-Severa}, where  M. {\v{S}}ira{\v{n}} and P. {\v{S}}evera constructed, following methods from \cite{SullivanDennis}, a Kan simplicial (Fréchet) Banach manifold from any negatively graded Lie $\infty$-algebroid (or $NQ$-manifold $(E,Q)$). The points of this manifold are solutions to an adapted Maurer-Cartan equation, and the manifold itself is infinite-dimensional. From this Kan simplicial manifold, they derived a non-canonical local Lie $\infty$-groupoid of finite dimension by imposing a specific gauge-fixing condition, as also employed by E. Getzler in \cite{Getzler} also \cite{Henriques}. This finite-dimensional local Lie $\infty$-groupoid depends on the gauge condition, the choice of local coordinates of the base manifold, and the local trivialization of $E$.

       The present work introduces a novel approach to integrating the universal Lie $\infty$-algebroid of a singular foliation into a finite-dimensional Kan simplicial manifold,
       \begin{equation}\label{Kan0}
   K_\bullet\colon \xymatrix{&\cdots\; \ar@<5pt>[r]\ar@<-5pt>@{->}[r]\ar@<1pt>[r]\ar@<-2pt>@{.}[r]& \ar@/^{0.8pc}/[l]\ar@/^{1.1pc}/[l]\ar@{-}@/^{1.4pc}/[l] \ar@/^{1.7pc}/[l] K_3\ar@<5pt>[r]\ar@<-5pt>@{->}[r]\ar@<1pt>[r]\ar@<-2pt>[r]&\ar@/^{0.8pc}/[l]\ar@/^{1.1pc}/[l]\ar@/^{1.4pc}/[l] K_2\ar@<-2pt>[r]\ar@<6pt>[r]\ar@<2pt>[r]&\ar@/^{0.8pc}/[l]\ar@/^{1.2pc}/[l]K_1 \ar@<-2pt>[r]\ar@<2pt>[r] & \ar@/^{0.8pc}/[l]K_0}
   \end{equation}
   
   \vspace{0.3cm}
       effectively creating a global Lie $\infty$-groupoid. Our method does not rely at all on the previous idea, but on the fundamental concept of “bi-submersion $M \stackrel{p}{\leftarrow} W \stackrel{q}{\rightarrow} N$ between singular foliations”, extending a notion introduced by \cite{AS}. A bi-submersion between two singular foliations $(M,\mathcal{F}_M)$ and $(N,\mathcal{F}_N)$ consists of two (surjective) submersions $p\colon W\to M$ and $q\colon W\to N$ satisfying some compatible conditions with respect to the singular foliations $\mathcal{F}_M$ and $\mathcal{F}_N$ as in Definition \ref{def:bi-submersion}. When $(M,\mathcal{F})=(M,\mathcal{F}_M)=(N,\mathcal{F}_N)$, we say that $W$ is a bi-submersion over $\mathcal
       F$. The latter can be understood as a Lie groupoid-like object without a product. Bi-submersions also come with a Morita-like equivalence between bi-submersions. This notion and all of its features were introduced by non-commutative geometers Androulidakis and Skandalis as local charts of the holonomy groupoid of a singular foliation \cite{AS}. The significant advantage of this notion is its finite-dimensional nature, situating it within the realm of traditional differential geometry. In order to use bi-submersions to build higher groupoid-like structures, the second author has introduced an extension of this notion in the paper \cite[Theorem 5.6]{RubenSymetries}, and proved that a singular foliation $\mathcal{F}$ admits a geometric resolution if and only if there exists a sequence of bi-submersions of the form \begin{equation}\label{tower0}T_\bullet\colon \xymatrix{\cdots\phantom{X}\ar@/^/[r]^{p_{i+1}}\ar@/_/[r]_{q_{i+1}}&{T_{i+1}}\ar@/^/[r]^{p_{i}}\ar@/_/[r]_{q_{i}}&{T_{i}}\ar@/^/[r]^{p_{i-1}}\ar@/_/[r]_{q_{i-1}}&{\cdots}\ar@/^/[r]^{p_1}\ar@/_/[r]_{q_1}&T_1\ar@/^/[r]^{p_0}\ar@/_/[r]_{q_0}&M}.
\end{equation} called \emph{bi-submersion tower over $\mathcal{F}$}, where $M \stackrel{p_0}{\leftarrow} T_{1} \stackrel{q_0}{\rightarrow} M$ is a bi-submersion over $\mathcal{F}$  and for each $i\geq 1$,\, $T_i \stackrel{p_i}{\leftarrow} T_{i+1} \stackrel{q_i}{\rightarrow} T_i$ is a bi-submersion over the  singular foliation $\mathcal{F}_i=\Gamma(\ker Tp_{i-1})\cap  \Gamma(\ker Tq_{i-1})$ on $T_{i+1}$ (referred to as the bi-vertical singular foliation of $T_{i+1}$) which exists under the existence of a geometric resolution of $\mathcal{F}$. This result allows the construction by recursion of a finite-dimensional Kan simplicial manifold whose simplices are given by gluing \underline{equivalences of bi-submersions} along a bi-submersion tower over $\mathcal{F}$ as in  \eqref{tower0}. This Kan simplicial manifold can be interpreted as follows: $K_0= M$ the space of object; $K_1$ is a bi-submersion over the singular foliation $(M,\mathcal{F})$; $K_2$ is an equivalence between bi-submersions $K_1$ and $\Lambda^2_\ell K$ (all the fiber products of $K_1$ with itself) which consists of a compatible bi-submersion $\Lambda^2_\ell K\stackrel{p_\ell^2}{\leftarrow}K_2\stackrel{d_\ell^2}{\rightarrow} K_1$ between the respective bi-vertical singular foliations of $K_1$ and $\Lambda^2_\ell K$ for $\ell=0, 1,2$; $K_3$ is an equivalence between equivalences of bi-submersions, that is an equivalence between $K_2$ and the horn spaces $\Lambda^3_\ell K$ which consists of a compatible bi-submersion $\Lambda^3_\ell K\stackrel{p_\ell^3}{\leftarrow}K_3\stackrel{d_\ell^3}{\rightarrow} K_2$ between the respective bi-vertical singular foliations of $K_2$ and $\Lambda^3_\ell K$ for $\ell=0,1,2,3$ and so on. By this construction, the complex of vector bundles
\begin{equation}
     \xymatrix{\cdots\ar[r]&\ker Tp_0^3\ar[r]^{Td_0^3}\ar[d]&\ker Tp_0^2\ar[r]^{Td_0^2}\ar[d]&\ker Td_1^1\ar[r]^{Td_0^1}\ar[d]&TM \ar[d]\\\cdots\ar[r]&K_3\ar[r]^{d_0^3}&K_2\ar[r]^{d_0^2}&K_1\ar[r]^{d_0^1}&M}
\end{equation}  

satisfies $Td^1_0\left (\Gamma(\ker  Td^1_1)\right)=(d^1_0)^*\mathcal F$, and is exact at the level of sections in the sense that for every $k\geq 1$, the intersection $\Gamma(\ker Td_0^k)\cap \Gamma(\ker Tp_{0}^k)$ is equal to the image of $d_{0}^{k+1}$-projectable vector fields in $\Gamma(\ker Tp_{0}^{k+1})$. In particular, the tangent complex of this Lie $\infty$-groupoid  $K_\bullet$ \begin{equation}
        \xymatrix{\cdots\ar[r]&\ker Tp_0^3|_M\ar[r]^{Td_0^3|_M}&\ker Tp_0^2|_M\ar[r]^{Td_0^2|_M}&\ker Td_1^1|_M\ar[r]^{Td_0^1|_M}&TM}
    \end{equation}is a geometric resolution of $\mathcal{F}$. As a consequence, the differentiation functor (cf. \cite{li2023differentiating}) applied to this Lie $\infty$-groupoid is a Lie $\infty$-algebroid built over a geometric resolution of $\mathcal F$. The latter is a universal Lie $\infty$-algebroid of the singular foliation in the sense of \cite{LLS}. Since a universal Lie $\infty$-algebroid is unique up to homotopy equivalence, this Lie $\infty$-groupoid $K_\bullet$ is an integration of any universal Lie $\infty$-algebroid of $\mathcal{F}$.
    
    We would like to emphasize that our construction has a limitation: the simplicial relations between the degeneracy maps $(s_\bullet)$, namely $s_i\circ s_j=s_{j+1}\circ s_i$, $i\leq j$, are not satisfied. {This is weaker than a simplicial manifold, but stronger than a semi-simplicial manifold \cite{Dorsch} (which has no degeneracies). Therefore, we do not have an honest simplicial manifold—at least not until we take a quotient. We refer to this as a \emph{para-simplicial structure}. The Kan condition still makes sense, since \begin{itemize}
        \item the Kan condition depends only on the face maps $d_i$;
\item degeneracies $s_i$ do not enter the horn-filling condition.
    \end{itemize}}  According to \cite[Theorem 5.7]{Rourke1971} and \cite{McClure2013}, there exists a system of degeneracies that satisfies all the simplicial conditions included $s_i\circ s_j=s_{j+1}\circ s_i$, $i\leq j$, since $K_\bullet$ satisfies Kan condition. However, the constructions of these articles do not allow them to be smooth in general.

    We call a {para-simplicial structure} that satisfies the Kan condition a \emph{para-Lie $\infty$-groupoid}.  
    
    This paper represents a preliminary attempt to address Question \ref{ques1} posed in the introduction. The key contribution of this initial work is to highlight the significance of the Androulidakis-Skandalis concept of bi-submersions in constructing a finite-dimensional para-Lie $\infty$-groupoid over a singular foliation. 
We claim that a similar method can be applied to integrate an arbitrary Lie $\infty$-algebroid whose linear part does not necessarily form  a geometric resolution as in Equation \eqref{resolution1} of its underlying singular foliation. 

The organization of this paper is as follows. In \S \ref{sec:1}, we review the fundamental concept of singular foliations and their associated universal Lie $\infty$-algebroids.  In \S \ref{sec:results}, we clarify what we mean by integrating a singular foliation and  present the main results of this paper.  In \S \ref{sec:construction}, we prove the main theorem of the paper by constructing a holonomy para-Lie $\infty$-groupoid that integrates a singular foliation. The appendix provides supplementary material supporting the development of the paper. Specifically, \S \ref{app:exact-sequence-of-vb} and  \S \ref{Appendix:higher-groupoids} provide a concise review of the definition of Lie $\infty$-groupoids and their tangent spaces, and exact sequence of vector bundles. In \S \ref{sec:bi-submersion}, we revisit the notion of bi-submersions between singular foliations. In \S \ref{sec:Equivalentbi-submersions}, we explore various notions of equivalence between bi-submersions. Furthermore, we derive new and significant results on bi-submersions, which play a crucial role in the main arguments of the paper. Last, in \S \ref{sec:tower}, we introduce the notion of “bi-submersion tower” a generalization of the notion of bi-submersion between two singular foliations.

\subsubsection*{Some conventions}
We recall some general notations and conventions of differential geometry.

\begin{itemize}
    \item For every $n\in \mathbb N$, $\mathcal B^n\subset \mathbb R^n$ stands for an open ball centered at zero.
    \item $\mathcal O$ stands for the sheaf of (smooth,
real analytic or holomorphic) functions on (smooth, analytic, complex manifold—depending on the context) $M$, and by $\mathfrak{X}$ the sheaf of vector fields on $M$. For simplicity, we primarily work in the smooth setting. In that case, we shall denote by $C^\infty(M)$ the algebra of smooth functions on $M$.
\item We shall denote by $\mathrm{rk}(E)$ the rank of a vector bundle $E\to M$.

    \item The $C^\infty(M)$-module of sections of a vector bundle $E\to M$ shall be denoted by $\Gamma(E)$. The $C ^\infty(U)$-module made of the restrictions of sections of $E$ to an open subset $\mathcal U\subset M$ shall be denoted by $\Gamma(E)|_\mathcal{U}$.

    \item For a vector bundle morphism $u\colon E\to F$ over the identity of $M$, we shall denote by the same $u$ the $C^\infty (M)$-linear map obtained at the level of sections $\Gamma(E)\to \Gamma(F),\;e\to u\circ e$.
    \item For a vector bundle morphism over different bases $$\xymatrix{E\ar[d]\ar[r]^d& F\ar[d]\\M\ar[r]^\phi & N},$$ we shall denote by $\Gamma_{proj}(E)\subseteq \Gamma (E) $  the subspace of all sections $e$ of $ E$ over $ M$ which are \emph{projectable}, i.e., such that there exists a section $\eta$ of $F$ with $d\circ e=\eta\circ \phi$.

    \item The pull-back of a vector bundle $E\to M$ along a smooth map $\varphi \colon N\to M$ is denoted by $\varphi^*E$. Also, the pull-back of a vector bundle morphism $u\colon E\to F$ along $\varphi$ shall be denoted by $\varphi^*u\colon \varphi^*E\to \varphi^*F$.
    \item The tangent map at a point $x\in M$ of a smooth map $\varphi \colon M\to N$  shall be denoted by $T_x\varphi \colon T_xM\to T_{\varphi(x)}N$. 
\end{itemize}

\begin{definition}
Let $\varphi\colon N\to M$ be a submersion. 
\begin{enumerate}
    \item A vector field $Z$ on $N$ is said to be \emph{$\varphi$-related} to a vector field on $X$ on $M$ if for every point $x\in N$ we have :
$T_x\varphi (Z(x)) = X\circ \varphi (x)$.  
\item A vector field $Z$ on $N$ is said to be $\varphi$-\emph{projectable} if $Z$ is $\varphi$-related to some vector field  on $M$.  
\end{enumerate}
\end{definition}

\section{Singular foliations}\label{sec:1}
In this section, we recall the concept of singular foliations and their features, as developed in 
\cite{Hermann, Nagano, Cerveau, Dazord,Debord,AS,AZ,LLS} and recently reviewed in \cite{LLL1}.

\begin{definition}\label{def:singfoliation}
    A \emph{singular foliation} on a smooth, real analytic, or complex manifold $M$   is a subsheaf $\mathcal{F}\subseteq\mathfrak{X}(M)$ of modules over functions that fulfills the following conditions
\begin{enumerate}
     \item \textbf{Stability under Lie bracket}: $[\mathcal{F},\mathcal{F}]\subseteq \mathcal{F}$.
     \item \textbf{Local finite generateness}: every $m\in M$ admits an
open neighborhood $\mathcal{U}\subseteq M$ together with a finite number of vector fields  $X_1, \ldots, X_r \in \mathfrak{X}(\mathcal U)$ such that for
every open subset $\mathcal V \subseteq \mathcal U$ the vector fields ${X_1}_{|_\mathcal{V}} ,\ldots, {X_r}_{|_\mathcal{V}}$ generate $\mathcal{F}$ on $\mathcal{V}$ as a $\mathcal{O}_\mathcal{V}$-module.
 \end{enumerate}
 \end{definition} 
From now on, we will deal with the smooth case: most result, however, extend to real analytic or complex settings.

\begin{definition}
Let $(M, \mathcal{F}_M)$ and $(N,\mathcal{F}_N)$ be singular foliations and $\phi\colon N\to M$ a diffeomorphism. We say that $\phi$ is an \emph{isomorphism of singular foliations} if $\phi_*(\mathcal{F}_M)=\mathcal{F}_N$. Here  $\phi_*\colon \mathfrak X(M)\to \mathfrak X(N),\, X\to T\phi\circ X\circ \phi^{-1}$ denotes push-forward map along $\phi$. When $M=N$ and $ \mathcal F_M = \mathcal F_N=\mathcal{F}$, we shall speak of a \emph{symmetry of $ \mathcal F$}.
\end{definition}

\begin{remark}
There are several alternative approaches to defining singular foliations on a manifold $M$. These approaches share a common feature: they are defined as Lie-Rinehart subalgebras $\mathcal F$ of the Lie-Rinehart algebra $\mathfrak X (M) $ of vector fields on $M$ (or compactly supported vector fields $\mathfrak X_c(M) $ on $M$). Here are some important consequences of Definition \ref{def:singfoliation}.

\begin{enumerate}
     \item A singular foliation admits leaves: there exists a partition of $M=\cup_{m\in M}L_m$ into immersed submanifolds of $M$ called leaves such that for all $m\in M$, the image $T_m\mathcal{F}\subseteq T_mM$ of the evaluation map $\mathcal F \to T_m M, X\to X(m)$  is the tangent space of the leaf $L_m$ through $m$. 
     \item \emph{Singular foliations are self-preserving}: the flow of  a vector field in $\mathcal F$, whenever defined, is a symmetry of $\mathcal F$, cf.,  \cite{AS,GarmendiaAlfonso}, \cite{RubenSymetries}-also \cite[\S 7.2]{LLL1}.
\end{enumerate}
\end{remark}

The following proposition is very useful.
\begin{proposition}\cite{AS}
Let $(M,\mathcal{F})$  be a singular foliation. Let $m\in M$ and denote by $\mathcal{I}_m$ the ideal of functions vanishing  at $m$. The dimension of the vector space $\frac{\mathcal{F}}{\mathcal{I}_m\mathcal{F}}$ is the minimal number of local generators of $\mathcal{F}$ on an open neighborhood of $m$. More precisely, for every family of vector fields $X_1, \ldots, X_n\in \mathcal{F}$ such that their classes  $[X_1]_m, \ldots, [X_n]_m\in \frac{\mathcal{F}}{\mathcal{I}_m\mathcal{F}}$ form a basis, there exists an open neighborhood of $m$ on which $X_1, \ldots, X_n$ are local generators for $\mathcal{F}$. This set of generators is minimal in the sense that none of the $X_i$ can be
written as a $C^\infty(M)$ linear combination of the others. 
\end{proposition}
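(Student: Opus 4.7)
The plan is to mimic the proof of Nakayama's lemma in this sheaf-theoretic setting, relying only on the fact that a smooth function which does not vanish at $m$ is invertible on some open neighborhood of $m$.

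First I would start from the local finite generation axiom and pick a set of local generators $Y_1,\dots,Y_r$ of $\mathcal{F}$ on an open neighborhood $\mathcal{U}$ of $m$ which is of \emph{minimal} cardinality $r$ among all choices of generating sets on all neighborhoods of $m$. The claim is that $[Y_1]_m,\dots,[Y_r]_m$ form a basis of the vector space $\mathcal{F}/\mathcal{I}_m\mathcal{F}$. The spanning property is immediate: any $X\in\mathcal{F}$ can be written, near $m$, as $X=\sum_i f_i Y_i$, so that modulo $\mathcal{I}_m\mathcal{F}$ the class $[X]_m$ equals $\sum_i f_i(m)[Y_i]_m$. Linear independence is the heart of the argument. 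Assume by contradiction a nontrivial relation $\sum_i \lambda_i [Y_i]_m = 0$, say with $\lambda_r\neq 0$. Then $Y_r = \sum_{i<r}\mu_i Y_i + \sum_j g_j Y_j$ with $g_j\in\mathcal{I}_m$, which rearranges to $(1-g_r)Y_r = \sum_{i<r}(\mu_i + g_i)Y_i$. Since $1-g_r$ is nonvanishing at $m$, hence invertible on a smaller neighborhood $\mathcal{U}'\subseteq\mathcal{U}$, the vector field $Y_r$ becomes a $C^\infty(\mathcal{U}')$-linear combination of $Y_1,\dots,Y_{r-1}$, so $Y_1,\dots,Y_{r-1}$ generate $\mathcal{F}$ on $\mathcal{U}'$, contradicting the minimality of $r$. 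Consequently $r=\dim\mathcal{F}/\mathcal{I}_m\mathcal{F}=:n$.

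Next, given an arbitrary family $X_1,\dots,X_n\in\mathcal{F}$ whose classes form a basis of $\mathcal{F}/\mathcal{I}_m\mathcal{F}$, I would write each $X_i=\sum_{j=1}^n a_{ij}Y_j$ on a suitable neighborhood of $m$, with $a_{ij}\in C^\infty$. Reducing this identity modulo $\mathcal{I}_m\mathcal{F}$ shows that the matrix $(a_{ij}(m))$ expresses the basis $[X_i]_m$ in terms of the basis $[Y_j]_m$ and is therefore invertible. By continuity of the determinant, $(a_{ij})$ remains invertible on a smaller neighborhood, so its inverse $(b_{ji})$ yields $Y_j=\sum_i b_{ji}X_i$ there. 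Hence $X_1,\dots,X_n$ generate $\mathcal{F}$ locally around $m$.

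Finally, minimality of the generating set $\{X_1,\dots,X_n\}$ follows directly from the already-established linear independence of the $[X_i]_m$: if some $X_i$ were of the form $\sum_{j\neq i} f_j X_j$ with $f_j\in C^\infty$, then evaluating modulo $\mathcal{I}_m\mathcal{F}$ would give $[X_i]_m=\sum_{j\neq i}f_j(m)[X_j]_m$, contradicting that the $[X_j]_m$ form a basis. The only step I expect to require genuine care is the linear independence argument for the minimal generating set, since it is there that one must convert the algebraic relation in $\mathcal{F}/\mathcal{I}_m\mathcal{F}$ into an actual reduction of generators, and this uses in an essential way that functions nonzero at $m$ are invertible on a neighborhood of $m$ — the smooth analogue of Nakayama's lemma.
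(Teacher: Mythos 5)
Your proof is correct and is essentially the standard Nakayama-type argument: the paper itself states this proposition only as a citation to \cite{AS} without reproducing a proof, and your argument (minimal generating set gives a basis of $\mathcal{F}/\mathcal{I}_m\mathcal{F}$ via invertibility of $1-g_r$ near $m$, then an invertible change-of-basis matrix to pass to an arbitrary family inducing a basis) is the same one given there. No gaps; the only point worth making explicit is that an element of $\mathcal{I}_m\mathcal{F}$ must first be re-expressed as an $\mathcal{I}_m$-combination of the $Y_i$ themselves before isolating $Y_r$, which your notation implicitly assumes and which follows immediately from the generating property.
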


\subsection{Pull-back of a singular foliation and clean intersections}
For a singular foliation $(M,\mathcal{F})$ and $m\in M$ we shall denote by $T_m\mathcal{F}:=\{X(m), X\in \mathcal{F}\}\subseteq T_mM$ the \emph{tangent space of $\mathcal{F}$ at $m$}.
\begin{definition}
Let $P, M$ be manifolds and $\mathcal{F}$ a singular foliation on $M$. We say that a map $\varphi\colon P\longrightarrow M$ is \emph{transverse} to $\mathcal{F}$ and write $\varphi \pitchfork\mathcal{F}$ if the following equivalent statements are satisfied:

\begin{enumerate}
    \item $\forall p\in P$, $\mathrm{im}(T_p\varphi)+ T_{\varphi(p)}\mathcal{F}= T_{\varphi(p)}M$.
    \item the sheaf morphism $\varphi^*\mathcal{F}\oplus\mathfrak X(P)\longrightarrow \Gamma(\varphi^*TM),\; (\alpha, Z)\longmapsto \alpha+ T\varphi(Z)$ is onto.
\end{enumerate}
Here, $\varphi ^*\mathcal F\subset \Gamma(\varphi^*TM)$ is the $C^{\infty}(P)$-module generated the sections  $X\circ \varphi\in \Gamma(\varphi^*TM)$ with $X\in \mathcal{F}$.
\end{definition}
\begin{example}
    Every submersion $\varphi\colon P\longrightarrow M$ is transverse to any singular foliation  $(M, \mathcal{F})$. 
\end{example}
We refer to  \cite[Proposition 1.10 and Proposition 1.11]{AS} for the proof of the following statement.
\begin{proposition}\label{prop:transverse-foliation}
  Let $(M,\mathcal{F})$ be  a singular foliation and $\varphi\colon P\longrightarrow M$ a smooth map transverse to $\mathcal{F}$ then 
\begin{enumerate}
      \item the $C^{\infty}(P)$-submodule $\varphi^{-1}(\mathcal F):= \{Z\in \mathfrak X(P)\mid T\varphi(Z)\in \varphi^*\mathcal{F}\}\subseteq \mathfrak X(P)$ is a singular foliation on $P$. Moreover, $$ T_p\left(\varphi^{-1}(\mathcal F)\right)=(T_p\varphi)^{-1}(T_{\varphi(p)}\mathcal{F}).$$
      \item Let $Q$ be a manifold and $\psi\colon  Q\longrightarrow P$ be a smooth map. Then $\psi\pitchfork \varphi^{-1}(\mathcal F)$ if and only if
$\varphi\circ \psi \pitchfork \mathcal{F}$ and we have $$(\varphi\circ \psi )^{-1}(\mathcal{F})= \psi^{-1}(\varphi^{-1}(\mathcal F)).$$
  \end{enumerate}
  The singular foliation $\varphi^{-1}(\mathcal{F}_M)\subset \mathfrak X(P)$ is called the \emph{pullback} foliation of $\mathcal{F}$ along $\varphi$.
\end{proposition}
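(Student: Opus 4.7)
The plan is to prove (1) by extracting local generators of $\varphi^{-1}(\mathcal F)$ from those of $\mathcal F$ via the surjectivity form of transversality, and then reduce (2) to a short pointwise linear-algebra argument using the tangent-space formula proved in (1).

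For part (1), I would fix $p\in P$ and work in a small neighborhood. Pick local generators $X_1,\dots,X_r\in\mathcal F$ of $\mathcal F$ near $\varphi(p)$. Condition (2) of transversality decomposes each $X_i\circ\varphi = Y_i + T\varphi(\tilde X_i)$ with $Y_i\in\varphi^*\mathcal F$ and $\tilde X_i\in\mathfrak X(P)$; then $T\varphi(\tilde X_i)= X_i\circ\varphi - Y_i\in\varphi^*\mathcal F$, so each $\tilde X_i$ lies in $\varphi^{-1}(\mathcal F)$. Complementing the $\tilde X_i$ with local generators of the submodule $\{Z\in\mathfrak X(P):T\varphi(Z)=0\}\subseteq\varphi^{-1}(\mathcal F)$ yields a local generating set: for $Z\in\varphi^{-1}(\mathcal F)$, write $T\varphi(Z)=\sum f_i(X_i\circ\varphi)$, subtract $\sum f_i\tilde X_i$, and absorb the leftover $\varphi^*\mathcal F$-valued term into a further application of the transversality decomposition until only a vertical residual remains. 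Closure under the Lie bracket then follows from $\varphi$-relatedness: $[\tilde X_i,\tilde X_j]$ projects to $[X_i,X_j]\in\mathcal F$ up to a $\varphi^*\mathcal F$-correction handled as above, while brackets involving vertical fields stay in $\varphi^{-1}(\mathcal F)$. The inclusion $T_p(\varphi^{-1}(\mathcal F))\subseteq(T_p\varphi)^{-1}(T_{\varphi(p)}\mathcal F)$ is immediate from $T_p\varphi(Z(p))=T\varphi(Z)(p)\in T_{\varphi(p)}\mathcal F$; for the reverse inclusion, given $v\in T_pP$ with $T_p\varphi(v)\in T_{\varphi(p)}\mathcal F$, decompose $T_p\varphi(v)=\sum a_i X_i(\varphi(p))$, observe that $v-\sum a_i\tilde X_i(p)\in\ker T_p\varphi$, and extend this residue to a local vertical vector field.

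For part (2), the tangent-space formula of (1) converts the transversality of $\psi$ to $\varphi^{-1}(\mathcal F)$ into a pointwise linear condition: writing $L=T_{\psi(q)}\varphi$, $U=T_{\varphi(\psi(q))}\mathcal F$, and $A=\mathrm{im}(T_q\psi)$, the two statements unfold to $A+L^{-1}(U)=T_{\psi(q)}P$ and $L(A)+U=T_{\varphi(\psi(q))}M$. Given the background transversality $L(T_{\psi(q)}P)+U=T_{\varphi(\psi(q))}M$, the equivalence is a one-line chase: apply $L$ and use $L(L^{-1}(U))=U\cap L(T_{\psi(q)}P)$ in one direction; lift representatives across $L$ in the other. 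For the equality $(\varphi\circ\psi)^{-1}(\mathcal F)=\psi^{-1}(\varphi^{-1}(\mathcal F))$, the inclusion $\supseteq$ follows directly from definitions; the reverse uses the local generators $\tilde X_i$ and $V_j$ of (1) to express $T\psi(W)$ as a $C^\infty(Q)$-combination of $\tilde X_i\circ\psi$ and $V_j\circ\psi$ whenever $T(\varphi\circ\psi)(W)\in(\varphi\circ\psi)^*\mathcal F$.

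The main obstacle is the ``residual cleanup'' in part (1): after the first subtraction $Z-\sum f_i\tilde X_i$, the image under $T\varphi$ lies in $\varphi^*\mathcal F$ rather than being zero, and showing this residue can be absorbed into the constructed lifts together with genuinely vertical vector fields---without enlarging the generating family---is where the full strength of the module-theoretic transversality condition is needed. Once this is established, the rest of the argument is essentially formal.
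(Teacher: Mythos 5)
The paper does not prove this proposition itself; it defers to \cite[Propositions 1.10 and 1.11]{AS}. Your part (2) reduction to pointwise linear algebra is fine (the chase with $L$, $U$, $A$ is correct, given the background transversality of $\varphi$), but part (1) --- on which everything else rests --- has a genuine gap, and it is exactly the one you flag as ``the main obstacle''. The decomposition $X_i\circ\varphi = Y_i + T\varphi(\tilde X_i)$ with $Y_i\in\varphi^*\mathcal F$ is vacuous: since $X_i\circ\varphi$ already lies in $\varphi^*\mathcal F$, one may take $Y_i=X_i\circ\varphi$ and $\tilde X_i=0$, so nothing forces the $\tilde X_i$ to be useful lifts (with that degenerate choice your generating family collapses to the vertical fields alone, which is visibly wrong when $\varphi$ is the inclusion of a transversal: no vertical fields, but a nontrivial transverse foliation). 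Even with a nontrivial choice, after subtracting $\sum f_i\tilde X_i$ from $Z$ the residue still only satisfies $T\varphi(\cdot)\in\varphi^*\mathcal F$ --- i.e., it is again a general element of $\varphi^{-1}(\mathcal F)$, not a vertical one --- so ``a further application of the transversality decomposition'' returns you to the situation you started from, and the iteration has no reason to terminate. The same defect breaks the reverse inclusion in the tangent-space formula: $v-\sum a_i\tilde X_i(p)$ lands in $(T_p\varphi)^{-1}(T_{\varphi(p)}\mathcal F)$, not in $\ker T_p\varphi$; moreover a vector in $\ker T_p\varphi$ need not extend to a local section of $\ker T\varphi$ when $\varphi$ has non-constant rank (try $\varphi(x)=x^2$ on $\mathbb R$). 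Finally, for a general transverse map the module $\{Z\in\mathfrak X(P): T\varphi(Z)=0\}$ is not known to be locally finitely generated, so even the ``vertical'' half of your proposed generating set is problematic.

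The argument of Androulidakis--Skandalis avoids all of this by factoring $\varphi$ through its graph: $\varphi=\mathrm{pr}_M\circ\tilde\varphi$, where $\tilde\varphi\colon P\to P\times M$ is the graph embedding and $\mathrm{pr}_M$ is a submersion. For a submersion the lifts can be chosen \emph{exact}, $T\mathrm{pr}_M(\tilde X_i)=X_i\circ\mathrm{pr}_M$ with no correction term, so one subtraction leaves a genuinely vertical residue and $\ker T\mathrm{pr}_M$ is an honest subbundle with a local frame. For the inclusion of the graph, which intersects $\mathrm{pr}_M^{-1}(\mathcal F)$ cleanly precisely when $\varphi\pitchfork\mathcal F$, one reorders and recombines the generators so that $X_1,\dots,X_c$ induce a frame of the normal bundle of the graph, and corrects the remaining generators to be tangent to it; those corrected restrictions generate the induced foliation. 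If you want a self-contained proof, that is the route to take; your pointwise computations in part (2) can then be kept essentially as written, though the module identity $(\varphi\circ\psi)^{-1}(\mathcal F)=\psi^{-1}(\varphi^{-1}(\mathcal F))$ also needs the corrected generators rather than the $\tilde X_i$, $V_j$ of your part (1).
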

\begin{remark}
   The submodule $\varphi^{-1}(\mathcal{F})\subset \mathfrak X(P)$  in Proposition \ref{prop:transverse-foliation}(1), is always stable under Lie bracket even if $\varphi$ is not transverse to $\mathcal{F}$, but it may be infinitely generated in the smooth case, cf. \cite[Exercice 5.13(8)]{LLL1}. The “transverse” assumption guarantees that it is a locally finitely generated submodule, hence a singular foliation.  
\end{remark}

\begin{definition}Let $(M, \mathcal F)$ be a singular foliation. Let $S\subset M$ be a submanifold. \begin{enumerate}
    \item We say that $S$ is \emph{intersects $\mathcal{F}$ cleanly}  if $\iota_S\pitchfork \mathcal{F}$, where  $S\stackrel{\iota_S}{\hookrightarrow}M$ is the inclusion map.
    
    \item Let  $m\in S$ be a point of $S$. We say that $S$ is \emph{transverse to $\mathcal{F}$ at $m$} if $\iota_S\pitchfork \mathcal{F}$ and $T_mS\,\oplus\, T_m\mathcal{F}=T_mM$. In particular, $\{m\}$ is a leaf of $\mathcal{F}_S$. In this case, we denote by  $\mathcal T_m^\mathcal{F} $ the \emph{transverse} singular foliation $\mathcal{F}_S=\mathcal{F}|_S:=\iota_S^{-1}(\mathcal{F})$ at $m$.
\end{enumerate}
\end{definition}
\begin{remark}
    In particular, for $S\subseteq M$ a sub-manifold intersecting a singular foliation $(M,\mathcal{F})$ cleanly, the singular foliation $\mathcal{F}_S$ is made of the restrictions to $S$ of  all vector fields of $\mathcal{F}$ that are tangent to $S$.
\end{remark}

\begin{proposition}\label{prop:operations-on-transversals}
Let $(M, \mathcal{F})$ be a singular foliation and $\varphi\colon P\rightarrow M$ a submersion. For every immersed sub-manifold $S$

\begin{enumerate}
    \item If $S\subset M$ intersects $\mathcal{F}$ cleanly, then the inverse image $\varphi^{-1}(S)$ intersects $\varphi^{-1}(\mathcal{F})$ cleanly. 
    \item If $\Sigma\subset P$  transverse to $\varphi^{-1}(\mathcal{F})$ at a point $p\in P$, then the image  $\varphi(\Sigma)$ is a submanifold transverse to $\mathcal{F}$ at $\varphi(p)\in M$. In particular, the corresponding leaves of the singular foliations $\varphi^{-1}(\mathcal{F})$ and $\mathcal{F}$ have the same codimensions.
\end{enumerate}
\end{proposition}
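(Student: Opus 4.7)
The plan is to deduce each claim from the characterization $T_p(\varphi^{-1}(\mathcal F)) = (T_p\varphi)^{-1}(T_{\varphi(p)}\mathcal F)$ given by Proposition \ref{prop:transverse-foliation}, together with the linear-algebraic fact that for a surjective map $f\colon V\to W$ between vector spaces and subspaces $A,B\subseteq W$ one has $f^{-1}(A)+f^{-1}(B)=f^{-1}(A+B)$ (proved by noting that for any $v$ with $f(v)=a+b$, choosing $u_a\in f^{-1}(a)$ gives $v-u_a\in f^{-1}(b)$, so $v = u_a + (v-u_a)\in f^{-1}(A)+f^{-1}(B)$).

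For part (1), $\varphi^{-1}(S)$ is automatically a submanifold of $P$ since $\varphi$ is a submersion, with $T_p(\varphi^{-1}(S))=(T_p\varphi)^{-1}(T_{\varphi(p)}S)$. Applying the linear identity to $A=T_{\varphi(p)}S$ and $B=T_{\varphi(p)}\mathcal F$, the clean intersection hypothesis $A+B=T_{\varphi(p)}M$ upgrades to $T_p(\varphi^{-1}(S))+T_p(\varphi^{-1}(\mathcal F))=T_pP$, which is exactly the required clean intersection at $p\in\varphi^{-1}(S)$.

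For part (2), I first observe that the direct-sum hypothesis $T_p\Sigma\oplus T_p(\varphi^{-1}(\mathcal F))=T_pP$ combined with the inclusion $\ker T_p\varphi\subseteq T_p(\varphi^{-1}(\mathcal F))$ forces $T_p\Sigma\cap\ker T_p\varphi=0$. Hence $\varphi|_\Sigma$ is an immersion at $p$, and after shrinking $\Sigma$ to a neighborhood of $p$ it is an embedding, so $\varphi(\Sigma)$ is a submanifold of $M$ near $\varphi(p)$ of dimension $\dim\Sigma$. To verify clean intersection of $\varphi(\Sigma)$ with $\mathcal F$ at any point $q=\varphi(q')\in\varphi(\Sigma)$, I apply $T_{q'}\varphi$ to the identity $T_{q'}\Sigma+T_{q'}(\varphi^{-1}(\mathcal F))=T_{q'}P$ (which holds by the hypothesis $\iota_\Sigma\pitchfork\varphi^{-1}(\mathcal F)$): the first summand maps onto $T_q\varphi(\Sigma)$ after the shrinking, the second lies in $T_q\mathcal F$ by the definition of the pullback foliation, and $T_{q'}\varphi$ is surjective onto $T_qM$. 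The direct-sum condition at $\varphi(p)$ then follows because any nonzero element of $T_{\varphi(p)}\varphi(\Sigma)\cap T_{\varphi(p)}\mathcal F$ would pull back, via the injective restriction $T_p\varphi|_{T_p\Sigma}$, to a nonzero vector in $T_p\Sigma\cap T_p(\varphi^{-1}(\mathcal F))=0$.

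The codimension statement reduces to a dimension count: since $T_p\varphi$ is surjective, $\dim T_p(\varphi^{-1}(\mathcal F))=\dim\ker T_p\varphi+\dim T_{\varphi(p)}\mathcal F$, so $\dim P-\dim T_p(\varphi^{-1}(\mathcal F))=\dim M-\dim T_{\varphi(p)}\mathcal F$, which are the respective codimensions of the leaves through $p$ and $\varphi(p)$. The main nuisance I anticipate is keeping straight the distinction between clean intersection (a pointwise condition on all of $\Sigma$) and pointwise transversality at a distinguished point; shrinking $\Sigma$ near $p$ handles the immersion side, while the clean intersection at other points of $\varphi(\Sigma)$ is inherited from the already-assumed $\iota_\Sigma\pitchfork\varphi^{-1}(\mathcal F)$ via the same $T_{q'}\varphi$-pushforward argument.
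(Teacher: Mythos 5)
Your proof is correct and follows essentially the same route as the paper's: both arguments are pointwise linear algebra on tangent spaces, using the surjectivity of $T\varphi$ and the inclusion $\ker T_p\varphi\subseteq T_p\left(\varphi^{-1}(\mathcal F)\right)$ to lift decompositions from $M$ to $P$ (for item 1) and push them forward from $P$ to $M$ (for item 2). You supply some details the paper leaves implicit — the immersion/embedding step justifying that $\varphi(\Sigma)$ is a submanifold, the injectivity argument for the direct sum at $\varphi(p)$, and the explicit codimension count — but the underlying argument is the same.
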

 \begin{proof}
For $\sigma\in p^{-1}(S)$ and  $v\in T_\sigma P$, there exist a vector field $X\in \mathcal{F}_M$ and  $u\in T_{\sigma}(p^{-1}(S))$ such that  $Tp(v)=\underbrace{X(p(\sigma))}_{T_{p(\sigma)}\mathcal{F}}+\underbrace{Tp(u)}_{\in T_{p(\sigma)}S}$. Since $p$ is a submersion, there exists a vector field $\widetilde{X}\in p^{-1}(\mathcal{F})$ such that $Tp(\widetilde X)=X\circ p$. Hence, $v-(\widetilde X(\sigma)+ u)\in \ker T_\sigma p$. This proves item 1 of the claim.

    Let us prove item 2. 
    For every $u\in T_{\varphi(p)}M$ we have $u=T_p \varphi(\widetilde u)$ for some $\widetilde u\in T_p P$. There is a decomposition $\widetilde u=\widetilde u_\Sigma+ Z(p)$ with $\widetilde u_\Sigma\in T_p\Sigma,\;  Z\in \varphi^{-1}(\mathcal{F})$. This implies that $$u= \underbrace{T_p\varphi(\widetilde u_\Sigma)}_{T_{\varphi(p)}\left(\varphi(\Sigma)\right)}+ \underbrace{T_p\varphi(Z(p))}_{\in T_{\varphi(p)}\mathcal{F}}.$$ It is clear that $\varphi(\Sigma)$ intersects cleanly the leaf that passes though $\varphi(p)$ at a single point. This proves the claim.
    \end{proof} 

\subsection{Geometric resolutions of a singular foliation.}
 Throughout this paper, we are interested in singular foliations that admit geometric resolutions in the sense of \cite{LLS}. Let us recall  this notion.\\

\noindent
\textbf{Geometric resolutions of a singular foliation.} Let $\mathcal{F}$ be a singular foliation on $M$. A \emph{anchored complex of vector bundles over $\mathcal{F}$} consists of a triple $(E_\bullet,\dd^\bullet,\rho)$, where
   \begin{enumerate}
       \item $ E_\bullet = (E_{-i})_{i \geq 1}$ is a family of vector bundles over $M$, indexed by negative integers.
       \item  $\dd^{(i+1)}\in \mathrm{Hom}(E_{-i-1}, E_{-i})$ is a vector bundle morphism over the identity of $M$ called the \emph{differential map}
       \item $\rho \colon E_{-1} \longrightarrow TM $ is a vector bundle morphism over the identity of $M$ called the \emph{anchor map} with $\rho(\Gamma(E_{-1}))=\mathcal{F}$. 
   \end{enumerate}
   such that \begin{equation}\label{eq:geom-ch-complex}\xymatrix{ \cdots \ar[r] & E_{-i-1} \ar[r]^{{\dd^{(i+1)}}} \ar[d] & 
     E_{ -i} \ar[r]^{{\dd^{(i)}}}
     \ar[d] & E_{-i+1} \ar[r] \ar[d] & \ar@{..}[r] & \ar[r]^{{\dd^{(2)}}}& E_{-1} \ar[r]^{\rho} \ar[d]& TM \ar[d] \\ 
      \ar@{=}[r] & \ar@{=}[r] M  &  \ar@{=}[r] M 
      &  \ar@{=}[r] M  &\ar@{..}[r] & \ar@{=}[r]   &  \ar@{=}[r] M  & M}\end{equation}
  which form a chain complex, i.e.,
  $$ \dd^{(i)}\circ\dd^{(i+1)}=0  \hbox{ and }  \rho \circ \dd^{(2)}=0.$$

{\textbf{Cohomology at the level of sections}}.
     The complex of vector bundles \eqref{eq:geom-ch-complex} induces a complex of sheaves of modules over functions. More explicitly, for every open subset $\mathcal U \subset M $, there is a complex of $C^\infty(\mathcal U)$-modules: $$ \cdots {\longrightarrow} \Gamma_{\mathcal U}({E_{ -i-1}})
     \stackrel{\dd^{(i+1)}}{\longrightarrow} \Gamma_{\mathcal U}({E_{-i}})
     \stackrel{\dd^{(i)}}{\longrightarrow}{\Gamma_{\mathcal U}(E_{-i+1})}{\longrightarrow}\cdots  \stackrel{\dd^{(2)}}{\longrightarrow} \Gamma_{\mathcal U}(E_{-1})  
     \stackrel{\rho}{\longrightarrow} \mathcal F_{\mathcal U} \subset \mathfrak X(\mathcal U).$$   Since $\mathrm{Im}(\dd^{(i+1)})\subseteq\ker \dd^{(i)}$ for every $i\in\mathbb N$, we are allowed to define the quotient spaces,
      $$ H^{-i}(E_{\bullet},\mathcal U) = \left\{ \begin{array}{ll} \frac{\ker\left(\Gamma_\mathcal{U}(E_{-1}) \overset{\rho}{\longrightarrow }\mathcal{F}_\mathcal U\right) 
      }{\mathrm{Im}\left(\Gamma_\mathcal{U}(E_{-2})\overset{\dd^{(2)}}{\longrightarrow}\Gamma_\mathcal U(E_{-1})\right)} & \hbox{for $i=1 $}\\&
      \\ \frac{\ker \left(\Gamma_\mathcal{U}(E_{-i})\overset{\dd^{(i)}}{\longrightarrow }\Gamma_\mathcal{U}(E_{i+1})\right)} {\mathrm{Im}\left(\Gamma_\mathcal{U}(E_{-i-1})\overset{\dd^{(i+1)}}{\longrightarrow}\Gamma_\mathcal{U}(E_{-i})\right)} &
      \hbox{ for $ i \geq 2$.}\end{array}\right.$$
  They are  modules over functions on $\mathcal U$. For each $i\geq 1$, $H^{-i}(E_{\bullet},\mathcal U)$ is called the \emph{$i$-th cohomology} of $(E_\bullet, \dd^\bullet, \rho) $ at the level of sections on $\mathcal{U}$.  
    \begin{definition}\label{def:geometric-resolution}
A \emph{geometric resolution} of $\mathcal{F}$ is an anchored complex of vector bundles $(E_\bullet, \dd^\bullet, \rho) $ over $\mathcal{F}$ such that every point $m\in M$ admits an open neighborhood $\mathcal{U}\subset M$ such that $(E_\bullet, \dd^\bullet, \rho) $ induces an exact complex at the level of sections, that is $$H^{-i}(E_{\bullet},\mathcal U')=\{0\}$$ for every open subset $m\in \mathcal U'\subset \mathcal U$ and $i\geq 1$. \end{definition}

    \begin{remark}
        Although free/projective resolution of a singular foliation $\mathcal{F}\subseteq \mathfrak X(M)$ as a $\mathcal{O}$-module always exist (maybe given by infinitely generated projective  $\mathcal{O}$-modules), geometric resolution may not exist in the smooth case. For instance, consider the singular foliation $\mathcal{F}$ on $M=\mathbb R$ generated by the single vector field  $\varphi(t)\frac{d}{dt}$ where $\varphi(t)=e^{-\frac{1}{t^2}}$ for $t>0$ and $\varphi(t)=0$ for $t\leq 0$. The latter does not admit a geometric resolution. However, there is a wide class of singular foliations for which geometric resolutions of a singular foliation automatically exist, at least locally, and are of finite length, e.g., (locally) real analytic and holomorphic cases. We refer to \cite[\S 2]{LLS} and \cite[\S 6]{LLL2} for a more detailed discussion.
    \end{remark}

\subsection{Universal  Lie $\infty$-algebroids of a singular foliation}
\noindent
\textbf{Negatively graded   Lie $\infty$-algebroids}
\begin{definition}
\label{NGLA}
A \emph{ Lie $\infty$-algebroid} $\left(E,(\ell_k)_{k\geq 1}, \rho\right)$ over a manifold $M$ is a collection of vector bundles $E =(E_{-i})_{i\geq 1}$ over $M$ endowed with a sheaf of $L_\infty$-algebra
structures $(\ell_k)_{k\geq 1}$ over the sheaf of sections of $E$ together with a vector bundle morphism $\rho\colon E_{-1}\to TM$, called the \emph{anchor map}, such that the $k$-ary-brackets $$\ell_k: \underbrace{\Gamma(E)\times \cdots \times \Gamma(E)}_{k {\text{-times}}}\longrightarrow \Gamma(E)$$ are all $\mathcal O$-multilinear  except when $k=2$ and at least one of the arguments is of degree $-1$, while the $2$-ary bracket  satisfies the Leibniz identity

\begin{equation}
    \ell_2(x, f y) = \rho(x)[f]y + f\ell_2(x, y),\; x \in \Gamma(E_{-1}), y\in\Gamma(E), \;f\in \mathcal{O}.
\end{equation}
\end{definition}

\begin{remark}
Definition \ref{NGLA} implies the following facts.
\begin{enumerate}

\item For all $ x, y \in\Gamma(E_{-1})$, $\rho(\ell_2(x,y) ) = [\rho(x), \rho(y)]$. Hence, the image $\mathcal{F}:=\rho(\Gamma(E_{-1}))\subseteq \mathfrak X(M)$ is a singular foliation on $M$ called the \emph{basic singular foliation} of $\left(E,(\ell_k)_{k\geq 1}, \rho\right)$. We say then that the  Lie $\infty$-algebroid $\left(E,(\ell_k)_{k\geq 1}, \rho\right)$ is \emph{over $\mathcal{F}$}.
\item The sequence of morphisms of vector bundles
$$\xymatrix{\cdots\ar[r]^{\ell_1}&E_{-2}\ar[r]^{\ell_1}&E_{-1}\ar[r]^{\rho}&TM}$$ is a anchored complex of vector bundles over $ \mathcal F$  that we call the \emph{linear part}.
\end{enumerate}
\end{remark}

We now present the main statement of this section. The following theorem holds for arbitrary Lie-Rinehart algebras; however, we provide the statement specifically in the context of singular foliations.
\begin{theorem}[\cite{LLS,CLRL}]\label{thm:existence} 
Let $\mathcal{F}$ be a singular foliation on a manifold $M$.
\begin{enumerate}
    \item Any resolution of $\mathcal F $ by free or projective $\mathcal{O}$-modules 
\begin{equation}
    \label{eq:resolutions}
\cdots \stackrel{\dd} \longrightarrow P_{-3} \stackrel{\dd}{\longrightarrow} P_{-2} \stackrel{\dd}{\longrightarrow} P_{-1} \stackrel{\rho}{\longrightarrow} \mathcal{F}\longrightarrow 0 \end{equation}
  carries a Lie $\infty $-algebroid structure over $\mathcal{F}$ whose unary bracket is $\ell_1:=\dd $.
  \item In particular, when $\mathcal F$ admits a geometric resolution $(E_{-\bullet
}, \dd^{\bullet}, \rho)$, there exists a  Lie $\infty$-algebroid $(\ell_\bullet, \rho)$ over $\mathcal{F}$ whose linear part is $(E_{-\bullet
}, \ell_1=\dd^{\bullet}, \rho)$.

\item Such a  Lie $\infty$-algebroid structure is unique up to homotopy equivalence and is denoted by $\mathbb U_\mathcal{F}$. The latter is called a \emph{universal  Lie $\infty$-algebroid of $\mathcal{F}$}.
\end{enumerate} 
\end{theorem}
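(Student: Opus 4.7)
The plan is to construct the brackets $(\ell_k)_{k \ge 1}$ on the resolution \eqref{eq:resolutions} by recursion on the arity $k$, using the exactness of the resolution to kill successive obstructions, then to prove uniqueness up to homotopy by an analogous obstruction argument on a ``cylinder'' resolution.

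First, I would set $\ell_1 := \dd$ on $P_{-\bullet}$ for $\bullet \ge 2$ and define the anchor to be the given $\rho\colon P_{-1} \to TM$; both the equation $\ell_1 \circ \ell_1 = 0$ and $\rho\circ \ell_1 = 0$ hold automatically from \eqref{eq:resolutions}. For the binary bracket $\ell_2$, I would define it first on sections of $P_{-1}$ by choosing, for each pair $x,y \in \Gamma(P_{-1})$, a lift of $[\rho(x),\rho(y)] \in \mathcal F$ through $\rho$; surjectivity of $\rho$ onto $\mathcal F$ guarantees such a lift, and one checks that the Leibniz rule can be imposed by modifying the lift by $\dd$-exact terms. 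For $x \in \Gamma(P_{-1})$ and $y \in \Gamma(P_{-i})$ with $i \ge 2$, I would define $\ell_2(x,y)$ inductively so that $\dd \ell_2(x,y) = \ell_2(x,\dd y)$, which is possible because the right-hand side is a $\dd$-cocycle (by the case $i=1$) and exactness of the resolution makes it a coboundary. The failure of the Jacobi identity for $\ell_2$ on $\Gamma(P_{-1})$ lands in the kernel of $\rho$, and is itself a $\dd$-cocycle; exactness lets me choose $\ell_3$ whose $\dd$-image is this failure.

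The heart of the argument is then the inductive step. Assuming brackets $\ell_1, \ldots, \ell_{n-1}$ have been constructed satisfying all higher Jacobi (i.e.\ $L_\infty$) identities $J_k = 0$ for $k < n$, I would show that the expression for $J_n$, which a priori may be nonzero, is actually a $\dd$-cocycle in the appropriate chain complex of multilinear maps $\Gamma(E)^{\otimes n} \to \Gamma(E)$. This cocycle property is a purely algebraic identity obtained by differentiating the lower $J_k = 0$ relations; it is the standard calculation in $L_\infty$ obstruction theory. Since the resolution is exact and the $P_{-i}$ are free/projective, the induced complex of multilinear maps is exact in the relevant degree, so one can solve $\dd \ell_n = -J_n$ (modulo Leibniz adjustments for the argument in degree $-1$, handled as before). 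This produces $\ell_n$ and ensures $J_n = 0$. The delicate point, and what I expect to be the main obstacle to write cleanly, is that the $\ell_n$ cannot be chosen $\mathcal{O}$-multilinear in all of its slots because the Leibniz rule forces a derivation property when one slot is in $P_{-1}$; one has to phrase the obstruction theory in the right complex of ``Leibniz-type'' multilinear cochains, and check that $\mathcal{O}$-multilinearity of the obstruction follows from the fact that $J_n$'s non-$\mathcal O$-linear contributions cancel thanks to the lower Jacobi identities.

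For part (2), I would simply apply (1) to the geometric resolution $(E_{-\bullet}, \dd^\bullet, \rho)$, noting that sections of vector bundles are projective $\mathcal O$-modules locally, and that the brackets produced may be taken to be given by vector bundle operations (polynomial in the fibers) because the inductive lifts can be chosen in $\Gamma(E_{-n})$ rather than in a larger free module. For part (3), given two such structures $(\ell_k)$ and $(\ell_k')$ on resolutions $P_{-\bullet}$ and $P_{-\bullet}'$, I would first lift the identity on $\mathcal F$ to a morphism of complexes $\varphi_1 \colon P_{-\bullet} \to P'_{-\bullet}$, then build a Lie $\infty$-morphism $\varphi = (\varphi_k)_{k \ge 1}$ extending $\varphi_1$ by the same recursive obstruction scheme: at each step the obstruction to $\varphi$ respecting the $n$-th bracket is a $\dd$-cocycle, hence exact, hence killed by choosing $\varphi_n$ appropriately. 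By standard arguments a Lie $\infty$-morphism that induces the identity on the underlying cohomology $\mathcal F$ is a homotopy equivalence, establishing uniqueness up to homotopy. Specializing both to a geometric resolution yields that $\mathbb U_{\mathcal F}$ is well-defined up to homotopy.
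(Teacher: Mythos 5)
The paper does not prove this theorem itself: it is imported verbatim from \cite{LLS,CLRL}, and your recursive obstruction-theoretic construction (killing the successive Jacobiators using exactness of the resolution and projectivity of the $P_{-i}$, then proving uniqueness by lifting the identity to an $L_\infty$-morphism via the same scheme) is precisely the argument given in those references. Your sketch is correct in outline and correctly flags the genuine technical subtlety, namely that the obstruction theory must be run in the complex of Leibniz-type (rather than fully $\mathcal{O}$-multilinear) cochains.
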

\begin{remark}
    Theorem \ref{thm:existence} generalizes the main result of  the work \cite{LLS} with the first author, together with S. Lavau and T.
Strobl. That is, it applies to singular foliations that do not admit geometric resolutions and even to arbitrary Lie subalgebra of $\mathfrak X(M)$. For instance, for $M=\mathbb R^2$, consider the $C^\infty(M)$-module $\mathcal F$ generated by the vector field $\frac{\partial}{\partial x}$ and vector fields of the form $\varphi(x) \frac{\partial}{\partial y}$ where $\varphi \colon \mathbb R\to \mathbb R$ is a smooth function vanishing on $x\leq 0$ and strictly non-zero on $x>0$. The  $C^\infty(M)$-submodule $\mathcal F\subset \mathfrak X(M)$ is stable under Lie bracket, but it is not locally finitely generated and there is no leaf through the points on the $y$-axis. Thus, $\mathcal F$ is not a singular foliation. However, $\mathcal F$ is a Lie-Rinehart algebra and there is a universal Lie $\infty$-algebroid of $\mathcal F$ built over a free resolution\footnote{Free resolutions of any $C^\infty(M)$-module always exist.} of $\mathcal F$, see \cite{CLRL} for more details.
\end{remark}

{The universal  Lie $\infty$-algebroid $\mathbb U_\mathcal{F}$ of a singular foliation $\mathcal F$ is the infinitesimal counterpart of a Lie $\infty$-groupoid. In \S \ref{sec:results} and \S \ref{sec:construction},  we will address the integration problem by constructing  a finite-dimensional para-Lie $\infty$-groupoid $K_\bullet$ integrating $\mathbb U_{\mathcal{F}}$. Moreover, for $i\geq 0$, $\dim K_i$ will be given explicitly as a function.}

\section{Main results}\label{sec:results}

In \cite{AS}, Androulidakis and Skandalis have constructed the holonomy groupoid of a singular foliation, which is a topological groupoid, but not a Lie groupoid in general. In this section, we announce the construction of the holonomy para-Lie $ \infty$-groupoid. We refer to Appendix \ref{Appendix:higher-groupoids} for the notion of (para) Lie $\infty$-groupoid on a manifold $M$. In one word, a (para) Lie $\infty$-groupoid are Kan semi-simplicial manifolds $ K_\bullet $ such  that for every $n \geq 1$, $K_n$ is a \underline{finite} dimensional manifold, which may be non-connected but has to have connected components that are \underline{all of the same dimension}. Also, $ K_0=M$, and degeneracies have to exist, although they may not satisfy all the simplicial relations.

The construction of Androulidakis and Skandalis holonomy groupoid is based on the notion of bi-submersion introduced by themselves. So is our construction:
we have provided detailed appendices, \ref{sec:bi-submersion} and \ref{sec:Equivalentbi-submersions}, presenting further developments and new results on this concept: in the particular the notion of equivalence, of total relations, and of bi-vertical vector fields are explained in detail in these appendices.

For now, we assume that the reader is familiar with the concepts above, allowing us to present the main results of this paper.

        

To begin, we first clarify what we mean by the holonomy para-Lie  $ \infty$-groupoid of a singular foliation. We start with a few generalities on para-Lie $ \infty$-groupoids. 

Consider a para-Lie $\infty$-groupoid \begin{equation}\label{eq:L-inftyG}
   K_\bullet\colon \xymatrix{&\cdots\; \ar@<5pt>[r]\ar@<-5pt>@{->}[r]\ar@<1pt>[r]\ar@<-2pt>@{.}[r]& \ar@/^{0.8pc}/[l]\ar@/^{1.1pc}/[l]\ar@{-}@/^{1.4pc}/[l] \ar@/^{1.7pc}/[l] K_3\ar@<5pt>[r]\ar@<-5pt>@{->}[r]\ar@<1pt>[r]\ar@<-2pt>[r]&\ar@/^{0.8pc}/[l]\ar@/^{1.1pc}/[l]\ar@/^{1.4pc}/[l] K_2\ar@<-2pt>[r]\ar@<6pt>[r]\ar@<2pt>[r]&\ar@/^{0.8pc}/[l]\ar@/^{1.2pc}/[l]K_1 \ar@<-2pt>[r]\ar@<2pt>[r] & \ar@/^{0.8pc}/[l]K_0}
   \end{equation}
   \vspace{0.1cm}

It is easily checked from the para-simplicial identities that the sequence of vector bundle morphisms as follows:
  \begin{equation}\label{eq:bigcomplex}
     \xymatrix{\cdots\ar[r]&\ker Tp_0^3\ar[r]^{Td_0^3}\ar[d]&\ker Tp_0^2\ar[r]^{Td_0^2}\ar[d]&\ker Td_1^1\ar[r]^{Td_0^1}\ar[d]&TM \ar[d]\\\cdots\ar[r]&K_3\ar[r]^{d_0^3}&K_2\ar[r]^{d_0^2}&K_1\ar[r]^{d_0^1}&M}
\end{equation} 
is a complex of vector bundles. From now on, it will be referred to as \emph{full tangent complex}.  Since $M$ can be seen as a submanifold of $K_k, k\geq 1$ through the degeneracies, the full tangent complex can be restricted to a complex of vector bundles over $M$, which matches the \emph{tangent complex} of \cite{Mehta,CUECA2023108829}: 
  \begin{equation}\label{eq:smallcomplex}
     \xymatrix{\cdots\ar[r]&\mathfrak i_M^* \ker Tp_0^3 \ar[r]^{\mathfrak i_M^* Td_0^3}\ar[d]&\mathfrak i_M^* \ker Tp_0^2\ar[r]^{\mathfrak i_M^* Td_0^2}\ar[d]&\mathfrak i_M^* \ker Td_1^1\ar[r]^>>>>>>{\mathfrak i_M^* Td_0^1}\ar[d]&TM \ar[d]\\\cdots\ar[r]&M\ar[r]^{\mathrm{id}}&M\ar[r]^{\mathrm{id}}&M\ar[r]^{\mathrm{id}}&M}
\end{equation} 
Here, $\mathfrak i_M^* $ stands for the restriction to $M$ of the vector bundles and vector bundle morphisms of Equation \eqref{eq:bigcomplex}.
We call \emph{basic singular foliation} of $ K_\bullet$ the sub-sheaf $ \mathcal F$ of vector fields  on $M$ defined above as the image of the last vector bundle morphism of the tangent complex \eqref{eq:smallcomplex}. In equation:
 $$\mathcal F := \mathfrak i_M^* Td_0^1\left( \Gamma(\mathfrak i_M^* \ker Td_1^1)\right) $$

Now, let us open some discussion. What is a correct definition of the notion of holonomy para-Lie $\infty$-groupoid of a singular foliation?
 It is tempting to define it as a para-Lie $\infty$-groupoid that satisfies the following two conditions:
    \begin{enumerate}
        \item its basic singular foliation is $ \mathcal F$,
        \item the full tangent complex is exact.
    \end{enumerate}
   However, we claim that it is not a correct definition, although it certainly has to satisfy these two items. A correct definition is more involved, and we claim that it has to go through the following three conditions:
   
\begin{definition}
\label{def:universal-Grpoid}
We call \emph{holonomy para-Lie $\infty$-groupoid} of a singular foliation $ \mathcal F$ a para-Lie $ \infty$-groupoid $ K_\bullet$ that satisfies the following conditions. 
     First, $M \leftarrow K_1\rightarrow M$ has to be a holonomy bi-submersion atlas of $\mathcal{F}$, and 
for every $k\geq 1$ and $0\leq \ell\leq k+1$, \begin{enumerate}
            \item the submodules of bi-vertical vector fields $$\begin{cases}
                \mathcal{BV}_{K_k}=\bigcap_{j=0}^k\Gamma\left(\ker Td^{k}_{j}\right)\subset \mathfrak X(K_k)\\&\\
     \mathcal{BV}_{\Lambda_\ell^{k+1}K}=\left(\bigcap_{j=1}^{\ell}\Gamma\left(\ker T(d_{\ell-1}^k\circ \mathrm{pr}_j)\right)\right)\cap\left(\bigcap_{j=\ell+1}^{k+1}\Gamma\left(\ker T(d_\ell^k\circ \mathrm{pr}_j)\right)\right)\subset\mathfrak X(\Lambda_\ell^{k+1}K)
            \end{cases} $$
            
            have to be singular foliations\footnote{We should remove terms that do not make sense in $\mathcal{BV}_{\Lambda_\ell^{k+1}K}$ when $\ell=0$ and $\ell=k+1$.} on the manifold $K_k$ and on the horn $\Lambda_\ell^{k+1}K$ respectively.

     \item Both
      $\Lambda_\ell^kK \stackrel{p_\ell^{k}}{\longleftarrow} K_k  \stackrel{d_\ell^{k}}{\longrightarrow} K_{k-1}$  and
     $\Lambda_\ell^kK\leftarrow\Lambda_{\ell}^{k+1} K\rightarrow K_{k-1}$ 
     have to be bi-submersions between $\mathcal{BV}_{\Lambda^{k}_\ell K}$ and $\mathcal{BV}_{K_{k-1}}$. For convenience, for $\ell=k+1$  we let $\Lambda^k_{k+1}=\Lambda^k_{k}$.
     \item These bi-submersions have to be equivalent.
     \item Moreover, 
     $\Lambda_\ell^{k+1}K \stackrel{\;
     \,p^{k+1}_\ell}{\longleftarrow} K_{k+1} \stackrel{d_\ell^{k+1}}{\longrightarrow} K_{k}$ is a total relation of bi-submersions between $\Lambda^{k+1}_\ell K$ and $K_k$. 
     
\end{enumerate}

\end{definition}

Definition \ref{def:universal-Grpoid} uses some notations that we now explain. For $j=1,\ldots, k+1$, $$\mathrm{pr}_j\colon \underbrace{K_k\times \cdots \times K_k}_{k+1-\text{times}}\to K_k$$ is the $j$-th projection on $K_k$. We shall let $K_0=M$ and  $\mathcal{BV}_{\Lambda^1_\ell K}=\mathcal{BV}_{K_0}=\mathcal{F}$. Also,
for a para-Lie $ \infty$-groupoids $K_\bullet$ and for $k\geq 1$, we consider the following pairs of maps on the horn spaces

\begin{align}\label{eq:pairs-of-maps-horn}
        \ell=0\colon \xymatrix{\Lambda^k_{0}K&&& \Lambda^{k+1}_0 K\ar[lll]_<<<<<<<<<<<<<<{\bigtimes^{i>1} d_{0}^{k}\circ \mathrm{pr}_{i}}\ar[rrr]^{d_{0}^{k}\circ \mathrm{pr}_{1}}&&& K_{k-1}}\\ \nonumber 0<\ell < k\colon \xymatrix{\Lambda^k_{\ell}K&&& \Lambda^{k+1}_\ell K\ar[lll]_<<<<<<<<<<<<<<{d_{\ell-1}^k\circ \mathrm{pr}_{i\leq \ell}\times d_{\ell}^k\circ \mathrm{pr}_{i>\ell+1}}\ar[rrr]^{d_{\ell}^k\circ \mathrm{pr}_{\ell+1}}&&& K_{k-1}}\\\nonumber \ell=k\colon \xymatrix{\Lambda^k_{k}K&&&\Lambda^{k+1}_{k} K\ar[lll]_<<<<<<<<<<<<<<{\bigtimes^{i\leq k} d_{k-1}^k\circ \mathrm{pr}_{i}}\ar[rrr]^{d_{k}^k\circ \mathrm{pr}_{k+1}}&&& K_{k-1}}\\ \nonumber\ell=k+1\colon \xymatrix{\Lambda^k_{k}K&&&\Lambda^{k+1}_{k+1} K\ar[lll]_<<<<<<<<<<<<<<{\bigtimes^{i\leq k}d_{k}^k\circ \mathrm{pr}_{i}}\ar[rrr]^{d_{k}^k\circ \mathrm{pr}_{k+1}}&&& K_{k-1}}.
    \end{align}

This somewhat formidable definition needs to be justified. So does the following proposition:

\begin{proposition}
 For any holonomy para-Lie $ \infty$-groupoid of a singular foliation $ \mathcal F$ 
\begin{enumerate}
\item Its $1$-truncation is the Androulidakis-Skandalis holonomy groupoid of the singular foliation $ \mathcal F$.
\item The full tangent complex is exact at the level of sections.
\item The differentiation functor (cf. \cite{li2023differentiating}) applied to $ K_\bullet$ gives a universal Lie $\infty$-algebroid of $\mathcal F$.
\end{enumerate}

\end{proposition}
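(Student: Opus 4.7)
The plan is to address the three items in order, with item (2) being the technical heart of the argument.

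For item (1), I would unpack the $1$-truncation of the para-simplicial manifold $K_\bullet$ as the groupoid on $M$ whose arrows are the (germs of) points of $K_1$ with source/target $d_1^1, d_0^1$, modulo the relations encoded by $K_2$. The preamble of Definition \ref{def:universal-Grpoid} states that $M \leftarrow K_1 \to M$ is a holonomy bi-submersion atlas of $\mathcal F$, and conditions (2)--(3) applied with $k=1$ force $K_2$ to be an equivalence between $K_1$ and its fiber products $\Lambda^2_\ell K$ for $\ell\in\{0,1,2\}$. By the original construction in \cite{AS}, the Androulidakis--Skandalis holonomy groupoid is precisely the quotient of any holonomy atlas by the equivalence relation generated by such equivalences of bi-submersions, so the $1$-truncation of $K_\bullet$ matches it by the universal property of the quotient.

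For item (2), I would proceed by induction on $k\geq 1$. The base case $Td_0^1(\Gamma(\ker Td_1^1)) = (d_0^1)^*\mathcal F$ is automatic from the preamble, since every bi-submersion over $\mathcal F$ enjoys this property. For the inductive step, the required exactness at the level of sections is the assertion that $\Gamma(\ker Td_0^k)\cap \Gamma(\ker Tp_0^k)$ coincides with the image, under $Td_0^{k+1}$, of the $d_0^{k+1}$-projectable vector fields in $\Gamma(\ker Tp_0^{k+1})$. The two key inputs are condition (4), which states that $\Lambda^{k+1}_0 K \leftarrow K_{k+1} \to K_k$ is a total relation of bi-submersions between $\mathcal{BV}_{\Lambda^{k+1}_0 K}$ and $\mathcal{BV}_{K_k}$, and condition (3), which identifies the bi-vertical foliations of $K_k$ and of $\Lambda^k_0 K$ up to equivalence. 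Combining these, the total relation produces $d_0^{k+1}$-projectable lifts from $\mathcal{BV}_{K_k}$ into $\mathcal{BV}_{K_{k+1}}\subset \Gamma(\ker Tp_0^{k+1})$, and the explicit formula for $\mathcal{BV}_{K_k}$ exhibits it as precisely the intersection appearing in the exactness condition.

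The main obstacle will be to carry out this translation cleanly: one must verify that the abstract total-relation condition on bi-vertical vector fields in $\Lambda^{k+1}_0 K$ (obtained from projections composed with face maps, as in the first row of \eqref{eq:pairs-of-maps-horn}) descends to the concrete lifting statement for sections of $\ker Td_0^k \cap \ker Tp_0^k$, and that the projectability of the lift is preserved. For this I would rely on the theory of bi-vertical vector fields and total relations developed in the appendices \S\ref{sec:bi-submersion}--\S\ref{sec:Equivalentbi-submersions}, together with Proposition \ref{prop:transverse-foliation} to handle the pull-back foliations underlying the $\Lambda^k_\ell K$.

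For item (3), once (2) is established, I would restrict the full tangent complex \eqref{eq:bigcomplex} along the degeneracies $M \hookrightarrow K_k$ to obtain the tangent complex \eqref{eq:smallcomplex} over $M$. By (2) the restricted complex is exact at the level of sections and has anchor image $\mathcal F$, hence it is a geometric resolution of $\mathcal F$ in the sense of Definition \ref{def:geometric-resolution}. Applying the differentiation functor of \cite{li2023differentiating} to $K_\bullet$ then yields a Lie $\infty$-algebroid over $\mathcal F$ whose linear part coincides with this geometric resolution, and Theorem \ref{thm:existence}(2)--(3) identifies it, up to homotopy equivalence, with the universal Lie $\infty$-algebroid $\mathbb U_\mathcal F$ of $\mathcal F$.
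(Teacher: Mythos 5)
Your overall strategy coincides with the paper's: item (1) by comparing the simplicial equivalence relation on $K_1$ with the Androulidakis--Skandalis one, item (2) from the bi-submersion structure on the face/horn projections, and item (3) by restricting the full tangent complex to $M$ and invoking Theorem \ref{thm:existence}. Two points, however, need repair.

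First, in item (1) you derive the identification from ``conditions (2)--(3) applied with $k=1$'' and the universal property of the quotient, but conditions (2)--(3) only make $K_2$ \emph{an} equivalence between $K_1$ and the horns; they do not guarantee that every pair of Androulidakis--Skandalis--equivalent points of $K_1$ is actually witnessed by a $2$-simplex. For that you need condition (4) with $k=1$, i.e.\ that $\Lambda^2_\ell K \leftarrow K_2 \rightarrow K_1$ is a \emph{total} relation: the paper's proof explicitly uses totality to show that any local bi-submersion morphism $K_1\to K_1$ sending $x$ to $y$ produces a $z\in K_2$ with the required faces. Without totality the $1$-truncation could be a strictly finer quotient than $\mathcal H(M,\mathcal F)$.

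Second, in item (2) the phrase ``lifts from $\mathcal{BV}_{K_k}$ into $\mathcal{BV}_{K_{k+1}}\subset\Gamma(\ker Tp_0^{k+1})$'' is incorrect as stated: a bi-vertical vector field on $K_{k+1}$ lies in $\ker Td_0^{k+1}$, so it projects to $0$ and cannot lift a nonzero element of $\mathcal{BV}_{K_k}$. The lift must be taken in $\Gamma(\ker Tp_0^{k+1})$ only. In fact no induction and no totality are needed here: condition (2) of Definition \ref{def:universal-Grpoid} already makes $\Lambda^k_0K \stackrel{p_0^k}{\leftarrow} K_k \stackrel{d_0^k}{\rightarrow} K_{k-1}$ a bi-submersion between the bi-vertical foliations, and the bi-submersion axiom $(p_0^k)^{-1}(\mathcal{BV}_{\Lambda^k_0K})=(d_0^k)^{-1}(\mathcal{BV}_{K_{k-1}})=\Gamma(\ker Tp_0^k)+\Gamma(\ker Td_0^k)$ directly yields $Td_0^k\bigl(\Gamma(\ker Tp_0^k)\bigr)=(d_0^k)^*\mathcal{BV}_{K_{k-1}}$ for every $k\geq 1$, which is exactness at the level of sections. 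Your item (3) is fine and agrees with the paper.
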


\begin{proof}
    \begin{enumerate}
\item By the definition of a holonomy  para-Lie $\infty$-groupoid of a singular foliation, {the quotient $K_1/_{\sim_1}\rightrightarrows M$ is the holonomy groupoid of Androulidakis-Skandalis of $\mathcal{F}$,  with $x\sim_1 y \in K_1$ if and only if  $d_i^1x=d_i^1y$ for $i=0,1$ and there exists a $2$-simplex $z\in K_2$ such that $d_0^2z=s_0^0d_0^1(x)=s_0^0d_0^1(y)$ and $d_1^2z=x,\;d_2^2z=y$}: Indeed, {the equivalence relation in the sense $x\sim_1 y$ through a point $z\in K_2$  implies that the points $x,y\in K_1$ are related w.r.t the bi-submersion $K_1$, i.e., there exists a local bi-submersion morphism $\phi\colon K_1\to K_1$ that sends $x$ to $y$: indeed, $\phi$ is the compositions of $d_2^2\colon K_2\to K_1$ with a local section of $p_2^2\colon K_2\rightarrow \Lambda_2^2K$ that sends $(s_0^0d_0^1x, x)$ to $z$ and with the map $x'\in K_1\mapsto (s_0^0d_0^1x', x')\in \Lambda_2^2K$. Conversely, given a morphism of bi-submersions $\phi\colon K_1\to K_1$ sending $x\in K_1$ to $y\in K_1$, the points $x$ and $(s_0^0d_0^1x, y)$ are related through the morphism $K_1\mapsto \Lambda_1^2K,\; x'\to (s_0^0d_0^1x', \phi(x'))$. Since $K_2$ is total, it implies that there exists a $2$-simplex $z\in K_2$ such that $d_0^2z=s_0^0d_0^1(x)=s_0^0d_0^1(y)$ and $d_1^2z=x,\;d_2^2z=y$.  Therefore, $x\sim_1 y \in K_1$ if and only if  there exists a morphism of bi-submersions  sending $x$ to $y$. Hence, the Androulidakis-Skandalis holonomy groupoid  $\mathcal{H}(M,\mathcal{F})\rightrightarrows M$ is $K_{1}/{\sim_1} \rightrightarrows $ M.}

\item {In the definition of a holonomy para-Lie $\infty$-groupoid of a singular foliation $\mathcal{F}$, the two maps $\Lambda^k_0K\stackrel{p_0^k}{\longleftarrow}K_k\stackrel{d_0^k}{\longrightarrow}K_{k-1}$ form a bi-submersion between the bi-vertical singular foliations $\mathcal{BV}_{\Lambda^k_0K}$ and $\mathcal{BV}_{K_{k-1}}$ for every $k\geq 1$. By the exioms of a bi-submersion \S \ref{sec:bi-submersion}, for every $k\geq 1$, we have $(p_0^k)^{-1}(\mathcal{BV}_{\Lambda^k_0K})=(d_0^k)^{-1}(\mathcal{BV}_{K_{k-1}})=\Gamma(\ker Tp_0^k)+ \Gamma(\ker Td_0^k)$.} This implies that $Td_0^k(\Gamma(\ker T p_0^k))=(d_0^k)^*\mathcal{BV}_{K_{k-1}}$ for $k\geq 1$, which expresses the exactness of the full tangent complex of $K_\bullet$.

\item The restriction to $M$ of the full tangent complex of a holonomy para-Lie $\infty$-groupoid of a singular foliation $\mathcal{F}$ is a geometric resolution of $\mathcal{F}$. Now, any Lie $\infty$-algebroid built on that geometric resolution is a universal Lie $\infty$-algebroid of $\mathcal{F}$, see \cite[Theorem 2.7]{LLS} and \cite[Theorem 2.1]{CLRL}.
This completes the proof.
        \end{enumerate}
\end{proof}

The main statement of this paper is the following:

\begin{theorem}\label{int:theorem}
    Every singular foliation $\mathcal{F}\subset \mathfrak X(M)$ on a manifold $M$ that admits  a geometric resolution $$E_{-\bullet}\colon \xymatrix{\cdots\ar[r]^{}&E_{-2}\ar[r]^{}&E_{-1}\ar[r]^{}&TM}$$
    admits a holonomy para-Lie $\infty$-groupoid  $ \mathcal K_\bullet(\mathcal{F})$:
    \begin{equation*}
   \mathcal{K}_\bullet(\mathcal{F})\colon \xymatrix{&\cdots\; \ar@<5pt>[r]\ar@<-5pt>@{->}[r]\ar@<1pt>[r]\ar@<-2pt>@{.}[r]& \ar@/^{0.8pc}/[l]\ar@/^{1.1pc}/[l]\ar@{-}@/^{1.4pc}/[l] \ar@/^{1.7pc}/[l] K_3\ar@<5pt>[r]\ar@<-5pt>@{->}[r]\ar@<1pt>[r]\ar@<-2pt>[r]&\ar@/^{0.8pc}/[l]\ar@/^{1.1pc}/[l]\ar@/^{1.4pc}/[l] K_2\ar@<-2pt>[r]\ar@<6pt>[r]\ar@<2pt>[r]&\ar@/^{0.8pc}/[l]\ar@/^{1.2pc}/[l]K_1 \ar@<-2pt>[r]\ar@<2pt>[r] & \ar@/^{0.8pc}/[l] M}
    \end{equation*} \\
 Moreover, one can assume the  following items to hold: \begin{enumerate}
\item For every $k \geq 1$, all the connected components of the manifold $K_k$ are of the same dimension given by $$\displaystyle{\dim K_k=\dim M+ \sum_{i=1}^k\begin{pmatrix}k\\i
\end{pmatrix} \mathrm{rk}(E_{-i})}.$$
\item[]   In addition, for all $k\geq 1$, there are vector bundle isomorphisms:
$$
\xymatrix{
   \ar[rr]^{\simeq}\left(d_0^1\circ \cdots  \circ d_0^k\right)^* E_{-k}   \ar[dr]& &\ar[dl] \ker Tp^k_0 \\
     & K_k &
}
$$

\item If $ E_{-k}=0$ for all $ k \geq n+1$,
for some $n\in \mathbb N$,
         the horn projections $p^k_\ell\colon K_k\to \Lambda^k_\ell K$ are surjective local diffeomorphisms for all $0\leq \ell \leq k$. {Moreover, for all $k\geq n+1$,  $K_{k+1} =  \coprod_{i=0}^{k+1} W_i$ where $W_i\subset\Lambda_i^{k+1} K  \times K_k$ is the submanifold made of pairs $(y,x)$ which are equivalent.}
    \end{enumerate}
\end{theorem}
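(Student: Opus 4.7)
The strategy is to translate the inductive nature of Definition~\ref{def:universal-Grpoid} into a recursive construction driven by a bi-submersion tower $T_\bullet$ over $\mathcal{F}$ of the form~\eqref{tower0}, whose existence is guaranteed by the geometric resolution via \cite[Theorem 5.6]{RubenSymetries}. I would set $K_0 := M$ and take $K_1 := T_1$ with its two structural submersions, which by construction is a holonomy bi-submersion atlas of $\mathcal{F}$. Assuming $K_0, \ldots, K_k$ and their face and degeneracy maps have been built so that all items of Definition~\ref{def:universal-Grpoid} hold through degree $k$, the horn spaces $\Lambda^{k+1}_\ell K$ inherit bi-vertical singular foliations $\mathcal{BV}_{\Lambda^{k+1}_\ell K}$ via the pairs of maps in \eqref{eq:pairs-of-maps-horn}. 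I would then define $K_{k+1}$ as the appropriate gluing of local models of the form $\Lambda^{k+1}_\ell K \times_{K_k} T_{k+1}$, with faces $d_\ell^{k+1}$ induced by the horn projections composed with the tower map $T_{k+1} \to T_k$, and degeneracies coming from the unit sections along each level of the tower.

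The main obstacle, on which the whole construction pivots, is producing a \emph{single} $K_{k+1}$ that simultaneously witnesses a bi-submersion equivalence between $K_k$ and \emph{each} of the $k+2$ horn spaces $\Lambda^{k+1}_\ell K$, so that items (2)--(4) of Definition~\ref{def:universal-Grpoid} hold for every $0 \leq \ell \leq k+1$ at once. To handle this I would combine two ingredients: first, the equivalence and total-relation results of Appendix~\ref{sec:Equivalentbi-submersions}, which allow one to transport a bi-submersion over one singular foliation along an equivalence to a bi-submersion over another; second, the exactness of the tower $T_\bullet$, which ensures that at each level the bi-vertical foliation $\mathcal{F}_{i+1}$ on $T_{i+1}$ is pulled back, via the next tower element, from the foliation on $T_i$. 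Together these imply that the foliations $\mathcal{BV}_{\Lambda^{k+1}_\ell K}$ differ from $\mathcal{BV}_{K_k}$ only by the controlled direction introduced by $T_{k+1}$, so that the candidate $K_{k+1}$ above automatically supplies compatible equivalences with every face. Verifying that $K_{k+1}$ is a bi-submersion (items (1)--(2)) then reduces to an application of Proposition~\ref{prop:transverse-foliation} along each horn projection, the total-relation condition (item~(4)) follows from surjectivity of the submersion $T_{k+1} \to T_k$, and the para-simplicial identities (except for degeneracy commutation) are checked directly from the construction.

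The remaining quantitative statements then drop out. The dimension formula $\dim K_k = \dim M + \sum_{i=1}^k \binom{k}{i}\mathrm{rk}(E_{-i})$ follows by induction on $k$ using Pascal's identity: the horn $\Lambda^{k+1}_\ell K$ is assembled from $k+1$ copies of $K_k$ matched along $\Lambda^k_\ell K$, producing precisely the binomial contributions at level $k$, and the tower step $T_{k+1}$ adds $\mathrm{rk}(E_{-k-1})$ new fiber dimensions. The vector bundle isomorphism $\ker Tp^k_0 \cong (d_0^1 \circ \cdots \circ d_0^k)^* E_{-k}$ arises from the same recursion, since the newly introduced vertical direction at each step is modeled on $E_{-k}$ pulled back along the chain of $d_0$-faces. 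Finally, if $E_{-k} = 0$ for all $k \geq n$, the tower can be chosen to terminate at height $n$, so every subsequent bi-submersion $T_{k+1}$ has zero-dimensional fibers and is therefore a local diffeomorphism; this property is inherited by the horn projections $p^{k+1}_\ell$, which yields item~(3) and in passing ensures the Kan condition beyond degree $n$.
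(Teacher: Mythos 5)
Your overall strategy---a recursion on the simplicial degree, at each step producing an equivalence between $K_k$ and the horn spaces by combining the tower level $T_{k+1}$ with the equivalence machinery of Appendix~\ref{sec:Equivalentbi-submersions}---is the same as the paper's. However, three steps that you assert "drop out" or hold "automatically" are precisely where the real work lies, and as stated they have genuine gaps. First, $K_1:=T_1$ is not a holonomy bi-submersion atlas: a single path holonomy bi-submersion is not closed under composition and inversion up to equivalence, so item~3 of Definition~\ref{def:universal-Grpoid} for $k=1$ (the equivalence of $K_1$ with $\Lambda^2_\ell K=K_1*K_1^{\pm 1}$) fails. The paper must take the disjoint union of \emph{all} iterated products $\mathcal U_{E_{-1}}^{\pm}*\cdots*\mathcal U_{E_{-1}}^{\pm}$ and then invoke Theorem~\ref{th:reducedimension} to cut each piece down to a submanifold of the common dimension $\dim M+\mathrm{rk}(E_{-1})$; without this, the connected components of $K_1$ do not even have bounded dimension. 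The same dimension-reduction step is needed at every level and is absent from your construction of $K_{k+1}$: the total relations produced by Theorem~\ref{thm:bi-sub-equiv} are disjoint unions of manifolds of unbounded dimensions.

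Second, totality of the relation $\Lambda^{k+1}_\ell K\leftarrow K_{k+1}\rightarrow K_k$ does \emph{not} follow from surjectivity of $T_{k+1}\to T_k$. Totality means every pair of equivalent points is actually related through $K_{k+1}$; this requires covering the set of equivalent pairs by countably many local relations and composing with an atlas of the bi-vertical foliation (the content of Theorem~\ref{thm:bi-sub-equiv}), and in addition one needs Lemma~\ref{lem:higher-mult} to arrange that every point of the \emph{next} horn $\Lambda^{k+2}_\ell K$ is related to a point of $K_{k+1}$, which forces an enlargement of $K_{k+1}$ by iterated horn constructions. Third, the claim that a single candidate built from one horn "automatically supplies compatible equivalences with every face" is exactly the delicate part (Lemma~\ref{lem:KanN}): a total relation with respect to $\Lambda^{k+1}_\ell K$ is not a priori even surjective onto $\Lambda^{k+1}_{\ell'}K$ for $\ell'\neq\ell$, which is why the paper takes $K_{k+1}=\coprod_{\ell=0}^{k+1}W^\ell_{k+1}$ as a disjoint union over \emph{all} horns, and proves the bi-submersion property for the remaining projections via the lifting property of Proposition~\ref{prop:lifting-property} applied to bi-vertical vector fields placed in a single slot of the horn. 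Your heuristic that the horn foliations "differ from $\mathcal{BV}_{K_k}$ only by the controlled direction introduced by $T_{k+1}$" does not substitute for this argument.
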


This construction extends a construction presented in the PhD of the second author \cite[Chapter 10]{louis2023universalhigherliealgebras} and \cite[\S 5]{RubenSymetries} that integrates a singular foliation that admits a geometric resolution into an exact bi-submersion tower, see Appendix \ref{sec:tower}.



\section{Proof of Theorem \ref{int:theorem}}\label{sec:construction}

This section is dedicated to the proof of Theorem \ref{int:theorem}. We construct by recursion a holonomy para-Lie $ \infty$-groupoid of a singular foliation $(M,\mathcal{F})$, under the (commonly satisfied) assumption that it admits a geometric resolution \begin{equation}
    \cdots \stackrel{\dd^{(3)}}{\longrightarrow} E_{-2}\stackrel{\dd^{(2)}}{\longrightarrow}E_{-1}\stackrel{\rho}{\longrightarrow} TM
\end{equation} as in Definition \ref{def:geometric-resolution}.
The proof of Theorem \ref{int:theorem} will consist mostly in successive applications of generalities about bi-submersions that we recall in Appendices \ref{sec:bi-submersion} and \ref{sec:Equivalentbi-submersions}. More precisely, the construction is based on Theorems \ref{thm:bi-sub-equiv} and \ref{th:reducedimension}. Theorem \ref{thm:bi-sub-equiv} ensures the existence of a total relation of bi-submersions, while Theorem \ref{th:reducedimension} allows us to reduce the dimension.

Since our method is recursive, we need to start with a technical definition, to be compared with Definition \ref{def:sMfd}.

\begin{definition}
 A \emph{para-simplicial manifold up to order $N\in \mathbb N$}:
 $$K_{\leq N}:\xymatrix{&K_N\ar@<5pt>[r]\ar@<-5pt>@{->}[r]\ar@<1pt>[r]\ar@<-2pt>@{.}[r]&\ar@/^{0.8pc}/[l]\ar@/^{1.2pc}/[l]\ar@{.}@/^{1.4pc}/[l] \ar@/^{1.7pc}/[l]\cdots\; \ar@<5pt>[r]\ar@<-5pt>@{->}[r]\ar@<1pt>[r]\ar@<-2pt>@{.}[r]&\ar@/^{0.8pc}/[l]\ar@/^{1.1pc}/[l]\ar@{-}@/^{1.4pc}/[l] \ar@/^{1.7pc}/[l]K_3\ar@<5pt>[r]\ar@<-5pt>@{->}[r]\ar@<1pt>[r]\ar@<-2pt>[r]&\ar@/^{0.8pc}/[l]\ar@/^{1.2pc}/[l]\ar@/^{1.6pc}/[l]K_2\ar@<-2pt>[r]\ar@<6pt>[r]\ar@<2pt>[r]&\ar@/^{0.8pc}/[l]\ar@/^{1.2pc}/[l]K_1 \ar@<-2pt>[r]\ar@<2pt>[r]& \ar@/^{0.8pc}/[l]K_0=M}$$
    
\vspace{0.2cm}
is a finite sequence $ K_0,\dots,K_N$ are manifolds equipped with maps $\left(d_i^k\colon K_k\to K_{k-1}\right)_{0\leq i\leq k\leq N}$ and $\left(s_i^k\colon K_k\to K_{k+1}\right)_{0\leq i\leq k\leq N-1}$  that satisfy the simplicial axioms \eqref{eq:faces-faces} and \eqref{eq:faces-deneg} whenever they make sense.
\end{definition}


 Let us now define the precise para-simplicial manifolds up to order $N$ that we are going to construct.

\begin{definition}
\label{def:para-Grp-upto}
A para-simplicial manifold up to order $N\in \mathbb N$ is said to be a para-Lie $ \infty$-groupoid up to order $N$ when the natural projections from $K_k$ to the horn spaces 
$\left(\Lambda_\ell^kK\right)_{0\leq \ell\leq k \leq N+1}$ (which in this context are automatically smooth manifolds for the same reason as in Remark \ref{prop:horns}) are surjective submersions.
It is said to be a \emph{holonomy para-Lie $\infty$-groupoid up to order $N\in \mathbb N$ over a singular foliation $(M,\mathcal{F})$} if the items of Definition \ref{def:universal-Grpoid} for all indices for which they make sense, more precisely:
\begin{enumerate}
\item the first, second, and third items hold for $ k=1, \dots, N$,
\item the fourth item holds for  $k=1, \dots, N-1$. 
\end{enumerate}
 
\end{definition}




    \begin{convention}
        {We believe that the terminology “holonomy 
$N$-bi-submersion atlas” would be a more suitable designation for a 
“holonomy para-Lie $\infty$-groupoid up to order $N$", as it naturally captures the idea that a holonomy 
1-bi-submersion atlas corresponds to a holonomy bi-submersion atlas. However, for consistency, we will retain the terminology “holonomy para-Lie 
$\infty$-groupoid up to order $N$” throughout this section.}
    \end{convention}

\subsection{A holonomy para-Lie $\infty$-groupoid up to order  $1$}

We choose $K_0:=M$ and we construct $ K_1$. 

 
 
{\textbf{Step 1.} } 
Since $(E_{-1},\rho) $ is an anchored bundle over $M$ defining $ \mathcal F$,  there exists a neighborhood $ \mathcal U_{E_{-1}} \subset E_{-1} $ of the zero section equipped with a bi-submersion structure over $\mathcal{F}$ as in Example \ref{ex:holonomy-biss2}. 
As explained in Example \ref{ex:atlas}, 
the disjoint union $$K_1' = \coprod_{ n \in \mathbb N}  \underbrace{\mathcal U_{E_{-1}}^{\pm}*\cdots*\mathcal U_{E_{-1}}^{\pm}}_{n-\text{times}}  $$ is an atlas for $ \mathcal F$, where $^{\pm}$ means that we consider $\mathcal U_{E_{-1}}$ or its inverse $\mathcal U_{E_{-1}}^{-1}$. We denote by $ d_0^1,d_1^1$ its source and target, respectively. So far, this corresponds to the construction in \cite{AS} of the path-holonomy atlas in \cite{AS} when the vector bundle $ E_{-1}$ is trivial, see \cite[\S 5.2]{LLL2} for a description enlarged to the present context.

{\textbf{Step 2.}}  
$K_1'$ is still not a Lie $\infty$-groupoid up to order $1$ since $ K_1'$ is a disjoint union of manifolds which are not all of the same dimension (and moreover, their dimensions are even not bounded).
To solve this issue, we apply Theorem \ref{th:reducedimension}. Since $\mathcal F$ admits a geometric resolution, bi-vertical vector fields form a singular foliation on $$\underbrace{\mathcal U_{E_{-1}}^{\pm}*\cdots*\mathcal U_{E_{-1}}^{\pm}}_{n-\text{times}},\; n\in \mathbb N$$ by Lemma \ref{lem:anchoredbundleforbivertical}. Moreover, the codimension of the leaves of this singular foliation are bounded by the integer described in Equation \eqref{eq:dim:min}. This integer is smaller than the dimension of $E_{-1}$, seen as a manifold, i.e., the sum of the dimension of $M$ with the rank of $E_{-1}$. By Theorem \ref{th:reducedimension}, there exists, for all $ n \geq 2$, a submanifold $\mathcal{A}_n \subset  \mathcal U_{E_{-1}}^{\pm} * \cdots * \mathcal U_{E_{-1}}^{\pm} $ ($n$ times), made of connected submanifolds of dimension $\mathrm{dim}(M)+\mathrm{rk}(E_{-1})$, that satisfies the following two properties: each submanifold intersects cleanly the bi-vertical singular foliation and each leaf of the bi-vertical singular foliation is intersected at least once: as stated in  Theorem \ref{th:reducedimension}, this implies that $ \mathcal A_n$ is a bi-submersion equivalent to $ \mathcal U_{E_{-1}}^{\pm} * \cdots * \mathcal U_{E_{-1}}^{\pm} $ ($n$ times). 

{\textbf{Step 3.}} 
We choose $K_1$ to be the union of $\mathcal U_{E_{-1}}$, $ \mathcal U_{E_{-1}}^{-1}$ with all the submanifolds $\mathcal A_n$ for $n \geq 2$. We still denote by $ d_0^1$ and $ d_1^1$ the source and target maps obtained by restrictions of the same maps. 
The manifold $K_1$ hence constructed is again a bi-submersion, and it is equivalent to $ K_1'$. 
 Moreover, a unit map still exists, since $\mathcal U_{E_{-1}}$ still belongs to that atlas, and since the zero section is a unit map.

\begin{lemma}\label{lem:step1}
$ K_1$ is a holonomy para-Lie $ \infty$-groupoid up to order $1$ that satisfies condition 1 in Theorem \ref{int:theorem}.
\end{lemma}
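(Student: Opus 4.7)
The plan is to verify, for the $K_1$ built in Steps~1--3, the first three items of Definition~\ref{def:universal-Grpoid} at level $k=1$ (Item~4 is vacuous at order $N=1$) together with the dimension formula of Theorem~\ref{int:theorem}(1). Since $K_0=M$, all horn spaces $\Lambda^1_\ell K$ collapse to $M$ (with the convention $\Lambda^1_2 K=\Lambda^1_1 K$), so Items~2--3 at $k=1$ reduce to statements about bi-submersions of the form $M\leftarrow K_1\rightarrow M$ and $M\leftarrow\Lambda^2_\ell K\rightarrow M$.

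The dimension formula $\dim K_1=\dim M+\mathrm{rk}(E_{-1})$ and the holonomy bi-submersion atlas property are immediate from the construction. The open neighborhood $\mathcal U_{E_{-1}}\subset E_{-1}$ carries a bi-submersion structure of the required dimension, each iterated product $\mathcal U_{E_{-1}}^{\pm}*\cdots*\mathcal U_{E_{-1}}^{\pm}$ is a bi-submersion over $\mathcal F$, and by Theorem~\ref{th:reducedimension} the submanifolds $\mathcal A_n$ have dimension $\dim M+\mathrm{rk}(E_{-1})$ and are equivalent, as bi-submersions over $\mathcal F$, to those iterated products. Since equivalent bi-submersions generate the same atlas, $K_1$ is again a holonomy bi-submersion atlas for $\mathcal F$, with unit provided by the zero section of $\mathcal U_{E_{-1}}\subset K_1$.

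For Item~1 of Definition~\ref{def:universal-Grpoid}, the existence of a geometric resolution of $\mathcal F$ together with Proposition~\ref{prop:biss_anchored} immediately yields that the bi-vertical vector fields $\mathcal{BV}_{K_1}$ form a singular foliation on $K_1$. For each horn $\Lambda^2_\ell K\subset K_1\times K_1$, $\ell=0,1,2$, I would observe that it is a fiber product of $K_1$ with $K_1$ (or with its inverse $K_1^{-1}$) along the source and target maps, hence itself a bi-submersion over $\mathcal F$ with outer source and target inherited from $K_1$. Proposition~\ref{prop:biss_anchored} then gives that $\mathcal{BV}_{\Lambda^2_\ell K}$ is a singular foliation as well.

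Items~2 and~3 then reduce to showing that $M\leftarrow K_1\rightarrow M$ and $M\leftarrow\Lambda^2_\ell K\rightarrow M$ are equivalent as bi-submersions over $\mathcal F$, the bi-submersion property itself having been established above. The equivalence should follow from the atlas property of $K_1$: the fiber product $\Lambda^2_\ell K$ decomposes into components of the form $\mathcal A_i*\mathcal A_j$ (with $\mathcal A_1:=\mathcal U_{E_{-1}}$ or $\mathcal U_{E_{-1}}^{-1}$), each of which is equivalent, via the iterated product $\mathcal U_{E_{-1}}^{\pm}*\cdots*\mathcal U_{E_{-1}}^{\pm}$ it sits in, to an $\mathcal A_n$-type component of $K_1$. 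Theorem~\ref{thm:bi-sub-equiv} is then the tool I would invoke to assemble these pointwise equivalences into a genuine equivalence of bi-submersions. I expect this last assembly to be the main, though essentially bookkeeping, technical step.
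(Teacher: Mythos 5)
Your proof follows essentially the same route as the paper's: Item 1 via Proposition \ref{prop:biss_anchored}, Item 2 because the horns are fiber products of bi-submersions, and Item 3 from the atlas property of $K_1$ (the paper simply cites Remark \ref{rmk:3hornspaces}, which already packages the equivalence of an atlas with $\mathcal A * \mathcal A$, $\mathcal A * \mathcal A^{-1}$ and $\mathcal A^{-1}*\mathcal A$; for assembling pointwise equivalences into a global one the relevant tool is Proposition \ref{Global-equivalence} rather than Theorem \ref{thm:bi-sub-equiv}, which concerns upgrading an equivalence to a \emph{total} relation). The one piece you omit is the second half of condition 1 of Theorem \ref{int:theorem}, namely the vector bundle isomorphism $(d_0^1)^*E_{-1}\simeq \ker Td_1^1$: the paper obtains the inclusion $(d_0^1)^*E_{-1}\subseteq \ker Td_1^1$ by construction and upgrades it to an equality using the dimension count $\dim K_1=\dim M+\mathrm{rk}(E_{-1})$; this should be added to your argument.
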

\begin{proof}
Item 1 in Definition \ref{def:universal-Grpoid} is satisfied, since bi-vertical vector fields on $ K_1$ and on the horn space do form a singular foliation in view of Lemma \ref{lem:anchoredbundleforbivertical}. Item 2 holds because the horns in $K_1$, i.e., $(\Lambda_i^2K)_{0\leq i\leq 2}$ are fiber products of bi-submersions. Item 3 holds, by definition of an atlas, $K_1$ is equivalent to the three horn spaces $(\Lambda_i^2K)_{0\leq i\leq 2}$ according to Remark \ref{rmk:3hornspaces}. {By construction, we have  $(d_0^1)^*E_{-1}\subseteq\ker Td^1_1$. Since $\dim K_1=\mathrm{rk}(E_{-1})+\dim M$, we have $(d_0^1)^*E_{-1}=\ker Td^1_1$. Hence,  condition 1 in Theorem \ref{int:theorem} is satisfied.}
\end{proof}

\subsection{A holonomy para-Lie $\infty$-groupoid up to order  $2$}

This section is a special case of the general case of the general situation studied in \S \ref{sec:Ngeq3}: we include it for pedagogical reasons, since it is easier to visualize triangles than $N$-simplices.

We continue the construction of the para-Lie $\infty$-groupoid initiated in Lemma \ref{lem:step1} as follows.
Recall that, in that lemma, an atlas $ K_1$ of the holonomy Lie groupoid has been constructed, made of a disjoint union of manifolds all of dimension $ \mathrm{dim}(M)+\mathrm{rk}(E_{-1})$. 

 We need to construct $K_2$ so that

\begin{enumerate}
\item $\dim K_2=\mathrm{rk}(E_{-2})+\dim \Lambda^2_0K$,
    \item the horns spaces in $K_2$, i.e., $\left(\Lambda^3_\ell K\right)_{\ell=0,1,2,3}$ are bi-submersions between bi-vertical singular foliations $\mathcal{BV}_{K_2}$ and $\mathcal{BV}_{\Lambda_\ell^2 K}$ for $\ell=0,1,2$: this  will hold automatically,

    \item the bi-vertical singular foliations on $K_2$ and $\Lambda^3_\ell K$ exist for $\ell=0,1,2,3$: this will be a consequence of the existence of a geometric resolution $$ \cdots \stackrel{}{\longrightarrow} (d_0^1)^*E_{-2}\stackrel{}{\longrightarrow} TK_1$$of $\mathcal{BV}_{K_1}$.

    \item the horn space $\Lambda^3_\ell K$ is equivalent to $K_2$ for all $\ell=0,1,2,3.$
\end{enumerate}

\textbf{Step 1: construction of a first candidate for $K_2$ using $K_1$, but
whose dimension is not constant and there is no degeneracies: application of Theorem \ref{thm:bi-sub-equiv}.}

Since $ K_1$ is an atlas, by definition, it is equivalent to the horn space 
$$\Lambda^2_0 K = K_1 * K_1^{-1}.$$
There exists therefore by Theorem \ref{thm:bi-sub-equiv} a total relation $W_2' $ between them:
\begin{equation}\label{diag:S_2^{0}0}
    \scalebox{0.7}{ \xymatrix{&& (M,\mathcal{F})&& \\ \Lambda^2_0K \ar[urr]^{d_0^1\circ\mathrm{pr}_2}\ar[drr]_{d_0^1\circ\mathrm{pr}_1}&&\ar@{->>}[ll]_<<<<<<<<<<{p_0^2} W_2'\ar@{..>}[u]\ar@{..>}[d]\ar@{->>}[rr]^{d_0^2} && K_1\ar[llu]_{d_1^1}\ar[dll]^{d_0^1}\\ &&(M,\mathcal{F})&& }}
 \end{equation}
By definition, $W_2'$ is in particular a bi-submersion between the respective bi-vertical singular foliations of $ K_1$ and $\Lambda^2_0 K$ (which exists since the sequence $\Gamma(E_{-2})\stackrel{\dd}{\rightarrow}\Gamma(E_{-1})\stackrel{\rho}{\rightarrow}\mathcal{F}$ in exact in the middle, see Lemma \ref{lem:anchoredbundleforbivertical}).\\

\textbf{Step 2: making this candidate of constant dimension: application of Theorem \ref{th:reducedimension}.}

Since $$\mathrm{dim}(\Lambda^2_0 K) =  2\,  \mathrm{dim}(K_1)- \mathrm{dim}(M) =  \mathrm{dim}(M) +  2 \, \mathrm{rk}(E_{-1})$$ 
and since the rank of the bi-vertical singular foliation $\mathcal{BV}_{K_1}$ on 
$K_1$ is lower or equal to $\mathrm{rk}(E_{-2}) $, Theorem \ref{th:reducedimension} and Remark \ref{rk:cut-dim} imply that we can replace $W_2'$ by another total relation $W_2$ which is a bi-submersion:
 $$   \left(\Lambda^2_{0}K, \mathcal{BV}_{\Lambda^2_{0}K}\right)\stackrel{p_0^2}{\longleftarrow} W_2^0\stackrel{d_0^2}{\longrightarrow}\left(K_1,\mathcal{BV}_{K_1}\right) $$
and a total relation:
\begin{equation}\label{diag:S_2^{0}1}
    \scalebox{0.7}{ \xymatrix{&& (M,\mathcal{F})&& \\ \Lambda^2_0 K\ar[urr]^{d_0^1\circ\mathrm{pr}_2}\ar[drr]_{d_0^1\circ\mathrm{pr}_1}&&\ar@{->>}[ll]_<<<<<<<<<<{p_0^2} W_2^0\ar@{..>}[u]\ar@{..>}[d]\ar@{->>}[rr]^{d_0^2} && K_1\ar[llu]_{d_1^1}\ar[dll]^{d_0^1}\\ &&(M,\mathcal{F})&& }}
 \end{equation} 
 but whose dimension is now:
 $$  \mathrm{dim}(W_2^0) =  \mathrm{dim}(M) +  2\mathrm{rk}(E_{-1}) + \mathrm{rk}(E_{-2}).$$\\

 \textbf{Step 2'}. Since the map
\[
\phi_0 \colon K_1 \longrightarrow \Lambda_0^2 K, \qquad 
x \longmapsto \bigl(x,\, s_0^0 \circ d_1^1(x)\bigr),
\]
is a morphism of bi-submersions, every point \(x \in K_1\) is equivalent to the point 
\(\bigl(x,\, s_0^0 \circ d_1^1(x)\bigr)\in \Lambda_0^2 K\). 
We may choose \(W_2^0\) so that it contains \(K_1\), which enables us to define an injection
\(
s_0^1 \colon K_1 \longrightarrow W_2^0
\)
compatible with the structure maps in the commutative diagram~\eqref{diag:S_2^{0}1}, i.e., such that the following diagram commutes \[\xymatrix{K_1&W_2^0\ar[l]_{d_0^2}\ar[rd]^{p_0^2}&\\K_1\ar[u]^{\mathrm{Id}}\ar[rr]^{\phi_0}\ar[ru]^{s_0^1}&&\Lambda^2_0K}\] 
This construction is detailed in Lemma~\ref{lem:Kan2} below.\\

 \textbf{Step 3: checking that this candidate admits natural face maps that satisfy the required
conditions (but no degeneracy maps yet)}.

Now, composing the submersion $ W^0_2\stackrel{p^2_0}{\longrightarrow}\Lambda^2_0 K= K_1 * K_1^{-1}$  with the projections on the first and second factor, one obtains two submersions $W_2^0\to K_1$ that we denote by $ d_1^2$ and $ d_2^2$. The maps $d_0^2, d_1^2, d_2^2$ satisfy the required simplicial relations by commutativity of Diagram \eqref{diag:S_2^{0}1}. Notice that the horn projection $p^2_0\colon W_2^0\to \Lambda_0^2K$ is a surjective submersion by construction.  We will show in \S\ref{sec:Ngeq3} that the Kan conditions are also satisfied; that is, the remaining horn projections 
\(p^2_1 = (d_0^2, d_2^2)\) and \(p^2_2 = (d_0^2, d_1^2)\) are surjective submersions.
Note that \(W_2^0\) is not yet our definitive \(K_2\). It will be enlarged in order to define all degeneracy maps.\\

\textbf{Step 4: construction of our candidate for $K_2$ as disjoint unions of equivalences,
so that degeneracies now exist.}

As indicated above, the construction of $W_2^0$ is not yet sufficient. Regarding the para-degeneracy maps, there is no straightforward way to define them solely based on an equivalence between $K_1$ and the horn $\Lambda^2_0K$. We will need to consider equivalences involving the other horns $\left(\Lambda^2_iK\right)_{i=1,2}$. Eventually, we take $K_2=\coprod_{i=0}^2 W_2^i$ to be a disjoint union of total relations  $\left(\Lambda^2_{i}K, \mathcal{BV}_{\Lambda^2_{i}K}\right)\stackrel{p_i^2}{\longleftarrow} {W}^i_2\stackrel{d_i^2}{\longrightarrow}\left(K_1,\mathcal{BV}_{K_1}\right)$.

We establish the following result, which is a special case of Proposition \ref{prop:up-to-degreeN}. 

\begin{remark}
    We could have considered equivalences with two horns spaces. However, we take all three because this ensures that the three projection  maps $\left(p_\ell^2\colon K_2\to \Lambda^2_\ell K\right)_{\ell=0,1,2}$ are systematically surjective.
\end{remark}

\begin{proposition}
    \label{prop:up-to-degree2}
Let $(M,\mathcal{F})$ be a singular foliation  admitting a geometric resolution $(E_\bullet, \dd, \rho)$. Any holonomy para-Lie $\infty$-groupoid up to order $1$ extends to a holonomy para-Lie $\infty$-groupoid up to order~$2$ \begin{equation}\label{eq:inftygroupoid-up-to-2}
    \mathcal{K}_\bullet(\mathcal{F})_{\leq 2}\colon \xymatrix{&K_2\ar@<-2pt>[r]\ar@<6pt>[r]\ar@<2pt>[r]&\ar@/^{0.8pc}/[l]\ar@/^{1.2pc}/[l] K_1 \ar@<-2pt>[r]\ar@<2pt>[r] & \ar@/^{0.8pc}/[l]M}
   \end{equation}
    that satisfies conditions 1 
    in Theorem \ref{int:theorem}.
\end{proposition}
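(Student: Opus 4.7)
My plan is to build $K_2$ as a disjoint union $\coprod_{\ell=0}^{2}W_2^\ell$, one summand per face index. The disjoint-union strategy---rather than taking a single total relation between $K_1$ and the horn $\Lambda^2_0 K$---is forced on us because the para-degeneracies $s_0^1, s_1^1 \colon K_1\to K_2$ cannot be produced from only one of the three horns; working on all three at once ensures that each projection $p_\ell^2 \colon K_2 \to \Lambda_\ell^2 K$ admits a natural section above the degenerate locus.

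I would first apply the equivalence/total-relation machinery of Theorem \ref{thm:bi-sub-equiv}. For each $\ell\in\{0,1,2\}$, item 3 of Definition \ref{def:universal-Grpoid} (already established at level $1$) furnishes an equivalence between $K_1$ and $\Lambda^2_\ell K$, viewed as bi-submersions over their respective bi-vertical singular foliations (whose existence on $K_1$ and on each horn is guaranteed by the geometric resolution hypothesis via Proposition \ref{prop:biss_anchored}). Theorem \ref{thm:bi-sub-equiv} then produces, for each $\ell$, a total relation
$$\Lambda^2_\ell K \xleftarrow{\;p'_\ell\;} W_2^{\prime\ell} \xrightarrow{\;d'_\ell\;} K_1$$
that is itself a bi-submersion. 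I would then invoke Theorem \ref{th:reducedimension} together with Remark \ref{rk:cut-dim} to replace each $W_2^{\prime\ell}$ by an equivalent total relation $W_2^\ell$ all of whose connected components have dimension exactly
$$\dim M + 2\,\mathrm{rk}(E_{-1}) + \mathrm{rk}(E_{-2}) \;=\; \dim M + \binom{2}{1}\mathrm{rk}(E_{-1}) + \binom{2}{2}\mathrm{rk}(E_{-2}),$$
which is the value prescribed by item 1 of Theorem \ref{int:theorem}. The ability to cut to exactly this dimension uses that the rank of the bi-vertical foliation on $K_1$ is bounded by $\mathrm{rk}(E_{-2})$, the next slot of the geometric resolution.

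Setting $K_2:=\coprod_{\ell=0}^{2}W_2^\ell$, on the $\ell$-th summand I would take $d_\ell^2$ to be the projection to $K_1$, while the two other face maps $d_i^2$ ($i\neq\ell$) are the factor projections $\mathrm{pr}_i$ of the horn composed with $p'_\ell\colon W_2^\ell \to \Lambda^2_\ell K$. The simplicial identities $d_i^1 d_j^2 = d_{j-1}^1 d_i^2$ for $i<j$ then hold by inspection of the fiber-product description of the horns. The degeneracies $s_0^1, s_1^1\colon K_1\to K_2$ are induced by the unit map of $K_1$ (the zero section of the anchored bundle underlying $K_1$) lifted into the appropriate summand using the natural section of the corresponding total relation above its degenerate locus.

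Finally, the verification. Items 1, 2, 3 of Definition \ref{def:universal-Grpoid} for $k=2$ follow directly: bi-vertical foliations exist by Proposition \ref{prop:biss_anchored} applied one step up the resolution; the bi-submersion property of each summand is preserved by the dimension reduction of Theorem \ref{th:reducedimension}; and the equivalence in item 3 is precisely the total-relation property provided by Theorem \ref{thm:bi-sub-equiv}. Item 4 for $k=1$ asks that $\Lambda^2_\ell K\leftarrow K_2\to K_1$ be a total relation, which again holds by construction of the summands $W_2^\ell$. I expect the main obstacle to be bookkeeping of the para-simplicial identities \eqref{eq:faces-faces} and \eqref{eq:faces-deneg} through the dimension-cut step: Theorem \ref{th:reducedimension} must be applied compatibly across the three summands so that the degeneracy sections and face maps match on the nose. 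This will require choosing the cut submanifolds in Theorem \ref{th:reducedimension} to contain the images of the already-constructed unit sections, which is possible because these sections intersect the bi-vertical foliation cleanly and have the right codimension.
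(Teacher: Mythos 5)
Your construction is essentially identical to the paper's: a disjoint union $K_2=\coprod_{\ell=0}^{2}W_2^\ell$ of total relations between $K_1$ and each horn $\Lambda^2_\ell K$ (Theorem \ref{thm:bi-sub-equiv}), cut to constant dimension $\dim M+2\,\mathrm{rk}(E_{-1})+\mathrm{rk}(E_{-2})$ via Theorem \ref{th:reducedimension} and Remark \ref{rk:cut-dim}, with face maps from the horn projections and degeneracies from the unit sections built into the explicit form of the total relations — and your closing worry about making the cut compatible with the unit sections is resolved exactly as in the paper, since the total relations of Equation \eqref{equiv-ell} contain the graphs of the relevant morphisms by construction. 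The only point you gloss over is the verification, for item 2 of Definition \ref{def:universal-Grpoid} at $k=2$, that the spans $\Lambda^2_\ell K\leftarrow \Lambda^3_\ell K\to K_1$ are themselves bi-submersions; the paper handles this via the fiber-product decomposition $\Lambda^3_\ell K= K_2\times_{\Lambda^2_\ell K}\Lambda^3_{\{\ell,\ell+1\}}K$ of a bi-submersion with a Morita equivalence.
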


{The proof involves  a delicate lemma, whose proof is a particular case of  Lemma \ref{lem:KanN}},\S \ref{sec:Ngeq3} and will be proven later in full generality:
\begin{lemma}\label{lem:Kan2}
\begin{enumerate}
\item {For $\ell=0,1,2$,  and every $x\in K_2$, the leaves of the bi-vertical singular foliations through $d_\ell^2(x)$ and  $p_\ell^2(x)\in\Lambda^2_\ell K$  have the same codimension.}\item For every $\ell=0,1,2$, $\left(\Lambda^2_{\ell}K, \mathcal{BV}_{\Lambda^2_{\ell}K}\right)\stackrel{p_\ell^2}{\longleftarrow} K_2\stackrel{d_\ell^2}{\longrightarrow}\left(K_1,\mathcal{BV}_{K_1}\right)$ is a total relation between the bi-submersions  $\Lambda^2_{\ell}K$ and $K_1$. {Moreover, there exist two smooths maps $s_0^1\colon K_1\to W^0_2$ and $s_1^1\colon K_1\to W^1_2$ such that for all $x\in K_1$,} \begin{enumerate}
    \item [$(a)$] $d_0^2(s_0^1(x))=x$ and $p_0^2(s_0^1(x))=(x,s_0^0d_1^1(x))$; 

    \item[$(b)$] $d_1^2(s_1^1(x))=x$ and $p_1^2(s_1^1(x))=(s_0^0d_0^1(x),x)$.
\end{enumerate}
\end{enumerate}
\end{lemma}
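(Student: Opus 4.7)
The plan is to establish the three substantive claims of the lemma in sequence: (a) the leaf-codimension equality under $p_\ell^2$ and $d_\ell^2$, (b) the total-relation property at each $\ell=0,1,2$, and (c) existence of the smooth degeneracy sections $s_0^1,s_1^1$. Since by construction $K_2=\coprod_{i=0}^{2}W_2^i$, with each $W_2^\ell$ already a total relation along the distinguished pair $(p_\ell^2, d_\ell^2)$, the first two claims reduce to transferring the bi-submersion structure from the distinguished index $i=\ell$ to the remaining components $W_2^i$ with $i\neq\ell$, while (c) will require constructing unit-like sections by hand.

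For (a), I would invoke the standard dimension count available in any bi-submersion $W$ between $(M,\mathcal F_M)$ and $(N,\mathcal F_N)$ with projections $p,q$: using Proposition \ref{prop:transverse-foliation} applied to both $p$ and $q$ together with the defining equality $p^{-1}(\mathcal F_M)=q^{-1}(\mathcal F_N)$, one gets $\dim\ker T_wp+\dim T_{p(w)}\mathcal F_M=\dim\ker T_wq+\dim T_{q(w)}\mathcal F_N$ at every $w\in W$; a direct bookkeeping using $\dim\ker T_wp=\dim W-\dim M$ and $\dim\ker T_wq=\dim W-\dim N$ then shows that the leaves through $p(w)$ and $q(w)$ have the same codimension, which is precisely (a) once (b) is in hand. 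For (b), each $W_2^\ell$ is a total relation along its own pair $(p_\ell^2,d_\ell^2)$; it thus suffices to check that each component $W_2^i$ with $i\neq\ell$ is a bi-submersion along $(p_\ell^2,d_\ell^2)$ between $\mathcal{BV}_{\Lambda^2_\ell K}$ and $\mathcal{BV}_{K_1}$, since totalness at index $\ell$ is already witnessed by $W_2^\ell\subset K_2$. Here I would exploit the fact that the three horn spaces $\Lambda^2_0K,\Lambda^2_1K,\Lambda^2_2K$ are pairwise equivalent as bi-submersions over $M$ (all three are iterated fiber products of $K_1$) and that the simplicial identities relate the three pairs $(p_j^2,d_j^2)$ through compositions with the horn-to-horn equivalences. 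Combining the known bi-submersion structure of $W_2^i$ along $(p_i^2,d_i^2)$ with these equivalences, and invoking the stability of the bi-submersion property under composition with equivalences recalled in \S\ref{sec:Equivalentbi-submersions}, one then obtains the required bi-submersion structure along $(p_\ell^2,d_\ell^2)$.

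For (c), I would construct $s_0^1$ and $s_1^1$ as smooth sections of the submersive pairs $(p_0^2,d_0^2)$ and $(p_1^2,d_1^2)$ over the embedded copies of $K_1$ given by $x\mapsto((x,s_0^0 d_1^1(x)),x)$ and $x\mapsto((s_0^0 d_0^1(x),x),x)$ respectively. A short check using the simplicial identities $d_i^1 s_0^0=\mathrm{id}$ for $i=0,1$ verifies that both target points lie in the fiber product underlying $\Lambda^2_0 K$, respectively $\Lambda^2_1 K$: the element $s_0^0 d_i^1(x)$ behaves as an identity arrow at the relevant endpoint of $x$, so the horn compatibility holds automatically. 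Submersivity of $(p_\ell^2,d_\ell^2)$ guarantees that such sections exist locally; to promote them to global smooth sections I would enlarge $W_2^0$ and $W_2^1$ from the outset by adjoining the graphs of these unit-like sections as additional disjoint-union summands, mirroring the device used in Step 3 of the construction of $K_1$, where the zero section of $E_{-1}$ was added to the atlas to provide the degeneracy $s_0^0\colon M\to K_1$. The main obstacle is exactly this last point: a section of a surjective submersion need not exist globally, so (c) forces one to build the degenerate $2$-simplices into $K_2$ by construction rather than extract them after the fact, and it is this step that occupies the technical heart of the general statement in Lemma \ref{lem:KanN}.
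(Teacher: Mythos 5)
Your overall architecture matches the paper's: the lemma is proven there as the special case $N=1$ of Lemma \ref{lem:KanN}, and, like you, the paper (i) treats the distinguished pair $(p_i^2,d_i^2)$ on each summand $W_2^i$ as given by construction, (ii) reduces everything to the non-distinguished pairs, and (iii) obtains the degeneracies by ensuring that the graphs of the unit-like morphisms $x\mapsto(x,s_0^0d_1^1(x))$ and $x\mapsto(s_0^0d_0^1(x),x)$ are built into the summands from the outset (via the explicit description of the $W_2^\ell$ in Remark \ref{rmk:equiv-ell}), which is exactly the device you describe for part (c). Your dimension count for part (a) --- comparing $\dim\ker T_wp+\dim T_{p(w)}\mathcal F_M$ with $\dim\ker T_wq+\dim T_{q(w)}\mathcal F_N$ via Proposition \ref{prop:transverse-foliation} --- is correct and slightly more elementary than the paper's appeal to the fact that related points of a bi-submersion have isomorphic transverse foliations.

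The genuine gap is in part (b), precisely at the step you compress into ``combining the known bi-submersion structure of $W_2^i$ along $(p_i^2,d_i^2)$ with the horn-to-horn equivalences and invoking stability under composition.'' The pairs $(p_\ell^2,d_\ell^2)$ for $\ell\neq i$ are \emph{not} obtained from $(p_i^2,d_i^2)$ by composing with maps between horn spaces: on $W_2^0$, for instance, $p_1^2=(d_0^2,d_2^2)$ interleaves the distinguished face $d_0^2$ with one component of $p_0^2$, so no composition with an equivalence $\Lambda_0^2K\to\Lambda_1^2K$ produces it, and the stability results of Appendix \ref{sec:Equivalentbi-submersions} do not apply as stated. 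What the paper actually does is: first, deduce $(d_\ell^2)^{-1}(\mathcal{BV}_{K_1})=(p_\ell^2)^{-1}(\mathcal{BV}_{\Lambda_\ell^2K})$ from the commutativity of the defining diagrams, which yields one inclusion of the bi-submersion axiom for free; second, prove the reverse inclusion by taking a bi-vertical $X$ on $K_1$, viewing $(0,X)$ or $(X,0)$ as a bi-vertical field on the horn, and lifting it through the \emph{distinguished} bi-submersion $(p_i^2,d_i^2)$ via Proposition \ref{prop:lifting-property} to a field $\widecheck X$ that is $p_\ell^2$-related to $0$ and $d_\ell^2$-related to $X$; and third, separately verify that $p_\ell^2$ is a submersion, which is not automatic and uses the codimension equality of part (a) together with a bi-transversal argument. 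None of these three steps is supplied by your sketch, and the third in particular shows that (a) cannot simply be postponed until after (b): the submersivity of the non-distinguished horn projections needs the codimension statement as an input.
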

\begin{remark}
The maps \(s_0^1, s_1^1 \colon K_1 \to K_2\) of Lemma~\ref{lem:Kan2} take values in two distinct, disjoint components \(W_2^0, W_2^1 \subset K_2\). Consequently, the identity \(s_0^1 \circ s_0^0=s_1^1 \circ s_0^0\) does not hold.

\end{remark}

Let us now prove Proposition \ref{prop:up-to-degree2}.
\begin{proof}
The axioms of a para-Lie $\infty $-groupoid up to order $2$ in Definition \ref{def:para-Grp-upto} are satisfied by 
Lemma \ref{lem:Kan2}, since the latter Lemma gives the para-simplical maps and implies  the Kan conditions, namely that the projections on the horns are surjective submersions. Let us check that it is a holonomy para-Lie $\infty $-groupoids up to order $2$, i.e., that the three first  items in
Definition \ref{def:universal-Grpoid} are satisfied for $ k=1$ and $ k=2$ and that the fourth item holds for $ k=1$.
For $ k=1$, the two first items holds in view of Lemma \ref{lem:step1}. The third item holds because $ K_1$ is an atlas, and the fourth item holds by construction of $ K_2$. For $ k=2$,  the fourth item is void, and we have to check the first three items. The first one follows from Lemma \ref{lem:anchoredbundleforbivertical}, applied to $E_{-2}\stackrel{\dd^{(2)}}{\longrightarrow}E_{-1} \stackrel{\rho}{\longrightarrow} TM$ with $\ker\rho=\mathrm{im}(\dd^{(2)})$. Here, $\nu^\mathcal
F$ is as in Equation \eqref{eq:leftaction}.

The pullback vector bundle $(d^1_0)^*E_{-2}\to K_1$ together with the composition $\xymatrix{(d_0^1)^*E_{-2}\ar[rr]^{\nu^{\mathcal{F}}\circ \dd^{(2)}}&&TK_1}$ is an anchored bundle over 
 the bi-vertical singular foliation $\mathcal{BV}_{K_1}$ in the notations of Lemma \ref{lem:anchoredbundleforbivertical}. In addition, the pullback\begin{equation}\label{eq:geom-resol_2}
    \cdots \stackrel{}{\longrightarrow} (d^{1}_0)^*E_{-3}\stackrel{}{\longrightarrow}(d^{1}_0)^* E_{-2}\stackrel{\nu^{\mathcal{F}}\circ \dd^{(2)}}{\longrightarrow} TK_{1}\end{equation}
is a geometric resolution of the bi-vertical foliation $\mathcal{BV}_{K_1}$ on $K_1$.

The second item holds by Lemma \ref{lem:Kan2} which implies that
 $\Lambda_\ell^2 K \stackrel{p_\ell^{2}}{\longleftarrow} K_2  \stackrel{d_\ell^{2}}{\longrightarrow} K_{1}$ is a bi-submersion for the respective bi-vertical foliations for every $\ell=0,1,2$. 

The spans 

\begin{align*}
         \xymatrix{\left(\Lambda^2_{0}K, \mathcal{BV}_{\Lambda^2_{0}K}\right)&&& \Lambda^{3}_0 K\ar[lll]_<<<<<<<<<<<<<<{d_{0}^2\circ \mathrm{pr}_{2}\times d_{0}^2\circ \mathrm{pr}_{3}}\ar[rrr]^{d_{0}^2\circ \mathrm{pr}_{1}}&&& (K_{1},\mathcal{BV}_{K_{1}})}\\\xymatrix{\left(\Lambda^2_{1}K, \mathcal{BV}_{\Lambda^2_{1}K}\right)&&& \Lambda^{3}_1 K\ar[lll]_<<<<<<<<<<<<<<{d_{0}^2\circ \mathrm{pr}_{1}\times d_{1}^2\circ \mathrm{pr}_{3}}\ar[rrr]^{d_{1}^2\circ \mathrm{pr}_{2}}&&& (K_{1},\mathcal{BV}_{K_{1}})}\\  \xymatrix{\left(\Lambda^2_{2}K,\mathcal{BV}_{\Lambda^2_{2}K}\right)&&& \Lambda^{3}_2 K\ar[lll]_<<<<<<<<<<<<<<{d_{1}^2\circ \mathrm{pr}_{1}\times d_{2}^2\circ \mathrm{pr}_{2}}\ar[rrr]^{d_{2}^2\circ \mathrm{pr}_{3}}&&& (K_{1},\mathcal{BV}_{K_{1}})}\\\xymatrix{\left(\Lambda^2_{2}K, \mathcal{BV}_{\Lambda^2_{2}K}\right)&&&\Lambda^{3}_{3} K\ar[lll]_<<<<<<<<<<<<<<{d_{2}^2\circ \mathrm{pr}_{1}\times d_{2}^2\circ \mathrm{pr}_{2}}\ar[rrr]^{d_{2}^2\circ \mathrm{pr}_{3}}&&& (K_{1},\mathcal{BV}_{K_{1}}}).
    \end{align*}
 are bi-submersions since they can be written as the fiber product of a bi-submersion and a Morita-equivalence over $\mathcal{F}$ namely, $\Lambda^3_\ell K=K_2\times_{\Lambda^2_\ell K}\left(\Lambda^3_{\{\ell, \ell+1\}}K\right)$. A detailed computation is given in \S \ref{sec:Ngeq3}.

 Condition 1 in Theorem \ref{int:theorem} holds by construction. This completes the proof.
\end{proof}

\subsection{A holonomy para-Lie $\infty$-groupoid up to order $N$}\label{sec:Ngeq3}

Let us continue the recursive construction of the  holonomy para-Lie $\infty$-groupoid of a singular foliation.

The following proposition is the main result of this subsection, and its proof is deferred to the end of this section.
\begin{proposition}
    \label{prop:up-to-degreeN}
Let $(M,\mathcal{F})$ be a singular foliation admitting a geometric resolution $(E_\bullet, \dd, \rho)$. Any holonomy para-Lie $\infty$-groupoid up to order $N$ that satisfies condition 1 in Theorem \ref{int:theorem}
%
  extends to a holonomy para-Lie $\infty$-groupoid up to order $N+1$  that satisfies condition 1 
    in Theorem \ref{int:theorem}.
\end{proposition}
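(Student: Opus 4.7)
The plan is to imitate the passage from order $1$ to order $2$ carried out in Proposition \ref{prop:up-to-degree2}, but applied iteratively at the new level. By inductive hypothesis, $K_N$ itself is a bi-submersion between copies of $(K_{N-1}, \mathcal{BV}_{K_{N-1}})$, and similarly for each horn $\Lambda^{N+1}_\ell K$ with $0\leq \ell \leq N+1$. The third item of Definition \ref{def:universal-Grpoid} at level $N$ gives, for each such $\ell$, an equivalence between $K_N$ and $\Lambda^{N+1}_\ell K$ as bi-submersions over $\mathcal{BV}_{K_{N-1}}$. Applying Theorem \ref{thm:bi-sub-equiv} produces, for each $\ell$, a total relation
\[
\Lambda^{N+1}_\ell K \xleftarrow{p_\ell^{N+1\,\prime}} W^\ell_{N+1} \xrightarrow{d_\ell^{N+1\,\prime}} K_N
\]
between these two bi-submersions.

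Next, the dimensions of the $W^\ell_{N+1}$'s must be reduced. The crucial input is that condition~1 of Theorem \ref{int:theorem} at level $N$ identifies $\ker Tp_0^N \simeq (d_0^1\circ\cdots\circ d_0^N)^* E_{-N}$, so pulling back the original geometric resolution of $\mathcal{F}$ along $d_0^1\circ\cdots\circ d_0^N$ yields a geometric resolution
\[
\cdots \longrightarrow (d_0^1\circ\cdots\circ d_0^N)^*E_{-N-2} \longrightarrow (d_0^1\circ\cdots\circ d_0^N)^*E_{-N-1} \longrightarrow TK_N
\]
of the bi-vertical foliation $\mathcal{BV}_{K_N}$, by Proposition \ref{prop:biss_anchored} and induction. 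Consequently the codimension of the leaves of $\mathcal{BV}_{K_N}$ is bounded by $\mathrm{rk}(E_{-N-1})$, so Theorem \ref{th:reducedimension} (together with Remark \ref{rk:cut-dim}) lets us replace each $W^\ell_{N+1}$ by a submanifold, still a total relation, whose connected components all have the required constant dimension $\dim M + \sum_{i=1}^{N+1}\binom{N+1}{i}\mathrm{rk}(E_{-i})$, and such that the identification $\ker T p_0^{N+1}\simeq (d_0^1\circ\cdots\circ d_0^{N+1})^* E_{-N-1}$ at the new level continues to hold.

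I would then set $K_{N+1}:=\coprod_{\ell=0}^{N+1} W^\ell_{N+1}$, with $d_\ell^{N+1}$ and $p_\ell^{N+1}$ defined on the $\ell$-th component as above, and the remaining face maps $d_j^{N+1}$ ($j\neq\ell$) on the $\ell$-th component recovered by composing with the appropriate projections from the horn $\Lambda^{N+1}_\ell K$ to $K_N$. Using the disjoint union over all $\ell$ is exactly what allows the para-degeneracies $s_j^N\colon K_N \to K_{N+1}$ to land in the component $W^{j}_{N+1}$ (resp.\ $W^{j+1}_{N+1}$) via sections dictated by the degenerate $(N+1)$-simplex $(s_0^0 d_0^1\cdots x,\ldots,x,\ldots)$, mirroring items $(a)$ and $(b)$ of Lemma \ref{lem:Kan2}. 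The para-simplicial identities \eqref{eq:faces-faces} and \eqref{eq:faces-deneg} restricted to indices up to $N+1$ then follow from the ones already valid at level $\leq N$ and the definition of the new faces in terms of horn projections.

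The last and most delicate step is to promote this into a holonomy para-Lie $\infty$-groupoid up to order $N+1$: the four items of Definition \ref{def:universal-Grpoid} must be checked at level $N+1$, and the fourth item must now be checked at level $N$. The fourth item at level $N$ is built into the very construction of $K_{N+1}$ as a total relation. Item (1) for $K_{N+1}$ and $\Lambda^{N+2}_\ell K$ follows from Proposition \ref{prop:biss_anchored} applied to the pulled-back geometric resolution above; item (2) for the maps $\Lambda^{N+1}_\ell K \leftarrow K_{N+1}\rightarrow K_N$ holds by construction, whereas for the horn maps $\Lambda^{N+1}_\ell K\leftarrow \Lambda^{N+2}_\ell K \rightarrow K_N$ it follows from the identification $\Lambda^{N+2}_\ell K = K_{N+1}\times_{\Lambda^{N+1}_\ell K}\Lambda^{N+2}_{\{\ell,\ell+1\}} K$ exhibiting it as a fiber product of a bi-submersion and a Morita-type equivalence. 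The hardest piece is item (3), the equivalence between these two bi-submersions, which amounts to a higher-dimensional analogue of Lemma \ref{lem:Kan2}: this is precisely the content of the promised Lemma \ref{lem:KanN}, and providing its proof—balancing codimensions of leaves of the various bi-vertical foliations along the horn and producing the required local sections of the horn projections—is the main technical obstacle. Once Lemma \ref{lem:KanN} is available, the surjectivity of $p_\ell^{N+1}$ and the Kan property follow formally, completing the recursion.
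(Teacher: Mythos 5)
Your overall strategy coincides with the paper's: for each $\ell$ build a total relation $W^\ell_{N+1}$ between $K_N$ and $\Lambda^{N+1}_\ell K$ via Theorem \ref{thm:bi-sub-equiv}, cut its dimension down with Theorem \ref{th:reducedimension} and Remark \ref{rk:cut-dim} using the pulled-back geometric resolution of $\mathcal{BV}_{K_N}$, and take $K_{N+1}$ to be a disjoint union over all $N+2$ horns so that degeneracies exist. However, there are two genuine gaps. First, you defer the entire content of Lemma \ref{lem:KanN} as ``the main technical obstacle'' without supplying it, and this is not a formality: the nontrivial point is that $W^\ell_{N+1}$, built as a bi-submersion relative to the $\ell$-th horn projection, must \emph{also} be a bi-submersion relative to every other horn projection $p_{\ell'}^{N+1}$, $\ell'\neq\ell$. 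The paper proves this by taking $X\in\mathcal{BV}_{K_N}$, placing it in a single slot of the horn to get an element of $\mathcal{BV}_{\Lambda^{N+1}_{\ell'}K}$, and lifting it via Proposition \ref{prop:lifting-property} to a vector field on $W^\ell_{N+1}$ that is $p_{\ell'}^{N+1}$-related to zero and $d_{\ell'}^{N+1}$-related to $X$. Without this, the surjectivity of all horn projections, hence the Kan condition at order $N+1$, is unproved.

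Second, and more seriously, your $K_{N+1}=\coprod_{\ell=0}^{N+1}W^\ell_{N+1}$ does not by itself satisfy item 3 of Definition \ref{def:universal-Grpoid} at level $k=N+1$. That item requires $K_{N+1}$ and the next-level horn $\Lambda^{N+2}_\ell K$ to be \emph{equivalent} bi-submersions, and equivalence is a two-way condition: every point of $\Lambda^{N+2}_\ell K$ --- an $(N+2)$-tuple of compatible points of $K_{N+1}$ --- must be equivalent to a single point of $K_{N+1}$. Nothing in your construction guarantees this closure under horn-filling composition; the totality of each $W^\ell_{N+1}$ only concerns the horns $\Lambda^{N+1}_\ell K$ one level down. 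The paper addresses this with Lemma \ref{lem:higher-mult}: it enlarges $K_{N+1}$ by adjoining all iterated horn spaces built from $W_{N+1}$ and then cuts the result back down to constant dimension with Corollary \ref{cor:reducedimension}, after which one direction of the equivalence comes from Lemma \ref{lem:higher-mult} and the other from the explicit degeneracy-built morphisms of Lemma \ref{lem:hornN}. You would need this enlargement (or an equivalent device) for the recursion to close.
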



Assume that a holonomy para-Lie $\infty$-groupoid up to order $N$ $$\mathcal{K}_{\leq N}(\mathcal{F}):\xymatrix{&K_N\ar@<5pt>[r]\ar@<-5pt>@{->}[r]\ar@<1pt>[r]\ar@<-2pt>@{.}[r]&\ar@/^{0.8pc}/[l]\ar@/^{1.2pc}/[l]\ar@{.}@/^{1.4pc}/[l] \ar@/^{1.7pc}/[l]\cdots\; \ar@<5pt>[r]\ar@<-5pt>@{->}[r]\ar@<1pt>[r]\ar@<-2pt>@{.}[r]&\ar@/^{0.8pc}/[l]\ar@/^{1.1pc}/[l]\ar@{-}@/^{1.4pc}/[l] \ar@/^{1.7pc}/[l]K_3\ar@<5pt>[r]\ar@<-5pt>@{->}[r]\ar@<1pt>[r]\ar@<-2pt>[r]&\ar@/^{0.8pc}/[l]\ar@/^{1.2pc}/[l]\ar@/^{1.6pc}/[l]K_2\ar@<-2pt>[r]\ar@<6pt>[r]\ar@<2pt>[r]&\ar@/^{0.8pc}/[l]\ar@/^{1.2pc}/[l]K_1 \ar@<-2pt>[r]\ar@<2pt>[r]& \ar@/^{0.8pc}/[l]K_0}$$

\vspace{0.4cm}

that satisfies item 1 of Theorem \ref{int:theorem}  has been constructed.

{\textbf{Step 1: construction of a first candidate for $K_{N+1}$ using the recursion assumption, but whose dimension is not constant and there is no  degeneracies: application of Theorem \ref{thm:bi-sub-equiv}.}}

By the recursion assumption, $K_N$ and $\Lambda_\ell^{N+1}K$ for $\ell=0,\ldots, N+1$ are equivalent bisubmersions for all $\ell$. 

By Theorem \ref{thm:bi-sub-equiv}, there exists a bi-submersion $$\xymatrix{\left(\Lambda^{N+1}_\ell K, \mathcal{BV}_{\Lambda^{N+1}_\ell K}\right)&& {W'}_{N+1}^{\ell}\ar[ll]_<<<<<<<<<{p_\ell^{N+1}}\ar[rr]^{d_\ell^{N+1}}&& (K_{N},\mathcal{BV}_{K_{N}}})$$ which is an equivalence of bi-submersions, i.e., the  following diagrams commute:
 \begin{align}
     \label{diag:equivalence_{N+1}}
    \scalebox{0.7}{ \xymatrix{&& \left(K_{N},{{\mathcal{BV}}}_{K_{N}}\right)&& \\K_N\ar[urr]^{d^N_{\ell}}\ar[drr]_<<<<<<<<<<<<<<<<<<{p_{\ell}^N}&&\ar@{->>}[ll]_{d_\ell^{N+1}}{W'}_{N+1}^{\ell}\ar@{..>}[u]\ar@{..>}[d]\ar@{->>}[rr]^{p_\ell^{N+1}} && \Lambda_\ell^{N+1}K\ar[llu]_{d_{\ell}^N\circ\mathrm{pr}_{\ell+1}}\ar[dll]^<<<<<<<<<<<{d_{\ell-1}^N\circ \mathrm{pr}_{i\leq \ell}\times d_{\ell}^N\circ \mathrm{pr}_{i>\ell+1}}\\ && \left(\Lambda_{\ell}^NK,\, {\mathcal{BV}}_{\Lambda_{\ell}^NK}\right)&& }}\qquad  
    \scalebox{0.7}{ \xymatrix{&& \left(K_{N},{{\mathcal{BV}}}_{K_{N}}\right)&& \\K_N\ar[urr]^{d^N_{N}}\ar[drr]_<<<<<<<<<<<<<<<<<<{p_{N}^N}&&\ar@{->>}[ll]_{d_{N+1}^{N+1}}{W'}_{N+1}^{{N+1}}\ar@{..>}[u]\ar@{..>}[d]\ar@{->>}[rr]^{p_{N+1}^{N+1}} && \Lambda_{N+1}^{N+1}K\ar[llu]_{d_{N}^N\circ\mathrm{pr}_{N+1}}\ar[dll]^<<<<<<<<<<<{d_{N}^N\circ \mathrm{pr}_{i\leq N}}\\ && \left(\Lambda_{N}^NK,\, {\mathcal{BV}}_{\Lambda_{N}^NK}\right)&& }}
 \end{align}
  for every $0\leq \ell\leq N+1$.

   \begin{remark}\label{rmk:equiv-ell}
 In fact, ${W'}_{N+1}^{\ell}$ can be assumed to be  of the form $\mathcal T_{N+1}\times _{K_N} V_\ell$ with \begin{equation}
     \label{equiv-ell}V_\ell\subset \coprod_{\begin{array}{c}
          \phi\colon \Sigma\subset K_N\leftrightarrows \Lambda^{N+1}_\ell K \\
          \texttt{countably many}
     \end{array}}\mathrm{Graph}(\phi)\times \left(\mathcal B^{n_{N-1}}\right)^2\times  \mathcal B^{n_{N}},
 \end{equation} being a disjoint union of open neighborhoods of the $\mathrm{Graph}(\phi)$'s in $\mathrm{Graph}(\phi)\times \left(\mathcal B^{n_{N-1}}\right)^2\times  \mathcal B^{n_{N}}$, see \S \ref{sec:totalrelation}. Here 
 \begin{itemize}
     \item [-]$n_{N-1}$ is a common number of generators of the associate bi-vertical singular foliations on $K_{N-1}$ and on $\Lambda^{N}_\ell K$ for $0\leq \ell\leq N$; 
     \item [-]$n_N$ is a common number of generators of the associate bi-vertical singular foliations on $K_{N}$ and on $\Lambda^{N+1}_\ell K$ for $0\leq \ell\leq N+1$ 
     
     \item [-]and the $\phi$'s are the restrictions of local bi-submersion morphisms $K_N\leftrightarrows \Lambda^{N+1}_\ell K$ to a bi-transversal $\Sigma$ at a point of $K_N$ or $\Lambda^{N+1}_\ell K$. 
     \item [-]Also, $T_{N+1}\rightrightarrows K_N$ is global leaf preserving bi-submersion over $\mathcal{BV}_{K_N}$.
 \end{itemize}
  \end{remark}

{\textbf{Step 2: Replacing this candidate by another one of constant dimension (that we compute): application of Theorem \ref{th:reducedimension}.}}
  By Theorem \ref{thm:bi-sub-equiv},  one can assume that  ${W'}_{N+1}^{\ell}$ is a total relation, and by Theorem \ref{th:reducedimension} we can assume that ${W'}_{N+1}^{\ell}$ is a  disjoint union of manifolds all the same dimension, which can be chosen to be any integer greater or equal to the maximal codimension of the leaves of the bi-vertical vector fields on $ {W'}_{N+1}^{\ell}$. This construction leaves the preceding spaces unchanged. Let us compute an upper bound of this integer:
  \begin{lemma}
    The codimensions of the leaves of the bi-vertical singular foliation on ${W}_{N+1}^{\ell}$ are bounded by the sum of 
  \begin{enumerate}
      \item the dimension of $\Lambda^{N+1}_0 K$
      \item the rank of the bi-vertical foliation on $K_N$, which is itself bounded by  the rank of $E_{-N-1}$.
  \end{enumerate}
  \end{lemma}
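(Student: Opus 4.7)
The plan is to compute the rank of the bi-vertical singular foliation on ${W'}_{N+1}^{\ell}$ pointwise using the bi-submersion axioms, and then bound the result via the geometric resolution. By construction, ${W'}_{N+1}^{\ell}$ is a bi-submersion between $(\Lambda_{\ell}^{N+1}K, \mathcal{BV}_{\Lambda_{\ell}^{N+1}K})$ and $(K_N, \mathcal{BV}_{K_N})$, so the bi-submersion axiom (see \S \ref{sec:bi-submersion}) gives
\begin{equation*}
(p_\ell^{N+1})^{-1}(\mathcal{BV}_{\Lambda_\ell^{N+1}K}) \,=\, (d_\ell^{N+1})^{-1}(\mathcal{BV}_{K_N}) \,=\, \Gamma(\ker Tp_\ell^{N+1}) + \Gamma(\ker Td_\ell^{N+1}).
\end{equation*}

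First I would compute the fiberwise dimension of the intersection of the two kernels at an arbitrary $w \in {W'}_{N+1}^{\ell}$. Applying Proposition \ref{prop:transverse-foliation}(1) to the left-hand side gives
$\dim T_w[(p_\ell^{N+1})^{-1}(\mathcal{BV}_{\Lambda_\ell^{N+1}K})] = \dim\ker T_wp_\ell^{N+1} + \mathrm{rk}(\mathcal{BV}_{\Lambda_\ell^{N+1}K})_{p_\ell^{N+1}(w)}$; applying inclusion-exclusion to the right-hand side and using the equality of leaf codimensions furnished by Proposition \ref{prop:operations-on-transversals}(2), namely
\begin{equation*}
\dim\Lambda_\ell^{N+1}K - \mathrm{rk}(\mathcal{BV}_{\Lambda_\ell^{N+1}K})_{p_\ell^{N+1}(w)} \,=\, \dim K_N - \mathrm{rk}(\mathcal{BV}_{K_N})_{d_\ell^{N+1}(w)},
\end{equation*}
a short algebraic manipulation yields
\begin{equation*}
\dim\bigl(\ker T_wp_\ell^{N+1} \cap \ker T_wd_\ell^{N+1}\bigr) \,=\, \dim {W'}_{N+1}^{\ell} - \dim\Lambda_\ell^{N+1}K - \mathrm{rk}(\mathcal{BV}_{K_N})_{d_\ell^{N+1}(w)}.
\end{equation*}
Assuming the identification $T_w\mathcal{BV}_{{W'}_{N+1}^{\ell}} = \ker T_wp_\ell^{N+1} \cap \ker T_wd_\ell^{N+1}$ discussed below, the codimension of the bi-vertical leaf through $w$ equals $\dim\Lambda_\ell^{N+1}K + \mathrm{rk}(\mathcal{BV}_{K_N})_{d_\ell^{N+1}(w)}$. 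Since every horn $\Lambda_\ell^{N+1}K$ is an iterated fibered product involving $N+1$ copies of $K_N$ and has the same dimension as $\Lambda_0^{N+1}K$, and since by the inductive hypothesis (condition 1 of Theorem \ref{int:theorem}) together with a recursive use of Proposition \ref{prop:biss_anchored} the pullback $(d_0^1\circ\cdots\circ d_0^N)^*E_{-N-1}\to K_N$ is an anchored bundle whose anchor surjects onto $\mathcal{BV}_{K_N}$, one has $\mathrm{rk}(\mathcal{BV}_{K_N}) \leq \mathrm{rk}(E_{-N-1})$, yielding the claimed bound.

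The main obstacle is the identification $T_w\mathcal{BV}_{{W'}_{N+1}^{\ell}} = \ker T_wp_\ell^{N+1}\cap\ker T_wd_\ell^{N+1}$. The inclusion $\subseteq$ is immediate, since bi-vertical sections are annihilated by both $Tp_\ell^{N+1}$ and $Td_\ell^{N+1}$; but the reverse inclusion is not formal, as it requires that enough bi-vertical sections exist locally to realise every vector in the fiberwise intersection. I expect this to follow from another application of Proposition \ref{prop:biss_anchored}, using the exactness of $(E_\bullet,\dd,\rho)$ at $E_{-N-1}$ and the recursive structure of ${W'}_{N+1}^{\ell}$ provided by Theorem \ref{thm:bi-sub-equiv}: exactness of the geometric resolution at each stage of the tower is precisely what ensures that the pullback of the deeper parts of the resolution produces enough local generators of $\mathcal{BV}_{{W'}_{N+1}^{\ell}}$ to fill the intersection of kernels.
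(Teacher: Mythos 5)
There is a genuine gap, and it is the one you flag yourself: the identification $T_w\mathcal{BV}_{{W'}_{N+1}^{\ell}}=\ker T_wp_\ell^{N+1}\cap\ker T_wd_\ell^{N+1}$ is false in general, and it does \emph{not} follow from exactness of the geometric resolution, because that exactness is an exactness of modules of sections, not a fiberwise statement. A minimal counterexample already occurs one level down: take $M=\mathbb R$, $\mathcal F=\langle x\,\partial_x\rangle$, with the length-one geometric resolution $0\to E_{-1}=\mathbb R\times\mathbb R\stackrel{\rho}{\to}T\mathbb R$, $\rho(e)=x\partial_x$. On the path holonomy bi-submersion $p(x,\lambda)=x$, $q(x,\lambda)=e^{\lambda}x$ one computes $\ker T_{(0,\lambda)}p\cap\ker T_{(0,\lambda)}q=\mathbb R\,\partial_\lambda$, whereas $\mathcal{BV}_W=\Gamma(\ker Tp)\cap\Gamma(\ker Tq)=\{0\}$ (a smooth $g$ with $xg=0$ vanishes identically), so the bi-vertical leaf through $(0,\lambda)$ is a point of codimension $2$, not $1$. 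Since you only have the inclusion $T_w\mathcal{BV}\subseteq\ker T_wp\cap\ker T_wd$ unconditionally, your computation yields $\mathrm{codim}\geq\dim\Lambda_\ell^{N+1}K+\dim T_{d_\ell^{N+1}(w)}\mathcal{BV}_{K_N}$, i.e.\ a \emph{lower} bound on the codimension, while the lemma requires an \emph{upper} bound; the inequality points the wrong way exactly at the singular points of $\mathcal{BV}_{K_N}$, where the leaf dimension drops below the number of local generators and the codimension of the bi-vertical leaves jumps up.

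What is actually needed is a lower bound on $\dim T_w\mathcal{BV}_{{W'}_{N+1}^{\ell}}$, i.e.\ the explicit production of enough bi-vertical vector fields, and the correct quantity controlling the defect is the minimal number of local generators $\mathrm{rk}_{d_\ell^{N+1}(w)}(\mathcal{BV}_{K_N})$, not the leaf dimension. This is how the paper proceeds: Lemma \ref{lem:projectable-generators}(4) supplies bi-vertical vector fields $Z_1,\dots,Z_\kappa$ which, together with lifts of a generating family of $\mathcal{BV}_{K_N}$, span $\ker Tp_\ell^{N+1}$ at every point, so that $T_w\mathcal{BV}$ has codimension at most $\mathrm{rk}(\mathcal{BV}_{K_N})$ inside $\ker T_wp_\ell^{N+1}$; combined with $T_wW=T_w\Sigma\oplus\ker T_wp_\ell^{N+1}$ this gives Lemma \ref{lemma:codim} and Remark \ref{rk:cut-dim}. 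The remaining ingredient of your argument — that $(d_0^1\circ\cdots\circ d_0^N)^*E_{-N-1}$ with anchor $\nu^N\circ\dd^{(N+1)}$ is an anchored bundle over $\mathcal{BV}_{K_N}$ (via the recursion hypothesis and Proposition \ref{prop:biss_anchored}(2)), whence $\mathrm{rk}(\mathcal{BV}_{K_N})\leq\mathrm{rk}(E_{-N-1})$ — is correct and coincides with the paper's; it is the pointwise kernel-counting that must be replaced by the generator-counting argument.
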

  \begin{proof}
      By the recursion assumption, there exists a vector bundle isomorphism $\nu^{N}\colon \left(d_0^1\circ \cdots \circ d_0^N\right)^*E_{-N}\to \ker Td_0^N$ such that $Td_0^N\circ \nu^{N}=\left(d_0^1\circ \cdots \circ d_0^N\right)^*\dd^{(N)}$. The pullback  $(d_0^1\circ \cdots \circ d^N_0)^*E_{-N-1}\to K_N$ together with the composition \begin{equation}
          \label{eq:anchored_{N+1}}\nu^{N}\circ \dd^{(N+1)} \colon \xymatrix{\left(d_0^1\circ \cdots \circ d_0^N\right)^*E_{-N-1}\ar[r]^{}&TK_N}
      \end{equation} is an anchored bundle over 
 the bi-vertical singular foliation $\mathcal{BV}_{K_N}$,  by Lemma \ref{lem:anchoredbundleforbivertical}. In addition, the pullback \begin{equation}\label{eq:geom-resol_{N+1}}
    \cdots \stackrel{}{\longrightarrow} \left(d_0^1\circ \cdots \circ d^{N}_0\right)^*E_{-N-2}\stackrel{}{\longrightarrow}\left(d_0^1\circ \cdots \circ d^{N}_0\right)^* E_{-N-1}\stackrel{}{\longrightarrow} TK_{N}\end{equation}
is a geometric resolution of the bi-vertical foliation $\mathcal{BV}_{K_N}$ on $K_N$. {Since $W'^\ell_{N+1}$ is a bi-submersion between $K_N$ and $\Lambda^{N+1}_0K$, the codimension of the leaves of the bi-vertical singular foliation on $W'^\ell_{N+1}$ are bounded by $\mathrm{rk}(E_{-N-1})+\dim \Lambda^{N+1}_0K$ according to Remark \ref{rk:cut-dim}. }
\end{proof}

Since \eqref{eq:anchored_{N+1}} is an anchored bundle over $\mathcal{BV}_{K_{N}}$, we can take the leaf preserving bi-submersion \begin{equation}\label{eq:T_{N+1}}
    \mathcal{T}_{N+1}:=\mathcal{U}_{\left(d_0^1\circ \cdots \circ d_0^N\right)^*E_{-N-1}}
\end{equation} of Remark \ref{rmk:equiv-ell} be the bi-submersion over $\mathcal {BV}_{K_N}$, where $\mathcal{U}_{\left(d_0^1\circ \cdots \circ d_0^N\right)^*E_{-N-1}}$ is a neighborhood of the zero section of the pullback vector bundle $\left(d_0^1\circ \cdots \circ d_0^N\right)^*E_{-N-1}\to  K_N$ on which there is a bi-submersion over $\mathcal{BV}_{K_N}$ as in Example \ref{ex:holonomy-biss2}. By construction, we have $\left(d_0^1\circ \cdots \circ d_0^{N+1}\right)^*E_{-N-1}\subset \ker Tp_0^{N+1}$.

   By Theorem \ref{th:reducedimension} and Remark \ref{rk:cut-dim}, we can therefore replace ${W'}^\ell_{N+1}$ by another total relation $W_{N+1}^\ell$ whose dimension $\dim W_{N+1}^\ell=\mathrm{rk}(E_{-N-1})+ \dim (\Lambda^{N+1}_0K)$ so that $\left(d_0^1\circ \cdots \circ d_0^{N+1}\right)^*E_{-N-1}=\ker Tp_0^{N+1}\to {W}^\ell_{N+1}$.

{\textbf{Step 3: checking that this candidate admits natural face maps that satisfy the required conditions (but no degeneracy maps yet)}}
 For every $0\leq \ell \leq N+1$, let ${W}_{N+1}^{\ell}$ be as in Diagram \eqref{diag:equivalence_{N+1}}, we define the following maps on ${W}_{N+1}^{\ell}$ : 
$$d_i^{N+1}:=\begin{cases}
     \mathrm{pr}_{i+1}\circ p_\ell^{N+1}\colon {W}_{N+1}^{\ell}\to K_N, & \text{if}\;\; i=0,\ldots,\ell-1\\&\\d_\ell^{N+1}\colon {W}_{N+1}^{\ell}\to K_N, & \text{if\; $i=\ell$, is as in Diagram \eqref{diag:equivalence_{N+1}}}\\&\\\mathrm{pr}_i\circ p_0^{N+1}\colon {W}_{N+1}^{0}\to K_N, & \text{if}\,\; i=\ell+1,\ldots,N+1.
 \end{cases}$$
 \begin{lemma}\label{lem:face-face}
     For every $\ell=0,\ldots, N+1$, the maps $\left(d_i^{N+1}\colon {W}^\ell_{N+1}\to K_N\right)_{i=0,\ldots, N+1}$ satisfy the following identities $$d_i^{N}\circ d_j^{N+1}=d_{j-1}^N\circ d_i^{N+1}$$ for all $0\leq i<j\leq N+1$.
 \end{lemma}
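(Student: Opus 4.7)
The plan is to reduce the target simplicial identity to two inputs: the matching conditions that define the horn $\Lambda^{N+1}_\ell K$, and the commutativity of the two triangles in diagram \eqref{diag:equivalence_{N+1}}. First I would introduce convenient notation: for a point $w \in W^\ell_{N+1}$, set
\[ x_\ell := d_\ell^{N+1}(w) \in K_N \quad \text{and} \quad x_i := \mathrm{pr}_i\bigl(p_\ell^{N+1}(w)\bigr) \in K_N \text{ for } i\neq \ell, \]
so that by the very definition of the face maps, $d_i^{N+1}(w) = x_i$ for every $i \in \{0,\dots,N+1\}$. The identity to prove then reads $d_i^N x_j = d_{j-1}^N x_i$ for every $0\le i<j\le N+1$.

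The argument proceeds by case analysis on whether $\ell \in \{i,j\}$. When $i\ne\ell$ and $j\ne\ell$, the equality $d_i^N x_j = d_{j-1}^N x_i$ is literally one of the simplicial matching conditions that cut out $\Lambda^{N+1}_\ell K$ inside the iterated fiber product of copies of $K_N$, hence holds automatically. When $j=\ell$, so $0\le i\le \ell-1$, the bottom triangle of \eqref{diag:equivalence_{N+1}} gives the equality of maps
\[ p_\ell^N \circ d_\ell^{N+1} \;=\; \bigl(d_{\ell-1}^N\circ \mathrm{pr}_{a\le \ell}\;\times\; d_\ell^N\circ \mathrm{pr}_{b>\ell+1}\bigr)\circ p_\ell^{N+1}; \]
evaluating at $w$ and reading off the component indexed by $a=i$ on both sides yields exactly $d_i^N x_\ell = d_{\ell-1}^N x_i$. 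When $i=\ell$ and $j=\ell+1$, the top triangle of \eqref{diag:equivalence_{N+1}} produces $d_\ell^N x_\ell = d_\ell^N x_{\ell+1}$, which is the required identity. Finally, when $i=\ell$ and $j\ge \ell+2$, reading off the component indexed by $b=j$ in the bottom-triangle equality above gives $d_{j-1}^N x_\ell = d_\ell^N x_j$.

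The extreme values $\ell=0$ and $\ell=N+1$ are handled in the same way: for $\ell=N+1$ the second diagram of \eqref{diag:equivalence_{N+1}} replaces the first and the case $j=\ell+1$ is void, while for $\ell=0$ the range $0\le i\le \ell-1$ is empty and only the top-triangle and horn-matching cases remain. I expect the only delicate point to be purely notational: one must keep track of the index shift $b\mapsto b-1$ in the simplicial identity against the "omit index $\ell$" convention for the coordinates of the horn, so that the components selected by $\mathrm{pr}_{a\le\ell}$ and $\mathrm{pr}_{b>\ell+1}$ in the bottom-triangle equation land on the correct face. Once this bookkeeping is fixed, the lemma follows immediately from the commutativity of \eqref{diag:equivalence_{N+1}} and the definition of $\Lambda^{N+1}_\ell K$, with no further geometric input required.
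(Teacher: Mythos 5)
Your proposal is correct and follows essentially the same route as the paper's proof: the cases with $i\ne\ell\ne j$ are read off from the matching conditions defining $\Lambda^{N+1}_\ell K$, and the cases involving the index $\ell$ are extracted componentwise from the two commuting triangles of Diagram \eqref{diag:equivalence_{N+1}}, exactly as the paper does via its Equations \eqref{eq1} and \eqref{eq2}. The index-shift bookkeeping you flag as the delicate point is precisely what the paper's final case-by-case display handles.
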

\begin{proof}
    \underline{The cases involving the face map} $d_\ell^{N+1}$: the commutativity of Diagram \eqref{diag:equivalence_{N+1}}  reads \begin{itemize}
     
     \item[]   \begin{equation}\label{eq1}
               d^{N}_{\ell}\circ d^{N+1}_\ell=(d^{N}_\ell\circ \mathrm{pr}_{\ell+1})\circ p_\ell^{N+1}
           \end{equation}
     \item[] and \begin{align}
               \nonumber p_{\ell}^{N}\circ d^{N+1}_\ell&=\left(d_{\ell-1}^N\circ \mathrm{pr}_{i\leq \ell}\times d_{\ell}^N\circ \mathrm{pr}_{i>\ell+1}\right)\circ p_\ell^{N+1}\\&\label{eq2}=\left(d^{N}_{\ell-1}\circ d_0^{N+1},\ldots,{d^{N}_{\ell-1}\circ d_{\ell-1}^{N+1}}, d^{N}_{\ell}\circ d_{\ell+2}^{N+1},\ldots, d^{N}_{\ell}\circ d_{N+1}^{N+1}\right)
           \end{align}
        \end{itemize}
This implies that

\begin{enumerate}
       \item for $0\leq i\leq \ell-1$ and $j=\ell$ : $\mathrm{pr}_{i+1}\circ p^N_{\ell}\circ d^{N+1}_\ell=d^{N}_{\ell-1}\circ d_{i}^{N+1}$, using  Equation \eqref{eq2}. Therefore, $$d_{i}^N\circ d^{N+1}_\ell=d^{N}_{\ell-1}\circ d_{i}^{N+1}.$$

 \item $i=\ell$ and $j=\ell+1$ :  $d^{N}_{\ell}\circ d^{N+1}_\ell=(d^{N}_{\ell}\circ \mathrm{pr}_{\ell+1})\circ p_\ell^{N+1}=d^{N}_{\ell}\circ d_{\ell+1}^{N+1}$,\; using  Equation \eqref{eq1}.

\item  $i=\ell$ and $\ell+2 \leq j\leq N+1$  : $\mathrm{pr}_{j-1}\circ p^N_{\ell}\circ d^{N+1}_\ell=d^{N}_{\ell}\circ d_{j}^{N+1}$, using  Equation \eqref{eq2}. Therefore, $$d_{j-1}^N\circ d^{N+1}_\ell=d^{N}_{\ell}\circ d_{j}^{N+1}.$$

\end{enumerate}

\noindent
\underline{The cases without the face map} $d_\ell^{N+1}$:  for $i<j\neq\ell$ and by definition of $\Lambda^{N+1}_\ell K$ we have : 

$$d_i^N\circ d_j^{N+1}=\begin{cases}
     d_i^N\circ(\mathrm{pr}_{j+1}\circ p_\ell^{N+1})= d_{j-1}^N\circ(\mathrm{pr}_{i+1}\circ p_\ell^{N+1})=d_{j-1}^N\circ d_{i}^{N+1},\; \text{for}\; 0\leq j<\ell\\\\d_i^N\circ(\mathrm{pr}_{j}\circ p_\ell^{N+1})= \begin{cases}
         d_{j-1}^N\circ(\mathrm{pr}_{i+1}\circ p_\ell^{N+1})=d_{j-1}^N\circ d_{i}^{N+1},\;  \text{for}\,\; i<\ell+1\leq j\leq N+1\\ \\        d_{j-1}^N\circ(\mathrm{pr}_{i}\circ p_\ell^{N+1})=d_{j-1}^N\circ d_{i}^{N+1},\;  \text{for}\,\; \ell+1\leq i< j\leq N+1.
     \end{cases}
 \end{cases}$$
\end{proof}

{\textbf{Step 4: construction of our candidate for $K_{N+1}$ as disjoint unions of the previous ones, so that degeneracies now exist.}}

We define  $\displaystyle{K_{N+1}:= \coprod_{\ell=0}^{N+1} {W}_{N+1}^{\ell}}$ to be the disjoint union of these equivalences.


Let us check that all axioms are satisfied, that is

\begin{enumerate}
\item $\dim K_{N+1}=\mathrm{rk}(E_{-N-1})+\dim \Lambda^{N+1}_0K$: this hold my construction.
    \item the horn spaces in $K_{N+1}$, i.e., $\left(\Lambda^{N+2}_\ell K\right)_{\ell=0,\ldots,N+2}$ are bi-submersions between bi-vertical singular foliations $\mathcal{BV}_{K_{N+1}}$ and $\mathcal{BV}_{\Lambda_\ell^{N+1} K}$ for $\ell=0,\ldots,N+1$: this  holds by a direct computation.
\item the bi-vertical singular foliations on $K_{N+1}$ and $\Lambda^{N+2}_\ell K$ exist for $\ell=0,\ldots,N+2$: this will be a consequence of the existence of a geometric resolution $$ \cdots \stackrel{}{\longrightarrow} (d_0^1\circ \cdots \circ d_0^{N+1})^*E_{-N-2}\stackrel{}{\longrightarrow} TK_{N+1}$$of $\mathcal{BV}_{K_{N+1}}$.

    \item the horn space $\Lambda^{N+2}_\ell K$ is equivalent to $K_{N+2}$ for all $\ell=0,\ldots,N+2.$
\end{enumerate}

{$K_{N+1}$ can be chosen so that every point in the horn space $\Lambda^{N+2}_\ell K$ is related to a point in $K_{N+1}$ for all $\ell=0,\ldots,N+2$.} To see this,  we first need the following lemma.
\begin{lemma}
If for $k=0,\ldots, N+2$, $K_{N}\stackrel{\;\;d^{N+1}_{\ell}}{\longleftarrow}W^{k}\stackrel{p_{
\ell}^{N+1}}{\longrightarrow}\Lambda_\ell^{N+1}K$ is a bi-submersion between the bi-vertical singular foliations $\mathcal{BV}_{K_N}$ and $\mathcal{BV}_{\Lambda^{N+1}_\ell K}$ for $\ell=0,\ldots, N+1$, then $$\Lambda^{N+2}_\ell(W^0,\ldots, \widehat{W^\ell},\ldots,W^{N+3}):=\left\{ (x_0,\ldots, \widehat{x_\ell},\ldots,x_{N+2})\in W^1\times \cdots\times W^{N+3} \mid d^{N+1}_ix_j=d^{N+1}_{j-1}x_i,\; i<j\right\}$$ is a bi-submersion between $\mathcal{BV}_{K_N}$ and $\mathcal{BV}_{\Lambda^{N+1}_\ell K}$ for $\ell=0,\ldots, N+1$.
\end{lemma}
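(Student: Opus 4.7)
The plan is to realize $\Lambda^{N+2}_\ell(W^0,\ldots,\widehat{W^\ell},\ldots,W^{N+2})$ as an iterated fiber product of the bi-submersions $W^k$ over $K_N$ along the given face maps, and then invoke the stability of bi-submersions under transverse fiber products (a standard fact in the Androulidakis--Skandalis theory recalled in Appendix \ref{sec:bi-submersion}).

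First, I would verify that the horn is a smooth submanifold of $\prod_{k\neq\ell}W^k$. Each face map $d^{N+1}_i\colon W^k\to K_N$ is a surjective submersion (as part of a bi-submersion structure), so that each matching condition $d^{N+1}_i x_j = d^{N+1}_{j-1} x_i$ cuts out a transverse fiber product. I would proceed inductively: start with two factors glued along the relevant face maps, which gives a smooth manifold by transversality; then add one factor at a time, matching its faces with those already constrained, again using that face maps of bi-submersions are submersions, which guarantees transversality at each step.

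Second, I would exhibit the source and target projections that furnish the bi-submersion structure. The source is $d^{N+1}_\ell\circ\mathrm{pr}_k\colon \Lambda^{N+2}_\ell(\cdots)\to K_N$ for any surviving index $k$ (all such choices coincide up to composition with a face map, thanks to the matching conditions), and the target is the map to $\Lambda^{N+1}_\ell K$ induced by the $p^{N+1}_\ell\circ\mathrm{pr}_k$'s. Surjectivity and the submersion property of both maps follow from the corresponding properties on each factor $W^k$, together with the fact that the matching conditions can be solved iteratively.

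Third, I would verify the remaining bi-submersion axiom, namely the equality
\[
(d^{N+1}_\ell\circ\mathrm{pr}_k)^{-1}(\mathcal{BV}_{K_N})=(p^{N+1}_\ell)^{-1}(\mathcal{BV}_{\Lambda^{N+1}_\ell K})=\Gamma(\ker T(d^{N+1}_\ell\circ\mathrm{pr}_k))+\Gamma(\ker Tp^{N+1}_\ell).
\]
The inclusion $\supseteq$ is immediate by construction. For the reverse inclusion, I would lift a vector field from the right-hand side to each factor $W^k$, where by assumption the bi-submersion axiom holds, and then use the transverse fiber-product structure of the horn to assemble the lifts into a vector field on $\Lambda^{N+2}_\ell(\cdots)$ that is compatible with all matching conditions.

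The main obstacle will be the third step: the bookkeeping required to show that the bi-vertical vector fields on the horn decompose consistently across all factors, respecting simultaneously the many matching conditions $d^{N+1}_i x_j = d^{N+1}_{j-1} x_i$. This amounts to an exactness statement for the diagram of tangent bundles built from the $W^k$'s fibered over $K_N$, and its proof should reduce to applying, at each iteration of the fiber product, the closure property of bi-submersions under fiber products over common submersion targets, exactly as used in the $N=2$ case treated in Proposition \ref{prop:up-to-degree2} to handle the four horn spaces $\Lambda^3_\ell K$.
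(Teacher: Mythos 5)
Your overall plan---smoothness of the horn by transversality, then a fiber-product argument for the bi-submersion axioms---points in the right direction, but the decomposition you choose in the third step does not work as stated, and that step is exactly where the content of the lemma lies. You propose to build the horn by adding one factor $W^k$ at a time and to invoke, at each stage, ``the closure property of bi-submersions under fiber products over common submersion targets''. That closure property (Proposition \ref{prop: inv-composition}) applies only when the fiber product is taken along the \emph{structure maps} of the two bi-submersions over a common foliated manifold. In the horn, however, the matching conditions $d^{N+1}_i x_j=d^{N+1}_{j-1}x_i$ glue the factors along individual face maps, and for $i\neq\ell$ these are merely components of $p^{N+1}_\ell\colon W^j\to\Lambda^{N+1}_\ell K$, not the structure map $d^{N+1}_\ell$ to $(K_N,\mathcal{BV}_{K_N})$ nor the full map to $\left(\Lambda^{N+1}_\ell K,\mathcal{BV}_{\Lambda^{N+1}_\ell K}\right)$. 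Consequently the intermediate partial fiber products are not fiber products of bi-submersions over a common foliated target in the required sense, and nothing identifies them as bi-submersions between recognizable foliated manifolds; your induction has no valid inductive step.

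The paper's proof (which reduces this lemma to the proof of Lemma \ref{lem:hornN}) avoids this by a single, global decomposition rather than an iterated one: it isolates the $(\ell+1)$-st surviving factor, which by hypothesis is a bi-submersion between $\mathcal{BV}_{\Lambda^{N+1}_\ell K}$ and $\mathcal{BV}_{K_N}$, and lumps \emph{all} remaining factors into the double horn $\Lambda^{N+2}_{\{\ell,\ell+1\}}$, which it proves to be a Morita self-equivalence of $\mathcal{BV}_{\Lambda^{N+1}_\ell K}$; the horn is then the fiber product of these two pieces over $\Lambda^{N+1}_\ell K$ along the genuine structure maps, so Proposition \ref{prop: inv-composition} does apply. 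This is the same mechanism used for the horn spaces $\Lambda^3_\ell K$ in Proposition \ref{prop:up-to-degree2}, so your appeal to the $N=2$ case actually points to the decomposition you should be using. Two further points you pass over too quickly: the surjectivity of the two maps from the double horn onto $\Lambda^{N+1}_\ell K$ requires the recursive construction of preimages $(\xi_i)$ carried out in the proof of Lemma \ref{lem:hornN} (the matching conditions must be solved in a specific order), and the index carrying the map to $K_N$ is not arbitrary---the spans in Lemma \ref{lem:hornN} fix it to be the $(\ell+1)$-st projection, and different choices yield genuinely different maps, not maps that ``coincide up to composition with a face map''.
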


\begin{proof}
    It follows the same line as in the proof of  Lemma \ref{lem:hornN} below, when $W^k=K_{N+1}$ for $k=0,\ldots,N+2$.
\end{proof}

\begin{lemma}\label{lem:higher-mult}
    We can choose $K_{N+1}$ such that there exists for all $\ell =0,\dots, N $ a local morphism of bi-submersions $\Lambda^{N+2}_\ell K\hookrightarrow K_{N+1}$.
\end{lemma}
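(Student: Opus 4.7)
The plan is to obtain the morphism $\Lambda^{N+2}_\ell K \to K_{N+1}$ as a bi-submersion morphism landing in the summand $W^\ell_{N+1}$ of $K_{N+1}$, exploiting that $W^\ell_{N+1}$ was built as a \emph{total relation} and therefore absorbs every local bi-submersion morphism between its two singular foliations.

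First, I would check that $\Lambda^{N+2}_\ell K$, equipped with the projection to $\Lambda^{N+1}_\ell K$ (assembled from the face maps $d_i^{N+1}$, $i\neq \ell$) and with $d_\ell^{N+1}\circ \mathrm{pr}_{\ell+1}$ to $K_N$, is itself a bi-submersion between $\mathcal{BV}_{K_N}$ and $\mathcal{BV}_{\Lambda^{N+1}_\ell K}$. This is exactly what the unnumbered lemma preceding the statement delivers, applied with $W^0=\cdots=W^{N+2}=K_{N+1}$: each summand of $K_{N+1}$ is, for every $\ell$, a bi-submersion between the required pair of bi-vertical foliations (components that are not the ``$\ell$-th equivalence'' still carry the correct pair of projections via their own horn-projection composed with the face map at $\ell$).

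Next, I would unpack the structure of $W^\ell_{N+1}$ from Remark \ref{rmk:equiv-ell}: it has the shape $\mathcal{T}_{N+1}\times_{K_N} V_\ell$ with $V_\ell$ a \emph{countable} disjoint union of graph-neighborhoods, indexed by local bi-submersion morphisms $\phi\colon \Sigma\subset K_N \leftrightarrows \Lambda^{N+1}_\ell K$ defined on bi-transversals. By Theorem \ref{thm:bi-sub-equiv}, such a total relation has the defining property that any local bi-submersion morphism $K_N \dashrightarrow \Lambda^{N+1}_\ell K$ factors through it. Given a point $x\in \Lambda^{N+2}_\ell K$, restricting the two projections of $\Lambda^{N+2}_\ell K$ to a bi-transversal through $x$ and inverting the source projection (a submersion onto a bi-transversal of $K_N$) produces exactly a local bi-submersion morphism of this type. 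After enlarging the countable family of graphs defining $V_\ell$ by those extracted from a countable cover of $\Lambda^{N+2}_\ell K$ (second-countability and local-finiteness of a good cover), and invoking the freedom granted by Theorem \ref{th:reducedimension} and Remark \ref{rk:cut-dim} to keep the constant dimension of $W^\ell_{N+1}$, these local factorizations assemble into a well-defined map $\Lambda^{N+2}_\ell K \to W^\ell_{N+1}\hookrightarrow K_{N+1}$ commuting with both projections.

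Finally I would iterate this enlargement simultaneously for $\ell=0,\dots,N$, which is legitimate because the $V_\ell$'s are independent countable disjoint unions and enlarging one does not disturb another. The chief obstacle, which is where the argument really has to be careful, is the \emph{gluing} step: locally the morphism into $V_\ell$ depends on the choice of indexing graph $\phi$, and two overlapping charts on $\Lambda^{N+2}_\ell K$ may produce different $\phi$'s. The point is that different choices land in different components of the disjoint union, so rather than trying to glue them to a single global section one simply refines the cover and distributes its pieces into distinct components of $V_\ell$; this turns a potential cocycle problem into a bookkeeping statement, compatible with the overall para-simplicial construction since $K_{N+1}$ has throughout been built as a (possibly non-connected) disjoint union.
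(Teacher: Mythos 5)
There is a genuine gap, and it sits exactly where the paper's proof does its real work. The horn space $\Lambda^{N+2}_\ell K$ is built out of $(N+2)$-tuples of points of $K_{N+1}$ itself, so your plan to ``enlarge the countable family of graphs defining $V_\ell$ by those extracted from a countable cover of $\Lambda^{N+2}_\ell K$'' is circular: enlarging $V_\ell$ enlarges $W^\ell_{N+1}$, hence $K_{N+1}$, hence $\Lambda^{N+2}_\ell K$, which produces new points whose transverse diffeomorphisms must again be adjoined, and so on. Your ``iterate simultaneously for $\ell=0,\dots,N$'' addresses the wrong index: the iteration that must be controlled is over the nesting depth of horn constructions, not over $\ell$. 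The paper resolves precisely this by saturating first, i.e.\ taking $K'_{N+1}$ to be the disjoint union of $W_{N+1}$ with \emph{all} finitely iterated horn spaces $\Lambda^{N+2}_\ell\bigl(\Lambda^{N+2}_{\ell_n}\cdots\Lambda^{N+2}_{\ell_1}W_{N+1},\dots\bigr)$, so that the horn of the result is again (componentwise) one of the adjoined pieces, and only then cutting the dimension via Corollary \ref{cor:reducedimension}. That saturation step is the content of the lemma and it is absent from your argument.

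The gluing step also does not close. Declaring that overlapping charts are ``distributed into distinct components of $V_\ell$'' does not define a map: a continuous map from a connected component of $\Lambda^{N+2}_\ell K$ into the disjoint union $K_{N+1}$ must land in a single component, and nothing in your construction guarantees that one graph-neighborhood $\mathrm{Graph}(\phi)\times(\mathcal B^{n_{N-1}})^2\times\mathcal B^{n_N}$ absorbs an entire connected component of the horn; refining the cover only multiplies the incompatible local choices, it cannot merge them. Relatedly, totality of $W^\ell_{N+1}$ only supplies, for $x\in\Lambda^{N+2}_\ell K$, some $y\in W^\ell_{N+1}$ with the \emph{same two projections}; by Proposition \ref{Local-equivalence} this is strictly weaker than $x$ and $y$ being equivalent as points of bi-submersions between $\mathcal{BV}_{\Lambda^{N+1}_\ell K}$ and $\mathcal{BV}_{K_N}$ (the induced germs of transverse diffeomorphisms must also agree), so the enlargement of $V_\ell$ is genuinely needed and you are back to the circularity above. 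What your argument can deliver without these repairs is only that every point of $\Lambda^{N+2}_\ell K$ is equivalent to a point of $K_{N+1}$ after a stabilized enlargement — which is what is used downstream in Lemma \ref{lem:hornN} — but to obtain that stabilization you essentially have to reproduce the paper's iterated-horn construction.
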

\begin{proof}
    Let $K_{N}\stackrel{\;\;d^{N+1}_{\ell}}{\longleftarrow} W_{N+1}\stackrel{p_{
\ell}^{N+1}}{\longrightarrow}\Lambda_\ell^{N+1}K$ be a bi-submersion between the bi-vertical singular foliations $\mathcal{BV}_{K_N}$ and $\mathcal{BV}_{\Lambda^{N+1}_\ell K}$ for $\ell=0,\ldots, N+1$. We denote $\Lambda^{N+2}_\ell W:=\Lambda^{N+2}_\ell(W,\ldots, W)$  for a bi-submersion $W$ between $\mathcal{BV}_{K_N}$ and $\mathcal{BV}_{\Lambda^{N+1}_\ell K}$ for $\ell=0,\ldots, N+1$.
       \begin{enumerate}
            \item Consider $K_{N+1}'$ the disjoint union of $W_{N+1}$ with $$\Lambda^{N+2}_\ell\left(\Lambda^{N+2}_{\ell_n} \Lambda^{N+2}_{\ell_{n-1}}  \cdots \Lambda^{N+2}_{\ell_1}W_{N+1},\ldots, \underbrace{\widehat{W_{N+1}}}_{\ell\text{th-place}},\ldots,\Lambda^{N+2}_{\ell'_n} \Lambda^{N+2}_{\ell'_{n-1}}  \cdots \Lambda^{N+2}_{\ell'_1}W_{N+1}\right)$$

            for $\ell=0, \ldots, N+2$.
        \item $K'_{N+1}$ is not a manifold, but each of its connected component is a bi-submersion between the bi-vertical singular foliations of $\Lambda^{N+1}_\ell K$ and $K_N$ for $\ell=0,\ldots,N+1$. By Corollary \ref{cor:reducedimension}, $K_{N+1}'$ contains a finite dimensional manifold that we  denote by $K_{N+1}$  which is a bi-submersion between the bi-vertical singular foliations of $\Lambda^{N+1}_\ell K$ and $K_N$ for $\ell=0,\ldots,N+1$. By construction, \begin{enumerate}
            \item $K_{N+1}$ an equivalence of bi-submersions between $\Lambda^{N+1}_\ell K$ and $K_N$ for  $\ell=0,\ldots,N+1$;
            \item and every point in $\Lambda^{N+1}_\ell K$ is related  to a point in  $K_{N+1}$ for all $\ell=0,\ldots,N+1$.
        \end{enumerate}
       \end{enumerate}
\end{proof}

The following lemma is crucial, as it establishes the Kan condition and provides the degeneracy maps.
\begin{lemma}\label{lem:KanN}
    \begin{enumerate}

     \item {For $\ell=0,\ldots,N+1$,  and every $x\in K_{N+1}$, the leaves of the bi-vertical singular foliations through $d_\ell^{N+1}(x)$ and  $p_\ell^{N+1}(x)\in\Lambda^{N+1}_\ell K$  have the same codimension.}
    
    \item For every $\ell=0,\ldots,N+1$, $\left(\Lambda^{N+1}_{\ell}K, \mathcal{BV}_{\Lambda^{N+1}_{\ell}K}\right)\stackrel{\;\;\;\;p_\ell^{N+1}}{\longleftarrow} K_{N+1}\stackrel{d_\ell^{N+1}}{\longrightarrow}\left(K_N,\mathcal{BV}_{K_N}\right)$ is a total relation between the bi-submersions $\Lambda^{N+1}_{\ell}K$ and $K_N$. 

    \item Moreover, for every $\ell\in \{0, \ldots, N\}$, the map $\phi_\ell\colon K_N\to \Lambda_\ell^{N+1}K$  defined as\begin{equation}
        \label{eq:phi_ell}x\mapsto \begin{cases}
         \left(x, s_0^{N-1}d_{1}^N(x),\ldots, s_{0}^{N-1}d_{N}^N(x)\right) & \text{for}\; \ell=0\\\\ \left( s_{\ell-1}^{N-1}d_0^N(x),\ldots, s_{\ell-1}^{N-1}d_{\ell-1}^N(x),x, s_\ell^{N-1}d_{\ell+1}^N(x),\ldots, s_{\ell}^{N-1}d_{N}^N(x)\right) & \text{for}\; 0<\ell< N\\\\ \left( s_{N-1}^{N-1}d_0^N(x),\ldots, s_{N-1}^{N-1} d_{N-1}^N(x),x\right) & \text{for}\; \ell=N
     \end{cases} \end{equation}is a morphism of bi-submersions and there exist $N+1$ smooth maps $\left(s_\ell^N\colon K_N\to K_{N+1}\right)_{\ell=0,\ldots,N}$ such that  the following diagram commutes \begin{equation}\label{eq:phi_ell2}
         \xymatrix{K_N&W_{N+1}^\ell \ar[l]_{d_\ell ^{N+1}}\ar[rd]^{p_\ell^{N+1}}&\\K_N\ar[u]^{\mathrm{Id}}\ar[rr]^{\phi_\ell}\ar[ru]^{s_\ell^N}&&\Lambda^{N+1}_\ell K}
     \end{equation} 
    
\end{enumerate}\end{lemma}
\begin{remark}
        Equation \eqref{eq:phi_ell2} in Lemma \ref{lem:KanN} implies that $d^{N+1}_\ell\circ s_\ell^N=\mathrm{id}=d^{N+1}_{\ell+1}\circ s_\ell^N$ and \begin{align*}
    d^{N+1}_i\circ s_\ell^N&=s_{\ell-1}^{N-1}\circ d_{i}^N\qquad \text{for}\; 0\leq i< \ell\\ d^{N+1}_i\circ s_\ell^N&=s_\ell^{N-1}\circ d_{i-1}^N \qquad \text{for}\; i>\ell+1. 
\end{align*}Indeed, 
\begin{enumerate}
    \item for $0\leq i< \ell$ :  
   by definition of $d_i^{N+1}$ we have \begin{align*} 
        d^{N+1}_i\circ s_\ell^N&=\mathrm{pr}_{i+1}\left(s_{\ell-1}^{N-1}\circ d_0^N,\ldots, s_{\ell-1}^{N-1}\circ d_{\ell-1}^N,\mathrm{id}, s_\ell^{N-1}\circ d_{\ell+1}^N,\ldots, s_{\ell}^{N-1}\circ d_{N}^N\right)\\&=s_{\ell-1}^{N-1}\circ d^N_i
    \end{align*}
    \item  for $i=\ell+1$ : \begin{align*} 
        d^{N+1}_{\ell+1}\circ s_\ell^N&=\mathrm{pr}_{\ell+1}\left( s_{\ell-1}^{N-1}\circ d_0^N,\ldots, s_{\ell-1}^{N-1}\circ d_{\ell-1}^N,\mathrm{id}, s_\ell^{N-1}\circ d_{\ell+1}^N(x),\ldots, s_{\ell}^{N-1}\circ d_{N}^N\right)\\&=\mathrm{id}
    \end{align*}
    \item for $\ell+1< i \leq N$ : \begin{align*} 
        d^{N+1}_i\circ s_\ell^N&=\mathrm{pr}_{i}\left( s_{\ell-1}^{N-1}\circ d_0^N,\ldots, s_{\ell-1}^{N-1}\circ d_{\ell-1}^N,\mathrm{id}, s_\ell^{N-1}\circ d_{\ell+1}^N,\ldots, s_{\ell}^{N-1}\circ d_{N}^N\right)\\&=s_{\ell}^{N-1}\circ d^N_{i-1}
    \end{align*}
    \end{enumerate}
    \end{remark}
    \begin{proof}
     Item 1 follows from the fact that the maps $d_\ell^{N+1}\colon K_{N+1}\to K_N$ are submersions for all $\ell=0,\ldots,N+1$ and that,  upon choosing a local section, for every $x\in K_{N+1}$, the points $d_\ell^{N+1}(x)$ and $p_\ell^{N+1}(x)$ are related.  Consequently, the codimension of the corresponding bi-vertical leaves are identical. Let us show item 2. By construction,  $$\xymatrix{\left(\Lambda^{N+1}_\ell K, \mathcal{BV}_{\Lambda^{N+1}_\ell K}\right)&& {W}_{N+1}^{\ell}\ar[ll]_<<<<<<<<<{p_\ell^{N+1}}\ar[rr]^{d_\ell^{N+1}}&&(K_{N},\mathcal{BV}_{K_{N}}})$$ is a bi-submersion for every $0\leq\ell\leq N+1 $. For $0\leq \ell'\neq \ell\leq N+1$ we need to show that $$\xymatrix{\left(\Lambda^{N+1}_{\ell'} K, \mathcal{BV}_{\Lambda^{N+1}_{\ell'} K}\right)&& {W}_{N+1}^{\ell}\ar[ll]_<<<<<<<<<{p_{\ell'}^{N+1}}\ar[rr]^{d_{\ell'}^{N+1}}&& (K_{N},\mathcal{BV}_{K_{N}}})$$ is a bi-submersion. The equality  $(d_{\ell'}^{N+1})^{-1}\left(\mathcal{BV}_{K_N}\right)=(p_{\ell'}^{N+1})^{-1}\left(\mathcal{BV}_{\Lambda^{N+1}_{\ell'}K}\right)$ holds because the diagrams in Equation \eqref{diag:equivalence_{N+1}} commute. This implies that $$\Gamma(\ker Td_{\ell'}^{N+1})+\Gamma(\ker Tp_{\ell'}^{N+1})\subseteq (d_{\ell'}^{N+1})^{-1}\left(\mathcal{BV}_{K_N}\right)=(p_{\ell'}^{N+1})^{-1}\left(\mathcal{BV}_{\Lambda^{N+1}_{\ell'}K}\right).$$ We need to show the other inclusion. Let $X\in \mathcal{BV}_{K_N}=\bigcap_{j=0}^N\Gamma\left(\ker Td^{N}_{j}\right)\subset \mathfrak X(K_N)$. The vector field $(0,\ldots,X,\ldots,0)\in \mathfrak X(K_{N}^{\times N+1 })$ is tangent to the horn space $\Lambda^{N+1}_{\ell'}K$ and belongs to the bi-vertical singular foliation $\mathcal{BV}_{\Lambda^{N+1}_{\ell'}K}=\left(\bigcap_{j=1}^{\ell'}\Gamma\left(\ker T(d_{\ell'-1}^N\circ \mathrm{pr}_j)\right)\right)\cap\left(\bigcap_{j=\ell'+1}^{N+1}\Gamma\left(\ker T(d_{\ell'}^N\circ \mathrm{pr}_j)\right)\right)\subset \mathfrak X\left(\Lambda^{N+1}_{\ell'}K\right)$.  By Proposition \ref{prop:lifting-property}, there exists a vector field $\widecheck{X}\in \mathfrak X({W}_{N+1}^\ell)$ such that $Td_\ell^{N+1}(\widecheck X)=0$ and  \begin{equation*}
     Tp_\ell^{N+1}(\widecheck X)=\left(Td_0^{N+1}(\widecheck X),\cdots,\widehat{d_\ell^{N+1}},\cdots, Td_{N+1}^{N+1}(\widecheck X)\right)=(0,\ldots, \underbrace{X\circ d_{\ell'}^{N+1}}_{\text{the place of }\, d_{\ell'}^{N+1}\, \text{in}\, p_\ell^{N+1}},\ldots, 0).
 \end{equation*}
The vector field $\widecheck X\in \mathfrak X(K_{N+1})$ is $p_{\ell'}^{N+1}$-related to zero  and $d_{\ell'}^{N+1}$-related to $X$. Therefore, we have $Td_{\ell'}^{N+1}\left(\Gamma(\ker Tp_{\ell'}^{N+1})\right)=(d_{\ell'}^{N+1})^*\mathcal{BV}_{K_N}$. This proves the other inclusion. 

To complete the proof, we need to check that the maps $p_{\ell'}^2\colon {K}_{N+1}\to \Lambda^{N+1}_{\ell'}K$ are submersions for all $\ell'=0,\ldots, N+1$. This
follows from these two items \begin{enumerate}
     \item[$(i)$] For $0\leq \ell'\neq \ell\leq N+1$, the equality $$\Gamma(\ker Td_{\ell'}^{N+1})+\Gamma(\ker Tp_{\ell'}^{N+1})=(d_{\ell'}^{N+1})^{-1}\left(\mathcal{BV}_{K_N}\right)=(p_{\ell'}^{N+1})^{-1}\left(\mathcal{BV}_{\Lambda^{N+1}_{\ell'}K}\right)$$ implies that $\mathcal{F}_{K_{N+1}}=(p_{\ell'}^{N+1})^{-1}\left(\mathcal{BV}_{\Lambda^{N+1}_{\ell'}K}\right)=\Gamma(\ker Td_{\ell'}^{N+1})+\Gamma(\ker Tp_{\ell'}^{N+1})$ is a singular foliation on ${W}^\ell_{N+1}$. In particular, for every $x\in {W}^\ell_{N+1}$,  $T_xp_{\ell'}^{N+1}\left(\ker T_xd_{\ell'}^{N+1}\right)=T_{p_{\ell'}^{N+1}(x)}\mathcal{BV}_{\Lambda^{N+1}_{\ell'}K}$.

     \item[$(ii)$] { For every $x\in \Sigma$, there exists $ \Sigma\subset {W}^\ell_{N+1}$ a submanifold that contains $x$ such that $T_x{W}^\ell_{N+1}=T_x\Sigma\oplus T_x\mathcal{F}_{{W}^\ell_{N+1}}$. This implies in particular that the codimension of the leaf of 
     $\mathcal{BV}_{K_N}$ through $d_{\ell'}^{N+1}(x)$ is $\dim \Sigma$. The restriction of $p_{\ell'}^{N+1}\colon{W}_{N+1}^\ell\to K_N$ to $\Sigma\subset {W}_{N+1}^\ell$ is an immersion (we can assume it is injective). Its image $\Sigma'=p_{\ell'}^{N+1}(\Sigma)\subset K_N$ is a submanifold that intersects cleanly the bi-vertical leaf that passes though $p_{\ell'}^{N+1}(x)$ at a single point. By item $1$ of Lemma \ref{lem:KanN}, we have that  $T_xp_{\ell'}^{N+1}(T_x{W}^\ell_{N+1})=T_{p_{\ell'}^{N+1}(x)}\Sigma'\oplus T_{p_{\ell'}^{N+1}(x)}\mathcal{BV}_{\Lambda^{N+1}_{\ell'}K}=T_{p_{\ell'}^{N+1}(x)}\Lambda^{N+1}_{\ell'}K$.}
 \end{enumerate}
 Since the disjoint union of equivalences of bi-submersions is an equivalence of bi-submersions, $K_{N+1}$ is a bi-submersion between $K_N$ and $\Lambda^{N+1}_{\ell'} K$ for $\ell'=0,\ldots,N+1$.\\

 \noindent
 Item 3 is obtained as follows. To define the  maps $s_\ell^N\colon K_N\to K_{N+1}$ for $0\leq \ell\leq N$, we  use the explicit formula
in Equation \eqref{equiv-ell}. The latter also contains the graph of the restriction of the bi-submersion morphism  $\phi_\ell\colon K_{N}\to \Lambda^{N+1}_{\ell}K$ of the form of Equation \eqref{eq:phi_ell} 
to a bi-transversal $\Sigma\subset K_N$. This  allows us to define the $s_\ell^N$-map as
follows: consider $\iota_{T_{N+1}}\colon K_N\to T_{N+1}$ the unit of the path holonomy bi-submersion atlas $T_{N+1}\rightrightarrows ~K_N$. For $0\leq \ell\leq N$, we define $s_\ell^N$ to be the well-defined map  $$s_{\ell}^{N}\colon K_N\to {W}_{N+1}^{\ell}\subset K_{N+1},\; x\mapsto \left(\iota_{T_{N+1}}(x), (x,\phi(x))\times 0^{2n_{N-1}+n_N}\right)$$ with $\phi$ is as in Equation \eqref{eq:phi_ell}, so that $d^{N+1}_\ell(s_\ell^N(x))=x$ and $p_\ell^{N+1}(s_\ell^N(x))=\phi(x)$ for all $x\in K_N$. 
    \end{proof}


The following lemma is needed
\begin{lemma}\label{lem:hornN} \begin{enumerate}
           \item The spaces $\left(\Lambda^{N+2}_\ell K\right)_{0\leq \ell \leq N+2}$ are smooth and 
       $\Lambda^{N+2}_\ell K$ is a total relation of  bi-submersions for every $0\leq\ell\leq N+2$: 
\begin{align*}
     \ell=0\colon \xymatrix{\left(\Lambda^{N+1}_{0}K, \mathcal{BV}_{\Lambda^{N+1}_{0}K}\right)&&& \Lambda^{N+2}_0 K\ar[lll]_<<<<<<<<<<<<<<{\bigtimes^{i>1}d_{0}^{N+1}\circ \mathrm{pr}_{i}}\ar[rrr]^{d_{0}^{N+1}\circ \mathrm{pr}_{1}}&&& \left(K_{N},\mathcal{BV}_{K_{N}}\right)}\\ 0< \ell < N+1\colon \xymatrix{\left(\Lambda^{N+1}_{\ell}K, \mathcal{BV}_{\Lambda^{N+1}_{\ell}K}\right)&&& \Lambda^{N+2}_\ell K\ar[lll]_<<<<<<<<<<<<<<{d_{\ell-1}^{N+1}\circ \mathrm{pr}_{i\leq \ell}\times d_{\ell}^{N+1}\circ \mathrm{pr}_{i>\ell+1}}\ar[rrr]^{d_{\ell}^{N+1}\circ \mathrm{pr}_{\ell+1}}&&& \left(K_{N},\mathcal{BV}_{K_{N}}\right)}\\ \ell=N+1\colon \xymatrix{\left(\Lambda^{N+1}_{N+1}K,\mathcal{BV}_{\Lambda^{N+1}_{N+1}K}\right)&&&\Lambda^{N+2}_{N+1} K\ar[lll]_<<<<<<<<<<<<<<{\bigtimes^{i\leq N+1}d_{N}^{N+1}\circ \mathrm{pr}_{i}}\ar[rrr]^{d_{N+1}^{N+1}\circ \mathrm{pr}_{N+2}}&&& \left(K_{N},\mathcal{BV}_{K_{N}}\right)}\\ \ell=N+2\colon \xymatrix{\left(\Lambda^{N+1}_{N+1}K, \mathcal{BV}_{\Lambda^{N+1}_{N+1}K}\right)&&&\Lambda^{N+2}_{N+2} K\ar[lll]_<<<<<<<<<<<<<<{\bigtimes^{i\leq N+1}d_{N+1}^{N+1}\circ \mathrm{pr}_i}\ar[rrr]^{d_{N+1}^{N+1}\circ \mathrm{pr}_{N+2}}&&& \left(K_{N},\mathcal{BV}_{K_{N}}\right)}.
    \end{align*}

           \item the bi-submersions  $\left(\Lambda_\ell^{N+2}K\right)_{0\leq \ell \leq N+2}$ are \underline{equivalent} to $K_{N+1}$.
       \end{enumerate}
 \end{lemma}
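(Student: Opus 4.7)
The plan is to prove both items by reducing them to structure already built into $K_{N+1}$ via Lemma \ref{lem:KanN} and Lemma \ref{lem:higher-mult}, and then reusing the fiber-product trick already used in the proof of Proposition \ref{prop:up-to-degree2} (where $\Lambda^3_\ell K$ was written as $K_2 \times_{\Lambda^2_\ell K} \Lambda^3_{\{\ell,\ell+1\}} K$). Throughout, the smoothness of the various iterated fibered products is automatic from the fact, established in Lemma \ref{lem:KanN}, that each face map $d_i^{N+1}\colon K_{N+1}\to K_N$ is a submersion.

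For item (1), I would first decompose, for each $0 \leq \ell \leq N+2$,
$$\Lambda^{N+2}_\ell K \;\cong\; K_{N+1} \times_{\Lambda^{N+1}_\ell K} \Lambda^{N+2}_{\{\ell,\ell+1\}} K,$$
where the left leg $K_{N+1}\to \Lambda^{N+1}_\ell K$ is the horn projection $p_\ell^{N+1}$ and where $\Lambda^{N+2}_{\{\ell,\ell+1\}} K$ is the ``double horn'' playing the role of a Morita-type equivalence over $\Lambda^{N+1}_\ell K$. By Lemma \ref{lem:KanN}(2), the pair $(p_\ell^{N+1}, d_\ell^{N+1})$ is a bi-submersion between $\mathcal{BV}_{\Lambda^{N+1}_\ell K}$ and $\mathcal{BV}_{K_N}$. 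Since the fiber product of a bi-submersion with a Morita equivalence is again a bi-submersion (recalled in \S \ref{sec:Equivalentbi-submersions}), this yields the required bi-submersion structure on each $\Lambda^{N+2}_\ell K$ between $\mathcal{BV}_{\Lambda^{N+1}_\ell K}$ and $\mathcal{BV}_{K_N}$. Totality then follows from surjectivity of the legs $d_\ell^{N+1}$ and $p_\ell^{N+1}$ established in Lemma \ref{lem:KanN}, propagated to the iterated fibered product by means of the simplicial identities proved in Lemma \ref{lem:face-face}.

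For item (2), the equivalence $\Lambda^{N+2}_\ell K \sim K_{N+1}$ is essentially built into the construction. By Lemma \ref{lem:higher-mult}, $K_{N+1}$ was chosen large enough that for every $\ell$ there is an injective morphism of bi-submersions $\Lambda^{N+2}_\ell K \hookrightarrow K_{N+1}$ compatible with both legs. Combined with item (1), which supplies the bi-submersion structure on the left-hand side, this is exactly the data required for an equivalence of bi-submersions in the sense of \S \ref{sec:Equivalentbi-submersions}, and by the standard closure properties of equivalences it extends from the image of the morphism to the full manifolds.

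The main obstacle I expect is the verification of the full bi-submersion axiom, namely the non-obvious inclusion $(p_\ell^{N+2})^{-1}(\mathcal{BV}_{\Lambda^{N+1}_\ell K}) \subseteq \Gamma(\ker Tp_\ell^{N+2}) + \Gamma(\ker Td_\ell^{N+2})$ on $\Lambda^{N+2}_\ell K$. I expect to handle this by componentwise lifting of bi-vertical vector fields from $\Lambda^{N+1}_\ell K$ through the fiber product, imitating the use of Proposition \ref{prop:lifting-property} inside the proof of Lemma \ref{lem:KanN}; the boundary cases $\ell = 0$, $\ell = N+1$, $\ell = N+2$ will need to be treated separately because some of the projection factors defining the horn degenerate, but the same lifting argument should go through after notational adjustment.
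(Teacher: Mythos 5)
Your strategy for item (1) coincides with the paper's: the same fiber-product decomposition $\Lambda^{N+2}_\ell K \cong K_{N+1}\times_{\Lambda^{N+1}_\ell K}\Lambda^{N+2}_{\{\ell,\ell+1\}}K$, with the double horn playing the role of a Morita equivalence of $\mathcal{BV}_{\Lambda^{N+1}_\ell K}$ with itself, followed by the appeal to Proposition \ref{prop: inv-composition}. One caution: the paper does not treat the surjective-submersion property of the two legs $\Lambda^{N+2}_{\{\ell,\ell+1\}}K\to\Lambda^{N+1}_\ell K$ as automatic. Surjectivity is obtained by plugging in the degeneracy sections, and the submersion property is proved by an explicit recursive lifting of components $\xi_{N+2},\xi_{N+1},\dots,\xi_0$ that uses the surjectivity of the generalized horn projections $p^{N+1}_I$. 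Your appeal to ``smoothness of iterated fiber products is automatic'' plus ``the simplicial identities of Lemma \ref{lem:face-face}'' does not by itself deliver this step; you should supply the componentwise lifting argument.

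The genuine gap is in item (2). A morphism of bi-submersions $\Lambda^{N+2}_\ell K\hookrightarrow K_{N+1}$ (Lemma \ref{lem:higher-mult}) shows only that every point of $\Lambda^{N+2}_\ell K$ is equivalent to some point of $K_{N+1}$. By Proposition \ref{Global-equivalence}, an equivalence of bi-submersions also requires the converse: every point of $K_{N+1}$ must be equivalent to some point of $\Lambda^{N+2}_\ell K$. This does not follow from ``standard closure properties'' applied to a one-directional morphism --- a priori $K_{N+1}$, being a disjoint union of several total relations $W^\ell_{N+1}$, could contain points not related to any point of the horn, so your claim that the injective morphism is ``exactly the data required for an equivalence'' is false as stated. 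The paper closes this direction by exhibiting an explicit morphism $K_{N+1}\to\Lambda^{N+2}_\ell K$ sending $x$ to the tuple whose slots are $s^{N}_{\ell-1}\circ d^{N+1}_i(x)$ for $i<\ell$, the point $x$ itself in the $\ell$-th position, and $s^{N}_{\ell}\circ d^{N+1}_i(x)$ for $i>\ell$, built from the degeneracy maps constructed in Lemma \ref{lem:KanN}. You need this (or an equivalent argument) to complete item (2).
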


 \begin{proof}
 \begin{enumerate}
\item We have the following decompositions 
   {\begin{equation}\label{diag:(N+2,l)-horn1}
          {\hbox{$\xymatrix{ &&\Lambda^{N+2}_\ell K\ar[dll]_{\mathrm{pr}_{\ell+1}}\ar[drr]^{\times_{i\neq\ell+1 }\mathrm{pr}_{i}}&&\\K_{N+1}\ar[d]_{d_{\ell}^{N+1}}\ar[drr]^{p_{\ell}^{N+1}}&& &&\Lambda_{\{\ell,\ell+1\}}^{N+2}K\ar[dll]_>>>>>>>>>{{{\bigtimes^{\ell}d_{\ell}^{N+1}  \bigtimes^{N+1-\ell}d_{\ell+1}^{N+1}}}}\ar[d]^>>>>>{\bigtimes^{\ell}d_{\ell-1}^{N+1}  \bigtimes^{N+1-\ell}d_{\ell}^{N+1}}\\ K_{N}& &\Lambda^{N+1}_{\ell}K && \Lambda^{N+1}_{\ell}K.}$}}
        \end{equation}}
 for $0\leq \ell\leq N+1$ and \begin{equation}\label{diag:(N+2,N+2)-horn1}
          {\hbox{$\xymatrix{ &&\Lambda^{N+2}_{N+2} K\ar[dll]_{\mathrm{pr}_{N+2}}\ar[drr]^{\times_{i\neq N }\mathrm{pr}_{i}}&&\\K_{N+1}\ar[d]_{d_{N+1}^{N+1}}\ar[drr]^{p_{N+1}^{N+1}}&& &&\Lambda_{\{N+1,N+2\}}^{N+2}K\ar[dll]_>>>>>>>>>{{{\bigtimes^{N+1}d_{N}^{N+1} }}}\ar[d]^>>>>>{\bigtimes^{N+1}d_{N+1}^{N+1}}\\ K_{N}& &\Lambda^{N+1}_{N+1}K && \Lambda^{N+1}_{N+1}K}$}}
        \end{equation} for $\ell=N+2$. In Diagram \eqref{diag:(N+2,l)-horn1}, we should erase $\times^0$. We need to check that all the maps of Diagram \eqref{diag:(N+2,l)-horn1} and Diagram \eqref{diag:(N+2,N+2)-horn1} are well-defined and are surjective submersions: notice that the projections of $\Lambda^{N+2}_\ell K$ to $K_{N+1}$ and to $\Lambda^{N+2}_{\{\ell, \ell+1\}}K$ are well-defined, since $p^{N+1}_\ell\colon K_{N+1}\to \Lambda^{N+1}_\ell K$ is a surjective submersion for $0\leq \ell\leq N+1$. Let us check that the maps $$\xymatrix{ \Lambda^{N+1}_{\ell}K  &&& \ar[lll]_>>>>>>>>>>>>>>>>{\bigtimes^{\ell}d_{\ell}^{N+1}  \bigtimes^{N+1-\ell}d_{\ell+1}^{N+1}}\Lambda_{\{\ell,\ell+1\}}^{N+2}K \ar[rrr]^>>>>>>>>>>>>>>>{\bigtimes^{\ell}d_{\ell-1}^{N+1}  \bigtimes^{N+1-\ell}d_{\ell}^{N+1}}  &&&  \Lambda^{N+1}_{\ell}K}$$ are surjective submersions. Surjectivity is obvious, since for $0\leq \ell\leq N+1$ we have the well-defined identities\begin{align}
     \left(\bigtimes^{\ell}d_{\ell}^{N+1}  \bigtimes^{N+1-\ell}d_{\ell+1}^{N+1} \right)\left(\underbrace{s_{\ell-1}^{N},\ldots, s_{\ell-1}^{N}}_{\ell\text{-times}},\underbrace{s_{\ell}^{N},\ldots, s_{\ell}^{N}}_{(N+1-\ell)\text{-times}}\right)&=(\mathrm{id},\ldots,\mathrm{id})\\\left(\bigtimes^{\ell}d_{\ell-1}^{N+1}  \bigtimes^{N+1-\ell}d_{\ell}^{N+1}\right)\left(\underbrace{s_{\ell-1}^{N},\ldots, s_{\ell-1}^{N}}_{\ell\text{-times}},\underbrace{s_{\ell}^{N},\ldots, s_{\ell}^{N}}_{(N+1-\ell)\text{-times}}\right)&=(\mathrm{id},\ldots,\mathrm{id}).
\end{align} Now we check the surjectivity at the level of tangent spaces at every point $x\in \Lambda_{\{\ell,\ell+1\}}^{N+2}K$. To simplify the notations, we perform the computations at the level of the space and apply the tangent functor at a given point throughout.  Let  $(a_0,\ldots,\widehat{a_\ell},\ldots, a_{N+1})\in \Lambda_{\ell}^{N+1}K$. We construct recursively an element $(\xi_0,\ldots, \widehat{\xi_{\ell}}, \widehat{\xi_{\ell+1}}, \ldots, \xi_{N+2})\in\Lambda_{\{\ell, \ell+1\}}^{N+2}K$ such that  $$\bigtimes^{\ell-1}_{i=0}d_{\ell-1}^{N+1}(\xi_{i})  \bigtimes^{N+2}_{i=\ell+2}d_{\ell}^{N+1}(\xi_i)=(a_0,\ldots,\widehat{a_\ell},\ldots, a_{N+1}).$$ We proceed as follows
\begin{align*}
\xi_{N+2}&\colon \text{pick}\; \xi_{N+2}\in K_{N}\;\text{such that}\; d_\ell^{N+1}\xi_{N+2}=a_{N+1}\\
\xi_{N+1}&\colon \text{pick}\; \xi_{N}\in K_{N}\;\text{such that}\; p_{\{0,\ldots, \widehat{\ell},\ldots, N\}}^{N+1}\xi_{N}=(d_\ell^{N+1}\xi_{N}, d_{N+1}^{N+1}\xi_{N+1})=\underbrace{(a_{N}, d_{N+1}^{N+1}\xi_{N+2})}_{\in \Lambda^{N+1}_{\{0,\ldots, \widehat{\ell},\ldots, N\}}K}\\
\xi_{N} &\colon \text{pick}\; \xi_{N}\in K_{N}\;\text{such that}\; p_{\{0,\ldots, \widehat{\ell},\ldots, N-1\}}^{N+1}\xi_{N}=(d_\ell^{N+1}\xi_{N},d_{N}^{N+1}\xi_{N}, d_{N+1}^{N+1}\xi_{N})\\&\hspace{7.2cm}=\underbrace{(a_{N-1},d_{N}^{N+1}\xi_{N}, d_{N}^{N+1}\xi_{N+2})}_{\in \Lambda^{N+1}_{\{0,\ldots, \widehat{\ell},\ldots, N-1\}}K}\\
\vdots & \\
\xi_{\ell+2} & \colon \text{pick}\; \xi_{\ell+2}\in K_{N}\;\text{such that}\; p_{\{0,\ldots, \ell-1,\ell+1\}}^{N+1}\xi_{\ell+1}=(d_\ell^{N+1}\xi_{\ell+2},d_{\ell+2}^{N+1}\xi_{\ell+2},\ldots, d_{N+1}^{N+1}\xi_{\ell+2})\\&\hspace{6.9cm}=\underbrace{(a_{\ell+1},d_{\ell+2}^{N+1}\xi_{\ell+3},\ldots, d_{\ell+2}^{N+1}\xi_{N+2})}_{\in \Lambda^{N+1}_{\{0,\ldots, \ell-1,\ell+1\}}K}\\
\xi_{\ell-1}& \colon \text{pick}\; \xi_{\ell-1}\in K_{N}\;\text{such that}\; p_{\{0,\ldots, \ell-2,\ell\}}^{N+1}\xi_{\ell-1}=(d_{\ell-1}^{N+1}\xi_{\ell-1},d_{\ell+1}^{N+1}\xi_{\ell-1},\ldots, d_{N+1}^{N+1}\xi_{\ell-1})\\&\hspace{6.5cm}=\underbrace{(a_{\ell-1},d_{\ell-1}^{N+1}\xi_{\ell+2},\ldots, d_{\ell-1}^{N+1}\xi_{N+2})}_{\in \Lambda^{N+1}_{\{0,\ldots, \ell-2,\ell\}}K}\\
\vdots & \\
\xi_{0} &\colon \text{pick}\; \xi_{0}\in K_{N}\;\text{such that}\; p_\ell^{N+1}\xi_{0}=(d_0^{N+1}\xi_0,\ldots, d_{\ell-1}^{N+1}\xi_{0},\widehat{d_{\ell}^{N+1}\xi_{0}},\ldots, d_{N+1}^{N+1}\xi_{0})\\&\hspace{4.6cm}=\underbrace{(d_0^{N+1}\xi_1, \ldots,a_{0}, \widehat{d_{0}^{N+1}\xi_{\ell+1}},\ldots, d_{0}^{N+1}\xi_{N+2})}_{\in \Lambda^{N+1}_\ell K}
\end{align*}

We have used the fact that all the generalised horn projections are surjective submersions. We show that the other maps $\bigtimes^{\ell}d_{\ell}^{N+1}  \bigtimes^{N+1-\ell}d_{\ell+1}^{N+1}\colon \Lambda^{N+2}_{\{\ell,\ell+1\}} K\stackrel{}{\rightarrow}\Lambda^{N+1}_\ell K$ are surjective submersions in a similar way.

Now, since for $0\leq\ell\leq N+1$ the maps
\begin{align}
    \label{Morita_{ell-1,ell}}
     \xymatrix{ &\ar[dr]^>>>>>>>{\bigtimes^{\ell}d_{\ell-1}^{N+1}  \bigtimes^{N+1-\ell}d_{\ell}^{N+1}} \Lambda_{\{\ell,\ell+1\}}^{N+2}K\ar[dl]_>>>>>>>{\bigtimes^{\ell}d_{\ell}^{N+1}  \bigtimes^{N+1-\ell}d_{\ell+1}^{N+1}}  & \\ \Lambda^{N+1}_{\ell}K& & \Lambda^{N+1}_{\ell}K} 
  \end{align}

makes the respective diagrams 

\begin{align}
    \label{}
   \scalebox{0.8}{\hbox{$ {\xymatrix{&& \Lambda_{\ell}^{N}K&&\\\Lambda^{N+1}_{\ell} K\ar[urr]^{d_{\ell-1}^{N}\circ \mathrm{pr}_{i\leq \ell}\times  d_{\ell}^{N}\circ \mathrm{pr}_{i> \ell+1} }\ar[drr]_{d_\ell^{N}\circ\mathrm{pr}_{\ell+1}}&&\ar@{->>}[ll]_<<<<<<<<<<{} \Lambda_{\{\ell,\ell+1\}}^{N+2}K\ar@{..>}[u]\ar@{..>}[d]\ar@{->>}[rr]^{} && \Lambda^{N+1}_{\ell} K\ar[llu]_{d_{\ell-1}^{N}\circ \mathrm{pr}_{i\leq \ell}\times d_{\ell}^{N}\circ \mathrm{pr}_{i>\ell+1} }\ar[dll]^{d_{\ell}^{N}\circ\mathrm{pr}_{\ell+1}}\\&& K_{N-1}&& }}  \qquad     \xymatrix{&& \Lambda_{N}^{N}K&&\\\Lambda^{N+1}_{N} K\ar[urr]^{d_{N}^{N}\circ \mathrm{pr}_{i\leq N}}\ar[drr]_{d_{N}^{N}\circ\mathrm{pr}_{N+1}}&&\ar@{->>}[ll]_<<<<<<<<<<{}\Lambda_{\{N+1,N+2\}}^{N+2}K\ar@{..>}[u]\ar@{..>}[d]\ar@{->>}[rr]^{} && \Lambda^{N+1}_{N} K\ar[llu]_{d_{N}^{N}\circ \mathrm{pr}_{i\leq N+1}}\ar[dll]^{d_{N}^{N}\circ\mathrm{pr}_{N+1}}\\&& K_{N-1}&&}$}}\end{align}

   commute, the right-hand side of the decomposition \eqref{diag:(N+2,l)-horn1} or $\eqref{diag:(N+2,N+2)-horn1}$ is a Morita equivalence of singular foliations between ${\mathcal{BV}}_{\Lambda^{N+1}_\ell K}$ and itself. By Proposition \ref{prop: inv-composition},  Diagram \eqref{diag:(N+2,l)-horn1} or \eqref{diag:(N+2,N+2)-horn1}  is a bi-submersion between the bi-vertical singular foliations ${\mathcal{BV}}_{K_{N+1}}$ and ${\mathcal{BV}}_{\Lambda^{N+1}_{\ell}K}$ for all  $0\leq \ell\leq N+1$ as  the fiber product of a bi-submersion and a Morita equivalence of singular foliations.\\

   \noindent
The bi-submersions $\left(\Lambda^{N+2}_\ell K\right)_{\ell=0,\ldots, N+2}$ are total relations between $\left(\Lambda^{N+1}_\ell K\right)_{0\leq  \ell \leq N+2}$ and $K_{N}$ for $\ell=0,\ldots, N+2$: for $\ell=0,\ldots, N+2$, the following diagrams commute
\begin{align}
   \scalebox{0.6}{ \xymatrix{&& (K_{N-1},\mathcal{F}_{K_{N-1}})&&\\ &&\ar[dll]^{}\Lambda^{N+2}_\ell K\ar@{->}[drr]_{} &&\\ ({\Lambda^{N+1}_\ell K}, \mathcal{F}_{\Lambda_\ell^{N+1}K})\ar[uurr]^{} \ar[ddrr]_{}&& &&  (K_{N}, \mathcal{F}_{K_{N}}) \ar[ddll]^{}\ar[uull]_{} \\&& \ar@{->}[ull]_{}K_{N+1}\ar@{->}[urr]^{}&&\\&&(\Lambda^{N}_\ell K,\mathcal{F}_{\Lambda^{N}_\ell K}) &&}} \quad \scalebox{0.6}{\xymatrix{&&(K_{N-1},\mathcal{F}_{K_{N-1}})&&\\ &&\ar[dll]^{}\Lambda^{N+2}_{N+2} K\ar@{->}[drr]_{} &&\\ ({\Lambda^{N+1}_{N+1} K}, \mathcal{F}_{\Lambda_{N+1}^{N+1}K})\ar[uurr]^{} \ar[ddrr]_{}&& && (K_{N}, \mathcal{F}_{K_{N}}) \ar[ddll]^{}\ar[uull]_{}\\&& \ar@{->}[ull]_{}K_{N+1}\ar@{->}[urr]^{}&&\\&&(\Lambda^{N}_{N} K,\mathcal{F}_{\Lambda^{N}_{N} K}) && }}\end{align}

    \item  We need to show that the bi-submersions  $\left(\Lambda_\ell^{N+2}K\right)_{0\leq \ell \leq N+2}$ are \underline{equivalent} to $K_{N+1}$. By Lemma \ref{lem:higher-mult}, every point of $\Lambda^{N+2}_\ell K$ is related to a point of $K_{N+1}$ for all $0\leq \ell \leq N+2$. On the other side, all the points of $K_{N+1}$ are related to points of $\Lambda^{N+2}_\ell K$ via the bi-submersion morphism \begin{equation}
     \label{eq:phi_elll}x\mapsto \begin{cases}
         \left(x, s_0^{N}\circ d_{1}^{N+1}(x),\ldots, s_{0}^{N}\circ d_{N+1}^{N+1}(x)\right) & \text{for}\; \ell=0\\\\ \left( s_{\ell-1}^{N}\circ d_0^{N+1}(x),\ldots, s_{\ell-1}^{N}\circ d_{\ell-1}^{N+1}(x),x, s_\ell^{N}\circ d_{\ell+1}^{N+1}(x),\ldots, s_{\ell}^{N}\circ d_{N+1}^{N+1}(x)\right) & \text{for}\; 0<\ell< N+1\\\\ \left( s_{N}^{N}\circ d_0^{N+1}(x),\ldots, s_{N}^{N}\circ d_{N}^{N+1}(x),x\right) & \text{for}\; \ell=N+1, N+2
     \end{cases}
 \end{equation}
\end{enumerate}
\end{proof}

We are now in a position to write the proof of Proposition \ref{prop:up-to-degreeN}.

\begin{proof}[Proof (of Proposition \ref{prop:up-to-degreeN})]
Let us check that the axioms of a para-Lie $\infty $-groupoid up to order $N+1$ in Definition \ref{def:para-Grp-upto} are satisfied. For $k=0,\dots,N$, they hold in view of the recursion assumption. 
Lemma \ref{lem:KanN} implies that the axioms of para-Lie $\infty$-groupoid up to order $N+1$ hold since the latter Lemma gives the para-simplical maps and the Kan conditions: it states that the projections on the horns are surjective submersions.

Let us now check that it is a holonomy para-Lie $\infty $-groupoids up to order $N+1$, i.e., that the four items in
Definition \ref{def:universal-Grpoid} are satisfied for $ k=1,\ldots, N$ and the three first ones hold for $k=N+1$.

These four items hold by the recursion assumption for $k=1,\ldots, N-1$ and the three first items hold for $k=N$. The fourth one holds by definition of $K_{N+1}$.
We need to check the three first ones for $ k=N+1$. Itme 1 is obtained as follows: since the pullback \begin{equation}\label{eq:geom-resol_{N}}
    \cdots \stackrel{}{\longrightarrow} \left(d_0^1\circ \cdots \circ d^{N}_0\right)^*E_{-N-2}\stackrel{}{\longrightarrow}\left(d_0^1\circ \cdots \circ d^{N}_0\right)^* E_{-N-1}\stackrel{}{\longrightarrow} TK_{N}\end{equation}is a geometric resolution of the bi-vertical foliation $\mathcal{BV}_{K_{N}}$ on $K_{N}$ and $\dim K_{N+1}=\mathrm{rk}(E_{-N-1})+ \dim \Lambda ^{N+1}_0 K$, there exists a vector bundle isomorphism $\nu^{N+1}\colon \left(d_0^1\circ \cdots \circ d_0^{N+1}\right)^*E_{-N-1}\to\ker Td_0^{N+1}$ such that $Td_0^{N+1}\circ \nu^{N+1}=\left(d_0^1\circ \cdots \circ d_0^{N+1}\right)^*\dd^{(N+1)}$, by Lemma \ref{lem:biss_anchored} and Lemma \ref{lem:anchoredbundleforbivertical}. The exactness of the complex \eqref{eq:geom-resol_{N}} at level of sections implies that  the intersection $\Gamma(\ker Td_0^{N+1}
    )\cap\Gamma(\ker Tp_0^{N+1}
    )$ is a singular foliation on $K_{N+1}$ and 
 the pullback vector bundle $(d_0^1\circ \cdots \circ d^{N+1}_0)^*E_{-N-2}\to K_{N+1}$ together with the composition $$\nu^{N+1}\circ \dd^{(N+2)} \colon \xymatrix{\left(d_0^1\circ \cdots \circ d_0^{N+1}\right)^*E_{-N-2}\ar[r]^{}&TK_{N+1}}$$ is an anchored bundle over 
 the bi-vertical singular foliation $\mathcal{BV}_{K_{N+1}}=\bigcap_{j=0}^{N+1}\Gamma\left(\ker Td^{N+1}_{j}\right)$, by Lemma \ref{lem:anchoredbundleforbivertical}.  In particular, $\mathcal{BV}_{\Lambda_\ell^{k+1}K}=\left(\bigcap_{j=1}^{\ell}\Gamma\left(\ker T(d_{\ell-1}^k\circ \mathrm{pr}_j)\right)\right)\cap\left(\bigcap_{j=\ell+1}^{k+1}\Gamma\left(\ker T(d_\ell^k\circ \mathrm{pr}_j)\right)\right)$ is a singular foliation on $\Lambda_\ell^{N+2}K$ for all $\ell=0,\ldots, N+2$.


The second item holds by Lemma \ref{lem:KanN} and Lemma \ref{lem:hornN} which imply that
 $\Lambda_\ell^{N+1} K \stackrel{\;\;\;\;\;p_\ell^{N+1}}{\longleftarrow} K_{N+1}\stackrel{d_\ell^{N+1}}{\longrightarrow} K_{N}$ and $\Lambda_\ell^{N+1} K \stackrel{}{\longleftarrow} \Lambda^{N+2}_\ell K\stackrel{}{\longrightarrow} K_{N}$ are bi-submersions for the respective bi-vertical foliations for every $\ell=0,\ldots,N+2$. 
 The third item holds by Lemma \ref{lem:hornN} since it states that they are equivalent.
 
 Condition 1 in Theorem \ref{int:theorem} holds by construction. This completes the proof.
\end{proof}


\subsection{Conclusion}

Theorem \ref{int:theorem} now follows from Proposition \ref{prop:up-to-degreeN} and, except for the two last items, which has now to be dealt with. 

\begin{enumerate}
    \item The formula involving the dimensions in condition 1 of Theorem \ref{int:theorem} is obtained recursively by using the fact that  $\dim K_k=\mathrm{rk}(E_{-k})+\dim \Lambda_0^kK$ for $k\geq 1$ and  formula \eqref{eq:horndimension}. This proves the first one.

\item In Proposition \ref{prop:up-to-degreeN}, Condition 1 is satisfied, i.e., $(d_0^1\circ \cdots \circ d^k_0)^*E_{-k}\simeq\ker Tp_0^k$ for all $k\geq 1$. Therefore, if the length of the geometric resolution $(E,\dd, \rho)$ of $\mathcal{F}$ in Equation \eqref{eq:geom-resol_{N+1}} is $n$, then for all $k\geq n+1$, $\ker Tp_0^k\equiv 0$. For dimension reasons, we have $\ker Tp_i^k\equiv 0$ for all $0\leq i\leq k$. Since for each $0\leq i\leq k$, the projection map $p_i^k\colon K_k\to \Lambda^k_i K$ is a surjective submersion,  it follows that $p_i^k\colon K_k\to \Lambda ^k_iK$ is a surjective local diffeomorphism for all $k> n$ and $0\leq i\leq k$.\\

{In addition, assume  that $ E_{-k}$ is trivial for all $ k \geq n+1$. Equivalently, this means that there are no bivertical vector fields on $K_{k}$ for $ k \geq n+1$. 
Consider, for $k\geq n+1$ and  $i=0,\dots, k+1$, inside the direct product manifold,
  $$\Lambda_i^{k+1}K\times K_k,$$
  the subset $W_i$ of pairs $(y,x)$ which are equivalent. We claim that this subset is a submanifold, locally given by the graph of  a smooth map from an open subset of $\Lambda_i^{k+1} K$, and that $K_{k+1} =\coprod_{i=0}^{k+1}W_i$ satisfies the recursion assumption, it is a total relation between $\Lambda_i^{k+1}K$ and $K_{k}$: indeed, let $(y,x)$ be an element in this set. There is a neighborhood $ \mathcal U$ of $y$ in $\Lambda_i^{k+1} K$ and a smooth morphism of bisubmersions $\phi$ mapping $\mathcal{U}$ to $K_k$ and $y$ to $x$. There is also a neighborhood $\mathcal{V}$ of $x$ in $K_k$ where two different points can not be equivalent by Theorem \ref{th:whereoneusesAnalyse}. We can assume $ \phi(\mathcal U)\subset \mathcal V$. Now, the restriction of $W_i$ to $\mathcal U \times \mathcal V$ is the graph of $\phi$. In particular, it is a sub-manifold. By construction, it is a total relation.}

\end{enumerate}

 This concludes the proof of Theorem \ref{int:theorem}.

\section{Examples}

\begin{enumerate}
    \item For a singular foliation $(M,\mathcal F)$ which is Debord, i.e., projective as a module over the algebra of functions so that there is a geometric resolution of length $1$, the holonomy groupoid is a Lie groupoid \cite{DEBORD2013613}. 
    This Lie groupoid, being constructed as a groupoid of germs, satisfies by construction the property that it is not possible that two different elements are equivalent. As a consequence, the nerve (see Example \ref{example:groupoidnerve}) of this groupoid is a Lie $\infty$-groupoid: it is a holonomy Lie $\infty$-groupoid for this singular foliation. 

\item The next case is the case of a singular foliation $\mathcal F$ that admits a resolution of length $2$ that we denote by 
$$\cdots 0\stackrel{}{\longrightarrow}E_{-2}\stackrel{}{\longrightarrow} E_{-1}\stackrel{}{\longrightarrow}TM.$$
Let us assume that $E_{-1}$ admits a Lie algebroid structure. Assume that this Lie algebroid integrates to a connected Lie groupoid $\Gamma$. Then one can choose $K_1 = \Gamma$ together with source and target map and its unit map, since it is a bisubmersion over $(M,\mathcal F)$. Also, under these circumstances, there is a Lie algebra bundle structure on $E_{-2}$ such that the map
$\Gamma(E_{-2})\stackrel{}{\longrightarrow} \Gamma (E_{-1})$ is a Lie algebra morphism, i.e., $E_{-2}|_m$ maps to the isotropy Lie algebra of the Lie algebroid $E_{-1}$ at $m$ for every $m\in M$. The Lie algebra bundle structure is unique because this Lie algebra morphism is injective on a dense open subset.

Moreover, the bivertical singular foliation on $\Gamma$ is generated by right-invariant vector fields coming from sections of $E_{-2}$. It is also generated by left-invariant vector fields coming from sections of $E_{-2}$. 
For $ \Gamma \times_M \Gamma$, bivertical vector fields are generated by those of the form $ (u+\overrightarrow{a},\overleftarrow{a}+v)$ where $ a \in \Gamma(E_{-1}) $, $u,v $ are bivertical vector fields on $ \Gamma$. 

Let $\mathcal B \to M $ be a Lie group bundle that integrates the Lie algebra bundle  $E_{-2}\to M$, such that each fiber is simply connected. There is a groupoid morphism $\phi : \mathcal B \to \Gamma $.
A first attempt is  to choose $K_2'=\Gamma \times_M \Gamma \times_M \mathcal B  $ with the three face maps $d_0^2,d_2^2$ and  $d_1^2$ being the projections on the first and second components, then the map 
 $$ (\gamma_1,\gamma_2,b) \mapsto \gamma_1 \gamma_2 \phi(b)$$ 
together with the degeneracy maps $s_0^1(\gamma)=(\gamma,1,1)$ and $s_1^1(\gamma)=(1,\gamma,1)$. The latter is indeed an equivalence between the bi-submersions $ \Gamma \times_M \Gamma $ and $\Gamma $, and a relation between their respective bi-vertical vector fields. Unfortunately, $K_2'$ is not a total relation in general. However, we can replace $\mathcal B$ by a larger group bundle, now non-connected, such that any element in the isotropy group of $\Gamma$ equivalent to the identity lies in the image of $\mathcal B$. We then define $K_2= \Gamma \times_M \Gamma \times_M \mathcal B$. It is a total relation.

Since there is no non-trivial bi-vertical vector fields on $ K_2$.
The rest of the construction is then automatic. 
To be more abstract: $\mathcal B \longrightarrow \Gamma $ is a crossed module of Lie groupoids. 
A simplicial manifold can be associated to it \cite{Mathieu-Ginot}.

\item Now, if there is no Lie algebroid structure on $ E_{-1}$, then the construction goes through as follows.
We make the simplifying assumption that $E_{-1}$ and $ E_{-2}$ are trivial bundles. Assume that the  singular foliation $(M, \mathcal{F})$ is generated by the vector fields $X=(X_1,\ldots, X_n)\in \mathfrak X(M)^n$.
We let \begin{equation}
    K_0:=M,\; K_1\subset M\times \mathbb R^n,\;\text{and}\; K_2\subset M\times (\mathbb R^n)^2\times \mathbb R^k
\end{equation}
$K_1\subset M\times \mathbb R^n$ is a neighborhood of $M$ such that  $s, t\colon K_1\subset M\times \mathbb R^n\to M$ (as in Example \ref{ex:holonomy-biss}) is a path holonomy bi-submersion over $\mathcal{F}$ associated to $X_1,\ldots, X_n$. Also, $K_2$ is an open neighborhood of $M$ in $M\times (\mathbb R^n)^2\times \mathbb R^k$ where the exponential maps in the formulas below \eqref{eq:structuremaps1} make sense. Last, $k$ is the number generators of the bi-vertical singular foliation on $K_1$ when it exists.
 
 The structure maps are
\begin{align}
\nonumber&d^1_1:=s,\; d^1_0:=t\colon K_1\rightarrow K_0,\, s_0^0(m):=(m,0);\\\label{eq:structuremaps1}&d^2_0(m,\alpha, \beta, \lambda):=(\exp(\beta X)(m),\alpha);\\\nonumber&d^2_1(m,\alpha, \beta, \lambda):=\exp(\lambda Z)\circ\exp(\alpha \overrightarrow{X})(m,\beta); \\\nonumber&d^2_2(m,\alpha, \beta, \lambda):= (m, \beta);\, s_0^1(m, \alpha):= (m,\alpha, 0,0),\, \text{and}\, s_1^1(m, \alpha):= (m,0, \alpha,0).
\end{align}

$$\xymatrix
   {& 2 \ar@{->}[dl]_{\exp(\lambda Z)\circ\exp(\alpha \overrightarrow{X})(m,\beta)} \ar@{<-}[dr]^{\left(\exp(\beta X)(m),\alpha\right)}\ar@{}[d]|-{\circlearrowleft}\\
     0 \ar[rr]_{(m,\beta)} & & 1}
$$
where $\overrightarrow{X}=(\overrightarrow{X_1},\ldots, \overrightarrow{X}_n, Z_1, \ldots, Z_k)$ are generators of $\Gamma(\ker Td_1^1)$ such that for all $i=1,\ldots,n$ $\overrightarrow{X}_i$ is $d_0^1$-related to $X_i$  and $Z=(Z_1, \ldots, Z_k)$ are generators of the bi-vertical singular foliation on $K_1$. Here $\exp(\alpha Y)$ is a shorthand for the composition of flows $\exp(\alpha_1 Y_1)\circ \cdots \circ \exp(\alpha_n Y_n)$ with $\alpha=(\alpha_1,\ldots,\alpha_n)\in \mathbb R^n$ and $Y=(Y_1,\ldots, Y_n)$ a family of vector fields.

It is not hard to check that the maps in \eqref{eq:structuremaps1} satisfy the simplicial identities. Moreover, they satisfy the $1$-Kan and $2$-Kan conditions in a neighborhood of $M$.
\end{enumerate}

\appendix

\section{Exact sequences of vector bundles over different bases}\label{app:exact-sequence-of-vb}

 Let $ (E_i \to M_i)_{i\geq 1}$ be a sequence of vector bundles, and $ d_i: E_i \to E_{i-1}$ a sequence of vector bundles morphisms over smooth maps $ \phi_i\colon M_i\to M_{i-1}$ (defined for $ i\geq 1$):
\begin{equation}\label{eq:complex}
  \xymatrix{\cdots\ar[r]&E_3\ar[r]^{d_3}\ar[d]&E_2
  \ar[r]^{d_2}\ar[d]&E_1\ar[r]^{d_1}\ar[d]&E_0 \ar[d]\\\cdots\ar[r]&M_3\ar[r]^{\phi_3}&M_2\ar[r]^{\phi_2}&M_1\ar[r]^{\phi_1}&M_0}
\end{equation}
Such a data is called a \emph{complex} if $ d_i \circ d_{i+1}=0$ for $i\geq 1$. Now, it is interesting to ask: what is the correct definition of exactness at the level of sections of such a complex? For the present article, the definition will be as follows. For all  $k\geq i $, define $\phi_{k,i}:M_k\to M_i$ to be the composition:
 $$  \phi_{k,i}: =\phi_{i+1} \circ \dots \circ \phi_k $$
 and $ \phi_{i,i}=id$.
 We say that a complex as in \eqref{eq:complex} is \emph{exact} at the level of sections if for all $k\geq i$, the pull-back complex:
 \begin{equation}\label{eq:shortcomplex}
  \xymatrix{\phi_{k,i}^* E_i\ar[r]^{\phi_{k,i}^*d_i}\ar[d]&\phi_{k,i-1}^*E_{i-1}
  \ar[r]^{\phi_{k,i-1}^* d_{i-1}}\ar[d]&\phi_{k,i-2}^*E_{i-2}\ar[d]\\ M_k\ar[r]^{\mathrm{id}}&M_k\ar[r]^{\mathrm{id}}&M_k}
\end{equation}
  is exact at the level of sections for all $i\geq i$, i.e., if the middle term in the short sequence
   \begin{equation} \label{eq:exactnesscontent} \xymatrix{\Gamma(\phi_{k,i}^*E_i) \ar[r]^{\phi_{k,i}^*d_i}&\Gamma(\phi_{k,i-1}^*E_{i-1})
  \ar[r]^{\phi_{k,i-1}^*d_{i-1}}&\Gamma(\phi_{k,i-2}^*E_{i-2}) }\end{equation}
  is exact. Here “$\Gamma$” stands for  the sheaf of sections.
  In this article, all $ \phi_i$ will be surjective submersions: in that case, a complex is exact at the level of sections if and only if \eqref{eq:exactnesscontent} holds for $ k=i$. Note that, in this case, this implies that the complex:
   $$  \xymatrix{\cdots\stackrel{}{\longrightarrow}\Gamma_{proj}(E_i)\ar[r]^{d_i}&\Gamma_{proj}(E_{i-1})
  \ar[r]^{d_{i-1}}&\Gamma_{proj}(E_{i-2})\stackrel{}{\longrightarrow}\cdots}$$
   is exact.  Here $\Gamma_{proj}(E_i) $ stands for the subspace of all sections $a$ of $ E_i$ over $ M_i$ which are \emph{projectable}, i.e., such that there exists a section $ b$ of $ E_{i-1}$ with $ d_i (a)=b$. (Notice that the converse is not true in general, even if all $ \phi_i$ are surjective submersions: projectable sections might form an exact complex, but the complex might not be exact at the level of sections.)

\section{Lie $\infty$-groupoids}\label{Appendix:higher-groupoids}
We refer the reader, e.g., to \cite{Duskin.John.W, duskin2006higher} or \cite{Henriques,CCZ, Getzler, Mehta, Siran-Severa, del2024cohomology} for more details on the notion of simplicial sets and $\infty$-groupoids.

\subsection{Simplicial manifolds}
\begin{definition}\label{def:sMfd} We denote by $\Delta$ the category whose objects are finite sets $[n]:=\{0,\ldots,n\}$ and whose morphisms are monotonously increasing maps. A \emph{simplicial manifold} $S_\bullet$ is a contravariant functor from $\Delta$ to $\mathrm{\textbf{Man}}$ the category of manifolds. 
\end{definition}

\begin{remark}
Any monotonously increasing map can be written as the composition of maps $[k]\to [k+1]$ omitting one element and maps $[k]\to[k-1]$ hitting one element twice. Hence, the information of a simplicial manifold $S_\bullet$ is completely encoded by:
\begin{itemize}
    \item $S_k:=S([k])$, $k\geq 0$
    \item $d^k_i=S(\delta^k_i):S_{k}\to S_{k-1}$ for $k\geq 1$ $i\in\{0,\ldots,k\}$, where $\delta^k_i:[k-1]\to [k]$ is the injective monotonously increasing map which whose image is $[k]\backslash \{i\}$.
    \item $s^k_i=S(\sigma^k_i)\colon S_{k}\to S_{k+1}$ for $k\geq 0$ $i\in\{0,\ldots,k\}$, where $\sigma^k_i:[k+1]\to [k]$ is the surjective monotonously increasing map with $\sigma^k_i(i)=\sigma^k_i(i+1)=i$.
\end{itemize}
The $d^k_i$'s are called \emph{face maps} and the $s^k_i$'s are called \emph{degeneracy maps}. Notice that a countable collection $(S_\bullet,d^\bullet_\bullet,s^\bullet_\bullet)$ of objects and maps of  $\mathrm{\textbf{Man}}$ form a simplicial manifold if and only if the maps  $d^k_i$ and $s^k_i$ with $0\leq i\leq k$  satisfy certain identities commonly referred to as \emph{simplicial identities}, that is 
\begin{align}\label{eq:faces-faces}
    d^{k-1}_i d^{k}_j &= d^{k-1}_{j-1} d^k_i\;\; \text{if}\;\; 0\leq i<j\leq k\\\nonumber\\\label{eq:degen-degen} s^{k}_i s^{k-1}_j &= s^{k}_{j+1} s^{k-1}_i \;\; \text{if}\;\; 0\leq i\leq j\leq k-1
\end{align}
and 
\begin{equation}\label{eq:faces-deneg}
 d^k_i s^{k-1}_j = s^{k-2}_{j-1} d^{k-1}_i \; \text{if}\; i<j, \quad
 d^k_j s^{k-1}_j=\mathrm{id}_{S_{k}}=d^k_{j+1}s^{k-1}_j,\quad
 d^k_i s^{k-1}_j = s^{k-2}_j d^{k-1}_{i-1} \; \text{if}\; i> j+1.
\end{equation}

In that case, we shall denote the collection $S_\bullet=(S_\bullet,d^\bullet_\bullet,s^\bullet_\bullet)$ by

$$S_\bullet\colon \xymatrix{&\cdots\; \ar@<5pt>[r]\ar@<-5pt>@{->}[r]\ar@<1pt>[r]\ar@<-2pt>@{.}[r]&\ar@/^{0.8pc}/[l]\ar@/^{1.1pc}/[l]\ar@{-}@/^{1.4pc}/[l] \ar@/^{1.7pc}/[l]S_3\ar@<5pt>[r]\ar@<-5pt>@{->}[r]\ar@<1pt>[r]\ar@<-2pt>[r]&\ar@/^{0.8pc}/[l]\ar@/^{1.2pc}/[l]\ar@/^{1.6pc}/[l]S_2\ar@<-2pt>[r]\ar@<6pt>[r]\ar@<2pt>[r]&\ar@/^{0.8pc}/[l]\ar@/^{1.2pc}/[l]S_1 \ar@<-2pt>[r]\ar@<2pt>[r]& \ar@/^{0.8pc}/[l]S_0}.$$

\vspace{0.3cm}
Notice that by the simplicial identities the face maps are surjective  and the degeneracies are embeddings. Also, for every $k\geq 1$,  $S_0$ is embedded in $S_k$ via the successive compositions of  degeneracy maps $M\stackrel{s_0^0}{\longrightarrow}S_1\stackrel{s_0^1}{\longrightarrow}S_2\stackrel{}{\longrightarrow}\cdots \stackrel{s_0^{k-1}}{\longrightarrow} S_k$.
\end{remark}


\begin{example}
\label{example:groupoidnerve}
Let $\displaystyle{\mathcal G\rightrightarrows M}$ be a Lie groupoid with source and target  $s, t$ and a unit map $\mathfrak i\colon M\hookrightarrow  \mathcal{G}$. Then the following construction yields a simplicial manifold, known as the simplicial nerve of $\mathcal G\rightrightarrows M$:
\begin{itemize}
    \item $S_0=M$, for $k>0$, $S_k=\underbrace{\mathcal G\times_{s,M,t} \mathcal G\times_{s,M,t}\mathcal G\times_{s,M,t}\cdots \times \mathcal G\times_{s,M,t}\mathcal G}_{k-\text{times}}$ is the space of $k$-composable arrows for $k>0$.
    \item $d^1_1=s$, $d^1_0=t$, $d^k_i(g_k,\ldots,g_{1})=\begin{cases} (g_k,\ldots,g_{2}) & \text{for}\; i=0\\ 
        (g_k,\ldots, g_{i+1}g_i,\ldots,g_{1}) & \text{for}\; i=1, \ldots,k-1\\ (g_{k-1},\ldots,g_{1}) & \text{for}\; i=k
    \end{cases}$
    \item $s^0_0=\mathfrak{i} \colon M\hookrightarrow \mathcal G$ and  for $0\leq i\leq k>1$ we have $$s^k_i(g_k,\ldots,g_{1})=\begin{cases}\left(g_k,\ldots, g_{1}, \mathfrak i\circ s(g_1)\right) & \text{for}\; i=0\\
        \left(g_k,\ldots,g_{i+1},\mathfrak{i}\circ t(g_{i+1}), g_i,\ldots, g_{1}\right) & \text{for}\; i=1, \ldots,k
    \end{cases}$$ 
\end{itemize}
\end{example}

The analogue of the above example still works for local Lie groupoids. One just has to restrict all $S_k$ to  subspaces such that the $d_i^k$ and $s_i^k$ are defined.

\subsubsection{(Generalized) horn spaces and horn projections}
Let us first introduce some notations and vocabularies.\\

It is convenient for us to use notations from \cite{Dorsch} to consider for any ordered subset $I\subset \{0, \ldots, k\}$ the so-called \emph{(generalized) $(k, I)$-horn space $\Lambda_I^kS$} of a simplicial manifold consists of a $k+1-|I|$ tuple $(x_0,\ldots,\widehat{x_I},\ldots, x_k)$, where $x_i \in  S_{k-1}$, such that $d^{k-1}_ix_i=d^{k-1}_{j-1}x_i$ for $i<j$. The hat $\widehat{x_I}$ means the elements indexed on $I$ are missing. We shall denote the projection map of $S_k$ to $\Lambda_I^kS$ by \begin{equation}\label{eq:Horn-projections}
    p^k_I\colon S_k\to \Lambda_I^kS,\; x\mapsto ( d^k_0x,\ldots,\widehat{ d^k_Ix},\ldots,  d^k_kx).
\end{equation} As an example for $\ell<  \ell'\in \{0, \ldots, k\}$
\begin{align}
\Lambda_{\ell}^kS=\left\{(x_0, \ldots, \widehat{x}_\ell,\ldots, x_k)\in S^{\times k}_{k-1}\,  \middle| \; d^{k-1}_ix_i=d^{k-1}_{j-1}x_i\;\;\text{for}\;\; i, j\in [k]\setminus \{\ell\}\;\; \text{with}\;\; i<j \right\}\\\nonumber
\\
    \Lambda_{\{\ell, \ell'\}}^kS=\left\{(x_0, \ldots, \widehat{x}_\ell, \ldots, \widehat{x}_{\ell'
},\ldots, x_k)\in S^{\times k-1}_{k-1}\,  \middle| \; d^{k-1}_ix_i=d^{k-1}_{j-1}x_i\;\;\text{for}\;\; i, j\in [k]\setminus \{\ell,\ell'\}\;\; \text{with}\;\; i<j\right\}.
\end{align}

\begin{definition}[\cite{Henriques,Getzler,CCZ,Siran-Severa}] Let $n\in \mathbb N\cup \{\infty\}$. A \emph{Lie $n$-groupoid} is a simplicial manifold $K_\bullet$ such that the horn projections $p^k_\ell\colon K_k\to \Lambda^k_\ell K$ of Equation \eqref{eq:Horn-projections} are (surjective) submersions for every $0\leq \ell \leq k\leq n$ and are diffeomorphism for $k>n $ and $0\leq \ell \leq k$.  For $n=\infty$, we speak of \emph{Lie $\infty$-groupoid}.

In all cases, we shall say that $K_\bullet$ satisfies the \emph{Kan condition}.

\end{definition}
\begin{remark}\label{prop:horns}
    Let $K_\bullet$ be  Lie $\infty$-groupoid over a manifold $K_0=M$.

    \begin{enumerate}
        \item The horn spaces $\left(\Lambda_\ell^kK\right)_{0\leq \ell\leq k}$ or $\Lambda^k_I K$ for some ordered subset $I\subset [k]$ 
        are smooth manifolds and the generalized projection $p_I^k\colon K_k\to \Lambda^k_IK$ is a submersion. For instance, 
         we have the fiber product decompositions 
    
    \begin{align}\label{eq:horn-decomposition}
        \Lambda^k_{\ell}K=\begin{cases}
            K_{k-1}\bigtimes_{p^{k-1}_\ell,\, \Lambda^{k-1}_{\ell}K,\, \bigtimes^\ell d^{k-1}_\ell\bigtimes^{k-\ell-1}d^{k-1}_{\ell+1} } \Lambda^k_{\{\ell, \ell+1\}}K &\text{for}\;\; \ell\neq k\\ &\\K_{k-1}\bigtimes_{p^{k-1}_{k-1},\, \Lambda^{k-1}_{k-1}K,\, \bigtimes^k d^{k-1}_{k-2} } \Lambda^k_{\{k-1, k\}}K &\text{for}\;\; \ell= k
        \end{cases}
    \end{align}
    
We also have similar decompositions for the $\Lambda^k_{\{-, -\}}K$'s.  Hence, the smoothness of the aforementioned horn spaces are obtained recursively, cf., \cite[Lemma 2.4]{Henriques} and \cite[Remark 3.4]{Dorsch} for more details.
\item The dimension $\dim \left(\Lambda^k_\ell K\right)$ of the horn space $\Lambda^k_\ell K$ is constant for $0\leq \ell\leq k$. Moreover, for every $0\leq\ell\leq k\geq 1$, \begin{equation}\label{eq:horndimension}
       \dim (\Lambda^k_\ell K)=\sum_{i=0}^{k-1}(-1)^i\begin{pmatrix}
           k\\i+1
       \end{pmatrix}\dim K_{k-1-i}.
   \end{equation}
    {This can be seen by recursion  using the fiber product decomposition of $\Lambda^k_{\ell}K$ in Equation \eqref{eq:horn-decomposition}}, see \cite[Proposition 2.10]{CUECA2023108829}.
       \end{enumerate}
\end{remark}

\subsection{The tangent complex of a Lie $\infty$-groupoid}

There are several chain complexes of vector bundles that can be associated to a simplicial manifold or a Lie $\infty$-groupoid. Let us recall their definitions and the notion of tangent complex of a Lie $\infty$-groupoid. For more details on these notions see, e.g., \cite{goerss2009simplicial,Weibel} or \cite{Mehta, CUECA2023108829}.\\

\noindent

\textbf{Moore complex}. For $k\geq 1$, let $M\hookrightarrow K_k$ be the canonical embedding of $M$ in $K_k$ through the successive compositions degeneracy maps $s_0^{k-1}\circ \cdots \circ s_0^0 \colon M\to K_k$. For every Lie $\infty$-groupoid (also for an arbitrary simplicial manifold)

\begin{equation}
   K_\bullet\colon \xymatrix{&\cdots\; \ar@<5pt>[r]\ar@<-5pt>@{->}[r]\ar@<1pt>[r]\ar@<-2pt>@{.}[r]& \ar@/^{0.8pc}/[l]\ar@/^{1.1pc}/[l]\ar@{-}@/^{1.4pc}/[l] \ar@/^{1.7pc}/[l] K_3\ar@<5pt>[r]\ar@<-5pt>@{->}[r]\ar@<1pt>[r]\ar@<-2pt>[r]&\ar@/^{0.8pc}/[l]\ar@/^{1.1pc}/[l]\ar@/^{1.4pc}/[l] K_2\ar@<-2pt>[r]\ar@<6pt>[r]\ar@<2pt>[r]&\ar@/^{0.8pc}/[l]\ar@/^{1.2pc}/[l]K_1 \ar@<-2pt>[r]\ar@<2pt>[r] & \ar@/^{0.8pc}/[l]K_0=M}
   \end{equation}
\vspace{0.4cm}

we have a simplicial vector bundle over $M$ 
\begin{equation}
   TK_\bullet|_M\colon \xymatrix{&\cdots\; \ar@<5pt>[r]\ar@<-5pt>@{->}[r]\ar@<1pt>[r]\ar@<-2pt>@{.}[r]& \ar@/^{0.8pc}/[l]\ar@/^{1.1pc}/[l]\ar@{-}@/^{1.4pc}/[l] \ar@/^{1.7pc}/[l] TK_3|_M\ar@<5pt>[r]\ar@<-5pt>@{->}[r]\ar@<1pt>[r]\ar@<-2pt>[r]&\ar@/^{0.8pc}/[l]\ar@/^{1.1pc}/[l]\ar@/^{1.4pc}/[l]TK_2|_M\ar@<-2pt>[r]\ar@<6pt>[r]\ar@<2pt>[r]&\ar@/^{0.8pc}/[l]\ar@/^{1.2pc}/[l]TK_1|_M \ar@<-2pt>[r]\ar@<2pt>[r] & \ar@/^{0.8pc}/[l]TM}
   \end{equation}
\vspace{0.4cm}

here, for $k\geq 1$, $TK_k|_M$ stands for the restriction of the tangent bundle $TK_k\to K_k$ to the image of $M$ through the canonical embedding of $M$ in $K_k$. The face and degeneracy maps are the $Td_i^k$'s  and the $Ts_i^k$'s.  The simplicial vector bundle $TK_\bullet|_M$ induces a $\mathbb N$-graded complex of vector bundles 
    \begin{equation}\label{simplicial-ch}
        \xymatrix{\cdots\ar[r]^>>>>>{\partial_3}&TK_2|_M\ar[r]^{\partial_2}&TK_1|_M\ar[r]^{\partial_1}&TM}
    \end{equation}
    whose differential map is given by alternating the face maps \begin{equation}
        \partial_k=\sum_{i=0}^k(-1)^iTd^k_i
    \end{equation} for all $k\geq 1$. The identity  $\partial^2=0$ follows from the simplicial identities  \eqref{eq:faces-faces}. The latter complex is known as the \emph{Moore complex} associated to $K_\bullet$. This complex of vector bundles can be constructed for arbitrary simplicial manifolds, cf., e.g., \cite {goerss2009simplicial,Mehta}. 
\\

\noindent
\textbf{The normalized tangent space}. It is easily checked by using the simplicial identities \eqref{eq:faces-deneg} that the differential map $\partial$ of the Moore complex associated to $K_\bullet$ goes to quotient to induce a 
 morphism of vector bundle
\begin{equation}\label{eq:normlized_TS}
   \partial_k\colon  \frac{TK_{k+1}|_M}{\bigplus_{i=0}^{k}\mathrm{im}(Ts_i^{k})}\longrightarrow \frac{TK_{k}|_M}{\bigplus_{i=0}^{k-1}\mathrm{im}(Ts_i^{k-1})},\quad k\geq 1.
\end{equation}
The latter is a complex of vector bundles over $M$ and is  called the \emph{normalized tangent space} of $K_\bullet$, see  \cite[chap. III.2]{goerss2009simplicial} and \cite{Mehta,CUECA2023108829}. There are a posteriori two other natural chain complexes of vector bundles that can only be defined for a Kan simplicial manifold, i.e., Lie $\infty$-groupoid. One of them is as follows:  define 

\begin{equation}\label{eq:Normalised}
     N_\bullet K_\bullet\colon \xymatrix{\cdots\ar[r]&N_kK_\bullet \ar[r]^{Td^k_0}&N_{k-1}K_\bullet\ar[r]&\cdots \ar[r]&N_2 K_\bullet\ar[r]^{Td^2_0}&N_1 K_\bullet\ar[r]^{Td^1_0}&TM}
\end{equation}
 with
 $$N_k K_\bullet:=\bigcap_{i=1}^{k}\ker(Td^k_i|_M)=\ker( Tp^k_0|_M)$$

This is indeed a chain complex thanks to the simplicial identities \eqref{eq:faces-faces}. The $N_kK_\bullet's$ are vector bundles of over $M$ since the horn projections are submersions. Similarly, one may define another complex of vector bundles by taking the intersection of the  kernels of all face maps except the last $k$-th face map and take the remaining face map, $Td^k_k$, as the differential map.

The following theorem unifies these three complexes of vector bundles.  
\begin{theorem}[Dold-Kan correspondence\cite{goerss2009simplicial,Weibel}]
    The chain complexes constructed in \eqref{eq:normlized_TS} and \eqref{eq:Normalised} are isomorphic.
\end{theorem}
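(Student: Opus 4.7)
The plan is to prove this pointwise over $M$: since all maps $Td^k_i|_M$ and $Ts^k_i|_M$ restrict to vector bundle morphisms over the identity of $M$, and the images and kernels in question are subbundles (which I will justify by a rank argument using the Kan condition), it suffices to check the isomorphism fiber by fiber, where it becomes the classical Dold–Kan correspondence for abelian groups applied to each tangent space. Thus the proof reduces to verifying Dold–Kan in each fiber and then checking that the resulting bijection globalizes to a vector bundle isomorphism.

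First, I would build the obvious comparison map. The inclusion $N_k K_\bullet = \bigcap_{i=1}^{k}\ker(Td^k_i|_M)\hookrightarrow TK_k|_M$ composed with the projection modulo degeneracies gives
\[
\phi_k\colon N_k K_\bullet \longrightarrow \frac{TK_{k}|_M}{\textstyle\bigplus_{i=0}^{k-1}\mathrm{im}(Ts_i^{k-1})}.
\]
I would then check that $\phi_\bullet$ intertwines the differentials: on $N_k K_\bullet$ the differential is $Td_0^k$ (by definition), while the Moore differential $\sum_{i=0}^k (-1)^i Td_i^k$ descends to the quotient on the right, and on the subspace where $Td_i^k=0$ for $i\geq 1$ it collapses to $Td_0^k$. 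So $\phi_\bullet$ is a chain map.

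Next I would prove bijectivity pointwise. For injectivity, suppose $v\in \bigcap_{i=1}^k \ker Td^k_i$ can be written as $v=\sum_{i=0}^{k-1} Ts_i^{k-1}(w_i)$. Using the simplicial relations \eqref{eq:faces-deneg} in the form $d^k_j s^{k-1}_j=\mathrm{id}=d^k_{j+1}s^{k-1}_j$ together with $d^k_i s^{k-1}_j = s^{k-2}_{j-1} d^{k-1}_i$ (resp.\ $s^{k-2}_j d^{k-1}_{i-1}$) for $i<j$ (resp.\ $i>j+1$), I would argue inductively, starting from the largest index $j$ with $w_j\neq 0$ modulo lower degeneracies, that applying $Td^k_{j+1}$ kills all other summands and forces $w_j=0$; iterating peels off the decomposition and yields $v=0$. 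For surjectivity, given $v\in TK_k|_M$ I would construct a representative in $\bigcap_{i\geq 1}\ker Td_i^k$ by successive modification: set $v_k:=v-Ts_{k-1}^{k-1}Td_k^k(v)$, which lies in the same class and satisfies $Td_k^k v_k=0$ by $d^k_k s^{k-1}_{k-1}=\mathrm{id}$; then iterate downwards, at each step using an appropriate $Ts^{k-1}_{i-1}$ to annihilate $Td_i^k$ while checking (via the face/degeneracy identities) that the previously achieved vanishings are preserved. This is the standard normalization argument of Dold–Kan.

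The main obstacle, and the only genuinely geometric (rather than algebraic) point, is to ensure that $N_k K_\bullet$ and the subspaces $\mathrm{im}(Ts_i^{k-1})\subset TK_k|_M$ are vector \emph{subbundles} of constant rank, so that the pointwise Dold–Kan isomorphism assembles into a smooth isomorphism of vector bundles. For $N_k K_\bullet$ this is immediate because $p_0^k\colon K_k \to \Lambda_0^k K$ is a submersion by the Kan condition, so $\ker(Tp^k_0|_M)$ is a subbundle. For the images of the degeneracies, $s_i^{k-1}$ is an immersion (as it has the face maps as left inverses), hence $Ts_i^{k-1}$ is injective on fibers and its image is a subbundle; the sum of these subbundles has constant rank because the simplicial identities \eqref{eq:degen-degen} impose universal relations among them so that the rank of the sum can be computed combinatorially and is independent of the base point. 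Once these subbundle facts are established, the fiberwise Dold–Kan isomorphism automatically globalizes (it is given by polynomial formulas in the structure maps), completing the proof.
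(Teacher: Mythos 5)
The paper does not actually prove this statement: it is quoted as the classical Dold--Kan correspondence and delegated to \cite{goerss2009simplicial,Weibel}, so there is no in-paper argument to compare against. Your proposal is, in substance, the standard Dold--Kan normalization proof (comparison map from $N_kK_\bullet$ into the quotient by degeneracies, injectivity by peeling off the top degeneracy index using \eqref{eq:faces-deneg}, surjectivity by the iterated correction $v\mapsto v-Ts_{j}^{k-1}Td_{j+1}^{k}(v)$), together with the genuinely geometric observation that the fiberwise isomorphism globalizes. That extra geometric layer is exactly what the citation leaves implicit, and it is the right thing to worry about; your identification of $N_kK_\bullet=\ker(Tp_0^k|_M)$ as a subbundle via the Kan condition is correct and is the same mechanism the paper uses elsewhere to make \eqref{eq:Normalised} a complex of vector bundles.

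The one step I would tighten is the claim that $\sum_{i=0}^{k-1}\mathrm{im}(Ts_i^{k-1})$ has constant rank ``because \eqref{eq:degen-degen} imposes universal relations so the rank can be computed combinatorially.'' As stated this is not an argument: relations among subspaces do not by themselves force the rank of their sum to be locally constant. The clean fix is the one implicit in your own last sentence: the standard normalization operator, e.g.\ $P_k=(\mathrm{id}-Ts_{k-1}^{k-1}Td_k^k)\circ\cdots\circ(\mathrm{id}-Ts_0^{k-1}Td_1^k)$, is a smooth bundle endomorphism of $TK_k|_M$ given by a universal formula in the structure maps, it is fiberwise a projector with image $N_kK_\bullet$ and kernel the degenerate subspace; since its image is the subbundle $\ker(Tp_0^k|_M)$ of constant rank, its kernel is automatically a subbundle of complementary rank, and the quotient in \eqref{eq:normlized_TS} is a vector bundle with $\phi_\bullet$ a smooth isomorphism. (A side remark relevant to this paper: your injectivity and surjectivity arguments use only \eqref{eq:faces-deneg}, not \eqref{eq:degen-degen}, so they survive in the para-simplicial setting the authors actually work in; only the rank bookkeeping needs to be routed through the projector rather than through \eqref{eq:degen-degen} there.)
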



\begin{definition}\label{def:tangent-complex}
    The \emph{tangent complex} of a Lie $\infty$-groupoid  $K_\bullet$ is the complex of vector bundles  \eqref{eq:normlized_TS}.
\end{definition}
\begin{remark}
    Definition \ref{def:tangent-complex} is valid for arbitrary simplicial manifolds. However, by Dold-Kan theorem, the tangent complex of a Lie $\infty$-groupoid $K_\bullet$ can be alternatively defined as the complex  of vector bundles
    \begin{equation}\label{eq:complex-kernels2}
        \xymatrix{\cdots\ar[r]^>>>>>{Td_0^4}&\ker Tp_0^3|_M\ar[r]^{Td_0^3}&\ker Tp_0^2|_M\ar[r]^{Td_0^2}&\ker Td_1^1|_M\ar[r]^{Td_0^1}&TM}.
    \end{equation}
If $K_\bullet$ is just a simplicial manifold, then by \cite[Theorem 3.1]{Dorsch}, for all $n\geq 1$  the projection maps $(p_i^n)_{0\leq i\leq n}$ are submersions in a neighborhood of $M$. Therefore, Equation \eqref{eq:complex-kernels2} makes sense for simplicial manifolds.

In  particular, the restriction of  the tangent complex of $K_\bullet$  \eqref{eq:complex-kernels2} to a point $m\in M$ yields a complex of vector spaces of the form  \begin{equation}\label{}
        \xymatrix{\cdots\ar[r]^>>>>>{Td_0^4|_m}&\ker T_{s_0^0(s_0^1(s_0^2(m)}p_0^3\ar[r]^>>>>>>{Td_0^3|_m}&\ker T_{s_0^0(s_0^1(m)}p_0^2\ar[r]^>>>>>{Td_0^2|_m}&\ker T_{s_0^0(m)}d_1^1\ar[r]^>>>>>>{Td_0^1|_m}&T_mM}.
    \end{equation}
\end{remark}

\subsection{Para-simplicial manifolds}
We now introduce a relaxed version of simplicial manifolds. Throughout this paper, the simplicial identities \eqref{eq:degen-degen}
 are not essential to present and prove our results. All the definitions and properties on simplicial manifolds presented in this paper, including the tangent complex, Kan condition, remain valid without any modifications for the notion  para-simplicial manifolds we now introduce.
\begin{definition}\label{def:partial-SM}
    A \emph{para-simplicial manifold} is a sequence $S_\bullet=(S_k)_{k \in\mathbb N}$ of manifold together with smooth functions $\left(d^k_i\colon S_k\longrightarrow S_{k-1}\right)_{i=0}^k$ called \emph{face maps} and $\left(s^k_i\colon S_k\longrightarrow S_{k+1}\right)_{i=0}^k$ called \emph{degeneracy maps} satisfying the simplicial identities \eqref{eq:faces-faces}  and  \eqref{eq:faces-deneg}. 
\end{definition}

\begin{remark}
    \begin{enumerate}
        \item A para-simplicial manifold that satisfies \eqref{eq:degen-degen} is a simplicial manifold.  Para-simplicial manifolds are stronger than semi-simplicial manifolds\cite{MKAN1970170}, as they possess degeneracy maps that satisfy the simplicial relations with the face maps.
        \item {Let $S_\bullet$ be a para-simplicial manifold. For $k\geq 2$, consider the equivalence relation on $S_k$ defined by identifying the points $s^{k}_i s^{k-1}_j(x)\sim s^{k}_{j+1} s^{k-1}_i(x)$ for $x\in S_{k-1}$ and   $0\leq i\leq j\leq k-1$. The face and degeneracy maps goes to quotient and $S_\bullet/_{\sim}$ is a simplicial space, i.e., the axioms \eqref{eq:faces-faces}, \eqref{eq:degen-degen} and \eqref{eq:faces-deneg} are fulfilled.}
    \end{enumerate}    
\end{remark}

We need the following technical definition,
\begin{definition}
 A \emph{para-simplicial manifold up to order $N\in \mathbb N$}  
 $$K_{\leq N}:\xymatrix{&K_N\ar@<5pt>[r]\ar@<-5pt>@{->}[r]\ar@<1pt>[r]\ar@<-2pt>@{.}[r]&\ar@/^{0.8pc}/[l]\ar@/^{1.2pc}/[l]\ar@{.}@/^{1.4pc}/[l] \ar@/^{1.7pc}/[l]\cdots\; \ar@<5pt>[r]\ar@<-5pt>@{->}[r]\ar@<1pt>[r]\ar@<-2pt>@{.}[r]&\ar@/^{0.8pc}/[l]\ar@/^{1.1pc}/[l]\ar@{-}@/^{1.4pc}/[l] \ar@/^{1.7pc}/[l]K_3\ar@<5pt>[r]\ar@<-5pt>@{->}[r]\ar@<1pt>[r]\ar@<-2pt>[r]&\ar@/^{0.8pc}/[l]\ar@/^{1.2pc}/[l]\ar@/^{1.6pc}/[l]K_2\ar@<-2pt>[r]\ar@<6pt>[r]\ar@<2pt>[r]&\ar@/^{0.8pc}/[l]\ar@/^{1.2pc}/[l]K_1 \ar@<-2pt>[r]\ar@<2pt>[r]& \ar@/^{0.8pc}/[l]K_0}$$
    
\vspace{0.2cm}
is  a para-simplicial manifold that is considered to be defined up to and including the $N$-simplices $K_N$.
 \end{definition}

\section{Bi-submersions between two singular foliations}
\label{sec:bi-submersion}

\subsection{Definition}

\vspace{0.2cm}

We extend the definition of bi-submersions over \emph{one} singular foliation given in \cite{AS} to \emph{two} singular foliations.

\begin{definition}\label{def:bi-submersion}
    Let $(M, \mathcal{F}_M)$ and $(N,\mathcal{F}_N)$ be foliated manifolds. A  \emph{bi-submersion  between $(M, \mathcal{F}_M)$ and $(N,\mathcal{F}_N)$} is a manifold\footnote{$W$ may be not connected, and we do \textbf{not} assume the connected components to be all of the same dimension.} $W$ equipped with a pair of submersions\footnote{We do not assume $p$ and $q$ to be surjective.} $p\colon W\longrightarrow M$ and  $q\colon W\longrightarrow N$
   fulfilling the following properties\footnote{Item 2 of Definition \ref{def:bi-submersion} simply means that both singular pull-back foliations $ p^{-1}( \mathcal F_M)$ and $q^{-1} (\mathcal F_N) $ are equal to the space of vector fields of the form $\xi+\zeta $ with $\xi\in \Gamma(\ker (T p)) $ and $\zeta \in \Gamma(\ker (T q))$.}:
    
    \begin{enumerate}
        \item $p^{-1}(\mathcal{F}_M)=q^{-1}(\mathcal{F}_N)$,
        \item $p^{-1}(\mathcal{F}_M)=\Gamma(\ker Tp)+ \Gamma(\ker Tq)$.
    \end{enumerate}
    Bi-submersions shall be denoted either by  $$(M,\mathcal{F}_{M}) \stackrel{p}{\leftarrow} W \stackrel{q}{\rightarrow} (N,\mathcal{F}_N),$$ or, when there is no risk of confusion, simply by $W$: the two submersions will be implicitly denoted by $p$ and $q$, or variations thereof (e.g., $p'$,$q'$). 
    Also, we denote by $\mathcal F_W$ the singular foliation  on $W$ given by $$\mathcal F_W:= p^{-1}(\mathcal{F}_M)=q^{-1}(\mathcal{F}_N)=\Gamma(\ker Tp)+ \Gamma(\ker Tq).$$
    \end{definition}

    Let us give more vocabulary.
    \begin{itemize}
        \item 
    A bi-submersion between $(M, \mathcal{F})$ and $(M,\mathcal{F})$ is simply called \emph{bi-submersion over $(M,\mathcal{F})$}. These objects were introduced by Androulidakis and Skandalis \cite{AS}.
\item Vector fields in $\mathcal {BV}_W:=\Gamma(\ker Tp)\cap\Gamma(\ker Tq)\subset \mathfrak X(W)$ are called \emph{bi-vertical vector fields}. They form an involutive submodule, i.e.,  stable under Lie bracket.
    When the  submodule $\mathcal{BV}_W$ is locally finitely generated, it defines a singular foliation on $W$, which we refer to as the \emph{bi-vertical singular foliation of $(M,\mathcal{F}_{M}) \stackrel{p}{\leftarrow} W \stackrel{q}{\rightarrow} (N,\mathcal{F}_N)$}.
    \item  When only item 1 in Definition \ref{def:bi-submersion} holds, we will speak of a \emph{Morita equivalence of singular foliations}\footnote{The notion was introduced in Garmendia and Zambon \cite{Garmadia-Zambon} with a difference: they assume that $p,q$ are surjective maps with connected fibers, while we do not.}
    \end{itemize}

\begin{remark} 
       Since any bi-submersion as in Definition \ref{def:bi-submersion} is a Morita equivalence of singular foliations as introduced by Garmendia and Zambon \cite{Garmadia-Zambon} upon restricted to appropriate neighborhoods of $w \in W, p(w)\in M, q(w) \in W$, one can use several results of \cite{Garmadia-Zambon}.
       To be more precise, for every $w\in W$, if  $\Sigma_{p(w)}\subset M$ is a transversal for $\mathcal{F}_M$ at $p(w)$; $\Sigma_{q(w)}\subset N$ a transversal for $\mathcal{F}_N$ at $q(w)$, then $\left(\Sigma_{p(w)}, \iota^{-1}_{\Sigma_{p(w)}}\mathcal{F}_M\right)$ and $\left(\Sigma_{q(w)}, \iota^{-1}_{\Sigma_{q(w)}}\mathcal{F}_N\right)$ are isomorphic singular foliations. We will say more about this point. In particular, bi-submersions preserve the codimension of leaves\cite[Proposition 2.5]{Garmadia-Zambon}.       
\end{remark}

For morphisms of bi-submersions, we also follow the terminology of Androulidakis-Skandalis.

 \begin{definition}
    A \emph{morphism} from a bi-submersion $(M,\mathcal{F}_{M}) \stackrel{p}{\leftarrow} W \stackrel{q}{\rightarrow} (N,\mathcal{F}_N)$ to a bi-submersion $(\mathcal{F}_{M}, M) \stackrel{p'}{\leftarrow} W' \stackrel{q'}{\rightarrow} (N,\mathcal{F}_N)$ is a smooth map $\varphi\colon W\longrightarrow W'$ that makes the following diagram commutes
    \begin{equation}
        \xymatrix{&W\ar[dd]^\varphi\ar[rd]^q\ar[ld]_p&\\M&&N\\&W'\ar[ur]_{q'}\ar[ul]^{p'}&}
    \end{equation}
    When $\varphi$ is only defined on an open subset of $W$, we speak of a \emph{local morphism}.
\end{definition}

We will see that all morphisms are “locally invertible”, and the notion of morphism will be replaced by the notion of “relation”, see \S \ref{sec:Equivalentbi-submersions}.

{Here is an important lifting property of bi-submersions that generalizes \cite[Proposition 5.15]{RubenSymetries}.}

\begin{proposition}\label{prop:lifting-property}
    {Let $(M,\mathcal{F}_{M}) \stackrel{p}{\leftarrow} W \stackrel{q}{\rightarrow} (N,\mathcal{F}_N)$ be a bi-submersion. For every pair of vector fields $(X,Y)\in \mathcal{F}_M\times \mathcal{F}_N$ there exists  a vector field $Z\in p^{-1}(\mathcal{F}_M)=q^{-1}(\mathcal F_N)$ such that $Z$ is $p$-related to $X$ and $q$-related to $Y$. }
\end{proposition}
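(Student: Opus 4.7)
The strategy is to build the desired $Z$ as the sum of two ``half-lifts'': one lift of $X$ on the $p$-side which is vertical for $q$, and one lift of $Y$ on the $q$-side which is vertical for $p$.

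First, since $p\colon W \to M$ is a submersion, standard partition of unity arguments produce some $Z_1 \in \mathfrak{X}(W)$ which is $p$-related to $X$, i.e.\ $Tp\circ Z_1 = X\circ p$. Because $X\in \mathcal{F}_M$, we have $X\circ p\in p^*\mathcal{F}_M$ and hence $Z_1\in p^{-1}(\mathcal{F}_M)$. The second axiom of a bi-submersion gives
\[
p^{-1}(\mathcal{F}_M) \;=\; \Gamma(\ker Tp)+\Gamma(\ker Tq),
\]
so we may write $Z_1 = A_1+B_1$ with $A_1\in\Gamma(\ker Tp)$ and $B_1\in\Gamma(\ker Tq)$. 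Applying $Tp$ to this decomposition yields $Tp\circ B_1 = Tp\circ Z_1 = X\circ p$, so $B_1$ is $p$-related to $X$ and $q$-related to $0$.

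Symmetrically, using that $q$ is a submersion, pick $Z_2\in\mathfrak{X}(W)$ which is $q$-related to $Y$; then $Z_2\in q^{-1}(\mathcal{F}_N)=p^{-1}(\mathcal{F}_M)$, and by the same decomposition one obtains $Z_2=A_2+B_2$ with $A_2\in\Gamma(\ker Tp)$, $B_2\in\Gamma(\ker Tq)$, and $A_2$ is $q$-related to $Y$ and $p$-related to $0$. Finally, set
\[
Z \;:=\; B_1 + A_2.
\]
Then $Tp\circ Z = Tp\circ B_1 + Tp\circ A_2 = X\circ p + 0 = X\circ p$ and $Tq\circ Z = 0 + Y\circ q = Y\circ q$, so $Z$ is $p$-related to $X$ and $q$-related to $Y$. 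By construction $Z\in \Gamma(\ker Tp)+\Gamma(\ker Tq) = p^{-1}(\mathcal{F}_M)= q^{-1}(\mathcal{F}_N)$, which is the required membership.

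The only delicate point is that the defining identity $p^{-1}(\mathcal{F}_M)=\Gamma(\ker Tp)+\Gamma(\ker Tq)$ should be used at the level of global sections; this is fine on paracompact manifolds since both sides are sheaves of locally finitely generated $\mathcal{O}$-modules, so local decompositions can be patched with partitions of unity subordinate to a trivializing cover of $W$. No further obstruction arises, and neither item~1 nor the existence of a bi-vertical foliation is needed for this statement.
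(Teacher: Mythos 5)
Your proof is correct and follows essentially the same route as the paper's: lift $X$ through the submersion $p$, use the bi-submersion identity $p^{-1}(\mathcal{F}_M)=\Gamma(\ker Tp)+\Gamma(\ker Tq)$ to extract the $\ker Tq$-component (which is still $p$-related to $X$ and $q$-related to $0$), do the symmetric thing for $Y$, and add the two pieces. Your $B_1+A_2$ is exactly the paper's $X^q+Y^p$; the extra remark about patching the decomposition globally with partitions of unity is a reasonable elaboration but does not change the argument.
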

\begin{proof}
    Let $(X,Y)\in \mathcal{F}_M\times \mathcal{F}_N$. Since $p^{-1}(\mathcal{F}_M)=q^{-1}(\mathcal F_N)=\Gamma(\ker Tp)+ \Gamma(\ker Tq)$, there exists a vector field $\underline{X}=X^p+X^q\in p^{-1}(\mathcal{F}_M)$ that is $p$-related to $X$ and $X^p\in \Gamma(\ker Tp)$ and  $X^q\in \Gamma(\ker Tq)$. In particular, $X^q$ is $p$-related to $X$. Likewise, there exists a vector field $\underline{Y}=Y^p+Y^q\in q^{-1}(\mathcal{F}_N)$ that is $q$-related to $Y$ and $Y^p\in \Gamma(\ker Tp)$ and  $Y^q\in \Gamma(\ker Tq)$. In particular, $Y^p$ is $q$-related to $Y$. The vector field $Z=X^q+Y^p$ is $p$-related to $X$ and $q$-related to $Y$.
\end{proof}

\subsection{Examples of bi-submersions}

Let us first list several examples of bi-submersions.

\begin{example}\label{ex:bi-submersions-projectable}
    Let $W,M,N$ be manifolds, and let $p\colon W\longrightarrow M$ and $q\colon W\longrightarrow N$ be submersions such that $\Gamma(\ker(T\pi_M))$ is generated by $p$-projectable vector fields and $\Gamma(\ker(T\pi_N))$ is generated by $q$-projectable vector fields. Define $\mathcal{F}_M$, respectively, $\mathcal{F}_N$ to be the images of these  projections: those are easily seen to be singular foliations. Then $$(M, \mathcal{F}_M) \stackrel{p}{\longleftarrow} W \stackrel{q}{\longrightarrow} (N, \mathcal{F}_N) $$ is easily seen to be a bi-submersion between $(M, \mathcal{F}_M)$ and  $(N, \mathcal{F}_N)$. It is clear by definition of $\mathcal{F}_M$ that $ \Gamma(\ker Tp)+ \Gamma(\ker Tq)\subseteq p^{-1}(\mathcal{F}_M)$. Now, for every $\xi\in (\pi_M)^{-1}(\mathcal{F}_M)$, there exists a vector field $\xi_1\in \Gamma(\ker Tq)$ such that $Tp(\xi-\xi_1)=0$, which proves the opposite inclusion.  The other equality is obtained similarly. This proves the result.
\end{example}
The following fundamental example shows that there exists always a bi-submersion over any given singular foliation.
\begin{example}\label{ex:holonomy-biss}\cite{AS}
Let $\mathcal{F}$ be a singular foliation on a manifold $M$. Let $m\in M$. Let $X_1,\ldots, X_n$ be generators of $\mathcal{F}$ near $m$. 
There is an open neighborhood $\mathcal{W}$ of $(m, 0)\in M \times \mathbb R^n$ such that $(M,\mathcal{F})\stackrel{p}{\leftarrow} \mathcal{W}\stackrel{q}{\rightarrow}(M,\mathcal{F})$ is a bi-submersion over $\mathcal{F}$ with \begin{equation}\label{eq:holbi-submersion}
    p(m,\lambda_1,\ldots,\lambda_n)=m\quad\text{and}\quad \displaystyle{q(m, \lambda_1,\ldots,\lambda_n)=\exp\left(\sum_{i=1}^n\lambda_iX_i \right)(m)}
\end{equation}
where for $X\in\mathfrak X(M)$,\;$\exp(X)_m:=\varphi^{X}_1(m)$ denotes the time-$1$ flow of $X$ at $m\in M$. 
Those bi-submersions are called \emph{path holonomy} in \cite{AS}. 
 When the generators $X_1,\ldots, X_n$  form a minimal family of generators of $ \mathcal F$ near $m$ (i.e., when their classes in $\mathcal{F}_m:=\mathcal{F}/\mathcal{I}_m\mathcal{F}$ form a basis), we will speak of a \emph{minimal path holonomy bi-submersion}.
 \end{example}

{
There is a more abstract description of the path holonomy bi-submersion over a singular foliation, which will be soon of a great importance.}

 \begin{example}\label{ex:holonomy-biss2}\cite[Proposition 4.35]{LLL2} Let $\mathcal{F}$ be a singular foliation on a manifold $M$ and $\mathcal{U}\subseteq M$ an open subset such that there exists an anchored bundle $A\stackrel{\rho}{\to} T\mathcal{U}=TM|_{\mathcal{U}}$ over $\mathcal{F}_\mathcal{U}$ the restriction of $ \mathcal{F}$ to $\mathcal{U}$, i.e., $A\stackrel{p}{\to} \mathcal{U}$ is a vector bundle such that $\rho(\Gamma(A))=\mathcal{F}_\mathcal{U}$.

Given a $TM$-connection $\nabla$ on $A$. Let  $\xi\in \mathfrak X(A)$ be the $\nabla$-geodesic vector field, i.e., the unique linear vector field on $A$ (that satisfies $Tp(\xi(a))=\rho(a)$ for $a\in A$) whose integral curves are the $\nabla$-geodesics.  There is an open neighborhood $\mathcal U_A$ of the zero section in $A$ on which $ M \stackrel{p}{\leftarrow} \mathcal U_A \stackrel{q}{\rightarrow} M $ is a bi-submersion over $\mathcal F $ with $q=p\circ \phi^\xi_1$. Here, $\phi^\xi_1$ is the time-$1$ flow of $\xi$.
     
 \end{example}

 \begin{example}
     Every Lie groupoid $\mathcal{G}\rightrightarrows M$ is a bi-submersion over the singular foliation defined by the image of the anchor map of its Lie algebroid $A\mathcal{G}$. It is even true for a local (i.e., defined on a neighborhood of identity) Lie groupoid.
 \end{example}

 Later, in Proposition \ref{prop:localstructure}, we will see that any bi-submersion is of the form given in Example \ref{ex:exampleofbi-submersion} below in a neighborhood of a point.

 \begin{example} \label{ex:exampleofbi-submersion}
Let $(M,\mathcal F_M)$ and $(N,\mathcal F_N)$ be foliated manifolds. Let $ m\in M$ and $ n \in N$ be two points such that the transverse singular foliations of the respective leaves through $m,n$ are isomorphic. This spells out by saying that  there exists  $ (\Sigma, \mathcal F_\Sigma)$ a foliated manifold, equipped with a point $O$ where all vector fields on $\mathcal F_\Sigma$ vanish, and two injective immersions $r:\Sigma \hookrightarrow M  $ and $l:\Sigma \hookrightarrow N  $ mapping $O$ to $m$ and $n$, respectively, transverse to $ \mathcal F_M$ and $ \mathcal F_N$, respectively, such that $r$, $l$ and the diffeomorphism \begin{equation}\label{eq:varphi}\varphi:= l^{-1} \circ r : l(\Sigma) \simeq r(\Sigma) \end{equation} are isomorphisms of foliated manifolds ($r(\Sigma)$ and $ l(\Sigma)$ being equipped with their respective induced singular foliations.  
Let $ d_M,d_N$ be the dimensions of the leaves through $m$ and $n$. By the local splitting theorems (see, e.g.,  \cite[\S 7.3]{LLL1}), there exists  isomorphisms of foliated manifolds
\begin{equation}\label{eq:} \Phi_M: (M^{m},\mathcal F_M) \simeq (\mathcal B^{d_M},\mathfrak X(\mathcal B^{d_M} ))  \times (\Sigma, \mathcal F_{\Sigma}) {\hbox{ and }}  \Phi_N: (N^{n},\mathcal F_N)  \simeq (\Sigma, \mathcal F_{\Sigma}) \times (\mathcal B^{d_N},\mathfrak X(\mathcal B^{d_N} ))  .
\end{equation}
Here $ M^m, N^n$ are neighborhoods of $m$ and $n$ in $M$ and $N$, respectively. For any bi-submersion 
    $$ (\Sigma, \mathcal{F}_\Sigma)\stackrel{p_\Sigma}{\leftarrow}W_\Sigma\stackrel{q_\Sigma}{\rightarrow}(\Sigma, \mathcal{F}_\Sigma) $$
    over $(\Sigma, \mathcal{F}_\Sigma)$,
the following pair of maps  
     $$ (M^m, \mathcal{F}_M)\stackrel{p}{\leftarrow}  \mathcal B^{d_M} \times W_\Sigma \times \mathcal B^{d_N}   \stackrel{q}{\rightarrow}(N^n, \mathcal F_N) $$ defines a bi-submersion between $\mathcal F_M$ and $\mathcal{F}_N$. Above $p(u,w,v)= \Phi_M(u,p_\Sigma(w))$ and $q(u,w,v)= \Phi_N(q_\Sigma(w),v)$.
\end{example}

\begin{definition}\label{def:typicaltype}
     Bi-submersions as in  Example \ref{ex:exampleofbi-submersion} will be from now on called bi-submersions of \emph{typical type}. The bi-submersion $ W_\Sigma$ over $ (\Sigma,\mathcal F_\Sigma)$ used in its construction will be called its \emph{model}, and the isomorphism of foliated manifolds $ \varphi$ in Equation \eqref{eq:varphi} is called its \emph{transverse diffeomorphism}.
\end{definition}

\subsection{New bi-submersions from old ones}

In this section, we present several constructions of bi-submersions derived from given bi-submersions.

The proof of the following proposition follows the same lines as in  \cite[Proposition 2.4]{AS}, we include a proof for the sake of completeness.
\begin{proposition}[Inverse and composition]\label{prop: inv-composition} Let $(M, \mathcal{F}_M)\stackrel{p}\leftarrow V\stackrel{q}{\rightarrow}(P, \mathcal{F}_P)$ be a Morita equivalence\footnote{Here we do not need to assume the fibers of $p$ and $q$ to be connected nor we assume $p$ and $q$ to be surjective.
} and $(P,\mathcal{F}_P)\stackrel{r}\leftarrow W\stackrel{\tau}{\rightarrow}(N, \mathcal{F}_N)$ a bi-submersion.
\begin{enumerate}
    \item $(N, \mathcal{F}_N)\stackrel{\tau}\leftarrow  W\stackrel{r}{\rightarrow}(P, \mathcal{F}_P)$ is a bi-submersion that is referred to as the \emph{inverse} of $(P, \mathcal{F}_P)\stackrel{r}\leftarrow W\stackrel{\tau}{\rightarrow}(N, \mathcal{F}_N)$.
It shall be denoted by $
    W^{-1}$.
    \item  $(M,\mathcal{F}_{M}) \stackrel{p\circ\mathrm{pr}_V}{\longleftarrow} V\times_{q,P,r}W\stackrel{\tau\circ\mathrm{pr}_W}{\longrightarrow} (N,\mathcal{F}_N)$ is a bi-submersion referred to as the \emph{composition} of $V$ and  $W$. It shall be denoted by $V*W$. Here,  $\mathrm{pr}_V\colon V\times W\to V$ is the projection onto $V$ and $\mathrm{pr}_W\colon V\times W\to W$ the projection onto $W$.
\end{enumerate}
Item 2 still holds when $(M, \mathcal{F}_M)\stackrel{r}\leftarrow W\stackrel{\tau}{\rightarrow}(P, \mathcal{F}_P)$ is a Morita equivalence and $(M, \mathcal{F}_M)\stackrel{p}\leftarrow V\stackrel{q}{\rightarrow}(P, \mathcal{F}_P)$ a bi-submersion. In particular, it holds when both are bi-submersions.
\end{proposition}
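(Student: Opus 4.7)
The plan is to check the two conditions of Definition \ref{def:bi-submersion} for each of the two constructions. For Item 1, both conditions are manifestly symmetric in the two submersions $p$ and $q$: swapping them interchanges $\Gamma(\ker Tp)$ and $\Gamma(\ker Tq)$ while preserving the equalities $p^{-1}(\mathcal{F}_M) = q^{-1}(\mathcal{F}_N)$ and $p^{-1}(\mathcal{F}_M) = \Gamma(\ker Tp) + \Gamma(\ker Tq)$. The inverse $W^{-1}$ is therefore automatically a bi-submersion, sharing the same bi-vertical foliation as $W$.

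For Item 2, I would first observe that the fiber product $U := V \times_{q, P, r} W$ is a smooth manifold because $q$ is a submersion, and that $\pi_V \colon U \to V$ and $\pi_W \colon U \to W$, as well as $p \circ \pi_V$ and $\tau \circ \pi_W$, are submersions by base change and composition. Condition (1) of a bi-submersion follows from a chain of equalities using Proposition \ref{prop:transverse-foliation}(2) (pullback of foliations along a composition), the Morita hypothesis $p^{-1}(\mathcal{F}_M) = q^{-1}(\mathcal{F}_P)$ on $V$, the defining identity $q \circ \pi_V = r \circ \pi_W$ on $U$, and condition (1) of $W$:
\begin{equation*}
(p\pi_V)^{-1}(\mathcal{F}_M) = \pi_V^{-1}(q^{-1}(\mathcal{F}_P)) = (q\pi_V)^{-1}(\mathcal{F}_P) = (r\pi_W)^{-1}(\mathcal{F}_P) = (\tau\pi_W)^{-1}(\mathcal{F}_N).
\end{equation*}

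For condition (2), the inclusion $\Gamma(\ker T(p\pi_V)) + \Gamma(\ker T(\tau\pi_W)) \subseteq (p\pi_V)^{-1}(\mathcal{F}_M)$ is immediate from condition (1). For the reverse inclusion, the plan is local-to-global. Given $Z \in (p\pi_V)^{-1}(\mathcal{F}_M)$, I would locally write $T(p\pi_V)(Z) = \sum_j f_j\,(X_j \circ p \circ \pi_V)$ with $X_j \in \mathcal{F}_M$ and $f_j \in C^\infty(U)$, then lift each $X_j$ to a local vector field $\xi_j \in \mathfrak{X}(V)$ using that $p$ is a submersion. Each such $\xi_j$ automatically lies in $p^{-1}(\mathcal{F}_M) = q^{-1}(\mathcal{F}_P)$, so $Tq(\xi_j) = \sum_k h_{jk}\,(Y_{jk} \circ q)$ for some $Y_{jk} \in \mathcal{F}_P$ and $h_{jk} \in C^\infty(V)$. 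Condition (2) applied to the bi-submersion $W$ then produces lifts $\eta_{jk} \in \Gamma(\ker T\tau)$ with $Tr(\eta_{jk}) = Y_{jk} \circ r$, which is the crucial step where the bi-submersion hypothesis on $W$ enters. Defining $\xi_V(v,w) := \sum_j f_j(v,w)\xi_j(v)$ and $\xi_W(v,w) := \sum_{j,k} f_j(v,w) h_{jk}(v)\eta_{jk}(w)$, the pair $Z_b := (\xi_V, \xi_W)$ is a genuine section of $TU$ because $Tq(\xi_V) = Tr(\xi_W)$ on $U$, and it satisfies $T(\tau\pi_W)(Z_b) = 0$ while $T(p\pi_V)(Z_b) = T(p\pi_V)(Z)$. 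Hence $Z = (Z - Z_b) + Z_b$ realises the desired local decomposition, and a partition of unity on $U$ globalises it.

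The main obstacle will be the bookkeeping of compatible open covers on $V$, $W$, and $U$ on which the local lifts $\xi_j$ and $\eta_{jk}$ simultaneously exist; once this is set up, the partition-of-unity gluing is routine. The symmetric case, where $V$ is the bi-submersion and $W$ the Morita equivalence, follows from Item 1 via the identity $V * W = (W^{-1} * V^{-1})^{-1}$, since $V^{-1}$ is then a bi-submersion and $W^{-1}$ a Morita equivalence, reducing to the case just treated.
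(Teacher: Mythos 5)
Your proof is correct, but the route you take for condition (2) of Item 2 is genuinely different from the paper's. The paper first observes that $(P,\mathcal F_P)\stackrel{\lambda}{\leftarrow}V\times_{q,P,r}W\stackrel{\tau\circ\mathrm{pr}_W}{\longrightarrow}(N,\mathcal F_N)$, with $\lambda(v,w)=q(v)=r(w)$, is a bi-submersion by Lemma \ref{lemma:pull-back} (it is the pullback of $W$ along the submersion $\mathrm{pr}_W$), and then concludes with a three-line module computation based on the bundle decomposition $\ker T\lambda=\ker T\mathrm{pr}_V\oplus\ker T\mathrm{pr}_W$ together with the inclusions $\Gamma(\ker T\mathrm{pr}_W)\subset\Gamma(\ker T(\tau\circ\mathrm{pr}_W))$ and $\Gamma(\ker T\mathrm{pr}_V)\subset\Gamma(\ker T(p\circ\mathrm{pr}_V))$. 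You instead build the decomposition by hand: lift generators of $\mathcal F_M$ to $V$, use the Morita hypothesis to express their $Tq$-images through generators of $\mathcal F_P$, lift those into $\Gamma(\ker T\tau)$ via the bi-submersion property of $W$ (this is essentially Proposition \ref{prop:lifting-property}), and assemble a correcting vector field $Z_b$ on the fiber product. Both arguments are sound; the paper's is shorter and entirely avoids the cover bookkeeping you identify as the main obstacle, while yours makes explicit exactly where the bi-submersion hypothesis on $W$ (as opposed to the mere Morita hypothesis on $V$) is used. Your reduction of the symmetric case to Item 1 via $V*W=(W^{-1}*V^{-1})^{-1}$ is a clean touch that the paper leaves implicit.
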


\begin{proof}
Item 1 is immediate. Let us prove item 2. The fibered product 
$$\xymatrix{&&\ar[dl]_{\mathrm{pr}_V}V\times_{q,P,r}W\ar[dr]^{\mathrm{pr}_W}&& \\&\ar[dl]_{p}V\ar[dr]^{q} & &\ar[dl]_{r}W\ar[dr]^{\tau}\\(M,\mathcal{F}_M)&&(P,\mathcal{F}_P)&&(N,\mathcal{F}_N)}$$

$$(M,\mathcal{F}_{M}) \stackrel{\alpha}{\longleftarrow} V\times_{q,P,r}W\stackrel{\beta}{\longrightarrow} (N,\mathcal{F}_N)$$ with $\alpha=p\circ\mathrm{pr}_V$ and $\beta=\tau\circ\mathrm{pr}_W$, is a Morita equivalence between $\mathcal{F}_M$ and $\mathcal{F}_{N}$ (as the composition of Morita equivalences of singular foliations \cite{Garmadia-Zambon}), that is $\alpha^{-1}(\mathcal{F}_M)=\beta^{-1}(\mathcal{F}_{N})$. In particular, we have that $\Gamma(\ker T\alpha)+\Gamma(\ker T\beta)\subseteq \beta^{-1}(\mathcal{F}_N)$. We need to prove the other inclusion. By Lemma \ref{lemma:pull-back} $$(P, \mathcal F_P)\stackrel{\lambda}{\leftarrow}V\times_{q,P,r}W\stackrel{\beta}{\rightarrow} (N, \mathcal F_N)$$ is a bi-submersion, where $\lambda\colon V\times_{q,P,r}W \stackrel{}{\longrightarrow}  P,\; (v,w)\longmapsto q(v)=r(w)$. Using the fact that $\ker(T\lambda) \subseteq \ker(T\mathrm{\mathrm{pr}_V} ) \oplus \ker(T\mathrm{pr}_W)$ it follows that
\begin{align*}
    \beta^{-1}(\mathcal{F}_N)&=\Gamma(\ker T\beta)+\Gamma(\ker T\lambda),\qquad\text{(bi-submersion property)}\\&\subseteq \Gamma(\ker T\beta)+\Gamma(\ker T\mathrm{pr}_V) + \Gamma(\ker T\mathrm{pr}_W)\\&\subseteq\Gamma(\ker T\beta)+\Gamma(\ker T\mathrm{pr}_V),\quad\text{(since $\Gamma(\ker T\mathrm{pr}_W)\subset \Gamma(\ker T\beta)$)}\\&\subseteq \Gamma(\ker T\alpha)+\Gamma(\ker T\beta) ,\qquad\text{(since $\Gamma(\ker T\mathrm{pr}_V)\subset \Gamma(\ker T\alpha)$)}.
\end{align*}Hence, $\alpha^{-1}(\mathcal{F}_{M})=\beta^{-1}(\mathcal{F}_N)=\Gamma(\ker T\alpha)+\Gamma(\ker T\beta)$.
\end{proof}

We continue by describing two methods for making bi-submersions “bigger” and “smaller”, respectively. The proof of the first lemma is left to the reader.

\begin{lemma}\label{lemma:pull-back}Let $(M, \mathcal{F}_M)\stackrel{p}\leftarrow W\stackrel{q}{\rightarrow}(N, \mathcal{F}_N)$ be a bi-submersion. Then, for every submersion $\pi\colon  V \rightarrow W$ the composition  $(M, \mathcal{F}_M)\stackrel{p\circ\pi}\longleftarrow V\stackrel{q\circ\pi}{\longrightarrow}(N, \mathcal{F}_N)$
is a bi-submersion. In particular, the restriction of a bi-submersion to an open subset is a bi-submersion.
\end{lemma}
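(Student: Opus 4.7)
The plan is to verify the two defining axioms of Definition~\ref{def:bi-submersion} for the pair $(p\circ\pi,q\circ\pi)$, noting at the outset that both composites are surjective submersions as composites of submersions. For axiom~(1), namely $(p\circ\pi)^{-1}(\mathcal F_M)=(q\circ\pi)^{-1}(\mathcal F_N)$, I would directly invoke the associativity of pullback foliations along composed submersions furnished by Proposition~\ref{prop:transverse-foliation}(2): this gives
\[
(p\circ\pi)^{-1}(\mathcal F_M)=\pi^{-1}\bigl(p^{-1}(\mathcal F_M)\bigr)=\pi^{-1}\bigl(q^{-1}(\mathcal F_N)\bigr)=(q\circ\pi)^{-1}(\mathcal F_N),
\]
where the middle equality is axiom~(1) for the original bi-submersion $W$.

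For axiom~(2), the inclusion $\Gamma(\ker T(p\circ\pi))+\Gamma(\ker T(q\circ\pi))\subseteq (p\circ\pi)^{-1}(\mathcal F_M)$ follows from axiom~(1) together with the trivial observation that vectors in either kernel project to zero under the relevant composite. The real content is the reverse inclusion, and here my plan is to fix $Z\in(p\circ\pi)^{-1}(\mathcal F_M)$ and use the identification $(p\circ\pi)^{-1}(\mathcal F_M)=\pi^{-1}(p^{-1}(\mathcal F_M))$ to conclude that $T\pi(Z)$ is a local section of $\pi^*TW$ lying in $\pi^*\bigl(\Gamma(\ker Tp)+\Gamma(\ker Tq)\bigr)$, by axiom~(2) applied to $W$. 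One then decomposes locally $T\pi(Z)=\alpha+\beta$ with $\alpha$ pointwise in $\ker Tp$ and $\beta$ pointwise in $\ker Tq$, and lifts $\alpha,\beta$ through the surjective bundle morphism $T\pi\colon TV\twoheadrightarrow \pi^*TW$ (possible locally because $\pi$ is a submersion) to vector fields $\widetilde\alpha,\widetilde\beta\in\mathfrak X(V)$. A pointwise check then yields $T(p\circ\pi)(\widetilde\alpha)=Tp(T\pi(\widetilde\alpha))=Tp(\alpha)=0$, so $\widetilde\alpha\in\Gamma(\ker T(p\circ\pi))$, and similarly $\widetilde\beta\in\Gamma(\ker T(q\circ\pi))$. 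The residual vector field $Z-\widetilde\alpha-\widetilde\beta$ is $\pi$-vertical and hence lies in both kernels simultaneously, completing the decomposition of $Z$.

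The assertion about restriction to an open subset is the special case $V=\mathcal U\subseteq W$ with $\pi$ the inclusion, which is tautologically a submersion. The only step requiring any care is the local lifting of $\alpha$ and $\beta$ from sections of $\pi^*TW$ to vector fields on $V$; however, this is standard once one fixes any local splitting of the short exact sequence $0\to\ker T\pi\to TV\to \pi^*TW\to 0$, so no genuine obstacle arises and the argument is essentially mechanical once item~(1) has been disposed of via Proposition~\ref{prop:transverse-foliation}(2).
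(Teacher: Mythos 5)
Your argument is correct; note that the paper explicitly leaves the proof of this lemma to the reader, so there is no in-text proof to compare against, and your route (associativity of pullback foliations via Proposition~\ref{prop:transverse-foliation}(2) for axiom~(1), then lifting a pointwise decomposition $T\pi(Z)=\alpha+\beta$ through a splitting of $0\to\ker T\pi\to TV\to\pi^*TW\to 0$ and absorbing the $\pi$-vertical remainder into either kernel for axiom~(2)) is the natural one. The only imprecision is the opening claim that $p\circ\pi$ and $q\circ\pi$ are \emph{surjective} submersions: this fails when $\pi$ is not onto (e.g.\ restriction to an open subset), but it is immaterial since Definition~\ref{def:bi-submersion} only requires submersions, not surjective ones.
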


The next proposition is less straightforward and can be used to reduce the dimension of a bi-submersion. It uses the bi-vertical singular foliation: more on the latter is given in \S\ref{sec:Bi-verticalFoliation}.

\begin{proposition}
\label{prop:reduced-bisub}
    Let $(M, \mathcal{F}_M)\stackrel{p}\leftarrow W\stackrel{q}{\rightarrow}(N, \mathcal{F}_N)$ be a bi-submersion.  For every sub-manifold $W'\subset W$ that intersects $\mathcal{BV}_W$ cleanly\footnote{i.e., such that for every $w' \in W'$
    $$ T_{w'} W = T_{w'} W' + T_{w'} \mathcal{BV}_W $$
where $T_{w'} \mathcal{BV}_W$ is the subspace of tangent vectors through which a bi-vertical vector field exists. Recall that  $\mathcal{BV}_W$ stand for bi-vertical vector fields on $W$.}:
    \begin{enumerate}
        \item the restrictions $p_{W'}\colon W'\to M$ and  $q_{W'}\colon W'\to N$ of the maps $p\colon W\to M$ and $q\colon W\to N$ respectively, are submersions;

        \item and $(M, \mathcal{F}_M)\stackrel{p_{W'}}\longleftarrow W'\stackrel{q_{W'}}{\longrightarrow}(N, \mathcal{F}_N)$ is a bi-submersion.
    \end{enumerate}
    \end{proposition}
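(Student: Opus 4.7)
The key geometric input is the clean intersection hypothesis $T_{w'}W=T_{w'}W'+T_{w'}\mathcal{BV}_W$ for every $w'\in W'$, combined with the obvious inclusion $T_{w'}\mathcal{BV}_W\subseteq\ker T_{w'}p\cap \ker T_{w'}q$ coming from the very definition of bi-vertical vector fields. For item~1 I would argue pointwise: given $w'\in W'$ and $v\in T_{p(w')}M$, lift $v$ to $\tilde v\in T_{w'}W$ using that $p$ is a submersion, then split $\tilde v=u+\xi$ using clean intersection, with $u\in T_{w'}W'$ and $\xi\in T_{w'}\mathcal{BV}_W\subseteq\ker T_{w'}p$. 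Then $T_{w'}p_{W'}(u)=T_{w'}p(\tilde v)=v$, showing that $p_{W'}$ is a submersion; the same argument applies to $q_{W'}$.

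For item~2 I will proceed in two stages. First I would prove the Morita identity $p_{W'}^{-1}(\mathcal F_M)=q_{W'}^{-1}(\mathcal F_N)$ via a local extension--restriction argument. Given $Z'\in p_{W'}^{-1}(\mathcal F_M)$ with $Tp_{W'}(Z')=\sum_i f_i^{W'}\,X_i\circ p_{W'}$ and $X_i\in\mathcal F_M$, I will pick any smooth extension $Z^{\mathrm{ext}}$ of $Z'$ to a neighborhood of $W'$ in $W$, extend the $f_i^{W'}$ to $f_i\in C^\infty(W)$, observe that $\eta:=Tp(Z^{\mathrm{ext}})-\sum_i f_i\,X_i\circ p$ is a section of $p^*TM$ vanishing on $W'$, and lift $\eta$ to a vector field $\tilde\eta\in\mathfrak X(W)$ that also vanishes on $W'$ (possible because in coordinates where $p$ is a projection one solves $Tp(\cdot)=\eta$ explicitly while preserving the vanishing along $W'$). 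Then $\tilde Z:=Z^{\mathrm{ext}}-\tilde\eta$ lies in $p^{-1}(\mathcal F_M)$ and restricts to $Z'$ along $W'$. The bi-submersion axioms for $W$ then give $\tilde Z\in q^{-1}(\mathcal F_N)$, and restricting once more gives $Tq_{W'}(Z')\in q_{W'}^*\mathcal F_N$. The reverse inclusion is symmetric.

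Second, I would establish $p_{W'}^{-1}(\mathcal F_M)=\Gamma(\ker Tp_{W'})+\Gamma(\ker Tq_{W'})$. The inclusion $\supseteq$ is immediate from the first stage. For the reverse, take $Z'\in p_{W'}^{-1}(\mathcal F_M)$, extend to $\tilde Z\in p^{-1}(\mathcal F_M)$ as above, and apply the bi-submersion property of the ambient $W$ to split $\tilde Z=A+B$ with $A\in\Gamma(\ker Tp)$ and $B\in\Gamma(\ker Tq)$. Using the sheaf-surjection formulation of transversality for $\iota_{W'}\pitchfork \mathcal{BV}_W$, I will write $A|_{W'}=A^{W'}+\xi_A|_{W'}$ and $B|_{W'}=B^{W'}+\xi_B|_{W'}$ with $A^{W'},B^{W'}\in\mathfrak X(W')$ and $\xi_A,\xi_B\in\mathcal{BV}_W$. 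Since bi-vertical vector fields sit in $\ker Tp\cap\ker Tq$, one checks $A^{W'}\in\Gamma(\ker Tp_{W'})$ and $B^{W'}\in\Gamma(\ker Tq_{W'})$, while the residue $(\xi_A+\xi_B)|_{W'}=Z'-A^{W'}-B^{W'}$ is automatically tangent to $W'$ and belongs to $\Gamma(\ker Tp_{W'})\cap\Gamma(\ker Tq_{W'})$. The desired decomposition is then $Z'=\bigl(A^{W'}+(\xi_A+\xi_B)|_{W'}\bigr)+B^{W'}$.

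The main obstacle is the extension lemma in stage one: one must simultaneously extend $Z'$ from $W'$ to $W$ \emph{and} correct the extension so that it genuinely lies in $p^{-1}(\mathcal F_M)$, since only then can the bi-submersion property of the ambient $W$ be invoked. Once this local extension is in place, the rest is careful bookkeeping between the clean intersection decomposition and the two bi-submersion axioms on $W$, together with the symmetry of the argument between $p$ and $q$.
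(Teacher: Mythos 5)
Your proof is correct, and item~1 coincides with the paper's argument; for item~2, however, you take a somewhat different route, so let me compare. For the Morita identity $p_{W'}^{-1}(\mathcal F_M)=q_{W'}^{-1}(\mathcal F_N)$ the paper avoids your extension--correction--restriction lemma entirely: since $\mathcal{BV}_W\subseteq \mathcal F_W=p^{-1}(\mathcal F_M)$, the inclusion $\iota_{W'}$ is automatically transverse to $\mathcal F_W$, and functoriality of pull-back foliations (Proposition \ref{prop:transverse-foliation}(2)) gives $p_{W'}^{-1}(\mathcal F_M)=\iota_{W'}^{-1}\bigl(p^{-1}(\mathcal F_M)\bigr)=\iota_{W'}^{-1}\bigl(q^{-1}(\mathcal F_N)\bigr)=q_{W'}^{-1}(\mathcal F_N)$ in one line; your extension argument works but re-proves a special case of that functoriality by hand, and it is the most technically delicate part of your write-up (one must extend $Z'$, correct the extension to land in $p^{-1}(\mathcal F_M)$, and only then restrict). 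For the second bi-submersion axiom the paper also works in the opposite order from you: rather than extending the given $Z'$, splitting it ambiently as $A+B$, and then projecting $A|_{W'}$ and $B|_{W'}$ onto $TW'$ modulo bi-vertical fields, it lifts each local generator $X$ of $\mathcal F_M$ to $\widetilde X\in\Gamma(\ker Tq)$ with $Tp(\widetilde X)=X\circ p$, corrects $\widetilde X$ by a bi-vertical field so that it becomes tangent to $W'$ (the correction stays in $\ker Tq$), restricts to get $\bar X\in\Gamma(\ker Tq_{W'})$ that is $p_{W'}$-related to $X$, and concludes that $Z'-\sum_i g_i\bar X_i\in\Gamma(\ker Tp_{W'})$. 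Both arguments rest on the same two facts --- bi-vertical fields lie in $\ker Tp\cap\ker Tq$, and the clean-intersection hypothesis provides the splitting --- but the paper's version buys explicit projectable lifts of the generators (useful elsewhere, e.g.\ for the lifting property of the reduced bi-submersion) and needs no extension lemma, whereas yours works one vector field at a time at the cost of that extra technical step. Your bookkeeping of the residue $(\xi_A+\xi_B)|_{W'}$ is correct: it is tangent to $W'$ as a difference of tangent fields and lies in both kernels, so the final decomposition is valid.
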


    \begin{proof} Let us show
      that the differential  $T_{x}p_{W'}\colon W'\to T_{p(x)}M$ is surjective for all $x\in W'$. Since $p$ is a submersion, for any $v\in T_{p(x)}M$, there exists $\widetilde{v}\in T_xW$ such that $T_xp(\widetilde v)=v$. 
       Since  $\iota_{W'}\colon W'\hookrightarrow  W$ is transverse submanifold to $\mathcal{BV}_W$, there is by definition a decomposition $$\widetilde v=\underbrace{\widetilde v_1}_{\in T_xW'}+ \underbrace{\widetilde v_2}_{\in T_x\mathcal{BV}_W}.$$ Since  $T_xp(\widetilde v_2)=0$, we have $T_xp(\widetilde v_1)=v$. The same argument applies to $q_{W'}$. This proves item 1.

Since $\iota_{W'}\colon W'\hookrightarrow  W$ intersects $\mathcal{BV}_W\subset \mathcal{F}_W$ cleanly, it also intersects the pull-back foliation $\mathcal{F}_W=p^{-1}(\mathcal{F}_M)=q^{-1}(\mathcal{F}_M)$ cleanly. In particular, \begin{align*}
    p_{W'}^{-1}(\mathcal{F}_M)&=(p\circ \iota_{W'})^{-1}(\mathcal{F}_M)=\iota_{W'}^{-1}(p^{-1}(\mathcal{F}_M)=\iota_{W'}^{-1}(q^{-1}(\mathcal{F}_M)=(q\circ \iota_{W'})^{-1}(\mathcal{F}_M)\\&=q_{W'}^{-1}(\mathcal{F}_N)\supseteq \Gamma(\ker Tp_{W'})+\Gamma(\ker Tq_{W'}).
\end{align*}
We only need to show that $p_{W'}^{-1}(\mathcal{F}_M)\subseteq\Gamma(\ker Tp_{W'})+\Gamma(\ker Tq_{W'})$. 

Let $X \in \mathcal F_M$. 
By the bi-submersion property, there is a vector field $\widetilde X\in \Gamma(\ker Tq)$ such that $Tp(\widetilde X)=X\circ p$, as sections of $p^* TW $. Since $W'$ intersects $ \mathcal{BV}_W$ cleanly, we have a decomposition:
 $$ \widetilde X = \underbrace{\widetilde X^{W'}}_{\in \mathfrak X_{W'}}+ \underbrace{\widetilde Y}_{\in \mathcal{BV}_W}, $$
where $ \mathfrak X_{W'}$ stands for vector fields on $W$ tangent to $W'$. 
 The vector field $\widetilde X^{W'}$ belongs to $ p^{-1}(\mathcal F_M) $ and to $\Gamma(\ker Tq)$. As a consequence, its restriction $ \bar{X} $ to $W'$ belongs to $ p^{-1}_{W'}(\mathcal F_M) $ and to  $\Gamma(\ker Tq_{W'})$. By construction, $ Tp_{W'}(\bar{X}) = X \circ p_{W'} $.

For $Z\in p_{W'}^{-1}(\mathcal F_M)$, there exist a finite number of smooth functions $(g_i)_{i\in I}$ on $W'$ and vector fields $(X_{i})_{i\in I}$ of $\mathcal{F}_M$ such that, as sections of $ p_{W'}^* TM \to W'$:
$$Tp_{W'}(Z)= \sum_{i\in I}g_i \, \, X_i\circ p_{W'}.$$
Let $ \bar{X}_i$ be a lift of $ X_i$, as above, to a section of $\Gamma(\ker Tq_{W'})$.  By construction, $Z-\sum_{i\in I}g_i\bar{X}_i \in \Gamma(\ker Tp_{W'})$. This proves item 2.
%
\end{proof}

We finish with a last lemma.

\begin{lemma}
Let $ (M,\mathcal F_M) \stackrel{p}{\leftarrow} W \stackrel{q}{\rightarrow} (N,\mathcal F_N) $ be a bi-submersion. For any submanifolds $ M' \subset M$ and $N' \subset N$ that intersect $\mathcal F_M $ and $ \mathcal F_N$ cleanly, then
 $$    W_{M'}^{N'}:=p^{-1}(M') \cap q^{-1} (N')  $$
 is a\footnote{Or course, it may be the empty set, but the empty set is a bi-submersion.} bi-submersion for the restricted singular foliations
  $ \mathcal F_{M'}$ on $M'$ and $  \mathcal F_{N'}$ on $N'$.
\end{lemma}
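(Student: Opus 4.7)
The plan is to verify, in order, that $W_{M'}^{N'}$ is a submanifold of $W$, that the restrictions $\bar p := p|_{W_{M'}^{N'}}$ and $\bar q := q|_{W_{M'}^{N'}}$ are submersions onto $M'$ and $N'$, and that the pair $(\bar p, \bar q)$ satisfies the two axioms of Definition \ref{def:bi-submersion}. By Proposition \ref{prop:operations-on-transversals}(1), both $p^{-1}(M')$ and $q^{-1}(N')$ are submanifolds of $W$, each intersecting the pulled-back foliation $\mathcal F_W = p^{-1}(\mathcal F_M) = q^{-1}(\mathcal F_N)$ cleanly. The first real task is to show that these two submanifolds meet transversely in $W$. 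Given $w \in W_{M'}^{N'}$ and $v \in T_w W$, I would write $T_w p(v) = u + X$ with $u \in T_{p(w)} M'$ and $X \in T_{p(w)} \mathcal F_M$ (clean intersection of $M'$), then invoke Proposition \ref{prop:lifting-property} with target $0 \in \mathcal F_N$ to produce $\zeta \in \mathcal F_W \cap \Gamma(\ker Tq)$ with $T_w p(\zeta(w)) = X$; then $v - \zeta(w) \in T_w p^{-1}(M')$ and $\zeta(w) \in T_w q^{-1}(N')$. A second application of the same trick on the $q$-side upgrades this to a decomposition $v = v_0 + \eta(w) + \zeta(w)$ with $v_0 \in (T_w p)^{-1}(T_{p(w)} M') \cap (T_w q)^{-1}(T_{q(w)} N')$ and $\eta(w), \zeta(w) \in T_w \mathcal F_W$, which simultaneously yields the transversality of $p^{-1}(M')$ and $q^{-1}(N')$ (so that $W_{M'}^{N'}$ is a submanifold with the expected tangent space) and the clean intersection of $W_{M'}^{N'}$ with $\mathcal F_W$.

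The submersion property of $\bar p$ at $w$ follows by the same mechanism: given $u \in T_{p(w)} M'$, lift to $\tilde u \in T_w W$ with $T_w p(\tilde u) = u$, decompose $T_w q(\tilde u) = u' + Y$ with $u' \in T_{q(w)} N'$ and $Y \in T_{q(w)} \mathcal F_N$, and correct by $\xi(w) \in \mathcal F_W \cap \Gamma(\ker T p)$ with $T_w q(\xi(w)) = Y$; then $\tilde u - \xi(w)$ lies in $T_w W_{M'}^{N'}$ and projects to $u$. The argument for $\bar q$ is symmetric. For the first bi-submersion axiom, Proposition \ref{prop:transverse-foliation}(2) applied to $p \circ \iota_{W_{M'}^{N'}} = \iota_{M'} \circ \bar p$, and its $q$-counterpart, identifies both $\bar p^{-1}(\mathcal F_{M'})$ and $\bar q^{-1}(\mathcal F_{N'})$ with $\iota_{W_{M'}^{N'}}^{-1}(\mathcal F_W)$, using the clean intersection established above. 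For the second axiom, any local generator of $\mathcal F_{M'}$ is the restriction of some $Y \in \mathcal F_M$ tangent to $M'$; Proposition \ref{prop:lifting-property} provides $\tilde Y \in \mathcal F_W$ which is $p$-related to $Y$ and lies in $\Gamma(\ker T q)$, and the tangency of $Y$ to $M'$ together with $T q(\tilde Y) = 0 \in TN'$ force $\tilde Y$ to be tangent to $W_{M'}^{N'}$. Subtracting $C^\infty$-combinations of such restrictions from any $Z \in \bar p^{-1}(\mathcal F_{M'})$ leaves a remainder in $\Gamma(\ker T \bar p)$, while the reverse inclusion is trivial.

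The main technical point is the initial transversality of $p^{-1}(M')$ and $q^{-1}(N')$ in $W$: this is where the full bi-submersion hypothesis (as opposed to a mere Morita equivalence) is essential, because the lifting property of Proposition \ref{prop:lifting-property} is precisely what converts the clean-intersection hypotheses on $M'$ and $N'$ downstairs into geometric transversality upstairs. Every subsequent step---submersion property, both bi-submersion axioms, and the auxiliary clean intersection of $W_{M'}^{N'}$ with $\mathcal F_W$ needed to apply Proposition \ref{prop:transverse-foliation}(2)---is a repackaging of this same decomposition, fibre by fibre.
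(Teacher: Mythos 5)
Your proof is correct, and it rests on the same core mechanism as the paper's: decompose a tangent vector using the clean-intersection hypothesis downstairs, then correct it by a lift lying in $\ker Tq$ (resp.\ $\ker Tp$) --- which is exactly what axiom 2 of Definition \ref{def:bi-submersion}, repackaged as Proposition \ref{prop:lifting-property}, supplies. The organization differs, though. The paper first reduces to the case $N'=N$ (``it suffices to apply the same partial result twice''), so that at each stage the relevant set is the preimage of a single submanifold under a submersion and its smoothness is automatic; you instead treat both constraints simultaneously, and therefore need the extra step of proving that $p^{-1}(M')$ and $q^{-1}(N')$ intersect transversely in $W$ --- which your two-stage lifting argument does deliver, together with the identification $T_w W_{M'}^{N'}=(T_wp)^{-1}(T_{p(w)}M')\cap (T_wq)^{-1}(T_{q(w)}N')$ and the clean intersection of $W_{M'}^{N'}$ with $\mathcal F_W$ in one stroke. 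From that point on the two proofs run in parallel: submersion property of the restricted maps, axiom 1 via contravariance of pull-backs along transverse maps, and axiom 2 by lifting generators of $\mathcal F_{M'}$ (restrictions of vector fields of $\mathcal F_M$ tangent to $M'$) to sections of $\ker Tq$ tangent to $W_{M'}^{N'}$ and subtracting. The paper's reduction buys lighter bookkeeping; your symmetric treatment makes the role of the two clean-intersection hypotheses more transparent and is the version one would want if one later needs the explicit description of the tangent spaces of $W_{M'}^{N'}$.
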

\begin{proof}
Without any loss of generality, one can assume $ N'=N$. Let us explain why: assume that we have shown that for any bi-submersion as in the Lemma, and any $M'\subset M$ that intersect $ \mathcal F_{M}$ cleanly,  $ p^{-1}(M') \subset W $ is a bi-submersion between the induced foliated manifold $ (M',\mathcal F_{M'})$ and $ (N,\mathcal F_N)$. It suffices then to apply the same partial result a second time, but this time to the submanifold $N' \subset N $, in order to derive the desired result.

Let us therefore assume $ N'=N$. Since $ p$ is a submersion, $ p^{-1}(M')$ is a submanifold of $W$, and $p$ restricts to a submersion $ p: p^{-1}(M') \to M'$.
Let us first show that the restriction of $q$ to $ p^{-1}(M')$ is still a submersion valued in $ N$ (not necessarily onto). Consider an arbitrary point $w \in  p^{-1}(M')$ and an arbitrary tangent vector $ v \in T_{p(w)} N$.  
Since $q: W \to N$ is a submersion, there exists $\tilde{v}  \in T_w W$ which projects on $ w$.  In view of decomposition
 \begin{equation} \label{eq:sommedirectepardef} T_{p(w)} M' + T_{p(w)}\mathcal F_M  = T_{p(w)} M \end{equation}
there exists $ a \in T_w p^{-1}(M') \subset T_w W $ and $ b \in \mathrm{ker}(T_ w q) \subset T_w W $ such that 
 $$ T_w p \, (a) + T_w p \, (b)  = T_w p \, (   \tilde{v}) .$$
 This implies that the difference $ \tilde{v}-b \in T_w W $  satisfies $T_w q( \tilde{v}-b) = v $  and 
  $ T_w p( \tilde{v}-b ) \in T_{p(w)}  M'$ by construction.
As a consequence, $ \tilde{v}-b$ belongs to $ p^{-1}(M')$ and projects on $v $ through $ T_w  q$. This proves that the restriction of $q$ to $p^{-1}(W') $  is a submersion. 

It also follows from Equation \eqref{eq:sommedirectepardef} that $ T_w p^{-1}(W') + T_w p^{-1}(\mathcal F_{M} ) = T_w W $, i.e., $ p^{-1}(W')$ intersects   
 $ p^{-1}(\mathcal F_{M} )= q^{-1}(\mathcal F_{N} ) $ cleanly. 
Taking the pull-back map of singular foliations by submersions or inclusions of transverse submanifolds is contravariant. This implies that
 $$  p^{-1}_{|_{p^{-1}(W')}} (\mathcal F_{M'} ) = q^{-1}_{|_{p^{-1}(W')}}(\mathcal F_{N} ).$$
Now, let $ X \in \mathcal F_{M'}$. Let $ \tilde{X} \in \mathcal F_M$ whose restriction to  $M'$ is $X$. There exists a section of $ u \in {\mathrm{ker}}(Tq) $ such that $ Tp( u) = \tilde{X}$. The restriction of $u$ to $ p^{-1}(M') $ is a section of $ {\mathrm{ker}}(Tq)$ that $ Tp$ projects to $X$. 
For any vector field  $Z \in   p^{-1}(\mathcal F_{M'})$, there exists near every point local generators $X_1, \dots, X_k$ of $\mathcal F_{M'}$  such that $ Tp (Z) - \sum_{i=1}^n \varphi_i \circ p  \, X_i =0 $ as sections of $ p^! TM'$. Let $ \bar{X}_1, \dots, \bar{X}_k$ be sections of $ {\mathrm{ker}}(Tq)$ such that  $ Tp(\bar{X_i})=X_i$ as before, then $ Z - \sum_{i=1}^k \varphi_i \bar{X}_i $ belongs to  $ {\mathrm{ker}}(Tp_{|_{p^{-1}(W')}})$. This completes the proof.
\end{proof}

 \subsection{A generalization of bisections: bi-transversals}\label{sec:bitransversals}
 The concept of bisection, as introduced in \cite{AS} for a bi-submersion over singular foliation $(M, \mathcal{F})$ turns out to be inadequate for the general case of bi-submersions between foliated manifolds $(M, \mathcal{F}_M)$ and $(N, \mathcal{F}_N)$, especially when $M$ and $N$ have different dimensions. In this section, we propose a new concept to remedy this deficiency. 

\begin{definition}\label{def:bi-transversal}
Let $(M, \mathcal{F}_M) \stackrel{p}{\longleftarrow} W \stackrel{q}{\longrightarrow} (N, \mathcal{F}_N) $
be a bi-submersion.  A \emph{bi-transversal at $ w \in W$} is a  pointed submanifold $ (\Sigma, w) \subset W $  that is transverse\footnote{I.e., it intersects $ \mathcal F_W$ cleanly and $T_w W= T_w \mathcal F_w \oplus  T_w \Sigma $.} to  $\mathcal{F}_W=p^{-1}(\mathcal F_M)=q^{-1}(\mathcal F_N)$ at $w$. When there is no possible confusion, we shall denote $(\Sigma, w)$ simply by $\Sigma$.
\end{definition}

\begin{example}\label{ex:typicaltype}
For a bi-submersion of typical type as in Definition \ref{def:typicaltype} with model a path holonomy bi-submersion $ \mathcal W\subset \Sigma \times \mathbb R^d $, the subset $$\{(0,(\sigma,0),0)  {\hbox{ with }} \sigma \in \Sigma \} \subset \mathcal B^{d_M} \times \mathcal W \times \mathcal B^{d_N} $$ 
is a bi-transversal at $ (0,(O,0),0)$. 
\end{example}

\begin{remark}
A bisection of a bi-submersion  $(M, \mathcal{F}) \stackrel{p}{\longleftarrow} W \stackrel{q}{\longrightarrow} (M, \mathcal{F}) $ over a singular foliation $(M, \mathcal{F})$ in the sense of \cite{AS} is a submanifold $\Sigma\subset W$ such that  $\forall \sigma \in \Sigma$
 $$T_\sigma \Sigma \oplus {\mathrm{ker}}(T_\sigma p)= T_\sigma \Sigma \oplus {\mathrm{ker}}(T_\sigma q)= T_\sigma W.
 $$ 
 Such a $\Sigma$ intersects $p^{-1}(\mathcal F_M)=q^{-1}(\mathcal F_N)\subset \mathfrak X(W)$ cleanly, but is not, in general, a bi-transversal. A bisection through a point $ w \in W$ such that all vector fields in $\mathcal F_M $ vanish at $p(w)$ and $q(w)$ is, however, a bi-transversal at $w$.
\end{remark}
\begin{remark}
    Notice that the notion of bi-transversal or bisection in Definition \ref{def:bi-transversal} does not use the second axiom in Definition \ref{def:bi-submersion}. Hence, it makes sense for Morita equivalence of singular foliations.
\end{remark}
The following lemma provides a justification for the terminology ‘‘bi-transversal of a bi-submersion” as used in Definition \ref{def:bi-transversal}.
\begin{lemma}\label{lemma:bi-transversal}
Let $(M, \mathcal{F}_M) \stackrel{p}{\longleftarrow} W \stackrel{q}{\longrightarrow} (N, \mathcal{F}_N) $
be a bi-submersion. For any $w \in W$, there exists at least a bi-transversal $(\Sigma,w)$ through $w$.
Moreover, for any bi-transversal   $(\Sigma,w) $, there exists a neighborhood $ \Sigma'$ of $ w$ in $ \Sigma$ such that
\begin{enumerate}
\item The restrictions of $p$ and $q$ to $ \Sigma'$ are injective immersions in a neighborhood of $w$.
\item The images $p( \Sigma')$, $q(\Sigma')$ are  submanifolds of $M$ and $N$ intersecting $\mathcal F_M,\mathcal F_N $ cleanly and transverse to the leaves $L_{p(w)}\subset M$ at $p(w)$ and $L_{q(w)}\subset N$ at $q(w)$, respectively.
\end{enumerate}
\end{lemma}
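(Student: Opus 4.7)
The argument splits into four natural parts: construction of a bi-transversal, verifying that $p$ and $q$ restrict to injective immersions, showing that the images are submanifolds intersecting $\mathcal F_M,\mathcal F_N$ cleanly, and finally upgrading clean intersection to transversality at $p(w)$ and $q(w)$.

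\emph{Existence.} Given $w \in W$, pick a vector subspace $V \subset T_w W$ complementary to $T_w \mathcal F_W$, and realize it as $T_w \Sigma$ for some embedded submanifold $\Sigma \ni w$ (e.g.\ by exponentiating through local coordinates). Clean intersection in a neighborhood will then follow from the openness of surjectivity: if $X_1,\dots,X_n$ are local generators of $\mathcal F_W$ around $w$, the linear map
\[
\phi_\sigma \colon T_\sigma \Sigma \oplus \mathbb R^n \longrightarrow T_\sigma W, \qquad (u,(c_i)) \longmapsto u + \sum_i c_i X_i(\sigma),
\]
is surjective at $\sigma = w$; since $\phi_\sigma$ depends continuously on $\sigma \in \Sigma$ and has constant domain/target dimensions, it remains surjective on a neighborhood of $w$ in $\Sigma$, which is the desired clean intersection condition. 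Shrinking $\Sigma$ to this neighborhood yields the bi-transversal.

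\emph{Immersion and embedding properties.} For any bi-transversal $(\Sigma,w)$, observe that $\Gamma(\ker Tp) \subseteq p^{-1}(\mathcal F_M) = \mathcal F_W$, so $\ker T_w p \subseteq T_w \mathcal F_W$. The direct sum condition at $w$ then gives $\ker T_w p \cap T_w \Sigma \subseteq T_w\mathcal F_W \cap T_w\Sigma = \{0\}$, hence $p|_\Sigma$ is an immersion at $w$. This is an open condition, so it persists on a neighborhood $\Sigma'$ of $w$ in $\Sigma$; by the local injectivity of immersions one may shrink $\Sigma'$ further to make $p|_{\Sigma'}$ an embedding, so that $p(\Sigma') \subset M$ is a submanifold. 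The same argument, applied symmetrically, handles $q|_{\Sigma'}$.

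\emph{Clean intersection of the images.} Fix $\sigma \in \Sigma'$ and $v \in T_{p(\sigma)}M$. Since $p$ is a submersion, lift $v$ to $\tilde v \in T_\sigma W$. Clean intersection of $\Sigma'$ with $\mathcal F_W$ gives $\tilde v = \tilde v_\Sigma + \tilde v_{\mathcal F}$ with $\tilde v_\Sigma \in T_\sigma \Sigma'$ and $\tilde v_{\mathcal F} \in T_\sigma \mathcal F_W$. Applying $T_\sigma p$ yields $v \in T_{p(\sigma)} p(\Sigma') + T_{p(\sigma)} \mathcal F_M$, because $T p (\mathcal F_W) \subseteq p^*\mathcal F_M$. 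Hence $p(\Sigma')$ intersects $\mathcal F_M$ cleanly; the analogous argument applies to $q(\Sigma')$.

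\emph{Transversality at the basepoint.} Here lies the most delicate point. We already have the sum decomposition at $p(w)$ by specializing the previous paragraph to $\sigma = w$; what remains is to show the sum is direct. Suppose $\alpha \in T_{p(w)} p(\Sigma') \cap T_{p(w)} \mathcal F_M$. Write $\alpha = T_w p(u)$ with $u \in T_w\Sigma$. Using the bi-submersion property, every element of $\mathcal F_M$ admits a lift into $\Gamma(\ker Tq)$, so $T_w p(\ker T_w q) = T_{p(w)}\mathcal F_M$ and we may also write $\alpha = T_w p(v)$ with $v \in \ker T_w q$. Then $u - v \in \ker T_w p$, so $u = (u-v) + v \in \ker T_w p + \ker T_w q = T_w\mathcal F_W$. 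Combined with $u \in T_w \Sigma$ and the direct sum $T_w\Sigma \oplus T_w\mathcal F_W = T_w W$, this forces $u = 0$ and hence $\alpha = 0$. Dually the same works for $q$. The step I expect to be the real crux is this last one, because it is the only place where one must exploit the second bi-submersion axiom (the splitting $\mathcal F_W = \Gamma(\ker Tp) + \Gamma(\ker Tq)$) rather than merely the Morita-equivalence condition.
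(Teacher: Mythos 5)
Your proof is correct and follows essentially the same route as the paper: choose a complement $V$ to $T_w\mathcal F_W=\ker T_wp+\ker T_wq$, use openness of surjectivity to get clean intersection on a neighborhood, deduce the immersion property from $\ker T_wp\subseteq T_w\mathcal F_W$, and push the transversality forward along the submersions (the paper delegates this last step to Proposition \ref{prop:operations-on-transversals}(2), whose proof is your ``clean intersection of the images'' paragraph). The only divergence is your directness argument at the basepoint: you invoke the second bi-submersion axiom to lift $\alpha$ into $\ker T_wq$, whereas the paper gets $u\in T_w\mathcal F_W$ directly from the identity $T_w\bigl(p^{-1}(\mathcal F_M)\bigr)=(T_wp)^{-1}\bigl(T_{p(w)}\mathcal F_M\bigr)$ of Proposition \ref{prop:transverse-foliation}(1), so — contrary to your closing remark — this step needs only the Morita-equivalence axiom $\mathcal F_W=p^{-1}(\mathcal F_M)$, not the splitting $\mathcal F_W=\Gamma(\ker Tp)+\Gamma(\ker Tq)$; both arguments are valid.
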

\begin{proof}
    There exist vector subspaces $V \subset T_w W$  that do not intersect  $ \ker(T_w p)+ \ker(T_w q)$. In particular, both $T_w p\colon V \to T_{p(w)}M$ and $ T_w q\colon V \to T_{q(w)}M$ are linear invertible maps on their images. Any submanifold through $w$ admitting $V$ as its tangent space admits a restriction to a neighborhood $\Sigma$ of $w$ which is a bi-transversal.

    \begin{enumerate}
        \item By assumption, the restriction of $p,q$ on $(\Sigma, w)$ are immersions. They are injective immersions in a neighborhood of $w$ since immersions are locally injective. In particular, their images $p( \Sigma,w)$, $q(\Sigma,w)$ are submanifolds of $M$ and $N$.
        \item This is a direct consequence of item 2 of  Proposition \ref{prop:operations-on-transversals}.
    \end{enumerate}
    
\end{proof}
\begin{definition}A bi-transversal that satisfies both items of Lemma
    \ref{lemma:bi-transversal} will be said to be \emph{good}.
\end{definition}

Lemma
    \ref{lemma:bi-transversal} means that every bi-transversal $ (\Sigma,w)$ can be assumed to be good, by shrinking to a smaller open neighborhood of $w$ if necessary.  To a good bi-transversal $(\Sigma,w) $, one associates a diffeomophism
 $$\underline{\Sigma} \colon p(\Sigma) \stackrel{\sim}{\longrightarrow} q(\Sigma)$$ 
 such that the following diagram of diffeomorphisms commutes:
\begin{equation}\label{diag:trans-diffeo}
    \xymatrix{&\Sigma\subset W\ar[dr]^{q|_\Sigma}\ar[ld]_{p|_\Sigma} & &\\ p(\Sigma)\ar[rr]_{\underline{\Sigma}}& &q(\Sigma)&}
\end{equation}

\begin{proposition}[Bi-transversals induce isomorphism of transverse foliations]\label{prop:inducedsymetry}
Let $(M, \mathcal{F}_M) \stackrel{p}{\longleftarrow} W \stackrel{q}{\longrightarrow} (N, \mathcal{F}_N) $
be a bi-submersion. For every good bi-transversal $\Sigma\subset W$, the induced diffeomorphism $$\underline{\Sigma} \colon p(\Sigma) \longrightarrow q(\Sigma) $$ is an isomorphism of transverse singular foliations from $ \left(p(\Sigma), \mathcal F_M|_{p(\Sigma)}\right)$ to $\left(q(\Sigma), \mathcal F_N|_{q(\Sigma)}\right) $. 
\end{proposition}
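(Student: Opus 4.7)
The plan is to exhibit the two singular foliations $\underline{\Sigma}^*(\mathcal{F}_N|_{q(\Sigma)})$ and $\mathcal{F}_M|_{p(\Sigma)}$ on $p(\Sigma)$ as the push-forward, via the diffeomorphism $p|_\Sigma\colon \Sigma\to p(\Sigma)$, of one and the same singular foliation on $\Sigma$, namely the pull-back $\iota_\Sigma^{-1}(\mathcal{F}_W)$ of the total foliation $\mathcal{F}_W = p^{-1}(\mathcal F_M) = q^{-1}(\mathcal F_N)$. This reduces everything to the functoriality of the pull-back of singular foliations under transverse/clean compositions, as recorded in Proposition \ref{prop:transverse-foliation}(2).

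First I would observe that $\iota_\Sigma\colon \Sigma\hookrightarrow W$ intersects $\mathcal F_W$ cleanly (by definition of a bi-transversal) and that $p\colon W\to M$ is a submersion, hence transverse to $\mathcal F_M$; by Proposition \ref{prop:transverse-foliation}(2), the composition $p|_\Sigma = p\circ \iota_\Sigma$ is transverse to $\mathcal{F}_M$ and
\[
(p|_\Sigma)^{-1}(\mathcal F_M) \;=\; \iota_\Sigma^{-1}\bigl(p^{-1}(\mathcal F_M)\bigr) \;=\; \iota_\Sigma^{-1}(\mathcal F_W).
\]
The same argument applied to $q$ gives $(q|_\Sigma)^{-1}(\mathcal F_N) = \iota_\Sigma^{-1}(\mathcal F_W)$. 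Combining these two identities yields the key equality of singular foliations on $\Sigma$:
\[
(p|_\Sigma)^{-1}(\mathcal F_M) \;=\; (q|_\Sigma)^{-1}(\mathcal F_N).
\]

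Next, I would exploit the hypothesis that $\Sigma$ is good: by Lemma \ref{lemma:bi-transversal}, after possibly shrinking, $p|_\Sigma\colon \Sigma\to p(\Sigma)$ and $q|_\Sigma\colon \Sigma\to q(\Sigma)$ are diffeomorphisms onto submanifolds of $M$ and $N$ intersecting $\mathcal F_M$ and $\mathcal F_N$ cleanly. Factoring $p|_\Sigma = \iota_{p(\Sigma)} \circ \alpha$ with $\alpha\colon \Sigma\xrightarrow{\sim} p(\Sigma)$ a diffeomorphism, one more application of Proposition \ref{prop:transverse-foliation}(2) gives
\[
(p|_\Sigma)^{-1}(\mathcal F_M) \;=\; \alpha^{-1}\bigl(\iota_{p(\Sigma)}^{-1}(\mathcal F_M)\bigr) \;=\; (\alpha^{-1})_*\bigl(\mathcal F_M|_{p(\Sigma)}\bigr),
\]
and similarly for the $q$-side with $\beta\colon \Sigma\xrightarrow{\sim} q(\Sigma)$. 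Applying $\beta_*$ to the equality displayed above and recalling $\underline{\Sigma} = \beta\circ \alpha^{-1}$, we conclude
\[
\underline{\Sigma}_*\bigl(\mathcal F_M|_{p(\Sigma)}\bigr) \;=\; \mathcal F_N|_{q(\Sigma)},
\]
which is the desired isomorphism of transverse singular foliations.

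I do not foresee a real obstacle: the statement is essentially a formal consequence of the commutativity diagram \eqref{diag:trans-diffeo}, the clean-intersection hypothesis built into the notion of bi-transversal, and the defining axiom $\mathcal F_W = p^{-1}(\mathcal F_M) = q^{-1}(\mathcal F_N)$ of a bi-submersion. The only care needed is bookkeeping: verifying at each step that all maps involved are either submersions, diffeomorphisms, or inclusions of cleanly intersecting submanifolds, so that Proposition \ref{prop:transverse-foliation}(2) applies and pull-backs compose as expected.
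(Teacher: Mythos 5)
Your proof is correct, and it reaches the conclusion by a genuinely more formal route than the paper's. Both arguments share the same skeleton -- reduce everything to showing that $p|_\Sigma$ and $q|_\Sigma$ are isomorphisms of foliated manifolds from $\left(\Sigma,\mathcal F_W|_\Sigma\right)$ onto $\left(p(\Sigma),\mathcal F_M|_{p(\Sigma)}\right)$ and $\left(q(\Sigma),\mathcal F_N|_{q(\Sigma)}\right)$, then conclude via the commuting triangle defining $\underline{\Sigma}$ -- but you establish the key identity $(p|_\Sigma)^{-1}(\mathcal F_M)=\iota_\Sigma^{-1}(\mathcal F_W)=(q|_\Sigma)^{-1}(\mathcal F_N)$ purely from the functoriality of pull-backs under transverse compositions (Proposition \ref{prop:transverse-foliation}(2), applied once with the submersion $p$ and the clean inclusion $\iota_\Sigma$, and once with the clean inclusion $\iota_{p(\Sigma)}$ and the diffeomorphism $\alpha$), whereas the paper proves the equivalent statement $(p|_\Sigma)_*\left((\mathcal F_W)_\Sigma\right)=(\mathcal F_M)_{p(\Sigma)}$ by a hands-on double inclusion: lifting a generator $u$ of $\mathcal F_M|_{p(\Sigma)}$ to a vector field of $\mathcal F_W$ tangent to $\Sigma$ using the splitting $T_\sigma W=T_\sigma\Sigma\oplus\ker(T_\sigma p)$, and conversely rectifying the coefficient functions of a tangent element of $\mathcal F_W$ to be constant along $p$-fibres. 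Your approach is shorter and cleanly isolates where each hypothesis (transversality of $\Sigma$, goodness, cleanness of $p(\Sigma)$) is used; the paper's approach is self-contained at this point and exhibits the explicit vector-field correspondences, which is what gets reused later in the bi-submersion machinery. The only cosmetic remark is that since $\Sigma$ is assumed \emph{good}, both items of Lemma \ref{lemma:bi-transversal} already hold, so no further shrinking is needed; also note that the identification $\alpha^{-1}(\mathcal G)=(\alpha^{-1})_*\mathcal G$ for a diffeomorphism $\alpha$, which you use implicitly, follows directly from the definition of the pull-back module together with $C^\infty$-linearity of $\mathcal G$.
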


In particular, denote by $ \mathcal F_\Sigma$ the induced singular foliation on $W$. The commutative diagram \eqref{diag:trans-diffeo2} is made of isomorphisms of singular foliations
\begin{equation}\label{diag:trans-diffeo2}
    \xymatrix{&(\Sigma, \mathcal F_\Sigma) \ar[dr]^{q|_\Sigma}\ar[ld]_{p|_\Sigma} & &\\  \left(p(\Sigma), \mathcal F_M|_{p(\Sigma)}\right)\ar[rr]_{\underline{\Sigma}}& & \left(q(\Sigma), \mathcal F_N|_{q(\Sigma)}\right).&}
\end{equation}

\begin{proof}
    It is enough to show that the restrictions  $p|_\Sigma,\; q|_\Sigma$ of $p,\;q$ to $\Sigma$ are  diffeomorphisms of singular foliations from 
$ \left(\Sigma, \mathcal F_W|_\Sigma \right)$ to $\left(p(\Sigma), \mathcal F_M|_{p(\Sigma)}\right) $ and $ (\Sigma,\mathcal F_W|_\Sigma) $ to $\left(q(\Sigma), \mathcal F_N|_{q(\Sigma)}\right)$ respectively. Then, the result will follow from Diagram \eqref{diag:trans-diffeo}. To begin, observe that according to Proposition \ref{prop:transverse-foliation}(1), the restriction $\left(\mathcal F_W\right)_{\Sigma}$ of $\mathcal F_W$ to $\Sigma\subset W$ is a singular foliation on $\Sigma$ due to the transversality of $\Sigma$ to $\mathcal F_W$. Likewise, by Lemma \ref{lemma:bi-transversal}, the restrictions $(\mathcal{F}_M)_{p(\Sigma)}$ and $(\mathcal{F}_N)_{q(\Sigma)}$ are singular foliations. Since the restriction $p_{|_\Sigma} $ of $p$ to $ \Sigma$ is a diffeomorphism, it is possible to consider the push-forward singular foliation $$(p_{|_\Sigma})_* \left(\left(\mathcal F_W\right)_{\Sigma} \right).$$ It is by construction a singular foliation on $p(\Sigma)$.
We first show the inclusion $(\mathcal F_N)_{p(\Sigma)}\subseteq ({p_{\Sigma}})_{*}(\left(\mathcal F_W\right)_\Sigma)$. Let $u \in (\mathcal F_N)_{p(\Sigma)}$. There exists an unique vector field $u^{\Sigma}\in\mathfrak X(\Sigma)$ such that $ Tp_\Sigma(u^\Sigma)=u$. 
 Let $ v$ be a vector field in $p^{-1}(\mathcal F_M) $ that $p$-related to $u$. 
For every $\sigma\in \Sigma$, the difference $  u_{\sigma}^\Sigma-v_\sigma  $  is valued in ${\mathrm{ker}}(T_\sigma p)$. 
In view of the decomposition
 $$  T_\sigma W = T_\sigma \Sigma \oplus {\mathrm{ker}}(T_\sigma p) ,$$
there exists a vector field in $Z \in \Gamma( {\mathrm{ker}}(T p))$ such that $u_{\sigma}^\Sigma -v_\sigma= Z_\sigma$ for all $ \sigma \in \Sigma$. Consider $u^! := v +Z $. The vector field $ u^!$ belongs to $p^{-1}(\mathcal F_M)$ by definition. Also, by construction, $u^!$ is tangent to $\Sigma$. Last, it coincides with $u^\Sigma$ on $ \Sigma$. This proves that $u^{\Sigma} \in \left(\mathcal{F}_W\right)_\Sigma $. Therefore,  the desired inclusion holds.

\vspace{1mm}
\noindent
Let us show the opposite inclusion $(p_{|_{\Sigma}})_* \left( \mathcal{F}_W\right)_\Sigma  \subseteq (\mathcal F_M)_{p(\Sigma)} $ : let $ v$ be a vector field in $\mathcal{F}_W$ that happens to be tangent to $\Sigma$. We need to show that there exists a vector field $ \tilde{v}$ which coincides with $v$ on $ \Sigma$ and is $p$-related to a vector field in $\mathcal F_M$. By construction, $v= \sum_i g_i v_i$ where the $g_i$ are local smooth functions on 
$W$ and $v_i\in \mathcal{F}_W $ are $p$-related to elements  $u_i \in \mathcal F$. Let $ \tilde{g}_i$ be functions on $W$ that coincide with $g_i$ on $\Sigma$ and are constant along $p$-fibers, i.e., $\tilde{g}_i=s^*f_i$ for some smooth function $f_i$ on $M$. The vector field $ \tilde{v} = \sum_i \tilde{g}_i v_i$ is tangent to $\Sigma $ (since it coincides with $v$ on $ \Sigma$) and is $p$-related with $\sum_i f_i u_i \in \mathcal F_M $. Hence $ p_*(v|_{\Sigma}) = \sum_i f_i u_i |_{p(\Sigma)}$
and in particular $(p_\Sigma)_*(v|_{\Sigma})\in \mathcal{F}_{p(\Sigma)}$.   
In turn, this implies $(p_{\Sigma})_* \left( \mathcal{F}_W \right)_\Sigma \subset (\mathcal F_M)_{p(\Sigma)} $. This same conclusion  is valid for $$(q_{|_\Sigma})_* \left(\left(\mathcal F_W\right)_{\Sigma} \right)=(\mathcal{F}_N)_{q({\Sigma})}.$$  This completes the proof.
\end{proof}

\begin{example}
For the bi-submersions of typical type and the bi-transversal in Example \ref{ex:typicaltype}, 
$ \underline{\Sigma}$ coincides with what we called in Definition \ref{def:typicaltype} the transverse diffeomorphism.
\end{example}

The following proposition is immediate.
\begin{lemma}\label{lemma:bisection}
    {Let $\Phi\colon W\to W'$ be a bi-submersions morphism  and $\Sigma\subseteq W$ a bi-transversal of $W$. The image  $\Phi(\Sigma)\subset W'$ is a bi-transversal of $W'$ and $\underline{\Phi(\Sigma)}=\underline \Sigma$.} 
    
    The same properties hold for good  bi-transversals.
\end{lemma}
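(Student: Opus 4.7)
The plan is to prove the lemma in three natural steps, corresponding to: (i) showing $\Phi(\Sigma)$ is a submanifold, (ii) verifying the transversality condition that makes it a bi-transversal of $W'$, and (iii) identifying the induced transverse diffeomorphism.

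\textbf{Step 1 (immersion).} First I would show that $\Phi|_\Sigma$ is an immersion at $w$. Suppose $v \in T_w\Sigma$ with $T_w\Phi(v)=0$. Using the compatibility $p = p' \circ \Phi$, one gets $T_wp(v) = T_{\Phi(w)}p'\circ T_w\Phi(v)=0$, so $v \in \ker T_wp \subseteq T_w \mathcal{F}_W$. But $T_w\Sigma \cap T_w\mathcal{F}_W = 0$ by the bi-transversality of $\Sigma$, forcing $v=0$. Shrinking $\Sigma$, we may assume $\Phi|_\Sigma$ is an injective immersion, so $\Phi(\Sigma)\subset W'$ is an embedded submanifold and $\Phi|_\Sigma:\Sigma\to\Phi(\Sigma)$ is a diffeomorphism.

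\textbf{Step 2 (transversality to $\mathcal{F}_{W'}$).} The main calculation is to show $T_{\Phi(w)}W' = T_{\Phi(w)}\mathcal{F}_{W'}\oplus T_{\Phi(w)}\Phi(\Sigma)$. Since $p$ and $p'$ are submersions, Proposition~\ref{prop:transverse-foliation} gives $T_w\mathcal{F}_W=(T_wp)^{-1}(T_{p(w)}\mathcal{F}_M)$ and similarly for $W'$, so the dimensions of the two summands add up to $\dim W'$ (both use the same transverse codimension at $p(w)=p'(\Phi(w))$). It therefore suffices to show the sum spans $T_{\Phi(w)}W'$. Given $v'\in T_{\Phi(w)}W'$, I decompose $T_{\Phi(w)}p'(v')=u_1+u_2$ with $u_1\in T_{p(w)}\mathcal{F}_M$ and $u_2\in T_{p(w)}p(\Sigma)$ (possible because $p(\Sigma)$ is transverse to $\mathcal{F}_M$ at $p(w)$, by Lemma~\ref{lemma:bi-transversal} applied to the good bi-transversal $\Sigma$). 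Lift $u_2$ to the unique $v\in T_w\Sigma$ with $T_wp(v)=u_2$; lift $u_1$ to some $\tilde z\in T_{\Phi(w)}\mathcal{F}_{W'}$ via $T_{\Phi(w)}\mathcal{F}_{W'}=(T_{\Phi(w)}p')^{-1}(T_{p(w)}\mathcal{F}_M)$. Then $v'-T_w\Phi(v)-\tilde z$ lies in $\ker T_{\Phi(w)}p'\subseteq T_{\Phi(w)}\mathcal{F}_{W'}$, giving the required decomposition $v'=T_w\Phi(v)+\bigl(\tilde z+(v'-T_w\Phi(v)-\tilde z)\bigr)$.

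\textbf{Step 3 (identification of the transverse diffeomorphism).} Once $\Phi(\Sigma)$ is known to be a (good) bi-transversal, the equalities $p'\circ\Phi=p$ and $q'\circ\Phi=q$ immediately give $p'(\Phi(\Sigma))=p(\Sigma)$ and $q'(\Phi(\Sigma))=q(\Sigma)$. For any $\sigma\in\Sigma$,
\[
\underline{\Phi(\Sigma)}(p(\sigma))=\underline{\Phi(\Sigma)}(p'(\Phi(\sigma)))=q'(\Phi(\sigma))=q(\sigma)=\underline{\Sigma}(p(\sigma)),
\]
so $\underline{\Phi(\Sigma)}=\underline{\Sigma}$ on the common source $p(\Sigma)$.

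The only genuinely non-formal step is Step 2; the algebra of surjectivity is routine once one uses the characterizations $T_w\mathcal{F}_W=(T_wp)^{-1}(T_{p(w)}\mathcal{F}_M)$ and the transversality of $p(\Sigma)$ to $\mathcal{F}_M$ supplied by Lemma~\ref{lemma:bi-transversal}. I do not anticipate any serious obstacle.
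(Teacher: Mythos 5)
Your proof is correct. Note that the paper itself offers no argument here: Lemma \ref{lemma:bisection} is simply declared ``immediate,'' so there is no written proof to compare against; your write-up supplies exactly the details the authors skip. Step 1 (kernel of $T_w\Phi$ restricted to $T_w\Sigma$ lands in $\ker T_wp\subseteq T_w\mathcal{F}_W$, hence vanishes) and Step 2 (dimension count via the equal transverse codimensions at $p(w)=p'(\Phi(w))$, plus the explicit lift showing the sum spans) are both sound; the uniqueness of the lift $v\in T_w\Sigma$ of $u_2$ that you invoke follows from the same intersection $T_w\Sigma\cap\ker T_wp=0$ used in Step 1, and Step 3 is the one-line computation one expects. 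The only point worth a word more care: a bi-transversal is required to intersect $\mathcal{F}_W$ cleanly at \emph{every} point of $\Sigma$, not only to be transverse at the basepoint $w$, so the spanning argument of Step 2 should be run at an arbitrary $\sigma\in\Sigma$ (where one only has $T_\sigma W=T_\sigma\Sigma+T_\sigma\mathcal{F}_W$, not necessarily a direct sum); your argument goes through verbatim there, since it never uses directness of the sum upstairs, only surjectivity of $T_\sigma p$ and the characterization $T_\sigma\mathcal{F}_W=(T_\sigma p)^{-1}(T_{p(\sigma)}\mathcal{F}_M)$.
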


Lemma \ref{lemma:bi-transversal} and Proposition \ref{prop:inducedsymetry} imply the following result (already obtained for Morita equivalences by Garmendia and Zambon's \cite{Garmadia-Zambon}).

\begin{corollary}\label{cor:bitrans}
Given a bi-submersion $(M, \mathcal{F}_M)\stackrel{p}{\leftarrow}W\stackrel{q}{\rightarrow}(N, \mathcal F_N) $, for every  $w\in W$, the leaves of $\mathcal{F}_M$ through $ p(w)$ and of $\mathcal{F}_N$ through $ q(w)$ have isomorphic transverse singular foliations.
\end{corollary}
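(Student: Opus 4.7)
The statement is essentially a direct packaging of Lemma \ref{lemma:bi-transversal} and Proposition \ref{prop:inducedsymetry}, so my plan is to assemble those two results rather than to prove anything genuinely new.

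\smallskip

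First, given $w \in W$, I would invoke Lemma \ref{lemma:bi-transversal} to produce a bi-transversal $(\Sigma, w) \subset W$ passing through $w$, and then shrink $\Sigma$ to a smaller open neighborhood of $w$ (still denoted $\Sigma$) so as to obtain a \emph{good} bi-transversal. By the two items of that lemma, the restrictions $p|_\Sigma$ and $q|_\Sigma$ are then injective immersions, and $p(\Sigma) \subset M$, $q(\Sigma) \subset N$ are submanifolds intersecting $\mathcal{F}_M$ and $\mathcal{F}_N$ cleanly and transverse to the leaves $L_{p(w)}$ and $L_{q(w)}$ at $p(w)$ and $q(w)$ respectively. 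In particular, $p(\Sigma)$ and $q(\Sigma)$ serve as slices on which the transverse singular foliations $\mathcal{T}_{p(w)}^{\mathcal{F}_M}$ and $\mathcal{T}_{q(w)}^{\mathcal{F}_N}$ are defined (possibly after further shrinking so that each slice meets its leaf only at one point).

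\smallskip

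Second, I would apply Proposition \ref{prop:inducedsymetry} to the good bi-transversal $\Sigma$: it provides a diffeomorphism
\[
\underline{\Sigma} \colon p(\Sigma) \stackrel{\sim}{\longrightarrow} q(\Sigma)
\]
sending $p(w)$ to $q(w)$, and asserts that $\underline{\Sigma}$ is an isomorphism of the restricted singular foliations $(p(\Sigma), \mathcal{F}_M|_{p(\Sigma)})$ and $(q(\Sigma), \mathcal{F}_N|_{q(\Sigma)})$. Since these two restrictions are precisely the transverse singular foliations to $L_{p(w)}$ and $L_{q(w)}$ at the respective points, this is exactly the claimed isomorphism of transverse singular foliations.

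\smallskip

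There is no real obstacle; the only minor care is to ensure after shrinking that $p(\Sigma)$ and $q(\Sigma)$ intersect their respective leaves only at the single point $p(w)$ and $q(w)$, so that they genuinely model $\mathcal{T}_{p(w)}^{\mathcal{F}_M}$ and $\mathcal{T}_{q(w)}^{\mathcal{F}_N}$. This is automatic by continuity for a sufficiently small $\Sigma$ because the conditions of good bi-transversal are stable under restriction. Hence the corollary follows.
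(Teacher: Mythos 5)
Your proof is correct and follows exactly the paper's route: the paper derives this corollary directly from Lemma \ref{lemma:bi-transversal} and Proposition \ref{prop:inducedsymetry}, precisely the two results you assemble. The only addition you make — shrinking $\Sigma$ so that $p(\Sigma)$ and $q(\Sigma)$ genuinely model the transverse foliations at a single point of the respective leaves — is a reasonable bit of care that the paper leaves implicit.
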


\subsection{Local structure of a bi-submersion}\label{Local-structureofbis}

 Corollary \ref{cor:bitrans} allows associating to any point $w$ in a bi-submersion $W$ a typical type as in Definition \ref{def:typicaltype} using the diffeomorphism $ \underline{\Sigma}$ associated to a good bi-transversal $ (\Sigma,w)$. It is particularly interesting if we choose its model to be a path holonomy bi-submersion for $ (\Sigma,\mathcal F_\Sigma)$, as we now see.
 
\begin{proposition}
\label{prop:localstructure}
Consider a bi-submersion $(M, \mathcal{F}_M)\stackrel{p}{\leftarrow}W\stackrel{q}{\rightarrow}(N, \mathcal F_N) $.
 Let $w$ be a point in $W$, and $ \Sigma$ be a good bi-transversal through $w$.
We use notations and conventions of Equation \eqref{diag:trans-diffeo2}, in particular we
assume that $ \mathcal F_\Sigma$ is finitely generated.
Then:
\begin{enumerate}
 \item For any  path holonomy bi-submersion $ \mathcal A $ on $ (\Sigma,\mathcal F_\Sigma)$, there is a local morphism $ \Xi$ of bi-submersions from a neighborhood of $w$ in $W$ and a bi-submersion of typical type of model $\mathcal A$ and transverse diffeomorphism $\underline{\Sigma}$.
    \item There exists a path holonomy bi-submersion for which this morphism $\Xi$ can be chosen to be an isomorphism.
    \item There exists a minimal  path holonomy bi-submersion for which this morphism $\Xi$ can be chosen to be a submersion.
\end{enumerate}
\end{proposition}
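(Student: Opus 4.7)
The plan is to reduce everything to explicit coordinates via local splitting. Since $(\Sigma,w)$ is a good bi-transversal, I may apply the local splitting theorem for singular foliations to $(M,\mathcal F_M)$ near $p(w)$ and to $(N,\mathcal F_N)$ near $q(w)$, obtaining foliated diffeomorphisms $\Phi_M\colon (M^{p(w)},\mathcal F_M)\simeq (\mathcal B^{d_M},\mathfrak X)\times(p(\Sigma),\mathcal F_M|_{p(\Sigma)})$ and $\Phi_N\colon(N^{q(w)},\mathcal F_N)\simeq(q(\Sigma),\mathcal F_N|_{q(\Sigma)})\times(\mathcal B^{d_N},\mathfrak X)$. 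Proposition~\ref{prop:inducedsymetry} promotes $\underline\Sigma\colon p(\Sigma)\to q(\Sigma)$ to a foliated diffeomorphism, so via $p|_\Sigma^{-1}$ and $q|_\Sigma^{-1}$ both sides identify with $(\Sigma,\mathcal F_\Sigma)$. Shrinking once more, I pick generators $Z_1,\dots,Z_k$ of $\mathcal F_\Sigma$, all vanishing at the distinguished point $O\in\Sigma$ corresponding to $w$ (they must vanish there because $\{O\}$ is a transverse leaf). The resulting path holonomy $\mathcal A\subset\Sigma\times\mathbb R^k$ and the typical-type bi-submersion $\mathcal B^{d_M}\times\mathcal A\times\mathcal B^{d_N}$ are the targets.

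\textbf{Item 1.} For the morphism $\Xi\colon W^w\to\mathcal B^{d_M}\times\mathcal A\times\mathcal B^{d_N}$, I read off coordinates: set $(a(w'),\sigma_p(w')):=\Phi_M(p(w'))$ and $(\sigma_q(w'),b(w')):=\Phi_N(q(w'))$. By Corollary~\ref{cor:bitrans} and compatibility of $\underline\Sigma$ with our identifications, $\sigma_p(w')$ and $\sigma_q(w')$ coincide as points of $\Sigma$, so the constant choice $\lambda\equiv 0$ satisfies $\exp(\lambda\cdot Z)(\sigma_p(w'))=\sigma_q(w')$. Setting $\Xi(w'):=(a(w'),(\sigma_p(w'),0),b(w'))$ defines a morphism of bi-submersions, as a direct inspection of the typical-type formulae for $p$ and $q$ shows.

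\textbf{Items 2 and 3.} To refine $\Xi$, I build a right inverse by composing flows. Using the bi-submersion axiom $p^{-1}(\mathcal F_M)=\Gamma(\ker Tp)+\Gamma(\ker Tq)$, I pick $\tilde X_i\in\Gamma(\ker Tq)$ that are $p$-related to the leaf-direction generators $\partial_{a_i}$ of $\mathcal F_M$, $\tilde Y_j\in\Gamma(\ker Tp)$ that are $q$-related to $\partial_{b_j}$, and $\tilde Z_\ell\in\Gamma(\ker Tp)$ that are $q$-related to the extensions of $Z_\ell$ to $\mathcal F_N$ via $\Phi_N$. A direct computation using these relatedness properties shows that $\Psi(a,(\sigma,\lambda),b):=\exp(a\cdot\tilde X)\,\exp(\lambda\cdot\tilde Z)\,\exp(b\cdot\tilde Y)(\iota_\Sigma(\sigma))$ is a bi-submersion morphism into $W$. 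Since $Z_\ell(O)=0$, each $\tilde Z_\ell(w)$ lies in the bi-vertical subspace $\ker T_wp\cap\ker T_wq$, while $\tilde X_i(w)\in\ker T_wq$ and $\tilde Y_j(w)\in\ker T_wp$ project non-trivially under $Tp,Tq$, so together with $T_O\Sigma$ they occupy complementary summands of $T_wW$. For item 3 take $\mathcal A$ minimal, with $k_{\min}$ generators: the $\tilde Z_\ell(w)$ are linearly independent in the bi-vertical space (any relation would descend to a non-trivial relation among minimal generators in $\mathcal F_\Sigma/\mathcal I_O\mathcal F_\Sigma$), so $\Psi$ is an immersion; extending the parametrisation by $b-k_{\min}$ additional bi-vertical directions on $W$ and then projecting onto the typical-type coordinates exhibits $\Xi$ as a submersion. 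For item 2 I augment the minimal generating set by $b-k_{\min}$ redundant zero generators $Z_\ell\equiv 0$ of $\mathcal F_\Sigma$ and choose their lifts $\tilde Z_\ell$ to be bi-vertical vector fields whose values at $w$ complete a basis of $\ker T_wp\cap\ker T_wq$; then the dimensions match and $\Psi$ becomes a local diffeomorphism, so $\Xi:=\Psi^{-1}$ is the desired isomorphism.

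\textbf{Main obstacle.} The delicate point is ensuring that the bi-vertical dimension $b:=\dim(\ker T_wp\cap\ker T_wq)$ is at least $k_{\min}$, so that item 2's augmentation is feasible and the bi-vertical lifts $\tilde Z_\ell(w)$ have room to be independent. This inequality $b\geq k_{\min}$ follows by combining the vector-bundle description $\mathcal F_W=\Gamma(\ker Tp)+\Gamma(\ker Tq)$, which yields at most $d_M+d_N+2b$ local generators of $\mathcal F_W$ at $w$, with the local splitting $(W,\mathcal F_W)\simeq(\mathcal B^{d_M+d_N+b},\mathfrak X)\times(\Sigma,\mathcal F_\Sigma)$ forcing the minimal number of generators to be $d_M+d_N+b+k_{\min}$. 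Once this numerical input is secured, the flow-based construction above assembles all three items.
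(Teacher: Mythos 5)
Your construction of $\Xi$ in Item 1 contains a genuine error. You claim that for every $w'$ in a neighborhood of $w$ the transverse coordinates $\sigma_p(w')$ and $\sigma_q(w')$ coincide (after identifying $p(\Sigma)$ and $q(\Sigma)$ via $\underline\Sigma$), so that one may set the $\lambda$-coordinate to zero. This is false: Corollary \ref{cor:bitrans} only gives an isomorphism of \emph{transverse foliations}, and the points $p(w')$, $q(w')$ need only lie in the same leaf, not in the same transverse slice. Already for the minimal path holonomy bi-submersion of $\mathcal F=\langle x\partial_x\rangle$ on $\mathbb R$ at the origin, $W\subset\mathbb R\times\mathbb R$ with $p(x,\lambda)=x$, $q(x,\lambda)=e^\lambda x$, one has $\Sigma=\{\lambda=0\}$, $d_M=d_N=0$, and for $w'=(x_0,\lambda_0)$ with $x_0\lambda_0\neq 0$ the two transverse coordinates are $x_0$ and $e^{\lambda_0}x_0$, which differ. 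Your map $\Xi(w')=(a(w'),(\sigma_p(w'),0),b(w'))$ then fails to intertwine the $q$-maps, so it is not a morphism of bi-submersions. The information you are discarding is exactly what the $\mathbb R^k$-factor of the path holonomy model is there to record. The paper's route avoids this: it observes that $W^w$, equipped with the two compositions $\mathrm{pr}_\Sigma\circ\Phi_M\circ p$ and $\mathrm{pr}_\Sigma\circ\Phi_N\circ q$, is itself a bi-submersion over $(\Sigma,\mathcal F_\Sigma)$ carrying the identity bisection at $w$, and then invokes the Androulidakis--Skandalis result that any such bi-submersion admits a local morphism $\varphi$ into a path holonomy bi-submersion; that $\varphi$ supplies the (generically nonzero) $\lambda$-coordinate of $\Xi$.

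Your treatment of Items 2 and 3 via the reverse flow map $\Psi$ is much closer to a workable argument (it parallels the proof of Proposition \ref{Local-equivalence}), and your inequality $b\geq k_{\min}$ is correct. Two points still need repair, however. First, the asserted linear independence of the $\tilde Z_\ell(w)$ does not follow from your reason: a pointwise relation $\sum c_\ell\tilde Z_\ell(w)=0$ pushes forward under $q$ only to the statement $\sum c_\ell Z_\ell(O)=0$, which is vacuous since every $Z_\ell$ already vanishes at $O$; you need the frame statement of Lemma \ref{lem:projectable-generators}(4) here. Second, completing a basis of $\ker T_wp\cap\ker T_wq$ by values of bi-vertical vector fields is not automatic, because $T_w\mathcal{BV}_W$ can be a proper subspace of $\ker T_wp\cap\ker T_wq$ (in the example above the bi-vertical module is zero while $\ker T_wp\cap\ker T_wq$ is one-dimensional at the origin); what saves the argument is again Lemma \ref{lem:projectable-generators}(4), which shows that $\ker T_wp\cap\ker T_wq$ is spanned by the lifts of the transverse generators together with genuinely bi-vertical fields. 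Finally, the deduction in Item 3 from ``$\Psi$ is an immersion'' to ``$\Xi$ is a submersion'' is only sketched and needs an actual construction of $\Xi$ (a projection along complementary bi-vertical directions) rather than an inversion of $\Psi$.
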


Let us spell this statement differently: the first item claims that there exist 
\begin{enumerate}
\item neighborhoods $W^w$, $M^{p(w)} $,  $M^{q(w)}$ of the points $ w,p(w),q(w) $ in $W,M,N$ respectively, 
\item and a map  $\Xi : W^w \longrightarrow \mathcal B^{d_M}  \times  \mathcal A  \times \mathcal B^{d_N}$
\end{enumerate}
such that the following diagram commutes:
  \begin{equation}
      \label{diag:local-structures}
 \xymatrix{    &  W^w  \ar@{->>}[ddd]^{\Xi} \ar[dl]_p \ar[dr]^q &  \\ M^{p(w)} \ar[d]^{\simeq \Phi_M } & &  N^{q(w)} \ar[d]^{\simeq \Phi_N }  \\   \mathcal B^{d_M} \times \Sigma & &  \Sigma  \times  \mathcal B^{d_N} \\  &  \mathcal B^{d_M} \times  \mathcal A  \times  \mathcal B^{d_N} \ar[ur]^{(q_\mathcal A, \mathrm{pr}_3 ) }  \ar[ul]_{(\mathrm{pr}_1, p_\mathcal A ) }& }
  \end{equation}
 where $ d_M,d_N$ are the dimensions of the leaves through $ p(w)$ and $q(w)$, respectively, and $ \Phi_M,\Phi_N$ are local splittings. Here, we denote by $ \mathcal B^d$ the unit ball in $ \mathbb R^d$. The second and third item claim that $ \Xi$ can be chosen to be a diffeomorphism or a submersion for a well-chosen path holonomy bi-submersion on the transverse singular foliation.
\begin{proof}We give a sketch of the proof. A direct computation shows that the composition of the maps $$\Sigma \stackrel{\mathrm{pr}_{\Sigma}\circ \Phi_M}{\longleftarrow}M^{p(w)}\stackrel{p}{\longleftarrow}W^w\stackrel{q}{\longrightarrow} N^{q(w)}\stackrel{\mathrm{pr}_{\Sigma}\circ \Phi_N}{\longrightarrow}\Sigma$$ define a bi-submersion over $\mathcal{F}_\Sigma$ that induces identity at $w$. Since $\mathcal{A}$ is a path holonomy bi-submersion there exists a morphism of bi-submersions $\varphi\colon W^w\to \mathcal{A}$, see \cite[Proposition 2.10]{AS}. The map $\Xi=(\mathrm{pr}_1\circ \Phi_M\circ p,\varphi,\mathrm{pr}_2\circ\Phi_N\circ q)$ makes Diagram \eqref{diag:local-structures} commutes by construction.
The second and third items are generalizations of equivalent statements for bi-submersion over one singular foliation: any bi-submersion is locally of that form  see Exercise 2.4.47 and the fact that for a path holonomy bi-submersion made of a anchored bundle of minimal rank at a point $m$,  any morphism of bi-submersions  valued into it has to be a submersion. 
\end{proof}

\begin{remark}\label{rmk:Xifibers}
    Each vector field tangent to the fiber of $ \Xi$
in the third item of Proposition
    \ref{prop:localstructure} is a bi-vertical vector field.
\end{remark}

The next lemma is technical, but is of crucial importance.

\begin{lemma}\label{lem:projectable-generators}
    Let $(M,\mathcal{F}_{M}) \stackrel{p}{\leftarrow} W \stackrel{q}{\rightarrow} (N,\mathcal{F}_N)$ be a bi-submersion. The pull-back singular foliation $\mathcal{F}_W:=q^{-1}(\mathcal{F}_M)=p^{-1}(\mathcal{F}_N)$ is generated by $p$-projectable vector fields in  $\Gamma(\ker Tq)$ and by $q$-projectable vector fields in $\Gamma(\ker Tp)$. More precisely, for $w \in W$, let $ {X}_1, \dots, {X}_{n}$ be local generators of $ \mathcal F_M$ in a neighborhood of $p(w)$
and ${Y}_1, \dots, {Y}_{r}$ be local generators of $ \mathcal F_N$ in a neighborhood of $q(w)$.  Let $k=\mathrm{rk}(\ker Tp),\; \ell=\mathrm{rk}(\ker Tq)$ and $\kappa:=k+\ell$. Then $w$ admits a neighborhood $\mathcal U_w $ on which $\mathcal{F}_W$ is generated by vector fields $$ \overrightarrow {X_1}, \dots, \overrightarrow {X_{n}},\overleftarrow {Y_1}, \dots,\overleftarrow {Y_{r}},Z_1, \dots,Z_{\kappa}$$ satisfying the following properties:
\begin{enumerate}
\item  $\overrightarrow{X_1}, \dots, \overrightarrow{X_{n}} \in \Gamma({\mathrm{ker}}(Tq))$ are $p$-related to ${X}_1, \dots, {X}_{n}$ respectively, 
\item $ \overleftarrow{Y_1}, \dots, \overleftarrow{Y_{r}} \in \Gamma({\mathrm{ker}}(Tp))$ are $q$-related to $ {Y}_1, \dots, {Y}_{r}$ respectively,
\item  $Z_1, \dots,Z_{\kappa}\in\Gamma({\mathrm{ker}}(Tp)\cap \Gamma({\mathrm{ker}}(Tq)) $ are bi-vertical vector fields.
\item The vector fields $\overrightarrow{X_{1}}, \dots, \overrightarrow{X_{n}}, Z_1, \dots, Z_\kappa$ generate ${\mathrm{ker}}(Tq) $ at every point of $\mathcal U$ and the vector fields $\overleftarrow{Y_{1}}, \dots, \overleftarrow{Y_{r}}, Z_1, \dots, Z_\kappa$ span ${\mathrm{ker}}(Tp) $ at every point of $ \mathcal U_w$.
 \end{enumerate}
\end{lemma}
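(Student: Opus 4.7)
\textbf{Overall strategy.} The idea is to first produce the horizontal lifts $\overrightarrow{X_i}$ and $\overleftarrow{Y_j}$ by a direct application of Proposition~\ref{prop:lifting-property}, and then build the bi-vertical vector fields $Z_1,\dots,Z_\kappa$ as correctors that complete the lifts to a full spanning set of $\ker Tq$ (respectively $\ker Tp$), pointwise. Since $\mathcal F_W = \Gamma(\ker Tp)+\Gamma(\ker Tq)$ by the second axiom of a bi-submersion, once we know that $\ker Tp$ and $\ker Tq$ are both generated by our list of vector fields, assertion~(1) of the lemma follows automatically from assertion~(4).

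\textbf{Construction of the lifts.} For each generator $X_i\in\mathcal F_M$ near $p(w)$, Proposition~\ref{prop:lifting-property} applied with the pair $(X_i,0)\in\mathcal F_M\times \mathcal F_N$ yields a vector field $\overrightarrow{X_i}\in\Gamma(\ker Tq)$ that is $p$-related to $X_i$; symmetrically, applying the lifting lemma with $(0,Y_j)$ provides $\overleftarrow{Y_j}\in\Gamma(\ker Tp)$ that is $q$-related to $Y_j$. Both constructions take place on some neighborhood $\mathcal U_w$ of $w$, which we shrink as needed so that the $X_i$'s and $Y_j$'s are defined and generate $\mathcal F_M,\mathcal F_N$ on $p(\mathcal U_w),q(\mathcal U_w)$.

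\textbf{Construction of the bi-vertical correctors.} Since $p$ and $q$ are submersions, $\ker Tq\to \mathcal U_w$ and $\ker Tp\to \mathcal U_w$ are vector bundles of constant ranks $\ell$ and $k$ respectively. Shrinking $\mathcal U_w$ further, we choose local frames $f_1,\dots,f_\ell$ of $\ker Tq$ and $e_1,\dots,e_k$ of $\ker Tp$. Because $\ker Tq\subset q^{-1}(\mathcal F_N)=p^{-1}(\mathcal F_M)$, the vector field $Tp(f_\alpha)$ is a section of $p^*\mathcal F_M$, hence can be written (after a further shrinking) as
\[
Tp(f_\alpha)=\sum_{i=1}^{n} a_{\alpha i}\, X_i\circ p,\qquad \alpha=1,\dots,\ell,
\]
for smooth functions $a_{\alpha i}$ on $\mathcal U_w$. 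Set $Z_\alpha^{(1)}:=f_\alpha-\sum_i a_{\alpha i}\overrightarrow{X_i}\in\Gamma(\ker Tq)$. A direct computation gives $Tp(Z_\alpha^{(1)})=0$, so $Z_\alpha^{(1)}$ is bi-vertical. Similarly, write $Tq(e_\beta)=\sum_{j=1}^{r} b_{\beta j}\,Y_j\circ q$ and set $Z_\beta^{(2)}:=e_\beta-\sum_j b_{\beta j}\overleftarrow{Y_j}\in\Gamma(\ker Tp)\cap\Gamma(\ker Tq)$. Relabelling $(Z^{(1)}_1,\dots,Z^{(1)}_\ell,Z^{(2)}_1,\dots,Z^{(2)}_k)$ as $(Z_1,\dots,Z_\kappa)$ with $\kappa=k+\ell$ gives the desired list.

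\textbf{Verification.} By construction $f_\alpha=Z_\alpha^{(1)}+\sum_i a_{\alpha i}\overrightarrow{X_i}$, so the $f_\alpha$'s lie in the module generated by $\overrightarrow{X_1},\dots,\overrightarrow{X_n},Z_1,\dots,Z_\kappa$; since the $f_\alpha$'s are a frame of $\ker Tq$, we obtain item~(4) for $\ker Tq$, and pointwise spanning follows as well. The analogous argument using $e_\beta=Z_\beta^{(2)}+\sum_j b_{\beta j}\overleftarrow{Y_j}$ handles $\ker Tp$. Items (1)--(3) are immediate from the construction, and item~(1) of the statement then follows because $\mathcal F_W=\Gamma(\ker Tp)+\Gamma(\ker Tq)$ is contained in the module spanned by the full list, and conversely every listed vector field belongs to $\mathcal F_W$.

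\textbf{Possible obstacle.} The only delicate point is that the bi-vertical submodule $\Gamma(\ker Tp)\cap\Gamma(\ker Tq)$ is generally \emph{not} the space of sections of a vector bundle (its pointwise rank can jump). The trick above avoids this difficulty: rather than trying to frame $\ker Tp\cap \ker Tq$ directly, we frame the two honest vector bundles $\ker Tp$ and $\ker Tq$ separately and then extract bi-vertical vector fields as correctors, which is why we are content with $\kappa=k+\ell$ generators rather than an optimal count.
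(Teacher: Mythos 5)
Your proof is correct, and it fills in details that the paper itself omits: the paper's ``proof'' of Lemma \ref{lem:projectable-generators} is a one-line reference to \cite[Lemma 2.4.6]{LLL}, and your argument (lift the generators via Proposition \ref{prop:lifting-property}, then correct local frames of $\ker Tp$ and $\ker Tq$ by subtracting the lifted parts to manufacture the bi-vertical fields $Z_1,\dots,Z_\kappa$) is exactly the standard pattern that reference follows in the one-foliation case. Your closing remark about why one frames the honest bundles $\ker Tp$ and $\ker Tq$ rather than the possibly non-locally-free intersection is also the right observation and explains the count $\kappa=k+\ell$.
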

\begin{proof}
    The proof follows the same patterns for $(M, \mathcal{F}_M)=(N,\mathcal
    F_N)$ as in \cite[Lemma 4.6]{LLL2}. 
\end{proof}


As a consequence, we have the following corollary.

\begin{corollary}Let $(M,\mathcal{F}_{M}) \stackrel{p}{\leftarrow} W \stackrel{q}{\rightarrow} (N,\mathcal{F}_N)$ be a bi-submersion. The kernel
    $\Gamma(\ker Tp)\subset \mathfrak X(W)$ (resp. $\Gamma(\ker Tq)\subset \mathfrak X(W)$ ) is generated by $q$-projectable vector fields (resp. $p$-projectable vector fields). In particular, $\Gamma_{proj}(\ker Tp)$ (resp. $\Gamma_{proj}(\ker Tq)$) is finitely generated, and  $Tq\left(\Gamma_{proj}(\ker Tp)\right)=q^*\mathcal{F}_N$ and $Tp\left(\Gamma_{proj}(\ker Tq)\right)=p^*\mathcal{F}_M$.
\end{corollary}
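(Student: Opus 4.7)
The plan is to read off all three assertions of the corollary directly from Lemma~\ref{lem:projectable-generators}, which already produces the crucial local generators; only one step in the argument is non-routine.

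First I would fix a point $w\in W$ and apply Lemma~\ref{lem:projectable-generators} on a neighborhood $\mathcal U_w$ of $w$. This yields vector fields $\overleftarrow{Y_1},\dots,\overleftarrow{Y_r},Z_1,\dots,Z_\kappa$ spanning $\ker Tp$ at every point of $\mathcal U_w$, where each $\overleftarrow{Y_j}$ is $q$-related to a local generator $Y_j\in\mathcal F_N$ and each $Z_k$ is bi-vertical, hence $q$-related to $0$. Every one of these generators is therefore $q$-projectable, and so $\Gamma(\ker Tp)$ is locally (and hence globally, as a sheaf) generated by $q$-projectable vector fields. The symmetric statement for $\ker Tq$ follows by the same lemma applied on the other side.

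Second, for the finite generation of $\Gamma_{proj}(\ker Tp)$ and for the image statement, I would take an arbitrary $v\in\Gamma_{proj}(\ker Tp)$ with $q$-projection $\bar v\in\mathfrak X(N)$ and first prove that $\bar v\in\mathcal F_N$. Since $v\in\Gamma(\ker Tp)\subset q^{-1}(\mathcal F_N)$, one has $Tq\circ v=\bar v\circ q\in q^{*}\mathcal F_N$, so locally $\bar v\circ q=\sum_i f_i\,(Y_i\circ q)$ for some $f_i\in C^\infty(W)$ and local generators $Y_i$ of $\mathcal F_N$. Composing with any local smooth section $s$ of the submersion $q$ then yields $\bar v=\sum_i (f_i\circ s)\,Y_i$, so $\bar v\in\mathcal F_N$ locally and hence as a sheaf. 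Writing $\bar v=\sum_j g_j Y_j$ with $g_j\in C^\infty(N)$, the difference $v-\sum_j (q^{*}g_j)\overleftarrow{Y_j}$ is still in $\Gamma_{proj}(\ker Tp)$, and its $q$-projection is zero; so it is bi-vertical, hence a $C^\infty(W)$-combination of $Z_1,\dots,Z_\kappa$. This exhibits the finite family $\{\overleftarrow{Y_1},\dots,\overleftarrow{Y_r},Z_1,\dots,Z_\kappa\}$ as local generators of $\Gamma_{proj}(\ker Tp)$.

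Finally, the equality $Tq(\Gamma_{proj}(\ker Tp))=q^{*}\mathcal F_N$ (understood as equality of $C^\infty(W)$-submodules of $\Gamma(q^{*}TN)$) follows from the previous step: the inclusion $\subseteq$ is the observation that $\bar v\in\mathcal F_N$ makes $Tq(v)=\bar v\circ q$ lie in $q^{*}\mathcal F_N$, while the reverse inclusion holds because each generator $Y_j\circ q$ of $q^{*}\mathcal F_N$ equals $Tq(\overleftarrow{Y_j})$ with $\overleftarrow{Y_j}\in\Gamma_{proj}(\ker Tp)$. The corresponding statement for $Tp(\Gamma_{proj}(\ker Tq))=p^{*}\mathcal F_M$ is obtained by swapping the roles of $p$ and $q$. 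I expect the only non-routine step to be the descent $\bar v\in\mathcal F_N$: everything else is immediate bookkeeping from Lemma~\ref{lem:projectable-generators}, but this step genuinely uses surjectivity of the submersion $q$ (to produce the local section $s$) together with the sheaf property of $\mathcal F_N$, and without it the image equality would not follow.
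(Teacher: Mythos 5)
Your first assertion and the two image equalities are handled correctly, and along the paper's intended route: the corollary is stated as an immediate consequence of Lemma \ref{lem:projectable-generators}, whose item 4 supplies, near each point, the finite family $\overleftarrow{Y_1},\dots,\overleftarrow{Y_r},Z_1,\dots,Z_\kappa$ of $q$-projectable sections spanning $\ker Tp$ pointwise, hence generating $\Gamma(\ker Tp)$ locally. Your descent argument showing that the $q$-projection $\bar v$ of a projectable section lies in $\mathcal F_N$ is a correct supplement, with two small caveats: it does not require surjectivity of $q$ (local sections of a submersion exist through every point of the open image $q(W)$, and sections of $q^*TN$ only see $q(W)$, so nothing is claimed or needed off the image), and, as you note yourself, the equality $Tq\left(\Gamma_{proj}(\ker Tp)\right)=q^*\mathcal{F}_N$ can only be read at the level of generated $C^\infty(W)$-submodules of $\Gamma(q^*TN)$, since the left-hand side is not stable under multiplication by arbitrary functions while $q^*\mathcal F_N$ is a $C^\infty(W)$-module by definition.

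There is, however, one genuine gap: the step ``its $q$-projection is zero; so it is bi-vertical, hence a $C^\infty(W)$-combination of $Z_1,\dots,Z_\kappa$.'' Lemma \ref{lem:projectable-generators} does not assert that $Z_1,\dots,Z_\kappa$ generate the bi-vertical module $\Gamma(\ker Tp)\cap\Gamma(\ker Tq)$; it only asserts that $\overleftarrow{Y_1},\dots,\overleftarrow{Y_r},Z_1,\dots,Z_\kappa$ span $\ker Tp$ pointwise. Since $r+\kappa$ generally exceeds $\mathrm{rk}(\ker Tp)$, a bi-vertical field $B$ can be written smoothly as $\sum_j a_j\overleftarrow{Y_j}+\sum_k b_k Z_k$, but $Tq(B)=0$ only yields $\sum_j a_j\,(Y_j\circ q)=0$, which does not force the coefficients $a_j$ to vanish. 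Indeed the claim is false in general: by Lemma \ref{lem:isasingularfoliation}, $\mathcal{BV}_W$ is locally finitely generated if and only if $\mathcal F_M$ is a coherent sheaf, which fails for smooth singular foliations in general. Consequently your strengthened conclusion --- that the finite family $\{\overleftarrow{Y_j},Z_k\}$ consists of local generators of $\Gamma_{proj}(\ker Tp)$ itself, so that every projectable section of $\ker Tp$ is a $C^\infty(W)$-combination of them --- cannot hold as stated, since it would make the submodule $\mathcal{BV}_W\subset \Gamma_{proj}(\ker Tp)$ finitely generated. The ``finitely generated'' in the corollary must be read in the weaker sense actually provided by Lemma \ref{lem:projectable-generators}(4): the $C^\infty(W)$-module generated by the projectable sections, namely $\Gamma(\ker Tp)$ by the first assertion, is locally generated by that finite projectable family. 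With this reading your decomposition $v=\sum_j(g_j\circ q)\,\overleftarrow{Y_j}+B$ remains useful for the image equality --- the bi-vertical remainder $B$ is simply killed by $Tq$ and should be discarded, not expanded in the $Z_k$'s.
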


{\subsection{Anchored bundles, bi-submersions, and bi-vertical vector fields}

Recall that an anchored vector bundle over $\mathcal{F}_M$ is by definition
a vector bundle $A\to M$ equipped with a vector bundle morphism  $A\stackrel{\rho}{\longrightarrow} TM$ over the identity of $M$ such that $\rho(\Gamma(A))=\mathcal{F}_M$. 
We call \emph{left action} a vector bundle morphism $\nu^{\mathcal{F}_M}\colon p^*A\to \ker Tq$ such that $Tp\circ \nu^{\mathcal{F}_M}=p^*\rho$.

Let us spell out the axiom of a left action: for all $ w \in W$, there has to exists a linear map 
\begin{equation}
\label{eq:leftaction}
\nu^{\mathcal{F}_M}_{w}  \colon p^*A|_w =  A_{p(w)} \longrightarrow  \ker T_w q \subset T_w W, \end{equation} depending smoothly on $w$, such that
    $$  \xymatrix{A_{p(w)} \ar[d]^{\rho} \ar[rr]^{\nu^{\mathcal{F}_M}_{w}}&  &  \ker T_w q \subset T_w W  \ar[dll]^{T_w p } \\ T_{p(w)} M &  & } .$$
Right actions are defined in the same manner.

\begin{lemma}\label{lem:biss_anchored}
Let $(M,\mathcal{F}_{M}) \stackrel{p}{\leftarrow} W \stackrel{q}{\rightarrow} (N,\mathcal{F}_N)$ be a bi-submersion. For any anchored bundle $A\stackrel{\rho}{\longrightarrow} TM$ over $\mathcal{F}_M$,
\begin{enumerate}
\item  a left action exists, 
    \item and, moreover, if
     $\mathrm{rk}(A)= \dim W- \dim N$ (resp. $\mathrm{rk}(A)\geq  \dim W-\dim N $), then, for any given $w \in W$, the vector bundle morphism \eqref{eq:leftaction} can be chosen to be an isomorphism of vector bundles in a neighborhood of $w$ (resp. a vector bundle morphism surjective in a neighborhood of $w$).
     \end{enumerate}
actions of $W$.
\end{lemma}

Symmetrically, a vector bundle $\nu^{\mathcal{F}_N}$ can be defined with the same properties for any anchored bundle over $ \mathcal F_N$: it is then called a \emph{right action}.

\begin{proof}[Proof (of Lemma \ref{lem:biss_anchored})]
 \begin{enumerate}
     \item Let $w\in W$ and $\mathcal{U}_w$ be an open neighborhood of $w$. Choose $(e_1, \ldots, e_{\mathrm{rk}(A)})$ to be a local trivialization of $A$ on $\mathcal{U}=p(\mathcal{U}_w)$. The vector fields $\rho(e_i)\in \mathfrak X(M)$ with $i=1,\ldots , r=\mathrm{rk}(A)$ generate $\mathcal{F}_M$ on $\mathcal{U}$. Let  $\overrightarrow{\rho(e_1)}, \ldots,\overrightarrow{\rho(e_{\mathrm{rk}(A)})}\in \Gamma(\ker Tq)$ and $Z_1, \ldots, Z_\kappa\in \Gamma(\ker Tp)\cap\Gamma(\ker Tq)$  be as in Lemma \ref{lem:projectable-generators}, i.e., there form a set of local generators of $\Gamma(\ker Tq)$ and $\overrightarrow{\rho(e_i)}\sim_p {\rho(e_i)}$ for every $i=1,\ldots, \mathrm{rk}(A)$. We define the map $\nu^{\mathcal{F}_M}$ on local generators by \begin{equation}
     \nu^{\mathcal{F}_M}_{\mathcal{U}_w}\colon \Gamma(p^*A)|_{\mathcal{U}_w}\longrightarrow \Gamma(\ker Tq)|_{\mathcal{U}_w},\; p^*e_i\longmapsto \overrightarrow{\rho(e_i)}
 \end{equation}
 and extend by $C^\infty(\mathcal U_w)$-linearity. These maps can be glued to a global one using partitions of unity. Indeed, there exists a countable family of points $(w_\alpha)_{\alpha\in I}$ of $W$ such that the open neighborhood of $\mathcal{U}_{w_\alpha}$ of $w_\alpha$ that form an open cover of $W$ and such that the $p(\mathcal{U}_{w_\alpha})$'s trivialize the vector bundle $A$. Let $(\chi_\alpha)_{\alpha\in I}$ be a partition
of unity subordinate to the open cover $\left(\mathcal{U}_{w_\alpha}\right)_{\alpha\in I}$. We define the map $\nu^{\mathcal{F}_M}$ on the sections of $p^*A$
as $\sum_{\alpha\in I}\chi_\alpha \nu^{\mathcal{F}_M}_{\mathcal{U}_\alpha}$. By construction, the latter induces a vector bundle morphism $\nu^{\mathcal{F}_M}\colon p^*A\to \ker Tq$ such that $Tp\circ \nu^{\mathcal{F}_M}=p^*\rho$. This proves item $1$.

\item For $w\in W$, let $n=\dim\left(\frac{\mathcal{F}_M}{\mathcal{I}_{p(w)}\mathcal{F}_M}\right)$ be the minimal number of generators of $\mathcal{F}_M$ in a neighborhood of $p(w)$. There exists a local trivialization $e_1, \ldots, e_n, e_{n+1}, \ldots, e_{\mathrm{rk}(A)}$ of $A$ on an open neighborhood of  $p(w)$ so that $\rho(e_i)=0$ for $i=n+1, \ldots, \mathrm{rk}(E)$. By Lemma \ref{lem:projectable-generators}(4), by definition of $n$ and since $\mathrm{rk}(A)=\mathrm{rk}(\ker Tq)$, there exist bi-vertical vector fields $\xi_{n+1}, \ldots, \xi_{\mathrm{rk}(E)}\in \Gamma(\ker Tp)\cap\Gamma(\ker Tq)$ such that $\overrightarrow{e_1}=\overrightarrow{\rho(e_1}), \ldots, \overrightarrow{e_n}=\overrightarrow{\rho (e_n)}, \xi_{n+1}, \ldots, \xi_{\mathrm{rk}(A)}$ form a local trivialization of $\ker Tq$ on an open neighborhood $\mathcal{U}_w$ of $w$. Hence, we define \begin{align*}
         \nu^{\mathcal{F}_M}|_{\mathcal{U}_w} \colon  \Gamma(p^*A)|_{\mathcal{U}_w}&\stackrel{\sim}{\longrightarrow} \Gamma(\ker Tq)|_{\mathcal{U}_w}\\  p^*e_i&\longmapsto \left\{\begin{array}{ll}
              \overrightarrow{e_i}& \text{for}\; i=1,\ldots,n \\
              \xi_i  & \text{for}\; i=n+1,\ldots,\mathrm{rk}(A).
         \end{array}\right.
         \end{align*}

The identity $Tp\left(\nu^{\mathcal{F}_M}(p^*e)\right)=\rho(e)\circ p$\,  implies  that $\nu^{\mathcal{F}_M}\colon p^*A\to \ker Tq$ restricts to a map  $$p^*(\ker\rho)\to \Gamma(\ker Tp)\cap\Gamma(\ker Tq).$$
 \end{enumerate}
This completes the proof.

\end{proof}

\subsection{The global rank of a singular foliation and bi-submersions}

Recall that the  minimal number of local generators of a singular foliation $\mathcal{F}$ near a point coincides with the rank of the holonomy Lie algebroid through that point. We denote it by $ {\mathrm{rk}}_x(\mathcal F)$, and we denote 
$${\mathrm{rk}(\mathcal F)}=\mathrm{max}_{x\in M} {\mathrm{rk}}_x{(\mathcal F)}$$
the upper bound of these ranks. By construction, ${\mathrm{rk}(\mathcal F)}$ belongs either to $ \mathbb N \cup \{+\infty\}$. It is always a finite integer provided that one of the following two equivalent conditions are satisfied:
\begin{enumerate}
    \item[(i)] $ \mathcal F$ is globally finitely generated, 
    \item[(ii)] $ \mathcal F$ admits a globally defined anchored bundle. 
\end{enumerate}
We refer to the discussion about ranks in \cite{zbMATH03423310}.
If one of these two equivalent conditions is satisfied, then the anchored bundle  over $ \mathcal F$ can be chosen to be a trivial vector bundle, and $ {\mathrm{rk}}({\mathcal F})$ is lower or equal to the rank of any anchored bundle over $ \mathcal F$. Such a singular foliation is said to be of \emph{global finite rank}.

\begin{lemma}
Let $W$ be a bi-submersion between two singular foliations  $$(M, \mathcal{F}_M)\stackrel{p}{\leftarrow}W\stackrel{q}{\rightarrow}(N, \mathcal F_N) $$ of global finite ranks. 
Assume $p$ and $q$ are surjective. Then:
\begin{equation} \label{eq:ranks}     {\mathrm{rk}} (\mathcal{F}_M )  - {\mathrm{dim}}(M)=   {\mathrm{rk}} (\mathcal{F}_N)    -  {\mathrm{dim}}(N) .\end{equation}
\end{lemma}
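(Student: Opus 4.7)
The plan is to deduce \eqref{eq:ranks} from the pointwise statement of Corollary \ref{cor:bitrans}—that bi-submersions induce isomorphisms of the transverse singular foliations at corresponding points—together with the local splitting theorem for singular foliations.

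First, I fix $w \in W$ and set $m := p(w)$, $n := q(w)$. By the local splitting theorem (invoked in Example \ref{ex:exampleofbi-submersion}; see also \S 1.7.3 of \cite{LLL}), there is an isomorphism of foliated manifolds $(M, \mathcal F_M) \simeq (\mathcal B^{d_m}, \mathfrak X(\mathcal B^{d_m})) \times (\Sigma_m, \mathcal F_{\Sigma_m})$ near $m$, where $d_m = \dim L_m$ is the dimension of the leaf through $m$ and $\Sigma_m$ is a transverse slice with distinguished base point $O_m$. A minimal family of generators of $\mathcal F_M$ near $m$ is obtained by concatenating the $d_m$ coordinate vector fields on $\mathcal B^{d_m}$ with a minimal family of generators of $\mathcal F_{\Sigma_m}$ at $O_m$, proving
$$\mathrm{rk}_m(\mathcal F_M) = d_m + \mathrm{rk}_{O_m}(\mathcal F_{\Sigma_m}).$$
Subtracting the tautology $\dim M = d_m + \dim \Sigma_m$ yields $\mathrm{rk}_m(\mathcal F_M) - \dim M = \mathrm{rk}_{O_m}(\mathcal F_{\Sigma_m}) - \dim \Sigma_m$, with an analogous identity at $n$ for $\mathcal F_N$.

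Second, Corollary \ref{cor:bitrans} supplies an isomorphism of foliated manifolds $(\Sigma_m, \mathcal F_{\Sigma_m}) \simeq (\Sigma_n, \mathcal F_{\Sigma_n})$ matching the base points $O_m \leftrightarrow O_n$. Since isomorphisms of foliated manifolds preserve both the dimension of the ambient manifold and the minimal number of local generators at a chosen point, the right-hand sides of the two preceding identities coincide, giving the pointwise equality
$$\mathrm{rk}_{p(w)}(\mathcal F_M) - \dim M = \mathrm{rk}_{q(w)}(\mathcal F_N) - \dim N, \qquad \forall w \in W.$$

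Third and finally, I take the supremum over $w \in W$. Because $\dim M$ and $\dim N$ are constants and because the surjectivity of $p, q$ gives $\{p(w):w\in W\}=M$ and $\{q(w):w\in W\}=N$, we obtain $\sup_{w \in W}\mathrm{rk}_{p(w)}(\mathcal F_M) = \sup_{m \in M}\mathrm{rk}_m(\mathcal F_M) = \mathrm{rk}(\mathcal F_M)$ and similarly for $\mathcal F_N$, so that \eqref{eq:ranks} follows at once. The main geometric input is Corollary \ref{cor:bitrans}; the only other non-formal ingredient is the rank decomposition $\mathrm{rk}_m(\mathcal F_M) = d_m + \mathrm{rk}_{O_m}(\mathcal F_{\Sigma_m})$, which is a standard consequence of the local splitting theorem, so I do not anticipate any serious obstacle.
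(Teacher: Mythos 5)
Your proof is correct, but it takes a genuinely different route from the paper's. The paper proves \eqref{eq:ranks} by observing that the quantity $\mathrm{rk}(\mathcal F)-\dim(M)$ is invariant under pull-back along a surjective submersion: for $\pi\colon P\to M$ one has $\mathrm{rk}_p(\pi^{-1}\mathcal F_M)=\mathrm{rk}_{\pi(p)}(\mathcal F_M)+\dim(P)-\dim(M)$, because a minimal generating family upstairs is obtained by concatenating a frame of the $\pi$-fibers with lifts of a minimal generating family downstairs; applying this to both legs and using $p^{-1}(\mathcal F_M)=q^{-1}(\mathcal F_N)$ gives the pointwise identity directly. You instead pass through the transverse slices: the local splitting theorem gives $\mathrm{rk}_m(\mathcal F_M)=d_m+\mathrm{rk}_{O_m}(\mathcal F_{\Sigma_m})$, and Corollary \ref{cor:bitrans} identifies the transverse foliations at $p(w)$ and $q(w)$, so the ``transverse defect'' $\mathrm{rk}-\dim$ matches. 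Both arguments end with the same supremum step over $w\in W$ using surjectivity of $p$ and $q$. The paper's version is lighter: it needs only the Morita-equivalence axiom $p^{-1}(\mathcal F_M)=q^{-1}(\mathcal F_N)$ and the elementary pull-back rank formula, which is why Remark \ref{rmq:rank-invariant} can note that the lemma already holds for Morita equivalences. Your version leans on Corollary \ref{cor:bitrans}, which is established via bi-transversals and Proposition \ref{prop:inducedsymetry} — heavier machinery, but it makes explicit the conceptual point that $\mathrm{rk}-\dim$ is an invariant of the transverse geometry; since the transverse isomorphism is itself a Morita invariant, your proof ultimately works at the same level of generality. The only small caveat is that your second step implicitly uses that the isomorphism of transverse slices provided by a bi-transversal through $w$ matches the base points $p(w)\leftrightarrow q(w)$; this is true (it is how the corollary is proved), but worth saying, since the rank decomposition is evaluated precisely at those base points.
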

\begin{proof}
It suffices of course that this relation holds for the pull-back of a singular foliation $\mathcal F_M $  on $M$ through a surjective submersion $ \pi: P \to M$. In this case, it is an immediate consequence of the following equality, valid for all $p\in P$:
$$     {\mathrm{rk}}_p (\pi^{-1}\mathcal{F}_M) =  {\mathrm{rk}}_p (\mathcal{F}_M )+{\mathrm{dim}}(P) -{\mathrm{dim}}(M) $$
which expresses that a minimal set of local generators for $\pi^{-1}(\mathcal F_M) $ near $p\in P$ is obtained as the concatenation of  $ {\mathrm{dim}}(P) -{\mathrm{dim}}(M)$ vertical vector fields that form a local trivialization of the tangent spaces of the fibers of $\pi$ with a family  $ Y_1, \dots, Y_r$ of vector fields, with $ Y_i$ being $ \pi$-related to some $X_i \in  \mathcal F_M$ for all $i=1, \dots, r$, such that  $ X_1, \dots, X_r$ are a minimal set of generators of $\mathcal F_M $.
\end{proof}

\begin{remark}\label{rmq:rank-invariant}
\normalfont
For Equation \eqref{eq:ranks}, we only need $ W$ to be a Morita equivalence in the sense of Garmendia and Zambon's \cite{Garmadia-Zambon} and could be proven in discussion around Remark 2.4 in \cite{Garmadia-Zambon}. 
Equation \eqref{eq:ranks} is an analogous to the classical statement that the dimension of a differentiable stack, i.e., the rank of the Lie algebroid minus the dimension of the base, is Morita invariant, see, e.g., \cite{jsg/1310388899}.
This equation implies that 
\begin{eqnarray*} {\mathrm{rk}} (\mathcal{F}_M )  +  {\mathrm{rk}} (\mathcal{F}_N )  + {\mathrm{dim}}(M) +  {\mathrm{dim}}(N)  \\ 
 = 2 \left(    {\mathrm{rk}} (\mathcal{F}_M )  +  {\mathrm{dim}}(N) \right) \\
 =2  \left( {\mathrm{rk}} (\mathcal{F}_N )  + {\mathrm{dim}}(M) \right)
\end{eqnarray*}
is an even integer. In particular,
\begin{equation}\label{eq:dim:min} \frac{1}{2}\left( {\mathrm{rk}} (\mathcal{F}_M )  +  {\mathrm{rk}} (\mathcal{F}_N )  + {\mathrm{dim}}(M) +  {\mathrm{dim}}(N)  \right) 
\end{equation}
is an integer.
 \end{remark}

\subsection{Bi-vertical vector fields as a singular foliation}\label{sec:Bi-verticalFoliation}

Let us start by collecting several elementary results about bi-vertical vector fields on a bi-submersion.
Recall that for any bi-submersion $(M, \mathcal{F}_M)\stackrel{p}{\leftarrow}W\stackrel{q}{\rightarrow}(N, \mathcal F_N) $,  the sheaf 
$\mathcal{BV}_W $  of bi-vertical vector fields on $W$ forms a $ C^\infty(W)$-module, and is stable under the Lie bracket. It may not be a singular foliation, since it may not be finitely generated. %

Let us be more precise. Consider a left action $\nu^{\mathcal{F}_M}$ as in \eqref{eq:leftaction}. We still denote by $\nu^{\mathcal{F}_M}$ the induced linear map 
 \begin{equation}
 \label{eq:levelsections}
    \nu^{\mathcal{F}_M }\colon \Gamma(p^* A) \to \Gamma( \ker T q) 
 \end{equation}
  at the level of sections.
   The following holds:
  \begin{enumerate}
    \item The restriction to  $p^*(\ker\rho)$ of the linear map  \eqref{eq:levelsections}  is valued in bi-vertical vector fields:
 \begin{equation}
 \label{eq:levelsections2}
    \nu^{\mathcal{F}_M }\colon p^*(\ker\rho) \to \mathcal{BV}_W
 \end{equation}
    Here $p^*(\ker\rho)\subset \Gamma(p^*A)$ is the $\mathcal{O}_W$ sub-module generated by sections $\eta\in \Gamma(p^*E)$ such that $\rho(\eta(w))=0$ for all $w\in W$.

    \item If the left action $\nu^{\mathcal{F}_M}$ \eqref{eq:levelsections} is surjective (resp. bijective)  at every point, then the linear map \eqref{eq:levelsections2} is surjective (resp. bijective).
    \end{enumerate}

Since any singular foliation admits, near a point, an anchored bundle  $(A,\rho) $ whose rank satisfies $\dim W=\mathrm{rk}(A)+ \dim N$, a bijective left action $\nu^{\mathcal{F}_M}$ exists near every point of $W$.
In this case, the linear map \eqref{eq:levelsections2} is invertible. 
Bi-vertical vector fields form therefore a singular foliation on $W$ if and only if  $\ker\rho\subset \Gamma(A)$ is locally finitely generated. 
This happens if and only if  there exists in a neighborhood of every point of $M$ a short exact sequence sequence of vector bundles of the form 
 \begin{equation}
  \label{eq:shortexact}
  B\stackrel{\dd}{\longrightarrow} A\stackrel{\rho}{\longrightarrow} TM
  \end{equation} 
  such that $\rho(\Gamma(A))=\mathcal{F}_M$ and $\dd(\Gamma(B))=\ker\rho$, 
  and $$ \nu^{\mathcal F_M} \circ p^* \dd : \Gamma(p^* B)  \longrightarrow \mathcal{BV}_W $$
  is surjective, i.e.,  
  \begin{equation}
      \label{eq:anchoredforbivertical}
  \xymatrix{p^*B\ar[rr]^{\nu^{\mathcal{F}_M}\circ p^*\dd}&&TW}
  \end{equation}
   is an anchored bundle over $\mathcal{BV}_W$.
Let us recapitulate this discussion.

    \begin{lemma}\label{lem:anchoredbundleforbivertical}
    Let $(M,\mathcal{F}_{M}) \stackrel{p}{\leftarrow} W \stackrel{q}{\rightarrow} (N,\mathcal{F}_N)$ be a bi-submersion. The following items are equivalent: 
    \begin{enumerate}
       \item There exists near every point of $M$ a local sequence of vector bundles $B\stackrel{\dd}{\longrightarrow} A\stackrel{\rho}{\longrightarrow} TM$  near $ p(w)$  such that $\rho(\Gamma(A))=\mathcal{F}_M$, $\dd(\Gamma(B))=\ker\rho$, and  $\mathrm{rk}(A) = \dim W - \dim N$.
       \item Bi-vertical vector fields on $W$ form a singular foliation on $W$. 
\end{enumerate}
     In this case, an anchored bundle over $\mathcal{BV}_W$ near any $w\in W$ can be obtained as follow: choose a left action  $ \nu^{\mathcal{F}_M} $ as in Equation \eqref{eq:leftaction}. Then  Equation \eqref{eq:anchoredforbivertical}  describes an anchored bundle over $\mathcal{BV}_W$.
    \end{lemma}


Now, when does a local sequence as in item 1) in Lemma \ref{lem:anchoredbundleforbivertical} exist near every point of $M$? The answer is in fact quite classical in sheaf theory. It holds if and only if $ \mathcal{F}_M $ is what is called a coherent sheaf, i.e. if for one (equivalently all) anchored bundle $(A,\rho) $ over $ \mathcal F_M$ there exists a vector bundle $ B \to M$ and a vector bundle morphism $ d : B \to A$ such that every section of $A$ in the kernel of $ \rho$ belongs to the image of $d$.
Let us be more precise.

\begin{lemma}\label{lem:isasingularfoliation}
Consider a bi-submersion $(M, \mathcal{F}_M)\stackrel{p}{\leftarrow}W\stackrel{q}{\rightarrow}(N, \mathcal F_N) $
with surjective $p,q$.
The following items are equivalent:
\begin{enumerate}
    \item[(i)] 
$\mathcal{BV}_W $  is a singular foliation.
     \item[(ii)] $ (M, \mathcal{F}_M) $ is a coherent sheaf,
      \item[(iii)] $ (N, \mathcal{F}_N) $ is a coherent sheaf.
\end{enumerate}
\end{lemma}
\begin{proof}
 The equivalence $ (i)  \equiv (ii)$ is the content of Lemma
    \ref{lem:anchoredbundleforbivertical}.
    The equivalence $(ii) \equiv (iii) $
    is then obvious by symmetry.
\end{proof}


Here is a last lemma on the bi-vertical singular foliation $\mathcal{BV}_W $ of a bi-submersion $W$. 

\begin{lemma}
\label{lemma:codim}

Let $W$ be a bi-submersion between two singular foliations  $$(M, \mathcal{F}_M)\stackrel{p}{\leftarrow}W\stackrel{q}{\rightarrow}(N, \mathcal F_N) $$ of global finite ranks such that $\mathcal{BV}_W $ is a singular foliation.
Then the codimension of its leaves is lower or equal to the integer in \eqref{eq:dim:min}. 
\end{lemma}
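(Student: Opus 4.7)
My plan is to localise the problem at an arbitrary point $w\in W$ and use the local structure of a bi-submersion (Proposition~\ref{prop:localstructure}), which gives us a great deal of information. Concretely, by item 3 of that proposition, upon choosing a good bi-transversal $(\Sigma,w)$ equipped with a \emph{minimal} path holonomy bi-submersion $\mathcal{A}$ for its induced transverse singular foliation $\mathcal{F}_\Sigma$, I obtain a surjective submersion
$$\Xi : W^w \longrightarrow \mathcal{B}^{d_M}\times \mathcal{A}\times \mathcal{B}^{d_N}$$
on a neighbourhood $W^w$ of $w$, where $d_M, d_N$ are the dimensions of the leaves of $\mathcal{F}_M$ and $\mathcal{F}_N$ through $p(w)$ and $q(w)$. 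By Remark~\ref{rmk:Xifibers}, every vector field tangent to the fibres of $\Xi$ is bi-vertical; hence $\ker T_w\Xi \subseteq T_w\mathcal{BV}_W$, so the codimension of the $\mathcal{BV}_W$-leaf through $w$ is bounded above by $\dim(\mathcal{B}^{d_M}\times \mathcal{A}\times \mathcal{B}^{d_N}) = d_M + \dim\mathcal{A} + d_N$.

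Next I compute $\dim\mathcal{A}$. Since $\mathcal{A}$ is a minimal path holonomy bi-submersion at the distinguished point $O\in\Sigma$, we have $\dim\mathcal{A} = \dim\Sigma + r$ where $r$ is the minimal number of local generators of $\mathcal{F}_\Sigma$ near $O$. Because $\Sigma$ is transverse to $\mathcal{F}_M$ at $p(w)=m$, the local splitting theorem identifies $r$ with $\mathrm{rk}_m(\mathcal{F}_M) - d_M$, and of course $\dim\Sigma = \dim M - d_M$. Substituting,
$$\mathrm{codim}_w(\mathcal{BV}_W)\ \leq\ d_M + d_N + (\dim M - d_M) + (\mathrm{rk}_m(\mathcal{F}_M) - d_M)\ =\ (d_N - d_M) + \dim M + \mathrm{rk}_m(\mathcal{F}_M).$$

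Finally, Corollary~\ref{cor:bitrans} says that the leaves through $p(w)$ and $q(w)$ have the same \emph{codimension}, i.e.\ $\dim M - d_M = \dim N - d_N$, whence $d_N - d_M = \dim N - \dim M$. Plugging in,
$$\mathrm{codim}_w(\mathcal{BV}_W)\ \leq\ \dim N + \mathrm{rk}_m(\mathcal{F}_M)\ \leq\ \dim N + \mathrm{rk}(\mathcal{F}_M),$$
and by symmetry (applying the same argument with the roles of $p$ and $q$ swapped) also $\leq \dim M + \mathrm{rk}(\mathcal{F}_N)$. These two bounds coincide by Remark~\ref{rmq:rank-invariant}, giving the integer in~\eqref{eq:dim:min}. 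The main subtlety I foresee is making sure that the local dimension computation of $\mathcal{A}$ genuinely uses a \emph{minimal} path holonomy bi-submersion rather than an arbitrary one (otherwise one would only obtain a cruder bound involving the rank of an ambient anchored bundle), and that the reduction from $\mathrm{rk}_m(\mathcal{F}_M)$ to the global rank $\mathrm{rk}(\mathcal{F}_M)$ is valid pointwise; both are guaranteed by the global finite rank hypothesis.
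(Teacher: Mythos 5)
Your argument is correct and is essentially the paper's own (first) proof: the authors also deduce the bound from item 3 of Proposition~\ref{prop:localstructure} together with Remark~\ref{rmk:Xifibers}, merely calling the dimension count a ``trivial consequence,'' whereas you spell it out (minimality of the path holonomy model, $\dim\mathcal A=\dim\Sigma+\mathrm{rk}_m(\mathcal F_M)-d_M$, equality of transverse codimensions via Corollary~\ref{cor:bitrans}, and the identification of $\dim N+\mathrm{rk}(\mathcal F_M)=\dim M+\mathrm{rk}(\mathcal F_N)$ with the half-sum in \eqref{eq:dim:min} via Remark~\ref{rmq:rank-invariant}). The paper additionally sketches a second, more direct route through item 4 of Lemma~\ref{lem:projectable-generators}, which you do not need.
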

\begin{proof}
 This lemma is a trivial consequence of item 3 in Proposition \ref{prop:localstructure}, since each connected component of each fiber of the submersion $\Xi$ in that statement is contained in 
 a leaf of the bi-vertical vector fields by Remark \ref{rmk:Xifibers}.
There is also a direct proof, without using this result. Since $p$ is a submersion:
$$  T_w \Sigma  \oplus {\mathrm{ker}}(T_w p)   = T_w W $$
Item 4 in Lemma \ref{lem:projectable-generators} implies that
the codimension of $T_w \mathcal{BV}_W $ in $   {\mathrm{ker}}(T_w p) $ is lower or equal to the number of local generators of $\mathcal F_N$. It is therefore lower or equal to $  {\mathrm{rk}}_{q(w)}(\mathcal F_N)  $.
Hence, the codimensions of the leaves of  $\mathcal{BV}_W $  are lower or equal to $   {\mathrm{dim}}(M)+{\mathrm{rk}}(\mathcal F_N)$, which coincides with the integer in \eqref{eq:dim:min} by Remark \ref{rmk:Xifibers}.
\end{proof}

\begin{remark}
It is a routine to check that (provided $p$ and $q$ are onto), there is one leaf of  $\mathcal{BV}_W $ whose codimensions in exactly the integer in \eqref{eq:dim:min}: it suffices to take $w$ such that $ {\mathrm{rk}}_{q(w)} (\mathcal F_N)$ is maximal.
\end{remark}

\begin{remark}
The bi-vertical singular foliation is a very special type of singular foliation. 
For instance, it is always, locally, defined by an action of the Abelian Lie group $\mathbb R^N$ for some integer $N$. This follows from the local model which shows that any point has a neighborhood where the leaves of the bi-vertical singular foliation are given by the obvious action of $ E_{-2} $ on $ E_{-1}$, see \S \ref{Local-structureofbis}. Also, the local model proves that the restriction to  a neighborhood of a point of the leaves are closed in that neighborhood. But there is more: its leaves are closed. This is an immediate consequence of the fact, proven below, that two points in the same leaf of the bi-vertical vector fields are equivalent,  and that every point has a neighborhood where equivalent points are in the same leaves as the restriction of that neighborhood of the bi-vertical singular foliation. 
\end{remark}

ˆ\section{Equivalent bi-submersions}
\label{sec:Equivalentbi-submersions}
\subsection{Definitions and first properties}


A notion of equivalence between bi-submersions was first introduced in \cite{AS} and later reformulated in \cite[\S 4.5]{LLL2}. In this work, we provide a natural generalization for bi-submersions between two different singular foliations, and investigate its meaning.

\begin{definition}
\label{def:related}
Consider two bi-submersions $(M, \mathcal{F}_{M}) \stackrel{p}{\leftarrow} W \stackrel{q}{\rightarrow} (N, \mathcal{F}_{N})$ and $(M,\mathcal{F}_{M}) \stackrel{p'}{\leftarrow} W' \stackrel{q'}{\rightarrow} (N,\mathcal{F}_{N})$. 

\begin{enumerate}
\item  $W$ and $W'$ are said to be \emph{equivalent} if there exists a manifold $V$ equipped with two surjective submersions $\delta \colon V \to W $ and $\eta \colon V \to W' $ such that the following diagram commutes:
\begin{equation}\label{weak-equivalence}
     \scalebox{0.7}{\xymatrix{&& (M,\mathcal{F}_M)&& \\W\ar[urr]^{p}\ar[drr]_{q}&&\ar@{->>}[ll]_\delta V\ar@{..>}[u]\ar@{..>}[d]\ar@{->>}[rr]^\eta && W'\ar[llu]_{p'}\ar[dll]^{q'}\\ && (N,\mathcal{F}_N)&&}}
 \end{equation}   
 \item The triple $(V,\delta,\eta)$ is then called a \emph{relation} between $W$ and $W'$.
\item  We say that two points $w \in W$ and $w' \in W'$ are \emph{equivalent} 
if there is an open neighborhood $\mathcal U\subseteq W$ of $w$ and an open neighborhood $\mathcal U'\subseteq W'$ of $w'$ which are equivalent.
\item  
We say 
that $w \in W$ and $w' \in W'$ are \emph{ related through a relation $(V,\delta,\eta)$} 
 if there exists $v \in V$ such that $\delta (v)=w$ and $ \eta(v)=w'$.
 \end{enumerate}
\end{definition}

\begin{remark}
Of course, if $w$ and $w'$ are related through  some relation $ (V,\delta,\eta)$, then they are equivalent. 
But even if $W$ and $W'$ are equivalent, two equivalent points $ w \in W$ and $w' \in W' $ do not need to be related through a given relation $ (V,\delta,\eta)$ between $W$ and $W'$. 
\end{remark}

\begin{remark}
The commutative diagram \eqref{weak-equivalence}
 induces a commutative diagram of vector bundle morphisms:
 \begin{equation}\label{diag:2}
    \scalebox{0.7}{\xymatrix{&& TM&& \\TW\ar[urr]^{Tp}\ar[drr]_{Tq}&&\ar@{->>}[ll]_{T\delta} TV\ar@{..>}[u]\ar@{..>}[d]\ar@{->>}[rr]^{T\eta} && TW'\ar[llu]_{Tp'}\ar[dll]^{Tq'}\\ && TN &&}}
 \end{equation}
 Restricting the above vector bundle morphisms to appropriate kernels, one obtains two complexes of vector bundle morphisms:
\begin{align}\label{diag:lesdeux}
&\ker T\delta \stackrel{T\eta }{\longrightarrow}\ker Tq' \stackrel{Tp' }{\longrightarrow} TM\\ \label{diag:lesdeux2}& \ker T\eta \stackrel{T\delta }{\longrightarrow}\ker Tp' \stackrel{Tq' }{\longrightarrow} TN.
\end{align}
 This will be useful in the sequel.
 \end{remark}

    The following two propositions are crucial for this paper, and reformulate similar results in \cite{AS}.

 \begin{proposition}[Local relation]\label{Local-equivalence}
Consider two bi-submersions $ (M,\mathcal F_M) \stackrel{p}{\leftarrow} W \stackrel{q}{\rightarrow} (N,\mathcal F_N) $ and $ (M,\mathcal{F}_M) \stackrel{p'}{\leftarrow} W' \stackrel{q'}{\rightarrow} (N,\mathcal{F}_N) $. 
For any two points $
w\in W$ and $w' \in W' $, the following statements\footnote{Notice that all these statements imply $ p(w)=p(w')$ and $ q(w)=q(w')$.} are equivalent:
\begin{enumerate}
    \item[(i)] $w \in W$ is equivalent to $w' \in W'$.
    \item[(ii)] There is a neighborhood $\mathcal U$ of $w$ in $W$  together with a morphism of bi-submersions $ \varphi\colon \mathcal U \to W'$ mapping $w$ to $w'$.
    \item[(iii)] There is a neighborhood\, $\mathcal U'$ of $w'$ in $W'$ together with a morphism of bi-submersions  $ \varphi' \colon \mathcal U' \to W$ mapping $w'$ to $w$.
    
    \item[(iv)] There exist bi-transversals through $w$ and $w'$ that induce the same germ of diffeomorphisms of transverse singular foliations. 
 \end{enumerate}
\end{proposition}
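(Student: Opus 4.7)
The plan is to establish the equivalences through a cycle, with the directions (ii)$\Rightarrow$(i), (iii)$\Rightarrow$(i), and (ii)$\Rightarrow$(iv) being immediate, the direction (i)$\Rightarrow$(ii) (and symmetrically (i)$\Rightarrow$(iii)) being a short application of the local section property of submersions, and (iv)$\Rightarrow$(ii) being the substantial step that uses the local structure theorem.

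First, for (ii)$\Rightarrow$(i) one takes the graph $\Gamma(\varphi)\subset \mathcal U\times W'$ together with its two projections onto $\mathcal U$ and $W'$ as the relation, noting $(w,w')\in \Gamma(\varphi)$; the implication (iii)$\Rightarrow$(i) is symmetric. For (ii)$\Rightarrow$(iv), a bi-transversal $\Sigma$ through $w$ in $\mathcal U$ is carried by $\varphi$ to a bi-transversal $\varphi(\Sigma)$ through $w'$ inducing the same transverse diffeomorphism, by Lemma \ref{lemma:bisection}. For (i)$\Rightarrow$(ii), given a relation $(V,\delta,\eta)$ and $v\in V$ with $\delta(v)=w,\eta(v)=w'$, I would use that $\delta$ is a surjective submersion to pick a local section $\sigma\colon \mathcal U\to V$ with $\sigma(w)=v$, and set $\varphi:=\eta\circ\sigma$; the commutativity of diagram \eqref{weak-equivalence} immediately gives $p'\circ\varphi=p$ and $q'\circ\varphi=q$, so $\varphi$ is a bi-submersion morphism. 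The symmetric argument (using a local section of $\eta$) yields (i)$\Rightarrow$(iii).

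The key implication is (iv)$\Rightarrow$(ii), and the plan is to reduce to the universal property of minimal path-holonomy bi-submersions through Proposition \ref{prop:localstructure}. Let $\Sigma$ and $\Sigma'$ be the given bi-transversals through $w$ and $w'$ with $\underline{\Sigma}=\underline{\Sigma'}$ as germs at $p(w)=p'(w')$. Apply item (3) of Proposition \ref{prop:localstructure} to $W$ at $w$ with $\Sigma$: this gives a minimal path-holonomy bi-submersion $\mathcal A$ on $(\Sigma,\mathcal F_\Sigma)$ and a local submersion $\Xi\colon \mathcal U \to \mathcal B^{d_M}\times \mathcal A\times \mathcal B^{d_N}$ of bi-submersions, with $\underline{\Sigma}$ as transverse diffeomorphism. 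Apply item (2) to $W'$ at $w'$ with $\Sigma'$: this gives a path-holonomy $\mathcal A'$ on $(\Sigma',\mathcal F_{\Sigma'})$ and a local isomorphism $\Xi'\colon \mathcal U'\to \mathcal B^{d_M}\times \mathcal A'\times \mathcal B^{d_N}$ with transverse diffeomorphism $\underline{\Sigma'}$. Since $\underline{\Sigma}$ and $\underline{\Sigma'}$ agree as germs, $\Sigma$ and $\Sigma'$ are identified as pointed foliated manifolds, so $\mathcal A'$ is naturally a bi-submersion over $(\Sigma,\mathcal F_\Sigma)$; the universal property of the minimal path-holonomy bi-submersion (Example \ref{ex:holonomy-biss}, cf.\ \cite[Prop.\ 2.10]{AS}) then produces a local morphism $\psi\colon \mathcal A \to \mathcal A'$ of bi-submersions sending the relevant basepoint of $\mathcal A$ to that of $\mathcal A'$. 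Extending $\psi$ trivially to $\mathrm{id}\times \psi\times \mathrm{id}$ on the product and composing $(\Xi')^{-1}\circ (\mathrm{id}\times \psi\times \mathrm{id})\circ \Xi$ gives the desired local morphism $\varphi\colon \mathcal U\to W'$ with $\varphi(w)=w'$.

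The main obstacle in the above argument will be arranging the two local splittings $\Phi_M,\Phi_N$ provided by Proposition \ref{prop:localstructure} so that they are compatible with the identification of $\Sigma$ and $\Sigma'$ via $\underline{\Sigma'}^{-1}\circ \underline{\Sigma}$, so that the trivial extension $\mathrm{id}\times \psi\times \mathrm{id}$ is genuinely a bi-submersion morphism over $(M,\mathcal F_M)$ and $(N,\mathcal F_N)$. This will be handled by shrinking the neighborhoods of $w$ and $w'$ and by choosing the splittings on $W'$ to agree with those on $W$ under the germ identification, using that both are the standard local models of the isomorphic transverse singular foliations at $p(w)=p'(w')$ and $q(w)=q'(w')$.
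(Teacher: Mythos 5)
Your cycle has a genuine gap at the step (ii)$\Rightarrow$(i). The graph $\mathrm{Graph}(\varphi)\subset\mathcal U\times W'$ of a morphism of bi-submersions projects diffeomorphically onto $\mathcal U$, but its second projection is essentially $\varphi$ itself, and a morphism of bi-submersions is \emph{not} in general a submersion onto an open neighbourhood of $w'$ --- Definition \ref{def:related} requires both legs of a relation to be surjective submersions. For instance, if $W$ is a path holonomy bi-submersion built from $n$ generators and $W'$ one built from $n+1$ generators (one of them redundant), there is a morphism $W\to W'$ which cannot be a submersion for dimension reasons (compare item 3 of Proposition \ref{prop:localstructure}: only morphisms into a \emph{minimal} path holonomy bi-submersion are guaranteed to be submersions). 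Since your only routes back to (i) are the graph constructions in (ii)$\Rightarrow$(i) and (iii)$\Rightarrow$(i), and your substantial step (iv)$\Rightarrow$(ii) lands in (ii) rather than (i), the equivalence with (i) is not established by your argument.

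The paper closes the loop differently: it proves (iv)$\Rightarrow$(i) directly, by \emph{fattening} the graph. Starting from bi-transversals $\Sigma,\Sigma'$ with $\underline{\Sigma}=\underline{\Sigma'}$ and the diffeomorphism $\phi=\left(p'_{|_{\Sigma'}}\right)^{-1}\circ p_{|_{\Sigma}}$, it takes $V=\mathrm{Graph}(\phi)\times\mathbb R^{n}\times\mathbb R^{r}\times\mathbb R^{\kappa}$ and defines the two legs by composing flows of the lifted generators $\overleftarrow X,\overrightarrow Y,Z$ (respectively $\overleftarrow{X'},\overrightarrow{Y'},Z'$) provided by Lemma \ref{lem:projectable-generators}; the extra Euclidean factors supply exactly the tangent directions spanning $\ker Tp+\ker Tq$ that are needed to make both maps surjective submersions near the section $\sigma\mapsto O_\sigma$. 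Your implications (i)$\Rightarrow$(ii),(iii) and (ii),(iii)$\Rightarrow$(iv) coincide with the paper's, and your (iv)$\Rightarrow$(ii) via Proposition \ref{prop:localstructure} and the universal property of path holonomy bi-submersions is a plausible alternative for producing a \emph{morphism} (modulo the splitting-compatibility issue you flag), but to repair the proof you must either replace the graph argument by such a thickened relation, or route the return to (i) through (iv)$\Rightarrow$(i) as the paper does.
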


The proof requires the following convention.

\begin{convention}
    In what follows, we use the notation $\exp{(\lambda \xi)}:= \Phi^{\xi_1}_{\lambda_1}\circ\cdots\circ \Phi^{\xi_d}_{\lambda_d}$ for all $\xi=(\xi_1, \ldots, \xi_d)\in \mathfrak X(M)^d$ and $\lambda =(\lambda_1,\cdots, \lambda_d)\in \mathbb{R}^d$ where the flows make sense.
\end{convention}

\begin{proof}
$(i)\Rightarrow (ii)\; \text{and}\; (iii)$ : Assume that $w\in W$ and $w'\in W'$ are related through a manifold $(V,\delta, \eta)$ and let $\mathcal U\subseteq W$  be an open neighborhood of $w\in W$ and $\mathcal U'\subseteq W'$ an open neighborhood of $w'\in W'$. The composition of the map $\eta\colon V \to W' $ with a local section of $\delta \colon {V} \to \mathcal{U}\subseteq W $ is a morphism of a bi-submersion as in $(ii)$. The composition of the map $\delta\colon V \to W $ with a local section of $\eta\colon V \to \mathcal{ U}'\subseteq W' $ is a morphism as in $(iii)$.\\

\noindent
$(ii)\Rightarrow (iv)$ : By Lemma \ref{lemma:bisection}, the image of any bi-transversal $(\Sigma, w) $ of $W$ through a morphism $\phi \colon \mathcal U \to W' $ mapping $w$ to $w'$ (with $\mathcal U $ an open neighborhood of $w\in W$) is a bi-transversal   $(\Sigma', w')$ of $ W'$, and $ \underline{\phi(\Sigma)} = \underline{\Sigma}$. Likewise, we have $(iii)\Rightarrow (iv)$.\\

\noindent
$(iv)\Rightarrow (i)$ : Consider bi-transversals $(\Sigma, w)$ of $\mathcal{U}\subseteq W$ and $(\Sigma',w')$ of $\mathcal{ U}'\subseteq W'$ such that $\underline{\Sigma}=\underline{\Sigma'}$. In particular, the restriction $$\phi=\left(p'_{|_{\Sigma'}}\right)^{-1}\circ p_{|_{\Sigma}}\colon\Sigma\longrightarrow \Sigma'$$ 
    is a diffeomorphism that makes the following diagram commutes:
    $$ \xymatrix{ & M& \\  \Sigma \ar[rr]^{\phi} \ar[ru]^{p_{|_{\Sigma}}} \ar[rd]_{q_{|_{\Sigma}}}& & \Sigma' \ar[lu]_{p'_{|_{\Sigma'}}} \ar[ld]^{q'_{|_{\Sigma'}}} \\ & N &}.$$

     Let $\{{X}_i, i= 1,\ldots, n\}$ be a set of generators of $\mathcal F_N$ in a neighborhood of $ p(w)=q(w')$ and let  $\{{Y}_j, j= 1,\ldots, r\}$ be a set of generators of $\mathcal F_M$ in a neighborhood of $ q(w)=q'(w')$. Let  
      $$(\overleftarrow{X};\overrightarrow{Y};Z)=(\overleftarrow{X_1}, \dots, \overleftarrow{X_{n}};\overrightarrow{Y_1},\dots, \overrightarrow{Y_{r}}; Z_1, \dots,Z_{\kappa}) $$
      and
      $$(\overleftarrow{X}';\overrightarrow{Y}';Z')=(\overleftarrow{X_1'}, \dots, \overleftarrow{X_{n}'};\overrightarrow{Y_1'},\dots, \overrightarrow{Y_{r}'}; Z_1', \dots,Z_{\kappa'}') $$
      be as in Lemma \ref{lem:projectable-generators}, generators in a neighborhood of $w$ and $w'$ of the pull-back foliations \begin{equation*}
          \mathcal{ P}_\mathcal{U}:=p^{-1}({\mathcal F_N}|_{p(\mathcal{U})})=q^{-1}({\mathcal F_M}|_{q(\mathcal{U})})\quad\text{and} \quad \mathcal{P}'_{\mathcal{U}'}:={(p')}^{-1}({\mathcal F_N}|_{p'(\mathcal{U'})})=(q')^{-1}({\mathcal F_M}|_{q'(\mathcal{U}')}).
      \end{equation*}
     Without any loss of generality, one can assume $\kappa=\kappa'$: it suffices for instance to add $\kappa'-\kappa $ times the vector field $0$ if $ \kappa' >\kappa$.
     Let $V^{n,r, \kappa}_{w,w'}:=\mathrm{Graph}(\phi) \times \mathbb R^{n} \times \mathbb R^{r} \times \mathbb R^\kappa $,
     where $\mathrm{Graph}(\phi)$ is the graph of the diffeomorphism $\phi\colon\Sigma\longrightarrow \Sigma'$.  Define two maps $\delta_{w,w'}$ and $\eta_{w,w'}$ by:
     $$ \begin{array}{rrcl}\delta_{w,w'}\colon & \left((\sigma,\sigma'),\lambda, \mu, \nu\right)\in V^{n,r, \kappa}_{w,w'} &\mapsto& \exp{(\lambda X)}\circ \exp{(\mu Y)} \circ \exp{(\nu Z)}(\sigma)\in \mathcal U\\\\\eta_{w,w'}\colon&\left((\sigma,\sigma'), \lambda, \mu,\nu\right)\in V^{n,r, \kappa}_{w,w'}  &\mapsto &\exp{(\lambda X')}\circ \exp{(\mu Y')} \circ \exp{(\nu Z')}(\sigma')\in \mathcal U'  \end{array}$$
     Above $ (\sigma,\sigma')$ is an element of the graph of $ \phi$, i.e., $ \sigma'=\phi(\sigma)$, and $(\lambda,\mu,\nu)\in \mathbb R^{n} \times \mathbb R^{r} \times \mathbb R^\kappa $ are such that the flow considered is well-defined, which is always true in some open ball centered at zero. The diagram below commutes \begin{equation}\label{}
    \scalebox{0.7}{ \xymatrix{&& M&& \\\mathcal U\subseteq W\ar[urr]^{p}\ar[drr]_{q}&&\ar[ll]_{\delta_{w,w'}}  V^{n,r, \kappa}_{w,w'}\ar@{..>}[u]\ar@{..>}[d]\ar[rr]^{\eta_{w,w'}} && \mathcal{ U}'\subseteq W'\ar[llu]_{p'}\ar[dll]^{q'}\\ && N&& }}
 \end{equation}because of these easy following facts
\begin{enumerate} 
\item the vector fields $\overleftarrow{X_i}\in \Gamma(\ker Tq)$ (resp. $\overleftarrow{X_i'}\in \Gamma(\ker Tq')$) are $p$-related  (resp. $p'$-related) to the same vector fields in $\mathcal F_N|_{p(\mathcal U)}$, namely $X_1,\ldots,X_{n}$  and $q$-related  with the same vector fields on $ \mathcal F_M|_{q(\mathcal{U})}$, namely zero.
\item the vector fields $\overrightarrow{Y_j}\in\Gamma(\ker Tp)$ (resp. $\overrightarrow{Y_j'}\in \Gamma(\ker Tp')$) are $q$-related  (resp. $q'$-related) to the same vector fields in $\mathcal F_M|_{q(\mathcal U)}$, namely $Y_1,\ldots,Y_{n}$  and $p'$-related with the same vector fields on $ \mathcal F_M|_{q(\mathcal{U})}$, namely zero,
\item the vector fields $Z_l\in \Gamma(\ker Tp)\cap \Gamma(\ker Tq) $  are $p$- and $q$-related to zero; the vector fields $\zeta_l'\in \Gamma(\ker Tp')\cap \Gamma(\ker Tq')$ are $p'$- and $q'$-related to zero,
\item $p(\sigma)=p'(\sigma')$ and $q(\sigma)=q'(\sigma')$ for all $(\sigma,\sigma')\in \mathrm{Graph}(\phi)$.
\end{enumerate}  For any $\sigma\in \Sigma$ consider the point $O_\sigma := ((\sigma,\sigma'), 0_{\mathbb R^{n}}, 0_{\mathbb R^{r}}, 0_{\mathbb R^\kappa})$ in $V^{n,r, k}_{w,w'}$. It is not hard to see that the image of the differential of $\eta_{w,w'}$ (resp. $\delta_{w,w'}$) at the point  $O_\sigma$ is generated by $ T_\sigma\Sigma$ and $ T_\sigma \mathcal P$ (resp. $ T_{\sigma}\Sigma'$ and $ T_{\sigma'}\mathcal P'$). Both $\eta_{w,w'},\delta_{w,w'}$ 
are therefore surjective submersions (upon shrinking $\mathcal{U}$ and $\mathcal{U'}$ if needed) in an open neighborhood of the section   $\sigma \longmapsto O_\sigma$ in $\mathrm{Graph}(\phi) \times \mathbb R^{n} \times \mathbb R^{r} \times \mathbb R^\kappa$ that we  denote by $V^{n,r, \kappa}_{w,w'}$ again.    
That is, $w\in W$ and $w'\in W'$ are related through the following diagram
\begin{equation}\label{}
    \scalebox{0.7}{ \xymatrix{&& M&& \\\mathcal U\subseteq W\ar[urr]^{p}\ar[drr]_{q}&&\ar@{->>}[ll]_>>>>>>>>>>>>>{\delta_{w,w'}} V^{n,r, \kappa}_{w,w'}\subseteq \mathrm{Graph}(\phi) \times \mathbb R^{n} \times \mathbb R^{r} \times \mathbb R^\kappa\ar@{..>}[u]\ar@{..>}[d]\ar@{->>}[rr]^>>>>>>>>>>>>{\eta_{w,w'}} && \mathcal{ U}'\subseteq W'\ar[llu]_{p'}\ar[dll]^{q'}\\ && N&& }}\end{equation}
\end{proof}


\begin{proposition}[Global relation]\label{Global-equivalence}Let $ (M,\mathcal F_M) \stackrel{p}{\leftarrow} W \stackrel{q}{\rightarrow} (N,\mathcal F_N) $ and $ (M,\mathcal F_M) \stackrel{p'}{\leftarrow} W' \stackrel{q'}{\rightarrow} (N,\mathcal F_N)$ be bi-submersions. Assume that the singular foliations $\mathcal{F}_M$ and $\mathcal{F}_N$ are finitely generated. The following statements are
equivalent:
    \begin{enumerate}
        \item[(i)] $W$ and $W'$ are equivalent bi-submersions.
        \item[(ii)] Every $w\in W$ is equivalent to some $w'\in W'$ and every $w'\in W'$ is equivalent to some $w\in W$ .
    \end{enumerate}
\end{proposition}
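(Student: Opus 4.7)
The implication (i) $\Rightarrow$ (ii) is essentially a tautology built into Definition \ref{def:related}. If $(V,\delta,\eta)$ is a relation between $W$ and $W'$, then for any $w \in W$, surjectivity of $\delta$ provides $v \in \delta^{-1}(w)$, and $w$ is then related (hence equivalent) to $\eta(v) \in W'$. Symmetrically, every $w' \in W'$ is equivalent to some $w \in W$.

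For (ii) $\Rightarrow$ (i), the idea is to assemble a global relation by gluing the local ones produced in Proposition \ref{Local-equivalence}. For each pair of equivalent points $(w,w') \in W \times W'$, that proposition (combined with its proof) yields an open neighborhood $\mathcal{U}_{w,w'} \subseteq W$ of $w$, an open neighborhood $\mathcal{U}'_{w,w'} \subseteq W'$ of $w'$, a manifold $V_{w,w'}$, and surjective submersions
\[
\mathcal{U}_{w,w'} \stackrel{\delta_{w,w'}}{\longleftarrow} V_{w,w'} \stackrel{\eta_{w,w'}}{\longrightarrow} \mathcal{U}'_{w,w'}
\]
making the diagram of Definition \ref{def:related} commute when composed with $p,q,p',q'$.

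Assuming (ii), every $w \in W$ has some equivalent $w' \in W'$, so the family $\{\mathcal{U}_{w,w'}\}$ covers $W$; symmetrically, the family $\{\mathcal{U}'_{w,w'}\}$ covers $W'$. Since $W$ and $W'$ are second countable, I would extract a countable set of pairs $\{(w_i,w'_i)\}_{i \in I}$ so that the $\delta_{w_i,w'_i}$-images cover $W$ and the $\eta_{w_j,w'_j}$-images cover $W'$ (if needed, take the union of two countable subfamilies, one chosen to cover $W$, one to cover $W'$). Set
\[
V := \coprod_{i \in I} V_{w_i,w'_i}, \qquad \delta := \coprod_i \delta_{w_i,w'_i}, \qquad \eta := \coprod_i \eta_{w_i,w'_i}.
\]
Each connected component of $V$ is a smooth manifold, $\delta$ and $\eta$ are smooth maps onto $W$ and $W'$ respectively, and on each piece $V_{w_i,w'_i}$ they are surjective submersions; the commutativity of diagram \eqref{weak-equivalence} holds componentwise. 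Hence $(V,\delta,\eta)$ is a global relation between $W$ and $W'$.

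The only non-routine step is guaranteeing surjectivity of both $\delta$ and $\eta$ simultaneously, which is why condition (ii) is formulated symmetrically and why the finite-generation assumption on $\mathcal{F}_M,\mathcal{F}_N$ (ensuring the local models of Proposition \ref{Local-equivalence} are built from genuine finite sets of generators) is convenient. Everything else follows directly from the local equivalence result and a standard second-countability argument, so I do not expect a serious obstacle.
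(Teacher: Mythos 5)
Your proof is correct and follows essentially the same route as the paper: both directions are handled identically, with (ii) $\Rightarrow$ (i) obtained by covering $W$ (and then $W'$) by countably many of the local relations supplied by Proposition \ref{Local-equivalence} and taking the disjoint union of the resulting manifolds. Your explicit remark that one may need to union two countable subfamilies, one ensuring surjectivity onto $W$ and one onto $W'$, is precisely the paper's step of forming $V_1 \coprod V_2$.
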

\begin{proof}
    $(i)\Rightarrow (ii)$ is obvious.\\
\noindent
$(ii).\Rightarrow (i).$ There exists a countable family $ (w_l)_{l \in \mathbb N}$ of $W$ such that the open subsets  $(\mathcal U_l)_{l\in \mathbb N} 
$ (be as in the proof of Proposition \ref{Local-equivalence}) cover $W$. The restriction of the bi-submersion $W$ to $\mathcal{U}_l\subseteq W$ is related through a manifold denoted $V_l$ to the restriction of the bi-submersion $W'$ to an open subset of $W'$. The bi-submersion $W$ is related through the disjoint union $V_1:=\coprod_{l \in \mathbb N} V_l$ to the restriction of $W'$ to an open subset of $W'$. Now, by the same reasoning, $ W'$  is related through a manifold $V_2$ to the restriction of $W$ to  an open subset of $W$. Therefore, the bi-submersions $W$ and $W'$ are related  through disjoint union $V_1 \coprod V_2$.
\end{proof}

\subsection{Bi-vertical vector fields and equivalent points of a given bi-submersion}

The next theorem extends a theorem of Claire Debord 
\cite{Debord,DEBORD2013613} about holonomy groupoids, and, as in \cite{Debord,Debord,DEBORD2013613}, is a consequence of the Period Bounding Lemma (see Abraham and Robbin \cite{AR} or Ozols \cite{Ozols}). 
There is a long history of using Period Bounding Lemma for integration results, see e.g., \cite{Crainic-Fernandes}, \cite{Debord,DEBORD2013613}.

\begin{theorem}
\label{th:whereoneusesAnalyse}
Consider a bi-submersion $(M, \mathcal{F}_M)\stackrel{p}{\leftarrow}W\stackrel{q}{\rightarrow}(N, \mathcal F_N) $ such that 
$\mathcal{BV}_W $  is a singular foliation.
Every point $x\in W$ admits a neighborhood $ \mathcal U$ such that the following items are equivalent for any two $w,w' \in \mathcal U$
\begin{enumerate}
    \item[(i)] $w$ and $w'$ are equivalent,
    \item[(ii)] $w$ and $w'$ are in the same leaf of the bi-vertical singular foliation $\mathcal{BV}_W $.
\end{enumerate}
\end{theorem}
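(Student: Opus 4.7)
The direction (ii) $\Rightarrow$ (i) is the routine one. My plan is to argue that every bi-vertical vector field $Z \in \Gamma(\ker Tp) \cap \Gamma(\ker Tq)$ has a flow $\phi_t^Z$ which, where defined, satisfies $p \circ \phi_t^Z = p$ and $q \circ \phi_t^Z = q$. Hence, locally, $\phi_t^Z$ is a morphism of bi-submersions from a neighborhood of any point in $W$ to $W$ itself. By Proposition \ref{Local-equivalence}(ii)$\Rightarrow$(i), this shows that $w$ and $\phi_t^Z(w)$ are equivalent for every $w$ in the domain of the flow. Since the leaf of $\mathcal{BV}_W$ through $w$ is, by Hermann's theorem, obtained by concatenating such flows, any two points in the same leaf are equivalent (provided the flows stay in a chosen neighborhood $\mathcal U$).

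For the converse (i) $\Rightarrow$ (ii), the plan is to reduce to a \emph{minimal} path holonomy bi-submersion. Using Proposition \ref{prop:localstructure}, pick a neighborhood $\mathcal U$ of $x$ and a surjective submersion $\Xi \colon \mathcal U \to \mathcal B^{d_M} \times \mathcal A \times \mathcal B^{d_N}$ onto a bi-submersion of typical type whose model $\mathcal A$ is a minimal path holonomy bi-submersion for the transverse foliation $(\Sigma, \mathcal F_\Sigma)$ at $O := \Xi(x)|_{\mathcal A}$, where all generators of $\mathcal F_\Sigma$ vanish. By Remark \ref{rmk:Xifibers}, the fibers of $\Xi$ are contained in leaves of $\mathcal{BV}_W$. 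Conversely, the images under $\Xi$ of bi-vertical leaves in $\mathcal U$ are of the form $\{u\} \times L_a \times \{v\}$, where $L_a$ is a bi-vertical leaf of $\mathcal A$. A parallel analysis applies to equivalence: if $w, w' \in \mathcal U$ are equivalent, then $p(w) = p(w')$ and $q(w) = q(w')$ forces the $\mathcal B^{d_M}$- and $\mathcal B^{d_N}$-components of $\Xi(w), \Xi(w')$ to coincide (after possibly shrinking $\mathcal U$ so that $\Phi_M, \Phi_N$ are diffeomorphisms), and one checks using Proposition \ref{Local-equivalence} that the $\mathcal A$-components are equivalent in $\mathcal A$. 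Thus both statements descend along $\Xi$ to their counterparts in $\mathcal A$, and it suffices to prove the theorem for a minimal path holonomy bi-submersion at a singular point $O$.

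For this reduced case, my plan is to invoke the Period Bounding Lemma (in the spirit of Debord \cite{Debord}). If $a, a' \in \mathcal A$ are close to $O$ and equivalent, Proposition \ref{Local-equivalence} provides a local bi-submersion morphism $\varphi \colon \mathcal A \to \mathcal A$, defined on a neighborhood of $a$, with $\varphi(a) = a'$. By minimality of $\mathcal A$ and the standard rigidity of morphisms of minimal path holonomy bi-submersions (cf.\ \cite[\S 3]{AS}), $\varphi$ must, when both $a$ and $a'$ are close to $O$, be a local diffeomorphism fixing $p_\mathcal A$ and $q_\mathcal A$. Such diffeomorphisms form a local pseudogroup on $\mathcal A$ acting as the identity on the $\Sigma$-base. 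The Period Bounding Lemma applied to the generators of this pseudogroup (viewed as flows generated by vector fields in $\ker Tp_\mathcal A \cap \ker Tq_\mathcal A$ together with possibly discrete isotropy) implies that for $a$ in a sufficiently small neighborhood of $O$, only the identity-component can move $a$ to a nearby $a'$. Concretely, I would write $\varphi$ as a finite composition of time-one flows of the generators (using the explicit form from the proof of Proposition \ref{Local-equivalence}) and show that, after shrinking the neighborhood, these generators can be taken to be bi-vertical; thus $a' = \phi^Z_1 \cdots \phi^{Z_k}_1(a)$ for bi-vertical vector fields $Z_i$, placing $a$ and $a'$ in the same leaf of $\mathcal{BV}_{\mathcal A}$.

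\textbf{Main obstacle.} The hard step is the last one: controlling an arbitrary local bi-submersion self-morphism close to the identity by flows of bi-vertical vector fields. One needs the Period Bounding Lemma (or equivalently Debord's technique of expressing nearby equivalences as local sections of an identity-component) to rule out discrete isotropy at $O$ acting freely nearby. The technical work is to make this precise in the general (non-projective) setting where $\mathcal A$ need not be a Lie groupoid, by combining the local normal form at $O$ with PBL-type estimates on the generators of the holonomy pseudogroup.
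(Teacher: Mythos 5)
Your proposal follows essentially the same route as the paper: the direction (ii)$\Rightarrow$(i) via flows of bi-vertical vector fields, which are self-morphisms of the bi-submersion since they preserve $p$ and $q$; and (i)$\Rightarrow$(ii) by reducing to the transverse model through the local structure of a bi-submersion and then invoking the Period Bounding Lemma in Debord's style. The one place where your sketch is substantially vaguer than the paper's argument is the final step, and I would warn you against the framing you chose there. There is no issue of ``discrete isotropy of a pseudogroup acting freely'' to rule out: the obstruction is continuous, and trying to control an arbitrary local self-morphism $\varphi$ by decomposing it into flow factors and showing each factor ``can be taken bi-vertical'' is exactly the cancellation that has to be proven, not assumed. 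The paper's mechanism avoids this by working with bisections instead of morphisms: by Proposition \ref{Local-equivalence}(iv), two equivalent points admit bisections inducing the same germ of transverse diffeomorphism; after identifying a neighborhood of a bisection with a neighborhood $\mathcal A$ of the zero section of the anchored bundle $(\ker Ts|_{\Sigma}, \rho = Tt)$ and translating one bisection onto the zero section by a left-invariant flow, one is left with a single section $a''$ of the anchored bundle whose induced diffeomorphism --- the time-one flow of $\rho(a'')$ --- is the identity. Lemma \ref{lem:asDebord} (the Period Bounding Lemma in section form) then gives $\rho(a'')=0$ outright, i.e.\ $a''$ is a bi-vertical direction, which is precisely the conclusion you need. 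So your plan is workable and rests on the same two pillars (local normal form plus PBL), but the bisection formulation is what makes the Period Bounding Lemma bite in one stroke; I recommend adopting it rather than attempting to tame the individual flow factors of $\varphi$.
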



We start with a result, mostly inspired by a similar result by Claire Debord: it extends Proposition 1.1 in \cite{DEBORD2013613} (we state it for anchored bundle and not only for the trivial vector bundle associated to a finite family of vector fields, and Debord's result holds at a point not in a neighborhood).

Consider an anchored bundle $ (A,\rho)$ over a relatively compact open subset $U_M$ of the manifold $M$ with $A$ a trivial vector bundle over $U_M$ equipped with a flat connection so that through any $m\in U_M$ and any $ a_m \in A_m$ there exists an unique parallel section denoted by $ \widehat{a_m} \in \Gamma(A)$. Those always exist in a neighborhood of a point.

\begin{lemma} \label{lem:asDebord}
There exists a neighborhood $ \mathcal A \subset A$ of the zero section where the following are equivalent for any section $a$ valued in $ \mathcal A \subset A \to M$:
\begin{enumerate}
\item $\rho(a)=0$  
\item the orbit of the vector field $\rho(\widehat{a(m)})$ starting at an arbitrary point $m \in U_M $ is periodic of period $1$.   
\end{enumerate} 
\end{lemma}
\begin{proof}The first condition implies the second one for any neighborhood $ \mathcal A$, since the vector field $\rho(\widehat{a(m)})$ vanishes at $m$ for all $ m\in U_M$. We have to check that there is a neighborhood of $ U_M$ in $A$ where the second one implies the first one.  
Let us identify the restriction of $ A $ to $ U_M$ to a trivial product $A_0 \times \mathcal U_M$ with $A_0$ a vector space whose dimension is the rank of $A$.
The flat connection induces a vector field $X$ on  $A $ whose value at $ x=(a,y) \in A $ is $ (0,\rho_y(a))$. 
By the period bounding Lemma, the zero section has a neighborhood $\mathcal B $ on which the periods of $ X$ are bounded below, i.e., there exists $ \epsilon>0 $ such that if the integral curve of $  X$ starting from a point $x=(y,a) \in A \simeq U_M \times A_0 $ is periodic of period  $ \epsilon' < \epsilon$ then $X$ vanishes at this point $ x=(y,a)$, i.e., $ \rho(a)=0$. Now, let $ \mathcal A$ be the image of $ \mathcal B$ through the map  $(y,a) \mapsto (y,\eta a )$. Then $ \mathcal A$ satisfies the same property as $ \mathcal B$, but the minimal value of the periods is now $ \epsilon/\eta  $. Choosing $ \eta < \epsilon$ completes the proof.
\end{proof}

Notice that in Lemma \ref{lem:asDebord}, we do not really need a flat connection on a trivial vector bundle $A$: all we need is a natural manner to define the a vector field $X$. Any connection would be sufficient.
 Last,  the submersions $ (a,m) \mapsto m$ and $ (a,m) \mapsto \phi_1^{\rho(\widehat{a(m)})}(m)$ define a bisubmersion at least on a neighborhood of the zero section, and without any loss of generality,  $ \mathcal A$ can be chosen to be of this type. 
 Lemma \ref{lem:asDebord} then implies the following statement:
 
 \begin{lemma}\label{lem:asDebord2}
     Any bisection included in  $\mathcal A$ that induces the identity map of an open subset $ U_M$ has to be given by a section of $ A \to U_M$ valued in the kernel of $ \rho$.  
 \end{lemma}
 \begin{proof}
 A bisection has to be given by a section of $A$ valued in  $ \mathcal A$. It induces the identity map if and only if the second item in Lemma \ref{lem:asDebord} holds at every point $m$. This completes the proof.
 \end{proof}

\begin{lemma}\label{lem:whereoneusesAnalyse}
Theorem \ref{th:whereoneusesAnalyse} holds true when $ M=N$ and $ \mathcal F_M=\mathcal F_N$, and when there exists a local bisection $\Sigma$ through $ x$ which induces the identity map.
\end{lemma}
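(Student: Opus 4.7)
I would prove the two directions separately after reducing to the local model.

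\emph{Step 1 (Reduction).} Since the bi-submersion is over a single singular foliation and $\Sigma$ is a local bisection through $x$ inducing the identity transverse diffeomorphism, Proposition \ref{prop:localstructure} (in its refined form for one foliation, cf. the discussion in Remark \ref{rmk:Xifibers}) lets me assume that in a neighborhood of $x$, $W$ is isomorphic, as a bi-submersion, to a neighborhood $\mathcal U_A\subset A$ of the zero section of a path holonomy bi-submersion of some anchored bundle $(A,\rho)$ over $\mathcal F$, with $x=0_m$ and $\Sigma$ matching the zero section. Under such a morphism, both the notion of equivalence (Proposition \ref{Local-equivalence}) and the bi-vertical singular foliation $\mathcal{BV}_W$ are preserved, so it suffices to prove the statement inside $\mathcal U_A$.

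\emph{Step 2 (Direction $(ii)\Rightarrow(i)$).} If $Z\in\Gamma(\ker Tp)\cap \Gamma(\ker Tq)$ is bi-vertical, its local flow $\Phi^Z_t$ satisfies $p\circ\Phi^Z_t=p$ and $q\circ\Phi^Z_t=q$, so each $\Phi^Z_t$ is a local automorphism of the bi-submersion. Two points $w,w'$ in the same leaf of the singular foliation $\mathcal{BV}_W$ are connected by a finite concatenation of such flows; the composition is a local morphism of bi-submersions sending $w$ to $w'$, so $w$ and $w'$ are equivalent by Proposition \ref{Local-equivalence}(ii).

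\emph{Step 3 (Direction $(i)\Rightarrow(ii)$, the hard part).} Assume $w,w'\in \mathcal U_A$ are equivalent. By Proposition \ref{Local-equivalence}, after shrinking neighborhoods, there exists a local morphism of bi-submersions $\varphi\colon \mathcal V\subset \mathcal U_A\to \mathcal U_A$ with $\varphi(w)=w'$, $p\circ\varphi=p$ and $q\circ\varphi=q$. Because $\varphi$ preserves the projection $p\colon A\to M$, it restricts to a fibre-preserving map, and the affine structure of the fibres lets me write
\[
\varphi(a)=a+\alpha(a),\qquad \alpha(a)\in A_{p(a)},
\]
for a smooth ``vertical section'' $\alpha\colon \mathcal V\to A$. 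I claim that $\rho\bigl(\alpha(a)\bigr)=0$ for all $a\in \mathcal V$. Granting this, the vertical lift $\widetilde\alpha\in \Gamma(\ker Tp)$ of $\alpha$ takes values in the vertical lift of $\ker\rho$, hence lies in $\ker Tq$ as well and is a bi-vertical vector field; a short interpolation $\varphi_s(a)=a+s\,\alpha(a)$ shows that the associated flow joins $w$ to $w'$ inside a single leaf of $\mathcal{BV}_W$.

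\emph{Step 4 (The period bounding argument).} To prove $\rho\circ\alpha\equiv 0$, I would apply Lemma \ref{lem:asDebord}. Shrinking $\mathcal V$ so that the image $\varphi(\mathcal V)$ still lies in the neighborhood of the zero section where Lemma \ref{lem:asDebord} applies, the identity $q(a+\alpha(a))=q(a)$ reads, in the geodesic-flow description of $q$, as the statement that the time-$1$ flow attached to the section $\alpha$ pulled back along a local bi-transversal through $a$ is the identity on a neighborhood of $p(a)$ in $M$. By Lemma \ref{lem:asDebord} applied to this section of the pulled-back anchored bundle, this forces $\rho(\alpha(a))=0$ pointwise.

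\emph{Main obstacle.} The subtlety of Step~4 is that $\alpha$ is a priori only defined on $\mathcal V\subset A$ and is not the pull-back of a section of $A\to M$. The remedy is to pull the anchored bundle $(A,\rho)$ back along $p\colon \mathcal V\to M$ and to interpret $\alpha$ as a genuine section of the pulled-back bundle on a neighborhood of a bi-transversal, after which Lemma \ref{lem:asDebord} applies verbatim. Once this analytic input (Debord's period bounding, which ultimately comes from \cite{AR,Ozols}) is in place, the rest of the argument is formal.
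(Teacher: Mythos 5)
Your overall strategy — reduce to the anchored-bundle local model around the bisection, handle $(ii)\Rightarrow(i)$ by flowing along bi-vertical vector fields, and invoke the period-bounding Lemma \ref{lem:asDebord} for the converse — is the same as the paper's, and Steps 1 and 2 are fine. The hard direction, however, has a genuine gap concentrated in Steps 3 and 4. Lemma \ref{lem:asDebord} applies to a single section of $A\to M$ whose \emph{carried diffeomorphism} is the identity. What your hypothesis gives is the pointwise identity $q(a+\alpha(a))=q(a)$, i.e., that the two bisections $\Sigma_0$ and $\varphi(\Sigma_0)$ carry the \emph{same} diffeomorphism. To pass from ``same diffeomorphism'' to ``some bisection carries the identity'' you must compose one bisection with the inverse of the other; since $q$ is built from flows of non-commuting vector fields, $q(a+\beta)$ is \emph{not} the time-$1$ flow of $\rho(\beta)$ applied to $q(a)$, so the ``reads as'' sentence in Step 4 is precisely the missing step, not a reformulation. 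The paper supplies it by flowing along a left-invariant vector field that carries the zero section to the bisection through $w$; the image of the bisection through $w'$ under that flow is a section $a''$ whose carried diffeomorphism is the identity, and Debord's lemma is applied to $a''$ — not to $\alpha$. Without this composition argument the appeal to Lemma \ref{lem:asDebord} does not go through.

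There is a second gap: even granting $\rho\circ\alpha\equiv 0$, the naive vertical lift $\widetilde\alpha$ need not lie in $\ker Tq$. At a point $a$ off the zero section, $T_aq(\widetilde\alpha)$ is the derivative of a time-$1$ flow with respect to a perturbation of its generating vector field, which involves transporting the frozen-coefficient field $\sum_i\alpha_i(a)X_i$ along the whole flow line; the vanishing of the single tangent vector $\rho(\alpha(a))\in T_{p(a)}M$ does not make this integral vanish. The correct identification of $\ker Tq$ with $p^*A$ is the corrected morphism $\nu^{\mathcal F_M}$ of Lemma \ref{lem:biss_anchored}, not the vertical lift, and consequently your interpolation $\varphi_s(a)=a+s\,\alpha(a)$ is not tangent to $\mathcal{BV}_W$ in a general chart. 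The paper's normal-bundle model (with $A=\ker Ts|_\Sigma$ and $\rho=Tt$) is set up precisely so that the bi-vertical foliation is realized by the constant vector fields valued in $\ker\rho$, and the ``same leaf'' conclusion is drawn there, after transporting back by the left-invariant flow, rather than via a straight-line homotopy.
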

\begin{proof}
The direction $(ii)\Longrightarrow (i)$ comes from the fact that the flow of vector fields in $\mathcal{BV}_W$ is a  local bi-submersion morphism $W\to W$, since its flow leaves the source and target maps invariant. Let us prove the converse implication. According to \cite[Lemma 4.17]{LLL2}, {the restriction $A\to s(\Sigma)$ of} the normal bundle 
of $ \Sigma$ in $W$ {to $s(\Sigma)$}, identified with ${\mathrm{ker}}(Ts)|_{s(\Sigma)}$ is an anchored bundle of the {restricted singular foliation $\mathcal F|_{s(\Sigma)}$}, when equipped with $ \rho=Tt$, that we denote by $(A,\rho)$. In particular, upon choosing a  connection on it, it has a neighborhood $\mathcal A \subset A $ of the zero section which is a bi-submersion. Moreover, this bi-submersion is diffeomorphic to a neighborhood of $\Sigma$ in $W$ through a diffeomorphism that intertwines source and target maps, and maps the zero section to $ \Sigma$. It suffices therefore to prove the result when $ W$ is of this type. Upon restricting to neighborhood of a point, we can assume that $A$ is trivial, the connection is flat, and  it suffices therefore the prove the result for bi-submersions which are as in Lemmas \ref{lem:asDebord}-\ref{lem:asDebord2}.
In particular, a “right invariant action”  $ \xi \mapsto \overrightarrow{\xi}$ can be chosen. Moreover, vector fields of this type act transitively on the fibers of $ \mathcal A \to U_M$.  
Last, we can assume that $ \mathcal A$ is chosen such that the two conditions in Lemma  \ref{lem:asDebord} are equivalent, so that the statement in Lemma \ref{lem:asDebord2} holds. Let $ w \in \mathcal A $ be a point equivalent to a point in the zero section, which then has to be its base point $m$. By definition, there exists a bisection through $w$ inducing the identity map on an open subset of $U_M$. By Lemma  \ref{lem:asDebord2}, there is a local section $b$ valued in  the kernel of $ \rho$ through $w$.   Since the bi-vertical singular foliation is generated by the constant vector fields $ b$ with $b$ a section of $A$ valued  in the kernel of $ \rho$, it implies that $w$ and $ m$ are in the same leaf of the bi-vertical singular foliation. 
Now, let $w \in \mathcal A$ be a point that belongs to the fiber $A_m$ of  $ A \to U_M$.
There exists a left-invariant vector field $\overrightarrow{\xi}$ such that the image through the flow $ \phi^{\overrightarrow{\xi}}_1(w)$ at time $1$ of $\overrightarrow{\xi}$ belongs to the zero section, i.e. $ \phi^{\overrightarrow{\xi}}_1(w)=m$. 
Now, let $w'$ be another point in $ \mathcal A$ equivalent to $w$. Upon replacing $ \mathcal A$ by a smaller neighborhood of $U_M$, we can assume that $   \phi^{\overrightarrow{\xi}}_1(w')$ is well-defined. Then  $m= \phi^{\overrightarrow{\xi}}_1(w)$ and $   \phi^{\overrightarrow{\xi}}_1(w')$ are equivalent again. Since $m$ belongs to the zero section, they belong to the same leaf of the bivertical singular foliation. 
%
%
Since $\overrightarrow{-\xi}$ is a symmetry of the bivertical singular foliation, $w$ and $w'$ also have to belong to the same leaf of the bivertical singular foliation. This completes the proof.
%
%
\end{proof}

\begin{proof}[Proof of Theorem \ref{th:whereoneusesAnalyse}]
Let $ (S,\mathcal S)$ and $ (T,\mathcal T)$ be $ L$-cuts of the leaves through $s(x)$ and $t(x)$. Then $s^{-1}(S) \cap t^{-1}(T) $ is a bi-submersions between them, and any bisection though $x$ defines an isomorphism  $ (S,\mathcal S) \simeq (T,\mathcal T)$, so that we are in the situation of Lemma \ref{lem:whereoneusesAnalyse}. 
 The result now follows from the local decomposition theorem of a bi-submersion, see Appendix \ref{Local-structureofbis}.
\end{proof}

\subsection{Bi-vertical vector fields and equivalences of two given bi-submersions}\label{sec:totalrelation}


\begin{definition}\label{def:total-relation}
    Consider two equivalent bi-submersions $(M, \mathcal{F}_{M}) \stackrel{p}{\leftarrow} W \stackrel{q}{\rightarrow} (N, \mathcal{F}_{N})$ and $(M,\mathcal{F}_{M}) \stackrel{p'}{\leftarrow} W' \stackrel{q'}{\rightarrow} (N,\mathcal{F}_{N})$. 
    A relation $ (Z,\delta,\eta)$ between $W$ and $W'$ is said to be  \underline{total} if the next two conditions hold
\begin{enumerate}
    \item
$w \in W$ and $ w'\in W'$ are equivalent if and only if they are  related through $ (Z,\delta,\eta)$, i.e., if and only if there exists $v \in Z$ such that $ \delta(v)=w$ and $\eta(v)=w' $.
\item $\mathcal{BV}_{W}$ and $\mathcal{BV}_{W'}$ are singular foliations\footnote{See discussion in Section \ref{sec:Bi-verticalFoliation} about this condition.} and 
$$\left(W,\mathcal{BV}_W\right) \stackrel{\delta}{\longleftarrow} Z \stackrel{\eta}{\longrightarrow} \left(W',\mathcal{BV}_{W'}\right)$$ 
is a bi-submersion.
 \end{enumerate}
\end{definition}

\begin{remark}
     Consider 
     two equivalent 
     bi-submersions $(M, \mathcal{F}_{M}) \stackrel{p}{\leftarrow} W \stackrel{q}{\rightarrow} (N, \mathcal{F}_{N})$ and $(M,\mathcal{F}_{M}) \stackrel{p'}{\leftarrow} W' \stackrel{q'}{\rightarrow} (N,\mathcal{F}_{N})$. A relation $(Z,\delta,\eta)$ between $W$ and $W'$ satisfies item 2 of Definition \ref{def:total-relation} if and only if one of
   the two short sequences in \eqref{diag:lesdeux} and \eqref{diag:lesdeux2} is exact in the middle at the level of sections.\end{remark}

A crucial result is that the existence of a relation implies the existence of a total one.

\begin{theorem}\label{thm:bi-sub-equiv}
Let $(M,\mathcal{F}_{M}) \stackrel{p}{\leftarrow} W \stackrel{q}{\rightarrow} (N,\mathcal{F}_{N})$ and $(M,\mathcal{F}_{M}) \stackrel{p'}{\leftarrow} W' \stackrel{q'}{\rightarrow} (N,\mathcal{F}_{N})$ be bi-submersions such that $\mathcal{BV}_W\subseteq \mathfrak X(W)$  and $\mathcal{BV}_{W'}\subseteq \mathfrak X(W') $ 
 are singular foliations. Then, the following conditions are equivalent:
 \begin{enumerate}
     \item[(i)] $W$ and $W'$ are equivalent.
     \item[(ii)]
 There exists a total relation $(Z, \delta,\eta)$ between $W$ and $W'$.
\end{enumerate}

\end{theorem}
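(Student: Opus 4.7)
The direction (ii) $\Rightarrow$ (i) is immediate from the definitions: a total relation in the sense of Definition \ref{def:total-relation} is in particular a relation in the sense of Definition \ref{def:related}(1), so it witnesses the equivalence of $W$ and $W'$. Thus the entire content of the theorem lies in (i) $\Rightarrow$ (ii). The strategy is to follow the local-to-global pattern of Propositions \ref{Local-equivalence} and \ref{Global-equivalence}, but to enhance the relation constructed there so that it is itself a bi-submersion for the bi-vertical singular foliations.

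Fix a pair of equivalent points $w \in W$ and $w' \in W'$. By Proposition \ref{Local-equivalence}, one can choose good bi-transversals $\Sigma \ni w$ and $\Sigma' \ni w'$ inducing the same transverse diffeomorphism, encoded by a diffeomorphism $\phi \colon \Sigma \to \Sigma'$. Select, as in Lemma \ref{lem:projectable-generators}, local generators $X_1, \dots, X_n$ of $\mathcal F_M$ near $p(w)$ and $Y_1, \dots, Y_r$ of $\mathcal F_N$ near $q(w)$, together with lifts $\overrightarrow{X_i}, \overrightarrow{Y_j}, Z_k$ on $W$ and $\overrightarrow{X_i'}, \overrightarrow{Y_j'}, Z_k'$ on $W'$. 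Because $\mathcal{BV}_W$ and $\mathcal{BV}_{W'}$ are singular foliations, fix further local generators $\zeta_1, \dots, \zeta_s$ of $\mathcal{BV}_W$ near $w$ and $\zeta_1', \dots, \zeta_{s'}'$ of $\mathcal{BV}_{W'}$ near $w'$. On a suitable neighborhood $V_{w,w'}$ of the zero section of $\mathrm{Graph}(\phi)\times\mathbb R^n\times\mathbb R^r\times\mathbb R^\kappa\times\mathbb R^s\times\mathbb R^{s'}$, set
\begin{align*}
\delta\bigl((\sigma,\sigma'),\lambda,\mu,\nu,\alpha,\beta\bigr) &:= \exp(\alpha\zeta)\circ\exp(\lambda \overrightarrow X)\circ\exp(\mu \overrightarrow Y)\circ \exp(\nu Z)(\sigma),\\
\eta\bigl((\sigma,\sigma'),\lambda,\mu,\nu,\alpha,\beta\bigr) &:= \exp(\beta\zeta')\circ\exp(\lambda \overrightarrow{X'})\circ\exp(\mu \overrightarrow{Y'})\circ\exp(\nu Z')(\sigma').
\end{align*}
The identities $p\circ\delta=p'\circ\eta$ and $q\circ\delta=q'\circ\eta$ follow from the $p$- and $q$-relatedness of the lifts and from the fact that $\phi$ is induced by a bi-transversal, so $p|_\Sigma = p'|_{\Sigma'}\circ\phi$ and $q|_\Sigma = q'|_{\Sigma'}\circ\phi$. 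Surjectivity of $\delta$ and $\eta$ as submersions near the zero section is obtained by the same tangent-space count as in the proof of Proposition \ref{Local-equivalence}, using the bi-transversality $T_w W = T_w\Sigma \oplus T_w \mathcal F_W$.

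The crucial point is that $V_{w,w'}$ is actually a bi-submersion between $(W,\mathcal{BV}_W)$ and $(W',\mathcal{BV}_{W'})$. The equality $\delta^{-1}(\mathcal{BV}_W)=\eta^{-1}(\mathcal{BV}_{W'})$ is automatic from the commutativity $p\circ\delta=p'\circ\eta$ and $q\circ\delta=q'\circ\eta$. The second axiom of a bi-submersion is
$$\delta^{-1}(\mathcal{BV}_W) \;=\; \Gamma(\ker T\delta)+\Gamma(\ker T\eta),$$
and is precisely where the enhancement by the coordinates $(\alpha,\beta)$ plays its role: the coordinate field $\partial/\partial\alpha_l$ lies in $\Gamma(\ker T\eta)$, and the family $\{T\delta(\partial/\partial\alpha_l)\}_l$ generates $\delta^*\mathcal{BV}_W$ as a $C^\infty(V_{w,w'})$-module (by $\mathcal{BV}_W$-invariance of bi-vertical vector fields under their own flows), and symmetrically for the $\beta$-direction. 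Given $u\in\delta^{-1}(\mathcal{BV}_W)$, a lifting argument analogous to Proposition \ref{prop:lifting-property} yields $u=(u-u_2)+u_2$ with $u_2$ an $\alpha$-valued combination whose image under $T\delta$ equals $T\delta(u)$; then $u_2\in\Gamma(\ker T\eta)$ and $u-u_2\in\Gamma(\ker T\delta)$.

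Totality of the local model follows from Theorem \ref{th:whereoneusesAnalyse}: upon shrinking $V_{w,w'}$, two points of $W$ are equivalent if and only if they lie in the same leaf of $\mathcal{BV}_W$, and the $\alpha$-flow sweeps out such a leaf; symmetrically for $W'$. Hence every pair equivalent to $(w,w')$ is realized by $(\delta,\eta)$. For the global conclusion, mimic the countable-cover argument of Proposition \ref{Global-equivalence}: choose a countable family $\{(w_l,w_l')\}$ whose local models $V_{w_l,w_l'}$ collectively exhaust all equivalent pairs in $W\times W'$, and take the disjoint union. This disjoint union remains a bi-submersion between $(W,\mathcal{BV}_W)$ and $(W',\mathcal{BV}_{W'})$ and every equivalent pair is related through it, giving the required total relation. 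The main obstacle of the proof is the second bi-submersion identity above: the construction is engineered so that the extra $\alpha$- and $\beta$-parameters supply enough directions in $\ker T\eta$ and $\ker T\delta$ to lift arbitrary bi-vertical vector fields on either side, and it is at this step that the assumption that $\mathcal{BV}_W$ and $\mathcal{BV}_{W'}$ are genuine singular foliations is used in an essential way.
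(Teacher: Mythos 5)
Your proof is correct and follows essentially the same route as the paper: a local relation from Proposition \ref{Local-equivalence}, enlarged so that the bi-vertical foliations can be lifted, then Theorem \ref{th:whereoneusesAnalyse} to identify local equivalence with membership in a common bi-vertical leaf, and finally a countable disjoint union. Your $\alpha$-parameters are exactly the paper's device in disguise: flowing along generators $\zeta_1,\dots,\zeta_s$ of $\mathcal{BV}_W$ is the same as taking the fiber product of the local relation with a path-holonomy atlas of the bi-vertical foliation on $\mathcal U_w$, which is how the paper secures both totality and the second bi-submersion axiom. The one genuine organizational difference is that the paper first builds a total self-relation $B$ of $W$ (the case $W=W'$) and then obtains the general case as the fiber product $B\times_{\varphi,W,t}V$ with an arbitrary relation $V$, invoking Proposition \ref{prop: inv-composition}; you instead construct local total relations directly between $W$ and $W'$ at each equivalent pair, adding (redundant, as the paper's closing remark observes) $\beta$-flows on the $W'$ side. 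Your approach is more explicit but reproves by hand what Proposition \ref{prop: inv-composition} gives for free. One step you dispatch too quickly: the claim that $\delta^{-1}(\mathcal{BV}_W)=\eta^{-1}(\mathcal{BV}_{W'})$ is ``automatic from the commutativity'' only yields that $T\delta(u)$ and $T\eta(u)$ are \emph{pointwise} valued in the respective kernels $\ker Tp\cap\ker Tq$, not that they lie in the \emph{modules} $\delta^*\mathcal{BV}_W$ and $\eta^*\mathcal{BV}_{W'}$; this needs (and admits) a short argument using the generators supplied by Lemma \ref{lem:projectable-generators}, and the paper's proof leans on the same implicit point when it treats the local relation as a Morita equivalence of the bi-vertical foliations, so this is a shared elision rather than a gap specific to your argument.
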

\begin{proof} 
The implication $(ii)\Rightarrow (i)$ is obvious. Now, let us show the other implication.

Let us assume that $W=W'$ and $(M, \mathcal{F}_M)=(N, \mathcal{F}_N)$. We will show that there exists a total relation $(B, s,t)$ between $W$ and $W$.
Let $w$ and $w'$ be equivalent points. By Proposition \ref{Local-equivalence}, there exists a local relation $(B_{w,w'}, \delta_{w,w'}, \eta_{w,w'})$ between them.  We can assume that the images $ \mathcal U_w $ and $ \mathcal U_w '$ of both submersions are open subsets  as in Theorem \ref{th:whereoneusesAnalyse} where equivalent points have to be in the same leaves of their respective bi-vertical singular foliation. Upon replacing $B_{w,w'} $ by the fibered product with a holonomy atlas $\mathcal{A}_w $ of the bi-vertical foliation of $ \mathcal U_w $, i.e., considering
\begin{equation}\label{Bww}  B_{w,w'}':= \mathcal{A}_w \times_W B_{w,w'} .\end{equation}
one obtains a new equivalence that now by construction satisfies the following property: two points in $ \mathcal U_w $ and $ \mathcal U_w '$ respectively are equivalent if and only if they are related through $B_{w,w'}'$.
We can cover the subset of $ W\times W$ made by pair of points $(w,w')$ that are equivalent by an open cover made of open sets of the form $ \mathcal U_{w}  \times \mathcal U_{w'}  $ as above. We extract a countable subfamily of those indexed by a set $ I$, and we denote by $B_i' $ the corresponding total relation between open subsets that we denote by $\mathcal U_{w_i} $ and $ \mathcal U_{w_i'}$.
Since \eqref{Bww} satisfies the second condition in Definition \ref{def:total-relation}, so does the disjoint union $B=\coprod_{i\in I} B_i'$. Hence $B$ is a total relation between $W$ and $W$. This completes this part of the proof.

Let us now assume $ W \neq W'$. Assume that $W$ and $W'$ are related through $(W, \mathcal{BV}_W) \stackrel{\varphi}{\leftarrow} V \stackrel{\psi}{\rightarrow} (W',\mathcal{BV}_{W'})$. Choose a total relation of bi-submersions  $(W, \mathcal{BV}_W)\stackrel{s}{\leftarrow}B\stackrel{t}{\rightarrow} (W,\mathcal{BV}_W)$  between $W$ and $W$ i.e., $B$ is a bi-submersion over $\mathcal{BV}_W$ such that the diagram    \begin{equation}\label{diag:bi-sub-equiv}
   \scalebox{0.7}{ \xymatrix{&& M&& \\W\ar[urr]^{p}\ar[drr]_{q}&&\ar@{->>}[ll]_{s}  B\ar@{..>}[u]\ar@{..>}[d]\ar@{->>}[rr]^{t} &&  W\ar[llu]_{p}\ar[dll]^{q}\\ && N&& }}
\end{equation}commutes and so that two points $(w,w')\in W\times W$ are related if and only if there exists an element $b\in B$ such that $s(b)=w$ and $t(b)=w'$. 

\begin{enumerate}
    \item By Proposition \ref{prop: inv-composition} (2), the fiber product $$(W, \mathcal{BV}_W) \stackrel{\delta}{\longleftarrow} B \times_{\varphi, W, t} V \stackrel{\eta}{\longrightarrow} (W',\mathcal{BV}_{W'})$$ is a bi-submersion, where $\delta=s\circ\mathrm{pr}_B$ and $\eta=\psi\circ \mathrm{pr}_V$. Moreover, $(B\times_{\varphi, W, t}V, \delta, \eta)$ is a relation between 
$W$ and $W'$. Indeed, for  $(b,v)\in B\times_{\varphi, W, t}V$ \begin{align*}
    p'\circ\eta(b,v)&=p'\circ\psi(v)=p\circ \varphi(v)=p\circ t(b),\quad\text{since $(V, \varphi,\psi)$ is a relation between $W$ and $W'$}\\&=p\circ s(b),\quad\text{since Diagram \eqref{diag:bi-sub-equiv} commutes}\\&=p\circ \delta(b,v).
\end{align*}Likewise, we have $q'\circ \eta =q\circ \delta$.
\item The triple $(B\times_{\varphi, W, t}V, s,t)$ satisfies  the first condition in Definition \ref{def:total-relation}: let  $(w,w')\in W\times W'$ be equivalent points and $v\in V$ such that $\psi(v)=w'$. The pair of points $(w,\varphi(v))\in W\times W$ are equivalent. Since the relation $B$ is total, there exists $b\in B$ such that $s(b)=w$ and $t(b)=\varphi(v)$. Thus, $(b,v)\in B\times_{\varphi, W, t}V$ projects to $w$ and $W$.
\end{enumerate}
Since $B$ satisfies the second condition in Definition \ref{def:total-relation}, so does $(B\times_{\varphi, W, t}V, s,t)$. 
 This completes the proof. \end{proof}

\begin{remark}
    Notice that in the proof of Theorem \ref{thm:bi-sub-equiv} we only needed, for example, a bi-submersion over $\mathcal{BV}_W\subseteq \mathfrak X(W)$ and not over $\mathcal{BV}_{W'}\subseteq \mathfrak X(W')$. Therefore, we do not need the assumption that both $\mathcal{BV}_W$ and $\mathcal{BV}_{W'}$ are locally finitely generated a priori, although the bi-submersion axioms constrain it in the end.
\end{remark}


From Theorem \ref{thm:bi-sub-equiv}, we derive the following corollary.

\begin{corollary}
   If $(Z, \delta, \eta)$ is a total relation between two bi-submersions $W$ and $W'$ and $B$ a bi-submersion over $\mathcal{BV}_W$, then the fiber product $B\times_W Z$ is a total relation between $W$ and $W'$.
\end{corollary}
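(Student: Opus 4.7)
The plan is to mirror the final step in the proof of Theorem \ref{thm:bi-sub-equiv}, which already essentially handles this statement in the special case where $B$ was chosen to be a total relation of $W$ with itself over $\mathcal{BV}_W$. Set $Y := B\times_{t,W,\delta} Z$, and equip it with the two canonical projections
\[ \alpha := s\circ\mathrm{pr}_B \colon Y \longrightarrow W, \qquad \beta := \eta\circ\mathrm{pr}_Z\colon Y\longrightarrow W'. \]

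First, I will verify that $(Y,\alpha,\beta)$ is a relation between $W$ and $W'$, i.e., that $p\circ\alpha = p'\circ\beta$ and $q\circ\alpha = q'\circ\beta$. Because $B$ is a bi-submersion over $\mathcal{BV}_W$ and $\mathcal{BV}_W\subseteq \ker Tp\cap\ker Tq$, every point $b\in B$ satisfies $p(s(b))=p(t(b))$ and $q(s(b))=q(t(b))$ (two points of $W$ related through $B$ lie in the same leaf of $\mathcal{BV}_W$, which is contained in a common fiber of $p$ and of $q$). Combined with the commutation relations $p\circ\delta=p'\circ\eta$ and $q\circ\delta=q'\circ\eta$ coming from $Z$ being a relation, and with the defining equality $t\circ\mathrm{pr}_B=\delta\circ\mathrm{pr}_Z$ on $Y$, this immediately yields the desired identities.

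Second, I will check that $(W,\mathcal{BV}_W) \stackrel{\alpha}{\leftarrow} Y \stackrel{\beta}{\rightarrow} (W',\mathcal{BV}_{W'})$ is a bi-submersion. This is a direct application of Proposition \ref{prop: inv-composition}(2), since $Y$ is by definition the composition of the two bi-submersions $(W,\mathcal{BV}_W) \stackrel{s}{\leftarrow} B \stackrel{t}{\rightarrow} (W,\mathcal{BV}_W)$ and $(W,\mathcal{BV}_W) \stackrel{\delta}{\leftarrow} Z \stackrel{\eta}{\rightarrow} (W',\mathcal{BV}_{W'})$.

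The main step—and the one I expect to require the most care—is establishing totality, i.e., that every pair $(w,w')\in W\times W'$ of equivalent points is realized by some element of $Y$. Given such a pair, totality of $Z$ produces $z\in Z$ with $\delta(z)=w$ and $\eta(z)=w'$. It remains to exhibit $b\in B$ with $s(b)=w$ and $t(b)=\delta(z)=w$, so that $(b,z)\in Y$ projects to $(w,w')$ under $(\alpha,\beta)$. The existence of such a ``unit'' $b$ at every $w\in W$ is precisely the content of $B$ being a bi-submersion over $\mathcal{BV}_W$ in the path-holonomy sense of Example \ref{ex:holonomy-biss2}: a neighborhood of the zero section of an anchored bundle always furnishes the identity bisection through any point of the base, which yields the required unit. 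I will make this observation explicit in order to conclude the totality of $(Y,\alpha,\beta)$.
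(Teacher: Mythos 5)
Your overall architecture is the right one and matches what the paper actually does inside the proof of Theorem \ref{thm:bi-sub-equiv}: form the fiber product, get the bi-submersion property from Proposition \ref{prop: inv-composition}(2), check the commuting square, and then produce a witness for every equivalent pair. The middle step is fine. The genuine problem is that in two places you silently read more into the hypothesis ``$B$ is a bi-submersion over $\mathcal{BV}_W$'' than it contains. First, your justification of $p\circ s=p\circ t$ and $q\circ s=q\circ t$ rests on the claim that $s(b)$ and $t(b)$ always lie in the same leaf of $\mathcal{BV}_W$; this is false for a general bi-submersion over $\mathcal{BV}_W$. For instance, if $\phi$ is any symmetry of $\mathcal{BV}_W$, then $(W,\mathcal{BV}_W)\stackrel{\mathrm{id}}{\leftarrow}W\stackrel{\phi}{\rightarrow}(W,\mathcal{BV}_W)$ is a bi-submersion, and $t(b)=\phi(b)$ need not lie in the leaf of $s(b)=b$, nor in the same $p$- or $q$-fiber; in that case $B\times_W Z$ is not even a relation between $W$ and $W'$. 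Second, your totality step needs a unit $b$ with $s(b)=t(b)=w$ over every $w\in W$, and you assert this is ``precisely the content'' of $B$ being a bi-submersion over $\mathcal{BV}_W$; it is not ($B=\emptyset$ is a bi-submersion over $\mathcal{BV}_W$, and so is any bi-submersion whose image under $s$ misses part of $W$).

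Both defects are inherited from the looseness of the corollary's statement, but the paper's own derivation avoids them: in the proof of Theorem \ref{thm:bi-sub-equiv} the auxiliary factor $B$ is taken to be a \emph{total relation between $W$ and itself}, which supplies exactly the two missing ingredients --- the commuting square with $p$ and $q$ by definition of a relation, and the units by applying totality of $B$ to the (trivially equivalent) pair $(w,w)$. Note also that the paper's totality argument runs in the opposite direction from yours: there $B$ is the total one and the other factor is an arbitrary relation, so the witness comes from totality of $B$; in the corollary's configuration it is $Z$ that is total, so your route (totality of $Z$ plus units of $B$) is the one actually needed --- but to make it work you must add, as a hypothesis, that $B$ is a leaf-preserving bi-submersion over $\mathcal{BV}_W$ admitting units through every point of $W$ (e.g.\ a path-holonomy bi-submersion as in Example \ref{ex:holonomy-biss2}, which is clearly what the authors intend), rather than claiming that these properties follow from the bi-submersion axioms alone.
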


\subsection{Equivalent points in a bi-submersion and the bi-vertical singular foliation (II)}

We start with an example of equivalent points, which motivates the discussion in this section.

\begin{example}
Let $ (M,\mathcal F_M) \stackrel{p}{\leftarrow} W \stackrel{q}{\rightarrow} (N,\mathcal F_N) $ be a bi-submersion such that the space $\mathcal{BV}_W\subset \mathfrak X(W)$ of bi-vertical vector fields form a singular foliation. 
 Two points $ w,w'\in W$ in the same leaf of $\mathcal{BV}_W\subset \mathfrak X(W)$ are related. This is an immediate consequence of the fact that the flow of a bi-vertical vector field, if it exists, is a isomorphism of bi-submersions  between $W$ and itself, since it preserves $ p$ and $q$.  
\end{example}

 Assume that there exists a submanifold $W'\subset W$ transverse to  $\mathcal{BV}_W$ and intersecting every leaf of $\mathcal{BV}_W$  at least once, then $W'$ is again a bi-submersion by Proposition \ref{prop:reduced-bisub}.
 It follows from the previous example that $W$ and $W'$ are equivalent.



\begin{theorem}
\label{th:reducedimension}
Let $W$ be a bi-submersion between two singular foliations  $$(M, \mathcal{F}_M)\stackrel{p}{\leftarrow}W\stackrel{q}{\rightarrow}(N, \mathcal F_N) $$ of global finite ranks. 
There exists a (non-connected) submanifold $W' \subset W$ fulfilling the following three properties:
\begin{enumerate}
    \item Every connected component of $ W'$ has dimension  \begin{equation}\label{eq:dim:min0}  \frac{1}{2} \left( {\mathrm{rk}} (\mathcal{F}_M )  +  {\mathrm{rk}} (\mathcal{F}_N )  + {\mathrm{dim}}(M) +  {\mathrm{dim}}(N)\right).\end{equation}
    \item The manifold $W'$, equipped with the restrictions of $p$ and $q$, is still a bi-submersion  between the two singular foliations $ (M,\mathcal F_M)$ and $ (N,\mathcal F_N)$ .
    \item The bi-submersions $ W'$ and $W$ are equivalent. 
    
    \end{enumerate}
\end{theorem}
Here is an immediate consequence of Theorem \ref{th:reducedimension}.
\begin{corollary}\label{cor:reducedimension}
  Let $(W_i')_{i\in I}$ a countable family of bi-submersions between two singular foliations $ (M,\mathcal F_M), (N,\mathcal F_N)$ of global finite ranks.  Then, for every $i\in I$ there exists a submanifold $W_i\subseteq W'_{i}$ of dimension $\frac{1}{2} \left( {\mathrm{rk}} (\mathcal{F}_M )  +  {\mathrm{rk}} (\mathcal{F}_N )  + {\mathrm{dim}}(M) +  {\mathrm{dim}}(N)\right)$ such that \begin{enumerate}
      \item the disjoint union $W=\coprod_{i\in I}W_i$ is a bi-submersion between the singular foliations $ (M,\mathcal F_M), (N,\mathcal F_N)$,
      \item 
      the bi-submersions $ W'$ and $W$ are related. 
  \end{enumerate} 
\end{corollary}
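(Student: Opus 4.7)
The plan is to derive this corollary as a direct consequence of Theorem \ref{th:reducedimension} applied component-wise, with the only subtlety being to check that disjoint unions behave well with respect to the bi-submersion axioms and the notion of relation.

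First, I would apply Theorem \ref{th:reducedimension} to each bi-submersion $W_i'$ individually. This produces, for every $i \in I$, a (possibly non-connected) submanifold $W_i \subseteq W_i'$ whose connected components all have the prescribed dimension $\tfrac12(\mathrm{rk}(\mathcal F_M)+\mathrm{rk}(\mathcal F_N)+\dim M+\dim N)$, such that the restrictions of $p_i, q_i$ to $W_i$ make it into a bi-submersion between $(M,\mathcal F_M)$ and $(N,\mathcal F_N)$, and such that $W_i$ is equivalent to $W_i'$ via some relation $(V_i,\delta_i,\eta_i)$.

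Next, I would form the disjoint union $W := \coprod_{i\in I} W_i$ (which is a second-countable manifold because $I$ is countable) together with the maps $p := \coprod_i p_i|_{W_i}$ and $q := \coprod_i q_i|_{W_i}$. Since the bi-submersion axioms of Definition \ref{def:bi-submersion} are purely local conditions (that $p,q$ are submersions, that $p^{-1}(\mathcal F_M) = q^{-1}(\mathcal F_N)$, and that this pullback equals $\Gamma(\ker Tp)+\Gamma(\ker Tq)$), they are inherited from each component. Hence $W$ is indeed a bi-submersion, proving item 1.

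Finally, to obtain item 2, I would take the disjoint union $V := \coprod_{i\in I} V_i$ equipped with the obvious maps $\delta := \coprod_i \delta_i \colon V \to W$ and $\eta := \coprod_i \eta_i \colon V \to W' := \coprod_{i\in I} W_i'$. Surjectivity of the submersions and the commutativity of the relation diagram \eqref{weak-equivalence} hold on each component, and therefore on the disjoint union, so $(V,\delta,\eta)$ is a relation between $W$ and $W'$. Since Theorem \ref{th:reducedimension} does all the real work, there is no substantial obstacle here; the only point requiring minimal care is bookkeeping the disjoint-union construction so that the dimension condition is met on each connected component of $W$ rather than globally (which would not even make sense, since the $W_i$ need not be connected).
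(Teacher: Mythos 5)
Your proposal is correct and is exactly the argument the paper intends: the paper presents this corollary as an "immediate consequence" of Theorem \ref{th:reducedimension}, obtained by applying that theorem to each $W_i'$ separately and then taking disjoint unions of the resulting submanifolds and of the relations, using that the bi-submersion axioms and the relation diagram are checked componentwise. Your added remarks on second-countability of the countable disjoint union and on the dimension condition being per connected component are the right bookkeeping points.
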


Here is a very important remark.
\begin{remark}\label{rk:cut-dim} If there is an anchored bundle $E\to TM$ over $\mathcal{F}_{M}$, then by following the proof of Lemma \ref{lemma:codim}, the dimension of $W'$ in Theorem \ref{th:reducedimension} can be taken to be equal to $\mathrm{rk}(E)+ \dim N$.  
\end{remark}

We will prove Theorem \ref{th:reducedimension} making the additional condition that the bi-vertical vector fields for $W$ form a singular foliation that we denote by $\mathcal{BV}_W $. However, we claim that this condition can be avoided, since the notion of transversal to $\mathcal{BV}_W $ makes sense even if $\mathcal{BV}_W $ is not locally finitely generated. Since this additional condition is satisfied every time we will need to use Theorem \ref{th:reducedimension}, we consider that we are allowed to make this additional assumption. We start with a lemma about generic singular foliation.

\begin{lemma}\label{lemma:Existence-{W'}}
Let $k \in \mathbb N$. Let $ \mathcal F_P$ be a singular foliation on a manifold $P$ whose leaves are all of codimension $ \leq k$.  Then there exists a (non-connected) submanifold $P' $
\begin{enumerate}
    \item every connected component of $P'$ has dimension $k$
    \item $P'$ intersects $ \mathcal F_P$ cleanly,
    \item Every leaf of $ P$ intersects $P'$ at least once. 
\end{enumerate}
\end{lemma}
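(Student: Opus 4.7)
The plan is to build $P'$ locally, using the splitting theorem for singular foliations to produce transverse slices of dimension exactly $k$, and then take a countable disjoint union indexed by a countable subcover of $P$.

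First, for each point $p\in P$, the local splitting theorem (cf.\ \cite[\S 1.7.3]{LLL}) provides an open neighborhood $U_p$ of $p$ and an isomorphism of foliated manifolds $U_p\simeq \mathcal B^{d_p}\times \Sigma_p$, where $d_p$ is the dimension of the leaf through $p$, $\Sigma_p$ is a transversal of dimension $c_p=\mathrm{dim}(P)-d_p$ at which the induced singular foliation $\mathcal F_{\Sigma_p}$ vanishes at the origin, and $\mathcal F_P|_{U_p}$ corresponds to the product $\mathfrak X(\mathcal B^{d_p})\times \mathcal F_{\Sigma_p}$. By hypothesis $c_p\leq k$. I pick a linear subspace $S_p\subset\mathbb R^{d_p}$ of dimension $k-c_p$ containing the origin, and define $\Sigma'_p:=(S_p\cap \mathcal B^{d_p})\times \Sigma_p$, an embedded submanifold of $U_p$ of dimension exactly $k$.

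Next I verify that $\Sigma'_p$ intersects $\mathcal F_P$ cleanly. At any point $(s,\sigma)\in \Sigma'_p$, one has $T_{(s,\sigma)}\Sigma'_p=T_sS_p\oplus T_\sigma \Sigma_p$ and $T_{(s,\sigma)}\mathcal F_P=T_s\mathcal B^{d_p}\oplus T_\sigma \mathcal F_{\Sigma_p}$. Since $T_sS_p\subset T_s\mathcal B^{d_p}$ and $T_\sigma\mathcal F_{\Sigma_p}\subset T_\sigma \Sigma_p$, the sum of these two subspaces equals $T_s\mathcal B^{d_p}\oplus T_\sigma \Sigma_p=T_{(s,\sigma)}U_p$, which is the cleanness condition.

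Since $P$ is second countable, the open cover $\{U_p\}_{p\in P}$ admits a countable subcover $\{U_{p_n}\}_{n\in\mathbb N}$. I define $P':=\coprod_{n\in\mathbb N}\Sigma'_{p_n}$, viewed as a (non-connected) submanifold of $P$ via the natural inclusion of each component (distinct components may of course map to overlapping subsets of $P$, but this is allowed by the convention of the statement). Every connected component has dimension $k$ by construction, and clean intersection holds component-wise by the previous paragraph. For the third condition, let $L$ be a leaf of $\mathcal F_P$ and pick $q\in L$; choose $n$ with $q\in U_{p_n}$. In the local splitting, $L\cap U_{p_n}=\mathcal B^{d_{p_n}}\times L'$ where $L'\subset\Sigma_{p_n}$ is the leaf of $\mathcal F_{\Sigma_{p_n}}$ through the transverse coordinate of $q$; hence $L\cap\Sigma'_{p_n}=(S_{p_n}\cap\mathcal B^{d_{p_n}})\times L'\neq\emptyset$.

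The argument is essentially routine; the only subtle point is the padding by $S_p$ to inflate the local slice from its natural dimension $c_p$ up to the uniform value $k$. I place $S_p$ entirely inside the leaf factor $\mathcal B^{d_p}$ precisely so that this added direction is absorbed by $T\mathcal F_P$ and does not spoil the clean-intersection condition, while still ensuring that every component of $P'$ has dimension exactly $k$.
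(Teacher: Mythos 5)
Your proof follows essentially the same route as the paper's: apply the local splitting theorem, enlarge the natural transversal by a slice of the leaf factor so that every local piece has dimension exactly $k$, check cleanness from the product structure of the foliation, and take a countable union of such pieces. The verification of conditions 1--3 is correct.

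The one point where you diverge is the treatment of the disjoint union. You allow distinct components of $P'$ to have overlapping images in $P$ and assert this is ``allowed by the convention of the statement,'' but the lemma is invoked later (in the proof of Theorem \ref{th:reducedimension}) to produce an honest submanifold $W'\subset W$ on which $p$ and $q$ restrict; an immersed union with overlapping components need not be a submanifold of $P$. The paper handles this by choosing the countably many local slices so that their closures are pairwise disjoint, while still having each leaf meet at least one of them. You should add this shrinking step (second countability and the fact that a slice hits every leaf meeting its chart make it possible to discard or shrink redundant pieces) rather than relying on the convention you invoke.
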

\begin{proof}
This is a direct consequence of the local splitting theorem of a singular foliation stated under various forms (cf. \cite{zbMATH03423310,AS} or \cite[\S 7.3]{LLL1}).  One of its forms says that every point $p\in P$ belongs to a local chart $({U}_p, x_1,\ldots,x_{l},y_{l+1},\ldots, y_{\dim P} )$ of $P$ centered at $p$ with $\dim T_p\mathcal{F}_P=l$, on which we have  $\mathcal{F}_P|_{U_p}=\mathrm{span}\left(\left(\frac{\partial}{\partial x_i}\right)_{i=1}^{l}, Z_{l+1},\ldots, Z_{\mathrm{rk}_p(\mathcal{F})-l}\right)$ with $Z_i\in \mathfrak X(x_1=\cdots=x_{l}=0)$ and $Z_i(y_{\ell+1}=0, \ldots, y_d=0)=0$ for all $i=\ell+1,\ldots, \mathrm{rk}_p(\mathcal{F})-l$. 
Consider the $k$-dimensional manifold $P'_{\mathcal U_p} \subset P$ given for every $p\in P'_{U_p}$ by
$$P_{\mathcal U_p}'=\{x_{k+l-d+1}=\cdots = x_{l}=0\}$$ where ${\mathcal U_p} $ is a coordinates chart of $P$ centered at $p$ as above. By construction, $P'_{\mathcal U_p} $ satisfies the required axioms 1, 2 of Lemma \ref{lemma:Existence-{W'}}. 
It does not satisfy axiom 3, but intersects all the leaves of $ \mathcal F_P$ that intersect $\mathcal U_p $.
Therefore, one has to choose a countable number of points $ (p_i)_{i\in I}$ such that each leaf of $ \mathcal F_P$  intersects $ \mathcal U_{p_i}$ and such that the closure are the submanifolds $ P_{\mathcal U_{p_i}}'$ and   $ P_{\mathcal U_{p_j}}'$ do not intersect each other for $ i \neq j$.
\end{proof}

\begin{proof}[Proof (of Theorem \ref{th:reducedimension})]
By Lemma \ref{lemma:codim}, applied to the bi-vertical singular foliation $\mathcal{BV}_W $ on $W$, there exists a submanifold $ W' \subset W$,  whose dimension is the integer in \eqref{eq:dim:min},  that satisfies the three conditions in Lemma \ref{lemma:Existence-{W'}}. 
In particular, Condition 2 in Lemma \ref{lemma:Existence-{W'}} implies that $ W'$ is still a bi-submersion. Since two points in the same leaf of $\mathcal{BV}_W $ are equivalent, Condition 3 in Lemma \ref{lemma:Existence-{W'}} implies that $W'$ and $W$ are equivalent bi-submersions.
\end{proof}

Theorem \ref{th:whereoneusesAnalyse} admits the following corollary.

\begin{corollary}
Let $W_1$ and $W_2 $ be bi-submersions between singular foliations $ (M,\mathcal F_M), (N,\mathcal F_N)$ be singular foliations. 
Let $W$ be an equivalence between $W_1$ and $W_2$, i.e., a bi-submersion  $W_1\stackrel{\delta}{\leftarrow}W\stackrel{\eta}{\rightarrow} W_2 $ between the bi-vertical singular foliations on $W_1$ and on $W_2$. 
Then for every $w \in W$, $ \delta(w)$ and $ \eta(w)$ admit neighborhoods $\mathcal W_1 $ and $\mathcal W_2$ on which for any two $w_1 \in \mathcal W_1$ and $w_2 \in \mathcal W_2$ the following are equivalent:
\begin{enumerate}
    \item[(i)]  $w_1$ and $ w_2$ are equivalent,
    \item[(ii)] there exists $w' \in W $ such that  $\delta(w')=w_1$ and $\eta(w')=w_2$.
\end{enumerate}
Moreover, $w$ is mapped to $ (p(w),0_{O},q(w) )$
where $0_O$ is the zero element of the fiber of $O$.
 
\end{corollary}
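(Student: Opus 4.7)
Here is my plan for proving the corollary.

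\textbf{Overall strategy.} The statement has two directions, and they are of very different difficulty. The implication (ii)$\Rightarrow$(i) is essentially tautological: by hypothesis the span $W_1 \stackrel{\delta}{\leftarrow} W \stackrel{\eta}{\rightarrow} W_2$ is a relation between the bi-submersions $W_1$ and $W_2$ (it comes with the commuting diagram expressing compatibility with $p_1,q_1,p_2,q_2$), so any $w'\in W$ with $\delta(w')=w_1$ and $\eta(w')=w_2$ witnesses equivalence of $w_1$ and $w_2$ directly from Definition~\ref{def:related}. The content of the corollary is therefore the converse (i)$\Rightarrow$(ii), together with the identification of the local model at $w$.

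\textbf{The key direction (i)$\Rightarrow$(ii).} First I would shrink $\mathcal W_1$ around $\delta(w)$ and $\mathcal W_2$ around $\eta(w)$ so that Theorem~\ref{th:whereoneusesAnalyse} applies inside each of them (for the bi-submersions $W_1$ and $W_2$, whose bi-vertical vector fields are singular foliations by assumption). Given equivalent points $w_1 \in \mathcal W_1$, $w_2 \in \mathcal W_2$, use that $\delta\colon W\to W_1$ is a submersion to choose a local section $\sigma$ sending $\delta(w)$ to $w$, and set $\tilde w := \sigma(w_1)$; by construction $\delta(\tilde w)=w_1$. Denote $\tilde w_2 := \eta(\tilde w)\in W_2$. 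Since $\tilde w\in W$ relates $w_1$ to $\tilde w_2$, the points $w_1$ and $\tilde w_2$ are equivalent bi-submersion points; by transitivity (applied to $w_1 \sim w_2$ and $w_1 \sim \tilde w_2$) we get $\tilde w_2 \sim w_2$ in $W_2$. After further shrinking to ensure $\tilde w_2\in \mathcal W_2$, Theorem~\ref{th:whereoneusesAnalyse} gives that $\tilde w_2$ and $w_2$ lie in the same leaf of $\mathcal{BV}_{W_2}$, so a finite composition of flows of vector fields $Z_1,\dots,Z_N\in \mathcal{BV}_{W_2}$ sends $\tilde w_2$ to $w_2$.

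\textbf{Lifting to $W$ without changing $\delta$.} Because $W$ is a bi-submersion between $(W_1,\mathcal{BV}_{W_1})$ and $(W_2,\mathcal{BV}_{W_2})$, Proposition~\ref{prop:lifting-property} (lifting a pair of vector fields in the two bi-vertical foliations) applied to the pairs $(0,Z_i)$ produces vector fields $\widetilde Z_i \in \Gamma(\ker T\delta)\cap \eta^{-1}(\mathcal{BV}_{W_2})$ on $W$ that are $\delta$-related to $0$ and $\eta$-related to $Z_i$. Flowing $\tilde w$ by $\widetilde Z_1,\dots,\widetilde Z_N$ (again shrinking the domain so that the flows are defined) produces the desired $w'\in W$: the $\delta$-image stays equal to $w_1$, while the $\eta$-image goes from $\tilde w_2$ to $w_2$. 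The main obstacle in the argument is purely a matter of bookkeeping of the shrinkings: one must arrange successively that $\mathcal W_1,\mathcal W_2$ are inside the Theorem~\ref{th:whereoneusesAnalyse} neighborhoods, that $\sigma$ is defined on $\mathcal W_1$, that $\tilde w_2$ lands in $\mathcal W_2$, and that the lifted flows remain defined — each step only requires taking neighborhoods smaller, so this causes no real difficulty.

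\textbf{The ``moreover'' clause.} The identification of $w$ with $(p(w),0_O,q(w))$ in the local model follows by applying Proposition~\ref{prop:localstructure} to the bi-submersion $W_1 \stackrel{\delta}{\leftarrow} W \stackrel{\eta}{\rightarrow} W_2$ (between the bi-vertical foliations), with a good bi-transversal chosen at $w$: then $\delta(w)$ and $\eta(w)$ correspond to the same point $O$ of the transversal, and in the normal form $\mathcal B^{d_1}\times\mathcal A\times\mathcal B^{d_2}$ the point $w$ sits at $(p(w),0_O,q(w))$, where $0_O$ is the zero of the fiber at $O$ of the anchored bundle used to build the path-holonomy model $\mathcal A$. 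This also gives a cleaner geometric picture of the above construction, since in this normal form the points $w'$ produced above are visibly of the form $(w_1\text{-coordinates},\,a,\,w_2\text{-coordinates})$ with $a\in\mathcal A$ running over the preimages in $\mathcal A$ of a fixed pair of points of the transversal.
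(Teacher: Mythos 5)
Your proof is correct and follows essentially the same route as the paper's: a local section of one leg of $W$ transports the comparison to a single bi-submersion, Theorem~\ref{th:whereoneusesAnalyse} converts equivalence of points into membership in the same bi-vertical leaf, and lifted bi-vertical flows on $W$ produce the witness $w'$ (you correct on the $W_2$ side where the paper corrects on the $W_1$ side, which is immaterial). If anything, your explicit appeal to Proposition~\ref{prop:lifting-property} for the final lifting step, and your reading of the ``moreover'' clause via Proposition~\ref{prop:localstructure}, supply details that the paper's own proof leaves implicit.
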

\begin{proof}
The inclusion $(ii) \implies (i)$ is obvious. Let us prove the converse.  Consider a neighborhood $ \mathcal W$ of $w$ as in Theorem \ref{th:whereoneusesAnalyse}. Let  $ w_1 \in \delta(\mathcal W)$ and $w_2 \in  \eta(\mathcal W)$ be two points which are equivalent. Without loss of generality, one can assume that there exists a submanifold $ \Sigma$ of $W$ through  $w$ such that  $ \eta : \Sigma \to \delta(\mathcal W)$ is a diffeomorphism and $ \eta(\Sigma) = \eta(\mathcal W_2)$.
For any $w_1 ,w_2$ in that open subset, the following items are equivalent. 
\begin{enumerate}
    \item[(i)]  $w_1$ and $ w_2$ are equivalent,
    \item[(ii)]  $w_1$ and $\delta \circ \eta_\Sigma^{-1}(w_2) $ are equivalent.
\end{enumerate}
This means that $w_1$ and $\delta \circ \eta_\Sigma^{-1}(w_2)$ belong to the same leaf of the bi-vertical singular foliation of $\mathcal{BV}_{W_1}$.
There exists $w' \in \mathcal W $ such that  $\delta(w')=w_1$ and $\eta(w')=w_2$.
\end{proof}

Let us conclude this discussion with an important result.

\begin{corollary}\label{cor:bi-sub-equiv2}
Let $(M,\mathcal{F}_{M}) \stackrel{p}{\leftarrow} W \stackrel{q}{\rightarrow} (N,\mathcal{F}_{N})$ and $(M,\mathcal{F}_{M}) \stackrel{p'}{\leftarrow} W' \stackrel{q'}{\rightarrow} (N,\mathcal{F}_{N})$ be bi-submersions between singular foliations 
of finite ranks which are coherent sheaves. Then, the following conditions are equivalent:
 \begin{enumerate}
     \item[(i)] $W$ and $W'$ are equivalent.
     \item[(ii)] There exists a total relation between $ W $ and $W' $ given by a manifold of dimension 
      $$ \frac{1}{2} \left( {\mathrm{rk}} (\mathcal{BV}_W )  +  {\mathrm{rk}} (\mathcal{BV}_{W'} )  + {\mathrm{dim}}(W) +  {\mathrm{dim}}(W')\right)$$
\end{enumerate}
\end{corollary}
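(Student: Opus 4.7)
The implication (ii) $\Rightarrow$ (i) is immediate: any total relation is in particular a relation witnessing equivalence. The substance is (i) $\Rightarrow$ (ii), which I would obtain by combining Theorem \ref{thm:bi-sub-equiv} with Theorem \ref{th:reducedimension}, applied to the second bi-submersion produced by the first.

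The plan is as follows. First, since $\mathcal{F}_M$ and $\mathcal{F}_N$ are coherent sheaves, Lemma \ref{lem:isasingularfoliation} guarantees that the bi-vertical vector fields $\mathcal{BV}_W$ and $\mathcal{BV}_{W'}$ form singular foliations; together with the finite rank assumption on $\mathcal{F}_M, \mathcal{F}_N$, Proposition \ref{prop:biss_anchored} and Lemma \ref{lemma:codim} yield that $\mathcal{BV}_W$ and $\mathcal{BV}_{W'}$ are of global finite rank as well. Theorem \ref{thm:bi-sub-equiv} then provides a total relation $(Z,\delta,\eta)$ between $W$ and $W'$; in particular, $(W,\mathcal{BV}_W) \stackrel{\delta}{\leftarrow} Z \stackrel{\eta}{\rightarrow} (W',\mathcal{BV}_{W'})$ is a bi-submersion between two singular foliations of global finite rank.

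Next, I would apply Theorem \ref{th:reducedimension} to this last bi-submersion, producing a (possibly non-connected) submanifold $Z' \subset Z$ whose connected components all have dimension exactly $\tfrac{1}{2}\bigl(\mathrm{rk}(\mathcal{BV}_W) + \mathrm{rk}(\mathcal{BV}_{W'}) + \dim W + \dim W'\bigr)$ by formula \eqref{eq:dim:min}, still forming a bi-submersion between $(W,\mathcal{BV}_W)$ and $(W',\mathcal{BV}_{W'})$ equivalent to $Z$. The composed maps $p \circ \delta|_{Z'} = p' \circ \eta|_{Z'}$ and $q \circ \delta|_{Z'} = q' \circ \eta|_{Z'}$ remain well defined (because $\delta|_{Z'}$ and $\eta|_{Z'}$ are submersions by Proposition \ref{prop:reduced-bisub}), so the triple $(Z', \delta|_{Z'}, \eta|_{Z'})$ is in particular a relation between $W$ and $W'$ in the original sense of Definition \ref{def:related}.

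The step I expect to be the main obstacle is to verify that $Z'$ is still \emph{total}, i.e.\ that every pair $(w,w')$ of equivalent points is related through $Z'$ and not merely through $Z$. Here is how I would argue: for any $z \in Z$ with $\delta(z)=w$ and $\eta(z)=w'$, the whole $\mathcal{BV}_Z$-leaf through $z$ maps to the single pair $(w,w')$, because bi-vertical vector fields on $Z$ lie in $\Gamma(\ker T\delta)\cap\Gamma(\ker T\eta)$ and their (local) flows therefore preserve $\delta$ and $\eta$. By condition 3 of Theorem \ref{th:reducedimension} (i.e.\ condition 3 of Lemma \ref{lemma:Existence-{W'}} applied to $\mathcal{BV}_Z$), this leaf intersects $Z'$, and any intersection point provides a witness in $Z'$ with the desired images. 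Combined with the totality of $(Z,\delta,\eta)$ provided by Theorem \ref{thm:bi-sub-equiv}, this shows that $Z'$ is a total relation of the required dimension, completing the proof.
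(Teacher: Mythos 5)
Your proposal is correct and follows exactly the route the paper intends: the corollary is stated without proof as an immediate consequence of Theorem \ref{thm:bi-sub-equiv} (existence of a total relation $Z$) followed by Theorem \ref{th:reducedimension} applied to the bi-submersion $(W,\mathcal{BV}_W)\leftarrow Z\rightarrow (W',\mathcal{BV}_{W'})$, whose formula \eqref{eq:dim:min} gives precisely the stated dimension. Your explicit verification that totality survives the dimension reduction --- each $\mathcal{BV}_Z$-leaf sits inside a single fiber of $(\delta,\eta)$ and meets $Z'$ by condition 3 of Lemma \ref{lemma:Existence-{W'}} --- is the one point the paper leaves implicit, and you handle it correctly.
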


Theorem \ref{th:whereoneusesAnalyse} can be even made more sophisticated. 

\begin{proposition}
\label{prop:rightactionisiso}
Consider an anchored bundle $ (A,\rho)$ over a singular  foliation $\mathcal F_M$.
    Let $(M,\mathcal{F}_{M}) \stackrel{p}{\leftarrow} W \stackrel{q}{\rightarrow} (N,\mathcal{F}_{N})$ be a bi-submersion. The manifold $W'$ in Theorem \ref{th:whereoneusesAnalyse}
can be chosen such that there exists a right action
     $p^* A \simeq \ker(Tq)$ which is an isomorphism of vector bundles.
\end{proposition}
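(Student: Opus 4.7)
The plan is to combine the dimension-reduction statement of Theorem \ref{th:reducedimension} (in its refined form) with the isomorphism criterion of Proposition \ref{prop:biss_anchored}; the right action is then nothing else than the vector bundle morphism $\nu^{\mathcal{F}_M}$ constructed in Lemma \ref{lem:biss_anchored}, interpreted in the situation where its source and target happen to have the same rank. So the proof is in two steps: a geometric reduction of $W$, followed by an algebraic identification on the reduced bi-submersion.

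First, I would invoke Theorem \ref{th:reducedimension}, but in the sharpened form provided by Remark \ref{rk:cut-dim}. That remark states that when an anchored bundle $(A,\rho)$ over $\mathcal{F}_M$ is at hand, the codimension bound \eqref{eq:dim:min} can be replaced by $\mathrm{rk}(A) + \dim N$, because this integer bounds the codimensions of the leaves of the bi-vertical singular foliation $\mathcal{BV}_W$. Applying Lemma \ref{lemma:Existence-{W'}} with $k = \mathrm{rk}(A) + \dim N$ then yields a (non-connected) submanifold $W' \subset W$ each of whose components has exactly that dimension, which intersects $\mathcal{BV}_W$ cleanly, and which meets every bi-vertical leaf at least once. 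By Proposition \ref{prop:reduced-bisub}, the restrictions of $p$ and $q$ to $W'$ make $W'$ into a bi-submersion between $(M,\mathcal{F}_M)$ and $(N,\mathcal{F}_N)$, and, by the last part of the proof of Theorem \ref{th:reducedimension}, it is equivalent to $W$.

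Second, I would apply Proposition \ref{prop:biss_anchored} to this reduced bi-submersion $(M,\mathcal{F}_M) \stackrel{p|_{W'}}{\leftarrow} W' \stackrel{q|_{W'}}{\rightarrow} (N,\mathcal{F}_N)$ and to the anchored bundle $(A,\rho)$. The dimension equality $\dim W' = \mathrm{rk}(A) + \dim N$ is precisely the hypothesis of that proposition, so the vector bundle morphism $\nu^{\mathcal{F}_M} : (p|_{W'})^* A \longrightarrow \ker T(q|_{W'})$ provided by Lemma \ref{lem:biss_anchored} is an isomorphism of vector bundles over $W'$. Unpacking the definitions, $\nu^{\mathcal{F}_M}$ sends a section $p^* e$ to a $q$-vertical vector field which is $p$-projectable onto $\rho(e)$; this is the standard description of the right action of $A$ on $W'$ by infinitesimal translations, so we obtain the desired identification $p^* A \simeq \ker Tq$.

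I expect no serious obstacle. The only point deserving care is that the bi-vertical submodule be a genuine singular foliation, so that one may speak of its leaves and apply Lemma \ref{lemma:Existence-{W'}}; but the existence of the anchored bundle $(A,\rho)$ together with the coherence conditions of Lemma \ref{lem:isasingularfoliation} already ensures this in every situation where the proposition will be invoked, and even without it the transversality argument in Theorem \ref{th:reducedimension} can be carried out directly on the module $\mathcal{BV}_W$ via the construction of $\nu^{\mathcal{F}_M}$.
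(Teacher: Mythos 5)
Your proposal is correct and follows exactly the route the paper takes: its proof of this proposition is the single sentence ``This is a direct consequence of Theorem \ref{th:reducedimension} and Proposition \ref{prop:biss_anchored},'' and your two steps (reduce $W$ to $W'$ of dimension $\mathrm{rk}(A)+\dim N$ via Remark \ref{rk:cut-dim}, then apply Proposition \ref{prop:biss_anchored} to conclude that $\nu^{\mathcal{F}_M}$ is an isomorphism) are precisely the intended unpacking of that sentence. You also correctly read the reference to ``the manifold $W'$ in Theorem \ref{th:whereoneusesAnalyse}'' as pointing to Theorem \ref{th:reducedimension}.
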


\begin{proof}
    This is a direct consequence of Theorem \ref{th:reducedimension} and Lemma \ref{lem:biss_anchored}; \ref{lem:anchoredbundleforbivertical}.
\end{proof}

\subsection{The Holonomy groupoid $\mathcal{H}(M, \mathcal{F})\rightrightarrows M$ of a singular foliation}

We recall the construction of the holonomy groupoid of a singular foliation introduced in \cite{AS}. In the process, we will review the notion of an atlas and prove several new results about those.

\vspace{0.5cm}

Let $(M,\mathcal F)$ be a singular foliation.

\begin{definition} \cite{AS}
A bi-submersion 
$M \stackrel{p}{\leftarrow} \mathcal{A} \stackrel{q}{\rightarrow} M$ over $\mathcal F $ is an \emph{atlas of $\mathcal F $} when 
\begin{enumerate}
    \item the composition $\mathcal{A}* \mathcal{A}$ is equivalent to $\mathcal{A}$,
    \item $\mathcal{A}$ is equivalent to its inverse $ \mathcal{A}^{-1}$
\end{enumerate}
 Two atlases are said to be \emph{equivalent} when they are equivalent as bi-submersions over  $(M,\mathcal F) $.
\end{definition}

\begin{lemma}(Existence of local units of an atlas)\label{lem:units}
   An atlas $M \stackrel{p}{\leftarrow} \mathcal{A} \stackrel{q}{\rightarrow} M$ admits local units, i.e., every point\footnote{We speak of \emph{global unit} when $\mathcal U=M $, and \emph{local unit} otherwise.}   $x \in \mathcal{A}$ there is  a neighborhood  $\mathcal{U}$ of $m=q(x)$ and a smooth
   map $\epsilon \colon \mathcal U \longrightarrow \mathcal{A} $ which is a section of both $p$ and $q$.
\end{lemma}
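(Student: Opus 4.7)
The plan is to derive the local unit from the two atlas axioms by exploiting the composition $\mathcal A^{-1}*\mathcal A$. First I will establish the equivalence $\mathcal A^{-1}*\mathcal A \sim \mathcal A$ of bi-submersions over $\mathcal F$. Axiom (2) gives $\mathcal A^{-1} \sim \mathcal A$, and by taking the fiber product of the connecting relation with $\mathcal A$ over $M$ one gets $\mathcal A^{-1}*\mathcal A \sim \mathcal A*\mathcal A$ (this compatibility of $*$ with equivalence is routine: if $(V,\delta,\eta)$ is a relation between $W_1$ and $W_2$, then $V \times_M W_3$ with the obvious projections is a relation between $W_1*W_3$ and $W_2*W_3$). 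Axiom (1) then yields $\mathcal A*\mathcal A \sim \mathcal A$, so altogether $\mathcal A^{-1}*\mathcal A \sim \mathcal A$.

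Next, given $x\in\mathcal A$ with $m:=q(x)$, I observe that $(x,x)$ lies in $\mathcal A^{-1}*\mathcal A = \mathcal A\times_{p,M,p}\mathcal A$ since $p(x)=p(x)$; moreover, using the convention that $\mathcal A^{-1}$ has source $q$ and target $p$, the source and target maps of $\mathcal A^{-1}*\mathcal A$ send $(a,b)$ to $q(a)$ and $q(b)$ respectively, so $(x,x)$ has source and target both equal to $m$. Let $(V,\delta,\eta)$ be any relation realizing $\mathcal A^{-1}*\mathcal A \sim \mathcal A$. Since $\delta$ is a surjective submersion, pick $v\in V$ with $\delta(v)=(x,x)$ and let $y:=\eta(v)\in\mathcal A$; commutativity of the equivalence diagram forces $p(y)=q(y)=m$.

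By Proposition \ref{Local-equivalence}, since $(x,x)$ and $y$ are equivalent, there exists a neighborhood $\mathcal W$ of $(x,x)$ in $\mathcal A^{-1}*\mathcal A$ and a morphism of bi-submersions $\varphi\colon\mathcal W\to\mathcal A$ with $\varphi(x,x)=y$, so in particular $p\circ\varphi$ and $q\circ\varphi$ agree with the source and target maps of $\mathcal A^{-1}*\mathcal A$. I then choose a local section $s\colon\mathcal U\to\mathcal A$ of the submersion $q\colon\mathcal A\to M$ with $s(m)=x$, shrinking $\mathcal U$ to a neighborhood of $m$ small enough that $(s(m'),s(m'))\in\mathcal W$ for every $m'\in\mathcal U$, and define
$$\epsilon(m') := \varphi\bigl(s(m'),s(m')\bigr).$$
Then $p(\epsilon(m'))=q(s(m'))=m'$ and $q(\epsilon(m'))=q(s(m'))=m'$, so $\epsilon$ is a common section of $p$ and $q$ on $\mathcal U$.

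The main (mild) obstacle is the compatibility of $*$ with equivalence in the first step, which requires an explicit construction of the fiber product relation; once this is in hand, the rest is transparent bookkeeping with the source/target maps of $\mathcal A^{-1}*\mathcal A$ and the observation that the diagonal map $a\mapsto(a,a)$ lands in the locus where source equals target, so that composing with $\varphi$ produces the desired section.
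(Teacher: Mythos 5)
Your proof is correct, but it takes a genuinely different route from the paper's. The paper argues through bisections and their carried diffeomorphisms: it takes a bisection $\Sigma$ through $x$, uses a local morphism $\Phi\colon\mathcal A\to\mathcal A^{-1}$ (from axiom 2 and Proposition \ref{Local-equivalence}) to get the bisection $\Phi(\Sigma)$ carrying $\underline{\Sigma}^{-1}$, notes that the graph of $\Phi|_\Sigma$ is a bisection of $\mathcal A*\mathcal A$ carrying the identity, and pushes it into $\mathcal A$ by a local morphism $\mu\colon\mathcal A*\mathcal A\to\mathcal A$ (from axiom 1); a bisection of $\mathcal A$ carrying the identity is precisely a common local section of $p$ and $q$. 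You instead exploit that the two structure maps of $\mathcal A^{-1}*\mathcal A=\mathcal A\times_{p,M,p}\mathcal A$ coincide on the diagonal (both equal $q\circ\mathrm{pr}$), so that composing a local section of $q$, the diagonal embedding, and a local morphism $\mathcal A^{-1}*\mathcal A\to\mathcal A$ supplied by Proposition \ref{Local-equivalence} immediately produces a section of both $p$ and $q$ near $m=q(x)$. Both arguments consume the same inputs — the two atlas axioms and Proposition \ref{Local-equivalence} — but yours bypasses the bisection calculus (existence of bisections through every point, Lemma \ref{lemma:bisection}, composition of carried diffeomorphisms) at the cost of the routine verifications that $*$ is compatible with equivalence and that equivalence is transitive; these are facts the paper itself uses tacitly (cf.\ Remark \ref{rmk:3hornspaces}), and your sketch of the fiber-product relation is adequate. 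The one point requiring care — shrinking $\mathcal U$ so that $(s(m'),s(m'))$ stays in the domain of $\varphi$ — is handled correctly.
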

\begin{proof}
We need to show $W$ admits bisections whose basic diffeomorphism is the identity map. Let $m\in M$ and $x\in \mathcal{A}$ such that $q(x)=m$. Let $\Sigma$ be a bisection through $x$. Let $\Phi\colon \mathcal{V}\subseteq \mathcal{A}\rightarrow \mathcal{A}^{-1}$ be a morphism of bi-submersions   on an open neighborhood $\mathcal{V}$ of $x$. We can assume that $\Sigma\subset \mathcal{V}$. By Lemma \ref{lemma:bisection}, the image $\Sigma':=\Phi(\Sigma)$ is a bisection of $\mathcal{A}$ through $\Phi(x)$ and $\underline{\Sigma}'=\underline{\Sigma}^{-1}$. The graph $\Sigma'':=\mathrm{graph}(\Phi|_\Sigma)\subset \Sigma\times \Sigma'$ is a bisection of $\mathcal{A}*\mathcal{A}$ through $(x, \Phi(x))$ whose basic diffeomorphism is $\underline{\Sigma}''=\underline{\Sigma}\circ\underline{\Sigma}'$, that coincides with the identity map on a neighborhood of $m$. For any $\mu\colon \mathcal{A}*\mathcal{A}\rightarrow \mathcal{A}$ morphism of bi-submersions  in a neighborhood of $\Sigma''$ that sends $(x, \Phi(x))$ to $x$  the image $\mu(\Sigma'')$ is a bisection  of $\mathcal{A}$ through $x$ that carries the identity map in a neighborhood of $m$.
\end{proof}

\begin{remark}
\label{rmk:3hornspaces} If $\mathcal{A}$ is a bi-submersion atlas over a singular foliation $\mathcal F$, then all the bi-submersion products $\mathcal{A}*\mathcal{A},\;\mathcal{A}*\mathcal{A}^{-1} $ and $\mathcal{A}^{-1}*\mathcal{A}$ are equivalent to $\mathcal A$.
\end{remark}

Here is a result from \cite{AS}.

\begin{proposition}{(Groupoid associated with an atlas)}
    Let $ M \stackrel{p}{\leftarrow} \mathcal{A} \stackrel{q}{\rightarrow} M $ of $\mathcal F $ be an atlas over $\mathcal F $. There is a topological groupoid $\mathcal{A}/_\sim\rightrightarrows M$ \emph{associated to $\mathcal{A}$} defined as follows: 

    \begin{enumerate}
        \item $\mathcal{A}/_\sim$ is the quotient of the equivalence relation  on $\mathcal{A}$ given by $ x \sim x'$ if and only if $x$ and $x'$ are equivalent,
\item the source $s$ and target $t$ are defined respectively by $s([x])=q(x)$ and $t([x])=r(x)$ for any representative $x\in \mathcal{A}$,
        \item for every $x,y\in \mathcal{A}$ such that $s([x])=t([y])$, their product is  $[x]\cdot [y]:= [\mu(x,y)] $ for any $ \mu(x,y)\in \mathcal A$ equivalent to $ (x,y)\in \mathcal A \times \mathcal A$. Moreover,  one can choose a local map $\mu\colon \mathcal{A}*\mathcal{A}\mapsto \mathcal{A}$ smooth in the neighborhood of $(x,y)$,
        \item The unit map is induced by the local units of Lemma \ref{lem:units}. The inverse of $ [x]$ is any element $ [\iota (x)] \in \mathcal A^{-1}$ equivalent to $x$. Again, $\iota\colon\mathcal{A}\rightarrow \mathcal{A}^{-1}$ can be chosen smooth in the neighborhood of any point.    \end{enumerate}
\end{proposition}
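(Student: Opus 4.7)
The plan is to verify in turn: that $\sim$ is an equivalence relation, that the candidate source, target, multiplication, unit, and inverse descend unambiguously to $\mathcal{A}/\sim$, that the groupoid axioms hold, and finally that the resulting structure is topological for the quotient topology on $\mathcal{A}/\sim$.

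First I would establish that $\sim$ is an equivalence relation. Reflexivity follows from the trivial relation $(\mathcal{A}, \mathrm{id}, \mathrm{id})$, and symmetry from swapping the two legs of any relation $(V, \delta, \eta)$. Transitivity is the substantive point: given $x \sim x'$ and $x' \sim x''$, Proposition \ref{Local-equivalence} yields local bi-submersion morphisms $\varphi \colon \mathcal{U} \to \mathcal{A}$ and $\varphi' \colon \mathcal{U}' \to \mathcal{A}$ with $\varphi(x)=x'$ and $\varphi'(x')=x''$; their composition $\varphi' \circ \varphi$ is defined near $x$ and is again a bi-submersion morphism, so by the same proposition $x \sim x''$. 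Well-definedness of $s,t$ on $\mathcal{A}/\sim$ is immediate, since the commutativity of \eqref{weak-equivalence} forces $p(\delta(v)) = p(\eta(v))$ and similarly for $q$.

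The key step is well-definedness of multiplication. For $[x],[y]$ with $q(x)=p(y)$, the pair $(x,y)$ lies in the composition $\mathcal{A}*\mathcal{A}$, which by the atlas axiom is equivalent to $\mathcal{A}$. By Proposition \ref{Local-equivalence} (ii), there is a neighborhood of $(x,y)$ in $\mathcal{A}*\mathcal{A}$ and a smooth local bi-submersion morphism $\mu$ into $\mathcal{A}$, and we set $[x]\cdot[y]:=[\mu(x,y)]$. Independence of the choice of $\mu$ follows because any two such morphisms $\mu, \mu'$ both realise an equivalence between $(x,y)\in \mathcal{A}*\mathcal{A}$ and $\mathcal{A}$, so $\mu(x,y) \sim \mu'(x,y)$ by Proposition \ref{Local-equivalence} again. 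Independence of representatives $(x,y) \mapsto (\tilde x,\tilde y)$ reduces, via local morphisms $\mathcal{A}\to \mathcal{A}$ and Lemma \ref{lemma:bisection}, to checking that equivalent pairs in $\mathcal{A}*\mathcal{A}$ have equivalent images in $\mathcal{A}$, which is again Proposition \ref{Local-equivalence}. Units come directly from Lemma \ref{lem:units}: a local section $\epsilon$ of both $p$ and $q$ through $m\in M$ produces the class $[\epsilon(m)]$, independent of $\epsilon$ because any two such sections are locally morphism-related to each other. Inverses are handled symmetrically using the atlas axiom $\mathcal{A}\simeq \mathcal{A}^{-1}$ and a local bi-submersion morphism $\iota \colon \mathcal{A}\to \mathcal{A}^{-1}$; associativity, left/right unit laws and the inverse laws all reduce, after passing to local smooth representatives $\mu,\iota,\epsilon$, to the statement that the relevant compositions of bi-submersion morphisms induce the same germ of transverse diffeomorphism, and hence give equivalent images by item (iv) of Proposition \ref{Local-equivalence}.

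For the topological structure, I equip $\mathcal{A}/\sim$ with the quotient topology. Continuity of $s,t$ is automatic from continuity of $p,q$ and the universal property of the quotient. For multiplication and inversion, I use that locally $\cdot$ and $(\,\cdot\,)^{-1}$ are represented by the smooth maps $\mu$ and $\iota$; continuity then follows once one knows the quotient map $\mathcal{A}\to \mathcal{A}/\sim$ is open, which in turn follows from Theorem \ref{th:whereoneusesAnalyse}: locally, two points are equivalent iff they lie in the same leaf of the bi-vertical singular foliation, so the saturation of an open set is a union of flows of bi-vertical vector fields and hence open. The main obstacle I anticipate is the bookkeeping around well-definedness of multiplication, precisely because one must combine three different incarnations of equivalence — the existence of a smooth local morphism (Proposition \ref{Local-equivalence}(ii)), the coincidence of transverse germs (Proposition \ref{Local-equivalence}(iv)), and the atlas identity $\mathcal{A}*\mathcal{A}\simeq \mathcal{A}$ — in a coherent way; once this is set up cleanly, the rest of the axioms follow by the same template.
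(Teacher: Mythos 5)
The paper does not actually prove this proposition; it is quoted from \cite{AS} without proof, so your proposal can only be measured against the standard argument of Androulidakis--Skandalis. Your construction follows that standard route: transitivity of $\sim$ via composition of local morphisms from Proposition \ref{Local-equivalence}, well-definedness of the product via the atlas axiom $\mathcal{A}*\mathcal{A}\simeq\mathcal{A}$ together with the equivalence of items (i), (ii), (iv) of Proposition \ref{Local-equivalence}, and units and inverses from Lemma \ref{lem:units} and $\mathcal{A}\simeq\mathcal{A}^{-1}$. All of that is correct and is essentially the argument of \cite{AS}.

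The one step that does not hold up as written is the openness of the quotient map $\mathcal{A}\to\mathcal{A}/_\sim$, which you derive from Theorem \ref{th:whereoneusesAnalyse}. First, that theorem assumes $\mathcal{BV}_{\mathcal A}$ is a singular foliation, which by Lemma \ref{lem:isasingularfoliation} amounts to $\mathcal F$ being a coherent sheaf --- a hypothesis the proposition does not make, so your argument would fail for a general singular foliation. Second, even granting coherence, the theorem only identifies equivalence with lying in the same bi-vertical leaf \emph{for two points inside a suitably small neighbourhood}; the saturation of an open set $U$ also contains points $y$ far from $U$ that are equivalent to some $x\in U$, and for those the ``union of flows of bi-vertical vector fields'' description says nothing. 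The correct (and simpler) argument needs neither coherence nor bi-vertical leaves: if $y$ is equivalent to some $x\in U$, Proposition \ref{Local-equivalence}(iii) gives a local morphism of bi-submersions $\varphi'\colon \mathcal U'\to\mathcal{A}$ defined near $y$ with $\varphi'(y)=x$, and then $(\varphi')^{-1}(U)$ is an open neighbourhood of $y$ contained in the saturation of $U$, since every point of it is equivalent to its image in $U$. With that replacement, the rest of your continuity argument for multiplication and inversion (using that an open quotient map yields a quotient map on fibre products) goes through.
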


\begin{example}\cite{AS}\label{ex:atlas}
Let $(M,\mathcal{F})$ be a singular foliation. Let  $(M, \mathcal{F})\stackrel{p_i}{\leftarrow}\mathcal{W}_i\subseteq M\times \mathbb R^{n_i}\stackrel{q_i}{\rightarrow}(M, \mathcal{F})$ for $i\in I$ a family made of countably many path holonomy bi-submersions as in Example \ref{ex:holonomy-biss} such that $\bigcup_{i\in I} p_i(\mathcal{W}_i)=M$. 
 In \cite{AS}, it is stated that the disjoint union $$\mathcal W=\coprod_{n\in \mathbb N} \mathcal{W}_{i_1,\ldots,i_n}^{\pm}$$ (where $\mathcal{W}_{i_1,\ldots,i_n}^{\pm}:={\mathcal W_{i_1}^{\pm} * \dots * \mathcal W_{i_n}^{\pm}}\;\;({n-\text{times} })$ and  $^{\pm}$ means that we consider $\mathcal W_i$ or its inverse $\mathcal W_i^{-1} $ for $i\in I$) is an atlas over $(M,\mathcal{F})$, denoted by $M \stackrel{p}{\leftarrow} \mathcal{W} \stackrel{q}{\rightarrow} M$ and called a \emph{path holonomy atlas}.
 Notice that it is not an atlas in our sense since the manifolds do not have all the same dimensions, but we did not dare to change the name which is already widely used.
 \end{example}

The holonomy groupoid $\mathcal{H}(M, \mathcal{F})\rightrightarrows M$ is not smooth in general. However, it is longitudinally smooth, see \cite{Debord,AS} and \cite{DEBORD2013613}.
 
\begin{definition}\cite{AS}
    The \emph{holonomy groupoid $\mathcal{H}(M, \mathcal{F})\rightrightarrows M$ of $\mathcal{F}$} is the groupoid associated to a fundamental  atlas of $\mathcal{F}$.
\end{definition}

The path holonomy atlas has an issue: its connected components have a non-bounded dimensions. It makes more sense to have an atlas of the following form:

 \begin{definition}
     \label{def:holonomy_atlas}
 We call \emph{holonomy atlas} an atlas which is equivalent to  path holonomy atlas, and is given by a manifold whose connected components are all the same dimension.
 \end{definition}

Theorem \ref{th:reducedimension} admits the following immediate consequence.

 \begin{corollary}
     Every singular foliation of finite rank admits a holonomy atlas of dimension  ${\mathrm{rk}}(\mathcal F)+{\mathrm{dim}}(M) $.
 \end{corollary}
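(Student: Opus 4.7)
The plan is to start from a path holonomy atlas as in Example~\ref{ex:atlas} and cut it down, piece by piece, using the dimension-reduction technology of Theorem~\ref{th:reducedimension}, producing an equivalent bi-submersion of constant dimension equal to the target ${\mathrm{rk}}(\mathcal F)+{\mathrm{dim}}(M)$. The only thing that will then remain is to check that the result is still an atlas.

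More concretely, first I would pick a countable family of path holonomy bi-submersions $(\mathcal W_i)_{i\in I}$ covering $M$ as in Example~\ref{ex:atlas} and form the path holonomy atlas $\mathcal W=\coprod_n \mathcal W_{i_1,\ldots,i_n}^\pm$. Each summand $\mathcal W_{i_1,\ldots,i_n}^\pm$ is a bi-submersion over $(M,\mathcal F)$, so Corollary~\ref{cor:reducedimension} applies to this countable family: it produces, inside each summand, a (non-necessarily-connected) submanifold of dimension
$$\tfrac{1}{2}\bigl({\mathrm{rk}}(\mathcal F)+{\mathrm{rk}}(\mathcal F)+{\mathrm{dim}}(M)+{\mathrm{dim}}(M)\bigr)={\mathrm{rk}}(\mathcal F)+{\mathrm{dim}}(M),$$
whose disjoint union $\mathcal A$ is itself a bi-submersion over $(M,\mathcal F)$ and is equivalent (related in both directions) to $\mathcal W$. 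By construction, each connected component of $\mathcal A$ is of dimension ${\mathrm{rk}}(\mathcal F)+{\mathrm{dim}}(M)$.

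Second, I would verify that $\mathcal A$ is an atlas. This is purely formal: equivalence of bi-submersions is stable under the operations $*$ and $^{-1}$ (by Proposition~\ref{prop: inv-composition}), so from $\mathcal A\sim\mathcal W$ one gets $\mathcal A*\mathcal A\sim\mathcal W*\mathcal W\sim\mathcal W\sim\mathcal A$ and $\mathcal A^{-1}\sim\mathcal W^{-1}\sim\mathcal W\sim\mathcal A$. Hence the two atlas axioms for $\mathcal A$ follow from those for the path holonomy atlas $\mathcal W$. Together with the equivalence $\mathcal A\sim\mathcal W$, Definition~\ref{def:holonomy_atlas} then gives that $\mathcal A$ is a holonomy atlas of the required dimension.

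I do not foresee a serious obstacle: the heavy lifting is already contained in Theorem~\ref{th:reducedimension} (and its countable version Corollary~\ref{cor:reducedimension}), which in turn rests on Lemma~\ref{lemma:codim} to bound the codimensions of the bi-vertical leaves by ${\mathrm{rk}}(\mathcal F)+{\mathrm{dim}}(M)$. The only mild subtlety is that Theorem~\ref{th:reducedimension} assumes the bi-vertical vector fields form a singular foliation; in the case at hand $(M,\mathcal F_M)=(N,\mathcal F_N)=(M,\mathcal F)$ and one may apply the result directly on each $\mathcal W_{i_1,\ldots,i_n}^\pm$, either assuming $\mathcal F$ is a coherent sheaf (so Lemma~\ref{lem:isasingularfoliation} applies) or invoking the remark following Theorem~\ref{th:reducedimension} that its proof extends unchanged without this hypothesis, since the notion of transversal to the bi-vertical module makes sense even when the module is not locally finitely generated.
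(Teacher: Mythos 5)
Your proposal is correct and follows essentially the same route as the paper: cut each summand $\mathcal W_{i_1,\ldots,i_n}^{\pm}$ of the path holonomy atlas down to a submanifold of dimension ${\mathrm{rk}}(\mathcal F)+{\mathrm{dim}}(M)$ that meets the bi-vertical foliation cleanly and hits every bi-vertical leaf (the paper does this via Proposition~\ref{prop:reduced-bisub} and the construction underlying Theorem~\ref{th:reducedimension}, which is exactly what Corollary~\ref{cor:reducedimension} packages), then take the disjoint union and note it is equivalent to the original atlas. You are in fact slightly more explicit than the paper in checking the atlas axioms for the reduced object and in flagging the coherent-sheaf/finitely-generated-kernel hypothesis, which the paper also restricts to.
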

 \begin{proof}{We will only prove the claim with the additional assumption that $\mathcal F$ is a coherent sheaf.
 In this case, let $ \mathcal A_{i_1, \dots, i_n} \subset \mathcal{W}_{i_1,\ldots,i_n}^{\pm}$ be for all  $(i_1,\ldots,i_n) \in I^n$
 a non-connected submanifold of dimension $k$ that intersects cleanly the bi-vertical foliation and intersect each of its leaf at least once. 
 The bi-submersion $\mathcal{W}_{i_1,\ldots,i_n}^{\pm}$  is related to the bi-submersion $ \Sigma_{i_1, \dots, i_n}$ in view of Proposition \ref{prop:reduced-bisub}. The disjoint union $\mathcal{A}=\coprod_{n\in \mathbb N} \mathcal{A}_{i_1, \dots, i_n}$ is a bi-submersion for $ (M,\mathcal F)$ made of sub-manifolds all of the same dimension $k$ and related to the path-holonomy atlas of Example \ref{ex:atlas}. Moreover, it admits a globally defined unit $\epsilon\colon M\hookrightarrow \mathcal{A}$ in the case where $I$ has only one element, i.e., if the singular foliation $\mathcal F$ is globally finitely generated.}

 Now, the assumption we made (that such an integer $k$ exists) holds if the following condition holds: there exists vector bundles $ E_{-1}, E_{-2}$ and morphisms 
   $$ E_{-2} \stackrel{d}{\longrightarrow} E_{-1} \stackrel{\rho}{\longrightarrow} 
 TM $$
 such that $ \rho(\Gamma(E_{-1})) = \mathcal F$ and the following sequence is exact:
  $$   \Gamma(E_{-2}) \longrightarrow \Gamma(E_{-1})  \longrightarrow \mathcal F $$
This comes from item 2 of Lemma \ref{lem:biss_anchored}.
In this case,  $\mathcal F$ is \emph{globally bounded}, i.e.,  the semi-continous map $m\in M \to  \dim(\frac{\mathcal{F}}{\mathcal I_m\mathcal{F}})$ is bounded. Notice that this map associates to a point the rank of holonomy Lie algebroid if the leaf through this point (cf. \cite[\S 2.3]{AZ}). Moreover, the integer $k$ can be chosen to be the sum of the dimension of $M$ with the maximum value of that function, that is $\dim M+\mathrm{rk}(\mathcal{F})$.
\end{proof}

  %
 %


 Theorem \ref{th:whereoneusesAnalyse} also admits the following side result, which describes the local topology of the holonomy groupoid of Androulidakis-Skandalis.

\begin{corollary} 
Let $ \mathcal F$ be a singular foliation on $M$, and assume that it is a coherent sheaf with local resolution:
  $$    E_{-2}\stackrel{\dd}{\longrightarrow} E_{-1}\stackrel{\rho}{\longrightarrow} TM$$
   Every point $ \gamma$ is the holonomy groupoid of $ \mathcal F$ admits a neighborhood $ \mathcal U$ homeomorphic, as a topological space, to a neighborhood of the target of $ \gamma$ in the quotient space  $ E_{-1}/\dd(E_{-2}) $ of $ E_{-1}$  by the image of $\dd$.
\end{corollary}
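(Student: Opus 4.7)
The approach combines three ingredients already present in the paper: the path-holonomy atlas built from the anchored bundle $(E_{-1},\rho)$ (Example \ref{ex:holonomy-biss2}); the explicit description of the bi-vertical foliation of this atlas in terms of $\dd(E_{-2})$ (Proposition \ref{prop:biss_anchored}); and the local characterization of the Androulidakis--Skandalis equivalence relation via leaves of bi-vertical vector fields (Theorem \ref{th:whereoneusesAnalyse}).

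First I would reduce to a neighborhood of a chosen preimage of $\gamma$. Pick a representative $x$ of $\gamma$ in a holonomy atlas $\mathcal A$ whose connected component through $x$ is a neighborhood $\mathcal U_{E_{-1}}$ of the zero section of $E_{-1}$ in the sense of Example \ref{ex:holonomy-biss2}. By Proposition \ref{prop:rightactionisiso} (together with the local structure of Proposition \ref{prop:localstructure}), one can shrink $\mathcal U_{E_{-1}}$ so that the right action gives a vector bundle isomorphism $t^* E_{-1} \simeq \ker T s$ near $x$; in particular one obtains a chart $\Phi\colon V \longrightarrow \mathcal A$, where $V$ is an open neighborhood of the zero vector $0_{t(\gamma)} \in E_{-1}$, which is a diffeomorphism onto an open neighborhood of $x$ and which sends the zero section to the unit section of $\mathcal A$.

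Next I would identify the equivalence relation in this chart. By Theorem \ref{th:whereoneusesAnalyse}, after possibly shrinking $V$ further, two points of $\Phi(V)$ are equivalent in $\mathcal H(M,\mathcal F)$ if and only if they lie in the same leaf of the bi-vertical singular foliation $\mathcal{BV}_{\mathcal A}$. Applying Proposition \ref{prop:biss_anchored} to the anchored bundle $(E_{-1},\rho)$, equipped with the resolution $\dd\colon E_{-2}\to E_{-1}$ (so that $\ker\rho = \mathrm{im}(\dd)$ at the level of sections), $\mathcal{BV}_{\mathcal A}$ is a singular foliation generated by the vertical vector fields corresponding to constant sections $\dd(\xi)$, $\xi\in \Gamma(E_{-2})$. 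On the fiber $E_{-1}|_m$ these vector fields integrate to translations by elements of $\dd(E_{-2}|_m)$. Consequently the bi-vertical leaf through $(m,e)\in V$ meets the fiber $E_{-1}|_m$ precisely in the connected component of $e$ inside the coset $e+\dd(E_{-2}|_m)$, and the leaf space of $\mathcal{BV}_{\mathcal A}|_V$ is canonically a neighborhood of $0_{t(\gamma)}$ in the quotient topological space $E_{-1}/\dd(E_{-2})$.

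Finally one passes to the holonomy groupoid. The composite $V \xrightarrow{\Phi} \mathcal A \twoheadrightarrow \mathcal A/{\sim}=\mathcal H(M,\mathcal F)$ factors through the quotient map $V \to V/\dd(E_{-2})$, and by the preceding step the induced map from a neighborhood of the target of $\gamma$ in $E_{-1}/\dd(E_{-2})$ to a neighborhood of $\gamma$ in $\mathcal H(M,\mathcal F)$ is a continuous bijection. Openness follows from the fact that $\Phi$ is a local diffeomorphism and that both quotient maps are open, hence the bijection is a homeomorphism. The main technical obstacle is that $\dd(E_{-2})\subset E_{-1}$ is not a subbundle in general (its fiber dimension jumps), so $E_{-1}/\dd(E_{-2})$ is only a topological space, possibly non-Hausdorff; one must therefore argue entirely at the topological level, and the careful application of Theorem \ref{th:whereoneusesAnalyse}---ensuring that leaves of $\mathcal{BV}_{\mathcal A}$ agree with the \emph{connected components} of cosets rather than full cosets, on a sufficiently small neighborhood of $x$---is the step that requires the most attention.
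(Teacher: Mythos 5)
Your proposal follows exactly the route the paper intends: the corollary is stated there without a written proof, as a direct consequence of Theorem \ref{th:whereoneusesAnalyse} combined with the description of the bi-vertical foliation of a path-holonomy bi-submersion as the (vertical, constant) vector fields valued in $\ker\rho=\mathrm{im}(\dd)$, and your three ingredients (Example \ref{ex:holonomy-biss2}, Proposition \ref{prop:biss_anchored}, Theorem \ref{th:whereoneusesAnalyse}) are precisely the right ones. Your identification of the bi-vertical leaf through $(m,e)$ with the connected component of $e$ in $(e+\dd(E_{-2}|_m))\cap V_m$, and your attention to the component-versus-coset issue and to the non-Hausdorff topology of $E_{-1}/\dd(E_{-2})$, are the points that actually require care, and you handle them correctly.

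One setup sentence is inaccurate and should be repaired. A general $\gamma\in\mathcal H(M,\mathcal F)$ need not admit a representative in the one-letter component $\mathcal U_{E_{-1}}$ of the path-holonomy atlas; its representative $x$ may live in some $\mathcal U_{E_{-1}}^{\pm}*\cdots*\mathcal U_{E_{-1}}^{\pm}$, which is not a neighborhood of the zero section of $E_{-1}$. The fix is already implicit in what you cite: by Theorem \ref{th:reducedimension} and Proposition \ref{prop:rightactionisiso}, a neighborhood of \emph{any} representative $x$ can be replaced by an equivalent bi-submersion $W'$ of dimension $\dim M+\mathrm{rk}(E_{-1})$ carrying an isomorphism $q^*E_{-1}\simeq\ker Tp$, and integrating this right action yields the chart $\Phi$ centered at $x$ with values in a neighborhood of $0_{t(\gamma)}$ in $E_{-1}$; alternatively, one identifies the resulting neighborhood of the class $[x]$ in $E_{-1}/\dd(E_{-2})$ with a neighborhood of the zero section by fiberwise translation along a local section of $E_{-1}$ extending $x$, which preserves $\dd(E_{-2})$-cosets. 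With that adjustment the argument is complete.
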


    The word “target” could of course be replaced by “source” in this corollary: it would still be true.
   Notice that for every anchored bundle  $(A,\rho)$ over $ \mathcal F$,  $ E_{-1}/\dd(E_{-2}) $ is homemorphic to the quotient of $ A$  by the strong kernels of $A$.

\section{Bi-submersion towers over singular foliations} \label{sec:tower}
In this section, we introduce the concept of a bi-submersion tower, which extends the notion of a bi-submersion over a singular foliation. This generalization will be useful for future developments.

\begin{definition}\label{def:bi-submersion-tower}
Let $(M, \mathcal{F}_M)$ and $(N,\mathcal{F}_N)$ be foliated manifolds. A \emph{bi-submersion tower between $(M, \mathcal{F}_M)$ and $(N,\mathcal{F}_N)$} is a pair of families of foliated manifolds $(K_i, \mathcal{F}_i)_{i\geq 0}$ and $(\Lambda_i, \mathcal
F^i)_{i\geq 0}$ (the latter being referred to as \emph{horn foliations}) that come equipped with bi-submersion structures  
\begin{enumerate}
    \item $\left((\Lambda_i, \mathcal{F}^{i}) \stackrel{a_i}{\leftarrow} K_{i+1} \stackrel{b_i}{\rightarrow} (K_i,\mathcal{F}_i)\right)_{i\geq 0}$ 

    \item $\left((\Lambda_i, \mathcal{F}^{i}) \stackrel{\delta_i}{\leftarrow} \Lambda_{i+1} \stackrel{\eta_i}{\rightarrow} (K_i,\mathcal{F}_i )\right)_{i\geq 0}$
    \end{enumerate}
    with $K_0:=M,\; \Lambda_0:= N$  and $\mathcal{F}_0=\mathcal{F}_M,\; \mathcal{F}_{\Lambda_0}:=\mathcal{F}_N$ such that

    \begin{enumerate}
        \item  for every $i\geq 0$ the following diagram commutes

        \begin{equation} 
    \scalebox{0.7}{\xymatrix{&&  (K_i,\mathcal{F}_i)  &&\\ &&\ar[dll]_{\delta_{i+1}}\Lambda_{i+2}\ar@{->}[drr]^{\eta_{i+1}} &&\\ (\Lambda_{i+1}, \mathcal{F}^{{i+1}})\ar[uurr]^{\eta_i} \ar[ddrr]_{\delta_i}&& &&  (K_{i+1}, \mathcal{F}_{i+1}) \ar[ddll]^{a_i}\ar[uull]_{b_i} \\&& \ar@{->}[ull]^{a_{i+1}}K_{i+2}\ar@{->}[urr]_{b_{i+1}}&&\\&&(\Lambda_i,\mathcal{F}^{i}) && }}
\end{equation}

 \item   for all $i \geq 1$, $\mathcal F_{i}\subseteq \Gamma(\ker Ta_{i-1})\cap\Gamma(\ker Tb_{i-1})$ and $\mathcal F^{i}\subseteq \Gamma(\ker T\delta_{i-1})\cap\Gamma(\ker T\eta_{i-1})$.
    \end{enumerate}
For small $i$, the picture looks like    
\begin{equation} \label{eq:tower1}
   \scalebox{0.7}{ \xymatrix{&&&&  M  &&&&\\ &&&& \ar@[]@{-->}@/_2pc/[dllll]_{\delta_1}\Lambda_2\ar@[]@{-->}@/^2pc/[drrrr]^{\eta_1} \ar@[]@{<.}@/_1pc/[dlll]_{a_2}\ar@[]@{<.}@/^1pc/[drrr]^{\delta_2}&&&&\\ \Lambda_1\ar@[]@/^2pc/[uurrrr]^{\eta_0} \ar@[]@/_2pc/[ddrrrr]_{\delta_0}& K_3 \ar@[]@{<.}[r]^{b_3} &\cdots  & & \cdots && \cdots & \ar@[]@{<.}[l]_{a_3} \Lambda_3\ar@[]@{.>}@/^1pc/[dlll]^{\eta_2}& K_1 \ar@[]@/^2pc/[ddllll]^{a_0}\ar@[]@/_2pc/[uullll]_{b_0} \\&&&& \ar@[]@{-->}@/^2pc/[ullll]^{a_1}K_2\ar@[]@{-->}@/_2pc/[urrrr]_{b_1}\ar@[]@{<.}@/^1pc/[ulll]^{b_2}&&&&\\&&&& N &&&& }}
\end{equation}
A bi-submersion tower between $(M, \mathcal{F})$ and $(M,\mathcal{F})$ is called \emph{bi-submersion tower over  $\mathcal{F}$}.

\begin{remark}
    Morally, a bi-submersion tower over two singular foliations should be understood as a sequence of equivalences  between equivalences of bi-submersions.
\end{remark}

\end{definition}
\begin{definition} In the notation of Definition \ref{def:bi-submersion-tower},
     a bi-submersion tower is said to be 
     \emph{exact} if for all $i \geq 1$, $\mathcal F_{i}= \Gamma(\ker Ta_{i-1})\cap\Gamma(\ker Tb_{i-1})$.

\end{definition}
\begin{example}
    We recover a bi-submersion tower over  a singular foliation in the sense of  \cite[Definition 5.3]{RubenSymetries} as follows: a bi-submersion tower over a single singular foliation $(M,\mathcal{F})$ can be obtained as a sequence of manifolds and maps 
\begin{equation}\label{eq:tower1}T_\bullet\colon \xymatrix{\cdots\phantom{X}\ar@/^/[r]^{p_{i+1}}\ar@/_/[r]_{q_{i+1}}&{T_{i+1}}\ar@/^/[r]^{p_{i}}\ar@/_/[r]_{q_{i}}&{T_{i}}\ar@/^/[r]^{p_{i-1}}\ar@/_/[r]_{q_{i-1}}&{\cdots}\ar@/^/[r]^{p_1}\ar@/_/[r]_{q_1}&T_1\ar@/^/[r]^{p_0}\ar@/_/[r]_{q_0}&T_0,}
\end{equation}
together with a sequence $\mathcal F_i$ of singular foliations on $T_i$, with the convention that $T_0=M$ and $\mathcal F_0 = \mathcal F$, such that
\begin{enumerate}
\item  for all $i \geq 1$, $\mathcal F_i\subseteq\Gamma(\ker Tp_{i-1})\cap\Gamma(\ker Tq_{i-1})$, 
\item  for each $i\geq 0$, $\xymatrix{{T_{i+1}}\ar@/^/[r]^{p_{i}}\ar@/_/[r]_{q_{i}}&{T_{i}}}$ is a bi-submersion over $\mathcal F_i$.
\item for all $i\geq 1$, the following diagram commutes $$\scalebox{0.7}{ \xymatrix{&& T_{i-1}&& \\T_i\ar[urr]^{p_{i-1}}\ar[drr]_{q_{i-1}}&&\ar@{->}[ll]_{p_i}  T_{i+1}\ar@{..>}[u]\ar@{..>}[d]\ar@{->}[rr]^{q_i} &&  T_i\ar[llu]_{p_{i-1}}\ar[dll]^{q_{i-1}}\\ && T_{i-1}&& }}.$$
\end{enumerate} 
    
Here, \eqref{eq:tower1} corresponds to a particular case of Definition \ref{def:bi-submersion-tower} with $K_i=\Lambda_i:= T_i$ and $\mathcal{F}_i=\mathcal{F}_{\Lambda_i}$.    

A geometrical consequence is that for $i\geq 1$,
two points $x,x'\in T_i$ of the same leaf of $\mathcal F_i$ satisfy $p_{i-1}(x)=p_{i-1}(x')$ and $q_{i-1}(x)=q_{i-1}(x')$ since for all $x\in T_i,\; T_x\mathcal{F}_i\subset (\ker Tq_{i-1})_{|_x}\cap(\ker Tq_{i-1})_{|_x}$. Therefore, if the bi-submersions of item 2  are made of leaf preserving bi-submersions (e.g., path holonomy bi-submersions), then item 3 is systematically satisfied. That is,  we have for all $i\geq 1$, $p_{i-1}\circ p_i=p_{i-1}\circ q_i$ and $q_{i-1}\circ p_i=q_{i-1}\circ q_{i}$.

\end{example}

The following theorem states that exact bi-submersion towers can be constructed over singular foliations that admit geometric resolutions. We refer the reader to \cite{RubenSymetries} for a proof.
\begin{theorem}[\cite{RubenSymetries}]\label{thm:equivalence-tower}
Let $\mathcal F$ be a singular foliation on $M$. The following items are equivalent:
\begin{enumerate}
    \item $\mathcal F$ admits a geometric resolution.
    \item There exists an exact path holonomy bi-submersion tower over $(M, \mathcal F)$ (i.e., a bi-submersion tower generated by path holonomy bi-submersions) of the form \begin{equation}\label{eq:tower}T_\bullet\colon \xymatrix{\cdots\phantom{X}\ar@/^/[r]^{p_{i+1}}\ar@/_/[r]_{q_{i+1}}&{T_{i+1}}\ar@/^/[r]^{p_{i}}\ar@/_/[r]_{q_{i}}&{T_{i}}\ar@/^/[r]^{p_{i-1}}\ar@/_/[r]_{q_{i-1}}&{\cdots}\ar@/^/[r]^{p_1}\ar@/_/[r]_{q_1}&T_1\ar@/^/[r]^{p_0}\ar@/_/[r]_{q_0}&M}.
\end{equation}
\end{enumerate}
\end{theorem}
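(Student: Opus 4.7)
The plan is to prove the equivalence by an inductive construction in the forward direction and a restriction-to-$M$ construction in the reverse direction, using Proposition \ref{prop:biss_anchored} and Example \ref{ex:holonomy-biss2} as the main machinery.

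For the implication $(1)\Rightarrow(2)$: Starting from a geometric resolution $\cdots\to E_{-2}\to E_{-1}\stackrel{\rho}{\to} TM$, I would construct the tower recursively so that at each stage $i\geq 1$ I have a bi-submersion $T_{i-1}\stackrel{p_{i-1}}{\leftarrow}T_i\stackrel{q_{i-1}}{\to}T_{i-1}$ together with a geometric resolution of the bi-vertical singular foliation $\mathcal F_i=\mathcal{BV}_{T_i}$ of the form
\begin{equation*}
\cdots\longrightarrow (p_0\circ\cdots\circ p_{i-1})^*E_{-i-1}\longrightarrow (p_0\circ\cdots\circ p_{i-1})^*E_{-i}\stackrel{\nu\circ\dd}{\longrightarrow} TT_i.
\end{equation*}
The base case $i=1$ is Example \ref{ex:holonomy-biss2} applied to the anchored bundle $E_{-1}\stackrel{\rho}{\to}TM$, yielding $T_1\subset E_{-1}$ and, by Remark \ref{rk:lemma:biss_anchored} and Proposition \ref{prop:biss_anchored}, an anchored bundle $p_0^*E_{-2}\to TT_1$ over $\mathcal F_1=\mathcal{BV}_{T_1}$ whose sections' kernel is covered by $p_0^*E_{-3}$. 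The inductive step is a verbatim repetition: apply Example \ref{ex:holonomy-biss2} to the pulled-back anchored bundle at level $i$ to produce $T_{i+1}$, then invoke Proposition \ref{prop:biss_anchored} to propagate the resolution.

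The two compatibility conditions defining a bi-submersion tower (commutativity $p_{i-1}\circ p_i=p_{i-1}\circ q_i$, $q_{i-1}\circ p_i=q_{i-1}\circ q_i$, and the inclusion $\mathcal F_i\subset \Gamma(\ker Tp_{i-1})\cap\Gamma(\ker Tq_{i-1})$) are automatic because path holonomy bi-submersions built from an anchored bundle for $\mathcal{BV}_{T_{i-1}}$ are leaf-preserving for that foliation, and any bi-vertical vector field on $T_i$ lies in $\ker Tp_{i-1}\cap\ker Tq_{i-1}$ by definition. Exactness ($\mathcal F_i=\Gamma(\ker Tp_{i-1})\cap\Gamma(\ker Tq_{i-1})$) is the nontrivial point: it follows from the fact that the vector bundle map $\nu^{\mathcal F_{i-1}}\colon p_{i-1}^*(\text{anchored bundle})\to \ker Tq_{i-1}$ of Proposition \ref{prop:biss_anchored} is an isomorphism (rank matches by construction of $T_i$ as a neighborhood of the zero section), so the image of $\ker(\text{anchor})$ — which is finitely generated by the resolution assumption — is exactly the bi-vertical vector fields.

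For the converse $(2)\Rightarrow(1)$: Given an exact path holonomy bi-submersion tower, iterated units embed $M\hookrightarrow T_i$ as zero sections, and I would define $E_{-i}:=\ker Tp_{i-1}|_M$ (a vector bundle over $M$ since $p_{i-1}$ is a submersion, with rank constant on each connected component because of the path-holonomy local model), with anchor $\rho=Tq_0|_M\colon E_{-1}\to TM$ and differentials $\dd^{(i)}=Tq_{i-1}|_M\colon E_{-i}\to E_{-i+1}$. That $\rho(\Gamma(E_{-1}))=\mathcal F$ is standard for path-holonomy bi-submersions, and the chain complex condition $\dd^{(i)}\circ\dd^{(i+1)}=0$ follows from the commutativity of the tower. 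The hard part, which is the main obstacle I anticipate, is verifying exactness of the resolution at the level of sections. The exactness assumption $\mathcal F_i=\Gamma(\ker Tp_{i-1})\cap\Gamma(\ker Tq_{i-1})$ gives exactness at the level of vector fields on $T_i$, but I need to translate this into exactness of sections of vector bundles over $M$. My plan is to use Proposition \ref{prop:biss_anchored} in reverse: the tower being path-holonomy at each level identifies $\ker Tp_{i-1}\simeq p_{i-1}^*E_{-i}$, and the bi-vertical foliation $\mathcal F_i\simeq p_{i-1}^*(\ker\dd^{(i)})$; then the path-holonomy structure at level $i+1$ forces $\ker\dd^{(i)}=\rho^{(i+1)}(\Gamma(E_{-i-1}))$ where $\rho^{(i+1)}$ is the anchor at that level, which (upon restriction to $M$) gives $\ker\dd^{(i)}=\mathrm{im}(\dd^{(i+1)})$ at the sectional level.
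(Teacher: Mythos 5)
The paper itself does not prove this theorem --- it cites \cite{RubenSymetries} and only replays the relevant machinery in \S\ref{sec:construction}. Measured against that machinery, your forward direction $(1)\Rightarrow(2)$ is essentially the intended argument and is sound: build $T_1\subset E_{-1}$ via Example \ref{ex:holonomy-biss2}, use Proposition \ref{prop:biss_anchored} (with $\dim T_i=\mathrm{rk}+\dim$ matching so that $\nu$ is an isomorphism) to identify the bi-vertical foliation with the image of the pulled-back kernel, and recurse --- this is exactly what the paper does with Equations \eqref{eq:geom-resol_2} and \eqref{eq:geom-resol_{N+1}}. One caveat you inherit from the paper's own exposition: Proposition \ref{prop:biss_anchored}(2) only produces the anchored-bundle stage over $\mathcal{BV}_{T_i}$; the claim that the \emph{entire} pulled-back truncated complex remains a geometric resolution (exactness in all higher degrees at the level of sections) requires the separate fact that pullback along a surjective submersion preserves sectional exactness, which you should state and prove (or cite) rather than fold into ``invoke Proposition \ref{prop:biss_anchored}.''

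The genuine gap is in $(2)\Rightarrow(1)$, precisely at the step you flag as hard, and your sketch for it is circular at the decisive point. Tower exactness gives you $\mathcal F_i=\Gamma(\ker Tp_{i-1})\cap\Gamma(\ker Tq_{i-1})$ as an equality of modules of vector fields \emph{on $T_i$}, while you need exactness of the restricted complex of sections \emph{over $M$}. Concretely: given a local section $e$ of $E_{-i}=\ker Tp_{i-1}|_M$ with $\dd^{(i)}e=Tq_{i-1}(e)=0$ \emph{along the unit section}, before you can apply tower exactness and then the bi-submersion property of $T_{i+1}$ over $\mathcal F_i$ (to realize $e$ as $Tq_i$ of a $\ker Tp_i$-section and restrict), you must first extend $e$ to a genuinely bi-vertical vector field on a neighborhood of $M$ in $T_i$. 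The fiberwise-constant extension in the path-holonomy trivialization does not work: its image under $Tq_{i-1}$ vanishes on $M$ but not nearby, so it is not bi-vertical. Your phrase ``which (upon restriction to $M$) gives $\ker\dd^{(i)}=\mathrm{im}(\dd^{(i+1)})$'' assumes that kernels and images commute with restriction to the unit section, i.e., that quotienting by the vanishing ideal $\mathcal I_M$ preserves exactness --- but this operation is only right-exact in general, and the whole content of the converse in \cite{RubenSymetries} is an argument (via the normal form of $q_{i-1}$ along the unit section, whose linearization along $M$ recovers $\dd^{(i)}$, in the spirit of Appendix \ref{app:exact-sequence-of-vb}) showing that exactness does pass. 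A cosmetic but symptomatic slip in the same passage: $p_{i-1}^*E_{-i}$ does not typecheck, since $E_{-i}$ lives over $M$ while $p_{i-1}\colon T_i\to T_{i-1}$; the identification should be $\ker Tp_{i-1}\simeq(p_0\circ\cdots\circ p_{i-1})^*E_{-i}$, which is a hint that the relation between the bundle data on $T_i$ and on $M$ is exactly what still needs to be controlled.
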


\bibliographystyle{alpha}
\bibliography{main}
\vfill
\begin{center}\textsc{Institut \'Elie Cartan de Lorraine, UMR 7502, Universit\'e de Lorraine, Metz, France.\\Department of Mathematics, University of Illinois Urbana-Champaign\\ 1409 W. Green Street, Urbana, IL 61801, USA.}\end{center}
\end{document}